\theoremstyle{plain}
\newtheorem{lemma}{Lemma}[section]
\newtheorem{theorem}[lemma]{Theorem}
\newtheorem{proposition}[lemma]{Proposition}
\newtheorem{corollary}[lemma]{Corollary}
\newtheorem{theorem-var}{Theorem}[]
\newtheorem{corollary-var}{Corollary}[]
\theoremstyle{definition}
\newtheorem{definition}[lemma]{Definition}
\newtheorem*{definition*}{Definition}
\theoremstyle{remark}
\newtheorem{remark}[lemma]{Remark}
\newtheorem{example}[lemma]{Example}
\DeclareMathOperator{\Root}{Root}
\DeclareMathOperator{\Div}{Div}
\DeclareMathOperator{\Aut}{Aut}
\DeclareMathOperator{\Hom}{Hom}
\DeclareMathOperator{\Lie}{Lie}
\DeclareMathOperator{\conv}{conv}
\DeclareMathOperator{\cone}{cone}
\DeclareMathOperator{\supp}{supp}
\newcommand{\calG}{\mathcal G}
\newcommand{\calX}{\mathcal X}
\newcommand{\mC}{\mathbb C} \newcommand{\mN}{\mathbb N}
 \newcommand{\mQ}{\mathbb Q}
\newcommand{\mZ}{\mathbb Z}
\newcommand{\gog}{\mathfrak g}
\newcommand{\goh}{\mathfrak h}
\newcommand{\gou}{\mathfrak u}
\newcommand{\gon}{\mathfrak n}
\newcommand{\gor}{\mathfrak r}
\newcommand{\got}{\mathfrak t}
\newcommand{\sfA}{\mathsf A} \newcommand{\sfB}{\mathsf B}
\newcommand{\sfC}{\mathsf C} 
 \newcommand{\sfF}{\mathsf F}
\newcommand{\sfG}{\mathsf G}
\newcommand{\scrB}{\mathscr B}
\newcommand{\scrC}{\mathscr C}
\newcommand{\scrD}{\mathscr D}
\newcommand{\scrF}{\mathscr F}
\newcommand{\scrO}{\mathscr O}
\newcommand{\scrS}{\mathscr S}
\newcommand{\scrU}{\mathscr U}
\newcommand{\gra}{\alpha} \newcommand{\grb}{\beta}    \newcommand{\grg}{\gamma}
\newcommand{\grd}{\delta} \newcommand{\grl}{\lambda}  \newcommand{\grs}{\sigma}
 \newcommand{\grD}{\Delta}  \newcommand{\grL}{\Lambda}
 \newcommand{\grS}{\Sigma}
\newcommand{\ra}         {\rightarrow}
\newcommand{\lra}        {\longrightarrow}
\newcommand{\vuoto}      {\varnothing}
\renewcommand{\geq}      {\geqslant}
\renewcommand{\leq}      {\leqslant}
\newcommand{\senza}      {\smallsetminus}
\newcommand{\ol}         {\overline}
            \newcommand{\st}       {\, : \,}
\newcommand{\mru}       {\mathrm u}
\DeclareMathOperator{\Stab}{Stab}
\DeclareMathOperator{\rk}{rk}
\newcommand{\weak}{\Psi^\mathrm{\sharp}}
\newcommand{\M}{\mathrm M}
\newcommand{\m}{\mathrm m}
\begin{document}

\title[Orbits of strongly solvable spherical subgroups]{Orbits of strongly solvable spherical subgroups\\on the flag variety}

\author{Jacopo Gandini, Guido Pezzini}

\email{jacopo.gandini@sns.it}

\curraddr{\textsc{Scuola Normale Superiore\\ Piazza dei Cavalieri, 7\\ 56126 Pisa, Italy}}

\email{pezzini@mat.uniroma1.it}

\curraddr{\textsc{Dipartimento di Matematica\\ Sapienza Universit\`a di Roma\\ Piazzale Aldo Moro 5\\ 00185 Roma, Italy}}

\subjclass[2010]{14M10, 14M27}
\keywords{Strongly solvable spherical subgroups, orbits of a Borel subgroup, weight polytopes}

\begin{abstract}
Let $G$ be a connected reductive complex algebraic group and $B$ a Borel subgroup of $G$. We consider a subgroup $H \subset B$ which acts with finitely many orbits on the flag variety $G/B$, and we classify the $H$-orbits in $G/B$ in terms of suitable root systems. As well, we study the Weyl group action defined by Knop on the set of $H$-orbits in $G/B$, and we give a combinatorial model for this action in terms of weight polytopes.
\end{abstract}

\maketitle

\section*{Introduction}

Let $G$ be a connected complex reductive algebraic group and let $B \subset G$ be a Borel subgroup. A subgroup $H \subset G$ is called {\em spherical} if it has an open orbit on the flag variety $G/B$. If this is the case, as independently proved by Brion \cite{Br1} and Vinberg \cite{Vi}, then $H$ acts with finitely many orbits on $G/B$.

The best known example is that of $B$ itself, and more generally any parabolic subgroup of $G$. The $B$-orbits in $G/B$ are indeed the Schubert cells, and are finitely many thanks to the Bruhat decomposition. Another well studied case is that of the {\em symmetric subgroups} of $G$, i.e.\ when $H$ is the subgroup of fixed points of some algebraic involution of $G$. Especially in this case, the study of the $H$-orbits on $G/B$, their classification and the geometry of their closures are important in representation theory (see e.g.\ \cite{Sp1}, \cite{Wo}). An equivalent problem is the study of the $B$-orbits in $G/H$, and the geometry of their closures: they are fundamental objects to understand the topology of $G/H$ and of its embeddings (see \cite{FMSS}).

Spherical subgroups are classified in combinatorial terms, see \cite{Kr}, \cite{Br0}, \cite{Mi}, \cite{Lu1}, \cite{Lu2} where several particular classes of subgroups are considered, and the more recent papers \cite{Lo}, \cite{BP}, \cite{CF} where the classification is completed in full generality. Nevertheless the set $\scrB(G/H)$ of the $B$-orbits in $G/H$ is still far from being understood, essentially except for the cases of the parabolic subgroups and of the symmetric subgroups of $G$ (the latter especially thanks to the work of Richardson and Springer \cite{RS1}, \cite{RS2}).

The goal of the present paper is to explicitly understand the set $\scrB(G/H)$ in some other case, and to produce some combinatorial model for it. More precisely, we consider the case of the \textit{strongly solvable} spherical subgroups of $G$, that is, spherical subgroups of $G$ which are contained in a Borel subgroup. As a consequence of a theorem of Brion (see \cite[Theorem 6]{Br3}), under these assumptions the $H$-orbit closures in $G/B$ provide nice generalizations of the Schubert varieties: even though $H$ might be not connected, they are always irreducible, and they have rational singularities, so that in particular they are normal and Cohen-Macaulay.

When $H$ is a strongly solvable spherical subgroup, the set $\scrB(G/H)$ has already been studied in the literature in some special cases: Timashev \cite{Ti1} treated the case where $H = TU'$ (where $T$ denotes a maximal torus of $B$ and $U'$ the derived subgroup of the unipotent radical of $B$), and Hashimoto \cite{Ha} treated the case where $G = \mathrm{SL}_n$ and $H$ is a Borel subgroup of $\mathrm{SL}_{n-1}$, regarded as a subgroup of $\mathrm{SL}_n$. 

Let $\Phi$ be the root system of $G$ and let $W$ be its Weyl group. Given a strongly solvable spherical subgroup $H \subset G$, in our main result we give an explicit parametrization of the set $\scrB(G/H)$ by attaching to every $B$-orbit an element of $W$ and a root subsystem of $\Phi$. To do this, we build upon known results about the classification of strongly solvable subgroups, which is available in three different forms. The first one was given by Luna in 1993 (see \cite{Lu1}), the second one emerged in the framework of the general classification of spherical subgroups, and the third one, more explicit, has been given recently by Avdeev in \cite{Avd1} (see \cite{Avd2} for a comparison between the three approaches).

Our results on $\scrB(G/H)$ also provide a nice description of the action of $W$ on $\scrB(G/H)$, defined by Knop in \cite{Kn} for any spherical subgroup $H\subset G$. While the simple reflections of $W$ act in a rather explicit way, the resulting action of the entire $W$ is quite difficult to study. When $H$ is strongly solvable, we will see how this action becomes actually very simple: the fact that two $B$-orbits are in the same $W$-orbit will boil down to the fact that the associated root subsystems are $W$-conjugated. This will enable us to give a simple combinatorial model for $\scrB(G/H)$ as a finite set endowed with an action of $W$ in terms of ``generalized faces'' of weight polytopes.

We now explain our results in more detail. Fix a maximal torus $T \subset B$ and denote by $U$ the unipotent radical of $B$. We also denote by $\Phi$ the root system associated with $T$, by $W = N_G(T)/T$ its Weyl group, by $\Phi^+ \subset \Phi$ the set of positive roots associated with $B$ and by $\grD \subset \Phi^+$ the corresponding basis of $\Phi$. Let $H$ be a strongly solvable spherical subgroup of $G$, up to conjugation we may assume $H \subset B$. Up to conjugation by an element of $B$, we may assume as well that $T \cap H$ is a maximal diagonalizable subgroup of $H$. Given $\gra \in \Phi^+$, let $U_\gra \subset U$ be the associated unipotent one dimensional subgroup.

Two natural sets which are attached to $H$ are the set of \textit{active roots}
$$
	\Psi = \{ \gra \in \Phi^+ \st U_\gra \not \subset H\},
$$
introduced by Avdeev in \cite{Avd1}, and the corresponding set of restricted characters
$$\ol \Psi = \{\gra_{|{T \cap H}} \st \gra \in \Psi\}.$$
From a geometrical point of view, $\ol\Psi$ is canonically identified with the set of the $B$-stable prime divisors in $G/H$ which map dominantly to $G/B$ via the natural projection (see Section \ref{s:BmodH}). Given $I \subset \ol\Psi$, let $\Psi_I \subset \Psi$ be the subset of those roots $\gra$ such that $\gra_{|{T \cap H}} \in I$ and set
$$\Phi_I= \mZ \Psi_I \cap \Phi.$$
This is a parabolic subsystem of $\Phi$ which is explicitly described once $H$ is described in terms of active roots following Avdeev's classification, and the intersections $\Phi^\pm_I = \Phi_I \cap \Phi^\pm$ define a subdivision of $\Phi_I$ into positive and negative roots.

Given $w \in W$ and $I \subset \ol\Psi$, we say that $(w,I)$ is a \textit{reduced pair} if $w(\Phi^+_I) \subset \Phi^-$. Our first main result is the following.

\begin{theorem-var}[Corollary~\ref{cor:parametrizzazione}]\label{thm-var:bij}
Let $H$ be a spherical subgroup of $G$ contained in $B$. There is a natural bijection between the set of reduced pairs and the set of $B$-orbits in $G/H$.
\end{theorem-var}

Given a reduced pair $(w,I)$ we denote by $\scrO_{w,I}$ the corresponding $B$-orbit in $G/H$. Reduced pairs encode important properties of the corresponding $B$-orbit, and they can be used to encode combinatorial properties of the entire set $\scrB(G/H)$ as well. Denote by $W_I$ the Weyl group of $\Phi_I$, canonically embedded in $W$. Our next main result describes in these terms the action of $W$ on $\scrB(G/H)$.

\begin{theorem-var}[Theorems~\ref{teo:stabilizzatori1} ~\ref{teo:stabilizzatori2} and Corollary~\ref{cor:numero-Borbite}]\label{thm-var:W}
Let $H$ be a spherical subgroup of $G$ contained in $B$ and let $(w,I)$ be a reduced pair, then $\Stab_W(\scrO_{w,I}) = w W_I w^{-1}$. Moreover two orbits $\scrO_{w,I}$ and $\scrO_{v,J}$ are in the same $W$-orbit if and only if $I = J$, in which case $\scrO_{w,I} = wv^{-1} \cdot \scrO_{v,I}$. In particular, the following formula holds:
$$
	|\scrB(G/H)| = \sum_{I \subset \ol\Psi} |W/W_I|.
$$
\end{theorem-var}

A very special example of a strongly spherical subgroup was treated by Timashev in \cite{Ti1}, where the corresponding set of $B$-orbits is studied. More precisely, let $U'$ be the derived subgroup of $U$, namely $U' = \prod_{\Phi^+ \senza \grD} U_\gra$, and consider the subgroup $TU' \subset G$: this is a spherical subgroup of $G$ contained in $B$, and we have equalities $\ol\Psi = \Psi = \grD$. In this case the parametrization of the $B$-orbits in terms of reduced pairs can be proved in a simple way by using the commutation relations among root subgroups, and the set of reduced pairs $(w,I)$ is easily seen to be in a $W$-equivariant bijection with the set of faces of the weight polytope associated with any given dominant regular weight of $G$. This elegant description generalizes to the case of any strongly solvable spherical subgroup as follows.

Let $\grl$ be a regular dominant weight and let $P = \conv(W\grl)$ be the associated weight polytope in $\grL_\mQ = \Lambda \otimes_\mZ \mQ$, where $\Lambda$ denotes the weight lattice of $G$. Since $\grl$ is regular, the elements $w\grl$ with $w \in W$ are all distinct and coincide with the vertices of $P$. By a \textit{subpolytope} of $P$ we mean the convex hull of a subset of vertices of $P$. Notice that $P$ is naturally endowed with an action of $W$ which permutes the subpolytopes of $P$, and we denote by $\scrS(P)$ the set of subpolytopes of $P$.

Given a spherical subgroup $H \subset G$ contained in $B$, to any reduced pair $(w,I)$ we may associate a subpolytope of $P$ by setting $\scrS_{w,I} = \conv(wW_I\grl)$. This enables us to reformulate our combinatorial model of $\scrB(G/H)$ as follows.

\begin{theorem-var}[Theorem~\ref{teo:subpolytopes}]\label{teo:injS}
Let $H$ be a spherical subgroup of $G$ contained in $B$, then the map $(w,I) \mapsto \scrS_{w,I}$ defines a $W$-equivariant injective map $\scrB(G/H) \longrightarrow \scrS(P)$. Moreover, $\scrS_{w,I}$ is described as the intersection of $P$ with a cone in $\grL_\mQ$ as follows
$$
	\scrS_{w,I} = P \cap \big(w\grl+\mQ_{\geq0}(w(\Phi^-_I)) \big).
$$
\end{theorem-var}

The map $(w,I) \mapsto \scrS_{w,I}$ is not surjective in general, but applying the last part of the previous theorem it is possible in any given example to describe explicitly the image of the map. Moreover, this map has the advantage of being compatible with the Bruhat order, i.e.\ the inclusion relation between $B$-orbit closures: if $(v,J)$ and $(w,I)$ are reduced pairs such that $\scrS_{v,J} \subset \scrS_{w,I}$, then $\scrO_{v,J} \subset \ol{\scrO_{w,I}}$ (see Proposition~\ref{prop: bruhat-compatibile}). Unfortunately the converse of the previous statement is false in general, and a complete description of the Bruhat order of $\scrB(G/H)$ remains an open problem.

In case $H = TU'$ is easy to see that the image of the map $(w,I) \mapsto \scrS_{w,I}$ equals the set of faces of $P$. It was conjectured by Knop that $G/TU'$ has the largest number of $B$-orbits among all the spherical homogeneous spaces for $G$. Using our formula for the cardinality of $\scrB(G/H)$ we prove this in the solvable case.

\begin{theorem-var} [Theorem~\ref{teo:knop-conj}]\label{thm-var:knopconj}
The spherical homogeneous space $G/TU'$ has the largest number of $B$-orbits among the homogeneous spherical varieties $G/H$ with $H$ a solvable subgroup of $G$.
\end{theorem-var}

We explain now the structure of the paper. In Section~\ref{s:preliminaries} we explain our notations and collect some basic facts about spherical varieties and toric varieties. Then we restrict to the case of a spherical subgroup $H \subset G$ contained in $B$, and in Section~\ref{s:toricvar} we study the variety $B/H$: this is an affine toric $T$-variety whose set of $T$-stable prime divisors is naturally parametrized by $\ol\Psi$, and whose set of $T$-orbits is naturally parametrized by the subsets of $\ol\Psi$. In Section~\ref{s:active} we introduce the notion of \textit{weakly active root}. These are the roots $\gra \in \Phi^+$ whose associated root subgroup $U_\gra \subset B$ acts non-trivially on $B/H$, and we use them to attach a root system $\Phi_I$ to every subset $I \subset \ol\Psi$, thus to every $T$-orbit in $B/H$. In Section~\ref{s:Borbits} we introduce the notion of reduced pair (and the analogous one of \textit{extended pair}), and we prove Theorem~\ref{thm-var:bij}. Finally, in Section~\ref{s:W} we study the action of $W$ on $\scrB(G/H)$ and we prove Theorems~\ref{thm-var:W} and \ref{teo:injS}, and in Section~\ref{s:Knop-bound} we prove Theorem \ref{thm-var:knopconj}.\\

\textit{Acknowledgements.} We thank P.~Bravi, M.~Brion, F.~Knop and A.~Maffei for useful conversations on the subject, and especially R.S.~Avdeev for numerous remarks and suggestions on previous versions of the paper which led to significant improvements. 

This work originated during a stay of the first named author in Friedrich-Alexander-Universit\"at Erlangen-N\"urnberg during the fall of 2012 partially supported by a DAAD fellowship, he is grateful to F.~Knop and to the Emmy Noether Zentrum for hospitality. Both the authors were partially supported by the DFG Schwerpunktprogramm 1388 -- Darstellungstheorie.

\section{Notations and preliminaries}\label{s:preliminaries}

\subsection{Generalities}

All varieties and algebraic groups that we will consider will be defined over the complex numbers. Let $G$ be a connected reductive algebraic group. Given a subgroup $K \subset G$, we denote by $\calX(K)$ the group of characters of $K$ and by $K^\mru$ the unipotent radical of $K$. The Lie algebra of $K$ will be denoted either by $\Lie K$ or by the corresponding fraktur letter (here $\mathfrak k$). Given $g \in G$ we set $K^g = g^{-1}Kg$ and ${}^g\!K = gKg^{-1}$. If $K$ acts on an algebraic variety $X$, we denote by $\Div_K(X)$ the set of $K$-stable prime divisors of $X$. If $S$ is a group acting on an algebraic variety $X$ and if $x \in X$, we denote by $\Stab_S(x)$ the stabilizer of $x$ in $S$.  If $M$ is a lattice (that is, a free and finitely generated abelian group), the dual lattice is $M^\vee = \Hom_{\mZ}(M,\mZ)$, and the corresponding $\mQ$-vector space $M \otimes_{\mZ} \mQ$ is denoted by $M_\mQ$.

Let $B$ be a connected solvable group and let $T \subset B$ be a maximal torus. Suppose that $Z$ is a homogeneous $B$-variety. The \textit{weight lattice} of $Z$ is the lattice
$$
\calX_B(Z) = \{\text{$B$-weights of non-zero rational $B$-eigenfunctions } f \in \mC(Z) \},
$$
and the \textit{rank} of $Z$ is by definition the rank of its weight lattice. When the acting group is clear from the context, we will drop the subscript and write simply $\calX(Z)$. By $\mC(Z)^{(B)}$ we will denote the set of non-zero rational $B$-eigenfunctions of $Z$. We say that $z \in Z$ is a \textit{standard base point} for $Z$ (with respect to $T$) if $\Stab_T(z)$ is a maximal diagonalizable subgroup of $\Stab_B(z)$. Notice that standard base points always exist: indeed, since $B$ is connected, every diagonalizable subgroup of $B$ is contained in a maximal torus, and the maximal tori of $B$ are all conjugated (see \cite[Theorem 19.3 and Proposition 19.4]{Hu}). Notice also that, if $z \in Z$ is standard, then every $z' \in Tz$ is standard as well.

\begin{lemma}	\label{lemma:B-rank}
Let $Z$ be a homogeneous $B$-variety and let $z_0 \in Z$ be a standard base point, then $T z_0$ is a closed $T$-orbit. If moreover $H = \Stab_B(z_0)$, then $H = (T \cap H) H^\mru$ and the followings equalities hold:$$\calX_B(Z) = \calX(B)^H = \calX_T(Tz_0)$$
(where $\calX(B)^{H}$ denotes the subgroup of $\calX(B)$ of characters that are trivial on $H$).
\end{lemma}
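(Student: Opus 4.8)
The plan is to work with $Z = B/H$, base point $z_0 = eH$, $H = \Stab_B(z_0)$, and to put $U = B^\mru$, so that $B = T\ltimes U$ and the projection $\pi\colon B \to B/U \cong T$ restricts to the identity on $T$. The crux is the decomposition $H = (T\cap H)H^\mru$; once it is available the other two assertions follow by soft arguments, so I would prove it first. As a preliminary remark, since $B$ is connected solvable, $U$ is exactly the set of unipotent elements of $B$; hence $H\cap U$ is the set of unipotent elements of $H$, a subgroup of $H$ normal in $B$, and (working over $\mC$, where every unipotent algebraic group is connected) it coincides with the unipotent radical $H^\mru$. Thus $\pi$ identifies $H/H^\mru$ with the diagonalizable group $\pi(H)\subseteq T$, and since $\pi$ restricts to the identity on $T\cap H$ we get $T\cap H \subseteq \pi(H)$.

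The key step is the reverse inclusion $\pi(H)\subseteq T\cap H$, and this is exactly where the standard base point hypothesis enters. Take $h\in H$ and let $h = su$ be its Jordan decomposition, with $s,u\in H$, $s$ semisimple and $u$ unipotent; then $u\in H^\mru$ and $\pi(h) = \pi(s)$. The semisimple element $s$ lies in some maximal diagonalizable subgroup of $H$; since all such subgroups are conjugate in $H$ (Levi--Mostow, or the structure theory of solvable groups) and $T\cap H$ is maximal diagonalizable by the standard base point assumption, there is $x\in H$ with $x^{-1}sx \in T\cap H \subseteq T$. As $\pi$ is the identity on $T$ and $T$ is abelian, $\pi(s) = \pi(x^{-1}sx) = x^{-1}sx \in T\cap H$, whence $\pi(h)\in T\cap H$. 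Therefore $\pi(H) = T\cap H$, and for any $h\in H$ the element $\pi(h)\in T\cap H$ satisfies $\pi(h)^{-1}h \in \ker\pi\cap H = H^\mru$, giving $H = (T\cap H)H^\mru$. I expect this conjugacy/Jordan argument --- together with the care needed because $H$ may be disconnected --- to be the main obstacle.

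It then remains to harvest the consequences. For the closed orbit: from the decomposition, $TH = T\cdot(T\cap H)H^\mru = TH^\mru$, and under the isomorphism $B\cong T\times U$ the set $TH^\mru$ is the image of the closed set $T\times H^\mru$; hence $TH$ is closed in $B$, and since it is saturated for the quotient map $B\to B/H$ its image $Tz_0$ is a closed $T$-orbit. For the character identities I would use the standard fact that on a homogeneous space $B/K$ the $B$-eigenfunctions are, up to scalars, the characters of $B$ trivial on $K$ (two eigenfunctions of the same weight differ by a $B$-invariant, hence constant, function), giving $\calX_B(Z) = \calX(B)^H$ and likewise $\calX_T(Tz_0) = \calX(T)^{T\cap H}$. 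Finally, since $U$ is unipotent, restriction induces an isomorphism $\calX(B)\xrightarrow{\sim}\calX(T)$; under it $\calX(B)^H$ maps into $\calX(T)^{T\cap H}$ because $T\cap H\subseteq H$, and the reverse inclusion is immediate from $H = (T\cap H)H^\mru$ together with the triviality of every character of $B$ on $U\supseteq H^\mru$. This yields $\calX(B)^H = \calX_T(Tz_0)$ and closes the chain of equalities.
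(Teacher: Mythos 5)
Your proof is correct, but it takes a genuinely different route from the paper, essentially inverting the logical order. The paper proves the closedness of $Tz_0$ first, by a soft dimension count: for every $z \in Z$ the stabilizer $\Stab_T(z)$ is a diagonalizable subgroup of $\Stab_B(z)$, all the stabilizers $\Stab_B(z)$ are isomorphic because $Z$ is homogeneous, so the standard base point hypothesis forces $\dim \Stab_T(z_0)$ to be maximal among the $\dim \Stab_T(z)$, hence $Tz_0$ to be of minimal dimension among $T$-orbits and therefore closed (its boundary, a union of smaller orbits, must be empty); the decomposition $H = (T\cap H)H^\mru$ and the character identities are then declared ``immediate''. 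You instead make the decomposition the cornerstone: you establish it via the Jordan decomposition inside $H$ together with the conjugacy of maximal diagonalizable subgroups of a possibly disconnected solvable group (Levi--Mostow, valid in characteristic zero), and only then deduce closedness concretely --- $TH = TH^\mru$ is closed in $B \simeq T \times U$ and saturated for $B \to B/H$ --- and the character identities by the standard eigenfunction argument. Your route is more self-contained, since it actually supplies a proof of the part the paper leaves implicit, at the price of invoking Mostow's conjugacy theorem; the paper's dimension argument gets closedness with no structure theory at all, which is what makes it shorter. Two minor points to fix or acknowledge: $H \cap U$ is normal in $H$, not in $B$ as you wrote (harmless, since normality in $H$ is all you need to identify it with $H^\mru$); and your identification of the abstract homogeneous variety $Z$ with $B/H$ uses that a bijective equivariant morphism onto a smooth (hence normal) variety in characteristic zero is an isomorphism --- a standard fact, also implicit in the paper.
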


\begin{proof}
For all $z \in Z$ the stabilizer $\Stab_T(z)$ is a diagonalizable subgroup of $\Stab_B(z)$. On the other hand $Z$ is homogeneous, therefore all the stabilizers $\Stab_B(z)$ are isomorphic and the maximal dimension for $\Stab_T(z)$ is the dimension of a maximal torus of $\Stab_B(z)$. By definition $\Stab_T(z_0)$ contains a maximal torus of $\Stab_B(z)$, therefore $T z_0$ has minimal dimension in $Z$, hence it is closed. The last claim is immediate.
\end{proof}

From now on $B$ will denote a Borel subgroup of $G$, $T\subset B$ a maximal torus, and $U = B^\mru$ the unipotent radical of $B$. Let $\Phi \subset \calX(T)$ be the root system of $G$ associated with $T$, $\Phi^+$ (resp.\ $\Phi^-$) the set of positive (resp.\ negative) roots determined by $B$ and $\grD \subset \Phi^+$ the corresponding set of simple roots. When dealing with an explicit irreducible root system we will enumerate the simple roots following Bourbaki's notation \cite{Bou}. We also set $\grD^- = -\grD$.

If $\grb \in \Phi^+$ and $\gra \in \grD$ we denote by $[\grb:\gra]$ the coefficient of $\gra$ in $\grb$ as a sum of simple roots, and we define the \textit{support} of $\grb$ as
$$
	\supp(\grb) = \{\gra \in \grD \st [\grb:\gra] > 0\}.
$$

Let $W = N_G(T)/T$ be the Weyl group of $G$ with respect to $T$. If $\gra \in \Phi$, we denote by $s_\gra \in W$ the corresponding reflection and by $U_\gra \subset G$ the unipotent root subgroup associated with $\gra$. If $\gra \in \grD$, we denote by $P_\gra$ the minimal parabolic subgroup of $G$ containing $B$ associated with $\gra$. If $w \in W$, we denote by $\Phi^+(w)$ the corresponding \textit{inversion set}, i.e.
$$
	\Phi^+(w) = \{\gra \in \Phi^+ \st w(\gra) \in \Phi^-\},
$$
and by $l(w)$ the length of $w$, that is the cardinality of $\Phi^+(w)$. Denote by $w_0$ be the longest element of $W$. If $\scrO$ is a $T$-stable subset of a $G$-variety and $n\in N_G(T)$, then $n\scrO$ only depends on the class $w$ of $n$ in $W$, therefore we will denote $n\scrO$ simply by $w\scrO$.

\subsection{Spherical varieties and toric varieties}	\label{ssec:spherical}

An irreducible normal $G$-variety $X$ is called \textit{spherical} if it contains an open $B$-orbit. See \cite{Kn1} as a general reference for spherical varieties. In particular, $X$ contains an open $G$-orbit, which is a spherical homogeneous variety. Following \cite{Br1} and \cite{Vi}, $X$ is spherical if and only if $B$ possesses finitely many orbits on it. Therefore, following the definition we gave in the introduction, a homogeneous variety $G/H$ is spherical if and only if $H$ is a spherical subgroup of $G$.

Let $X$ be a spherical $G$-variety. Every $B$-stable prime divisor $D \in \Div_B(X)$ induces a discrete valuation $\nu_D$ on $\mC(X)$, which is trivial on the constant functions. On the other hand, since $X$ contains an open $B$-orbit, every function $f \in \mC(X)^{(B)}$ is uniquely determined by its weight up to a scalar factor. By restricting valuations to $\mC(X)^{(B)}$ we get then a map
$$
	\rho : \Div_B(X) \lra \calX(X)^\vee,
$$
defined by $\langle \rho(D), \chi \rangle = \nu_D(f_\chi)$ where $f_\chi \in \mC(X)^{(B)}$ is any non-zero $B$-semiinvariant function of weight $\chi$.

When $G = B = T$ is a torus, we will also say that $X$ is a \textit{toric variety}. Notice that, differently form the standard literature on toric varieties, we will not assume that the action of $T$ on $X$ is effective. As a general reference on toric varieties see \cite{CLS}.

Let $X$ be an affine toric $T$-variety. The \textit{cone of} $X$ is the rational polyhedral cone in $\calX(X)^\vee_\mQ$ defined as
$$
	\grs_X = \cone(\rho(D) \st D \in \Div_T(X))
$$
This cone encodes important information on the geometry of $X$. In particular, there is an order reversing one-to-one correspondence between the set of $T$-orbits on $X$ (ordered with the inclusion of orbit closures) and the set of faces of $\grs_X$ (ordered with the inclusion). Given a face $\tau$ of $\grs_X$ we denote by $\scrU_\tau$ the corresponding $T$-orbit in $X$, in particular $\scrU_\grs$ is the unique closed orbit of $X$ and $\scrU_0$ is the open orbit of $X$.

\begin{definition}[{see \cite[Definition 2.3]{Ll}}]	\label{def:radici-toriche}
Let $X$ be an affine toric $T$-variety. An element $\alpha\in \calX(X)$ is called a \textit{root} of $X$ if there exists $\grd(\gra) \in \Div_T(X)$ such that $\langle \rho(\grd(\gra)), \alpha \rangle = -1$, and $\langle \rho(D), \alpha \rangle \geq 0$ for all $D \in \Div_T(X) \senza \{\grd(\gra)\}$.
\end{definition}

We will denote by $\Root(X)$ the set of roots of an affine toric variety $X$. Notice that by its definition $\Root(X)$ comes with a map
\begin{equation}	\label{eqn:mappa-delta-torica}
\grd : \Root(X) \lra \Div_T(X).
\end{equation}

For the notion of root of a toric variety, which goes back to Demazure, see also \cite[Definition 2.1]{AZK} and \cite[Proposition 3.13]{Oda}. Following \cite{De} and \cite{Ll}, there is a one-to-one correspondence between the roots of $X$ and the one parameter unipotent subgroups of $\Aut(X)$ normalized by $T$, see \cite[\S 2]{Ll} and \cite[\S 2]{AZK}. More precisely, every $\gra \in \Root(X)$ defines a locally nilpotent derivation $\partial_\gra$ of the graded algebra $\mC[X]$, which acts on $\mC[X]$ by the rule
\begin{equation}		\label{eqn:rooot-action}
\partial_\alpha(f_\chi) = \langle \rho (\grd(\gra)), \chi \rangle f_{\chi + \gra}
\end{equation}
where $\chi \in \calX(X)$ and where $f_\chi \in \mC(X)^{(T)}$ has weight $\chi$. Notice that the line $\mC \partial_\alpha$ is fixed by the action of $T$, which acts on it via the weight $\gra$. By exponentiating $\partial_\gra$, we get then a one parameter unipotent subgroup $V_\gra \subset \Aut(X)$ normalized by $T$, that is $V_\gra = \grl_\gra(\mathbb C)$ where $\grl_\gra$ denotes the one parameter subgroup $\xi \mapsto \exp(\xi \partial_\gra)$.

We gather in the following proposition some properties, that we will need later, of the action of the group $V_\gra$ on the $T$-orbits of $X$.

\begin{proposition} [{\cite[Lemma 2.1, Proposition 2.1 and Lemma 2.2]{AZK}}] \label{prop:sottovarietà toriche stabili}
Let $X$ be an affine toric $T$-variety, let $\tau, \tau'$ be faces of $\grs_X$ and let $\gra \in \Root(X)$.
\begin{itemize}
	\item[i)] $V_\gra \scrU_\tau$ decomposes into the union of at most two $T$-orbits.
	\item[ii)] $\grd(\gra)$ is the unique $T$-stable prime divisor of $X$ which is not stable under the action of $V_\gra$.
	\item[iii)] Suppose that $\scrU_{\tau'} \subset \ol{\scrU_\tau}$, then $\scrU_{\tau'} \subset V_\gra \scrU_{\tau}$ if and only if $\gra_{|\tau'} \leq 0$ and $\tau$ is the codimension one face of $\tau'$ defined by the equation $\tau = \tau' \cap \ker \gra$. 
\end{itemize}
\end{proposition}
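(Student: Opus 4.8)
The plan is to compute everything inside the coordinate ring $\mC[X]=\bigoplus_{\chi\in S_X}\mC f_\chi$, where $S_X=\grs_X^\vee\cap\calX(X)$ is the weight monoid, tracking the $V_\gra$-action on the distinguished points of the $T$-orbits. Write $n:=\rho(\grd(\gra))$ and $m_\chi:=\langle n,\chi\rangle\geq0$. Iterating \eqref{eqn:rooot-action} and using $\langle n,\gra\rangle=-1$ gives $\partial_\gra^k(f_\chi)=m_\chi(m_\chi-1)\cdots(m_\chi-k+1)\,f_{\chi+k\gra}$, hence the closed formula
$$
\exp(\xi\partial_\gra)(f_\chi)=\sum_{k=0}^{m_\chi}\binom{m_\chi}{k}\xi^k f_{\chi+k\gra},
$$
where each $\chi+k\gra$ with $0\leq k\leq m_\chi$ again lies in $S_X$ because $\gra$ is a root of $X$. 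I will read off $T$-orbits from supports: the distinguished point $x_\tau$ of $\scrU_\tau$ satisfies $f_\chi(x_\tau)=1$ for $\chi\in\tau^\perp\cap S_X$ and $f_\chi(x_\tau)=0$ otherwise, and the $T$-orbit of an arbitrary point is determined by the set of $\chi$ on which the $f_\chi$ do not vanish.

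For (ii) I would use that the ideal of a $T$-stable prime divisor $D$ is $I_D=\bigoplus_{\langle\rho(D),\chi\rangle>0}\mC f_\chi$. If $D\neq\grd(\gra)$ then $\langle\rho(D),\gra\rangle\geq0$, so $\langle\rho(D),\chi+\gra\rangle\geq\langle\rho(D),\chi\rangle>0$ and $\partial_\gra(I_D)\subset I_D$, i.e. $D$ is $V_\gra$-stable. For $D=\grd(\gra)$, starting from any $\chi_0\in S_X$ with $m_{\chi_0}>0$ the weight $\chi:=\chi_0+(m_{\chi_0}-1)\gra$ lies in $S_X$ and has $m_\chi=1$, so $\partial_\gra(f_\chi)=f_{\chi+\gra}$ with $\langle n,\chi+\gra\rangle=0$, whence $\partial_\gra(f_\chi)\notin I_{\grd(\gra)}$; thus $\grd(\gra)$ is the unique non-stable $T$-stable prime divisor.

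For (i) and (iii) I would analyse the curve $\xi\mapsto\xi\cdot x_\tau$, using that $T$ normalizes $V_\gra$, so $V_\gra\scrU_\tau=T\cdot V_\gra x_\tau$ meets exactly the orbits met by $V_\gra x_\tau$. By the closed formula, $f_\chi(\xi\cdot x_\tau)$ equals $(1+\xi)^{m_\chi}$ times the indicator of $\chi\in\tau^\perp$ when $\gra\in\tau^\perp$, and a single monomial $c_\chi\xi^{k(\chi)}$ when $\gra|_\tau\neq0$ (since then the constraint $\chi+k\gra\in\tau^\perp$ fixes at most one $k$). In either case the support of $\xi\cdot x_\tau$ takes only two values as $\xi$ varies over $\mC$ — one on a dense set of parameters, one at a single special value — so $V_\gra x_\tau$ meets at most two $T$-orbits and (i) follows. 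The two regimes flow in opposite directions: if $\gra|_\tau\neq0$ the special value is $\xi=0$ and the extra orbit, attained for generic $\xi$, is a larger orbit degenerating to $\scrU_\tau$ (its face lies strictly inside $\tau$); if $\gra\in\tau^\perp$ the special value is $\xi=-1$, where the support drops to $\tau^\perp\cap n^\perp\cap S_X$, producing an orbit $\scrU_{\tau'}$ with $\tau\subseteq\tau'$, hence lying in $\ol{\scrU_\tau}$.

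Since the hypothesis $\scrU_{\tau'}\subset\ol{\scrU_\tau}$ of (iii) forces $\tau\subset\tau'$, only the second regime is relevant, and everything reduces to identifying the orbit reached at $\xi=-1$, whose support is $(\tau+\mQ_{\geq0}n)^\perp\cap S_X$. The crux — and the step I expect to be the main obstacle — is the face-theoretic claim that $\tau':=\tau+\mQ_{\geq0}n$ is a genuine face of $\grs_X$ with $\gra|_{\tau'}\leq0$ and $\tau=\tau'\cap\ker\gra$ of codimension one. This is exactly where the root property enters: choosing $\chi_0$ in the relative interior of the dual face $\tau^\perp\cap\grs_X^\vee$ (so $\chi_0$ vanishes on the rays of $\tau$ and is positive on all other rays), the functional $\psi:=\chi_0+m_{\chi_0}\gra$ lies in $\grs_X^\vee$ and vanishes on $\grs_X$ precisely along the rays of $\tau$ together with $n$, because $\gra$ pairs nonnegatively with every ray generator except $n$, where it pairs to $-1$; hence $\tau'$ is a face, $\gra\leq0$ on it, and $\tau'\cap\ker\gra=\tau$ has codimension one. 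Conversely, given any $\tau'\supset\tau$ with $\gra|_{\tau'}\leq0$ and $\tau=\tau'\cap\ker\gra$ a facet, the same sign constraint forces the unique extra ray of $\tau'$ to be $n$, so $\tau'=\tau+\mQ_{\geq0}n$ and $\scrU_{\tau'}$ is exactly the orbit reached at $\xi=-1$; this yields both implications of (iii). Once this bookkeeping is organized around the single negative pairing $\langle n,\gra\rangle=-1$, the three statements all fall out.
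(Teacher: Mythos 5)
Your proof is correct. Note first that the paper itself does not prove this proposition: it is quoted directly from \cite{AZK} (Lemma 2.1, Proposition 2.1 and Lemma 2.2), so there is no internal argument to compare against; what you have produced is a self-contained reconstruction, and your method --- the closed formula for $\exp(\xi\partial_\gra)$ on the semigroup algebra $\mC[X]=\bigoplus_{\chi\in S_X}\mC f_\chi$, points viewed as semigroup homomorphisms, and the fact that $T$-orbits are classified by supports --- is in substance the same as that of the cited source. The computations check out: the exponential series terminates at $k=m_\chi$, the weights $\chi+k\gra$ with $0\leq k\leq m_\chi$ stay in $S_X$ precisely because $\gra$ is a root, the ideal-theoretic argument gives (ii) (non-preservation of $I_{\grd(\gra)}$ by $\partial_\gra$ does rule out stability, since a polynomial curve $\xi\mapsto\exp(\xi\partial_\gra)f$ lying in a subspace has all its coefficients there), and the two-regime analysis ($\gra\ristretto_\tau\neq 0$ versus $\gra\in\tau^\perp$) gives (i). The crux you correctly isolated --- that $\tau+\mQ_{\geq0}\,\rho(\grd(\gra))$ is a genuine face of $\grs_X$ --- is settled by your functional $\psi=\chi_0+m_{\chi_0}\gra$: it is nonnegative on every ray generator and vanishes exactly on those of $\tau$ and on $\rho(\grd(\gra))$, where the hypothesis $\gra\in\tau^\perp$ (forced in the relevant regime, since in the other regime the second orbit has face strictly inside $\tau$) is what kills $\psi$ on the rays of $\tau$; and the converse in (iii) works because any ray of $\tau'$ outside $\tau=\tau'\cap\ker\gra$ pairs strictly negatively with $\gra$, hence must be the ray of $\grd(\gra)$. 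The only blemish is inherited from the statement rather than from your argument: as written, part (iii) fails trivially when $\tau'=\tau$ (the left-hand side always holds, the right-hand side never does), so both the proposition and your proof should be read under the implicit assumption $\scrU_{\tau'}\neq\scrU_\tau$.
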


\section{Strongly solvable spherical subgroups and\\ associated toric varieties}\label{s:toricvar}

From now on, if not differently stated, $H$ will be a strongly solvable spherical subgroup of $G$. Up to conjugating $H$ in $G$ we may assume that $H$ is contained in $B$, in which case $H^\mru = U \cap H$. Up to conjugating $H$ in $B$ we may and will also assume that $T_H = T \cap H$ is a maximal diagonalizable subgroup of $H$. By Lemma~\ref{lemma:B-rank} it follows then that $TH/H \simeq T/T_H$ is a closed $T$-orbit in $B/H$, and that $\calX_B(B/H) = \calX_T(T/T_H)$. In particular we get the equality $\rk B/H = \rk G - \rk H$.

In order to classify the strongly solvable spherical subgroups $H$ of $G$, Avdeev introduced the \textit{active roots} of $H$, defined as
$$\Psi = \{ \gra \in \Phi^+ \st U_\gra \not \subset H \},$$
and developed a combinatorial theory of such roots. We refer to \cite{Avd1}, \cite{Avd2} for details on this construction.

\subsection{The structure of $B/H$ as a toric variety}\label{s:BmodH}

Consider the projection $G/H \ra G/B$, denote by $\scrB^*(G/H)$ the set of the $B$-orbits in $G/H$ which project dominantly on $G/B$ and by $\Div_B^*(G/H)$ the set of the $B$-stable prime divisors of $G/H$ which project dominantly on $G/B$. Since $G/H$ possesses finitely many $B$-orbits, $\Div_B^*(G/H)$ equals the set of closures of the codimension one $B$-orbits in $\scrB^*(G/H)$.

Since it will be a fundamental object in what follows, we denote
$$\scrD = \Div_T(B/H).$$

\begin{proposition}	\label{prop:corrispondenza-divisori}
The map $\scrO \mapsto w_0 \scrO \cap B/H$ induces a bijection between $\scrB^*(G/H)$ and the set of $T$-orbits in $B/H$, which preserves codimensions and inclusions of orbit closures. In particular, the map $D \mapsto w_0 D \cap B/H$ induces a bijection between $\Div_B^*(G/H)$ and $\scrD$.
\end{proposition}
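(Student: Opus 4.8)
The plan is to exploit the Bruhat decomposition together with the fact that $w_0 B w_0^{-1}$ is the opposite Borel subgroup $B^-$, so that $B/H$ sits inside $G/H$ as (a translate of) the preimage of the big cell of $G/B$. First I would recall that the projection $\pi : G/H \to G/B$ is $G$-equivariant with fiber $B/H$ over the base point $eB$. The open $B$-orbit in $G/B$ is $B w_0 B/B$, the big Schubert cell, and a $B$-orbit $\scrO$ in $G/H$ projects dominantly onto $G/B$ precisely when $\pi(\scrO)$ meets this open cell, equivalently when $\scrO$ meets the open fiber $\pi^{-1}(B w_0 B/B)$. Translating by $w_0$ (using a representative $n_0 \in N_G(T)$, and recalling from the end of Subsection on notations that $w_0 \scrO$ is well defined for $T$-stable sets), the key observation is that $w_0^{-1}\scrO \cap B/H$ lands inside the fiber over $eB$, on which $B$ acts through its quotient $B \to B/U = T$ composed with the $T$-action on $B/H$; this is where the passage from $B$-orbits to $T$-orbits comes from.

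The main steps, in order, would be the following. \emph{Step 1.} Show the map $\scrO \longmapsto w_0^{-1}\scrO \cap B/H$ is well defined, i.e. that for $\scrO \in \scrB^*(G/H)$ the intersection $w_0^{-1}\scrO \cap B/H$ is a single nonempty $T$-orbit. Nonemptiness follows from dominance of the projection together with the density of the big cell; that it is a \emph{single} $T$-orbit follows because $U$ acts trivially modulo the fiber, so the $B$-orbit structure on the open locus restricts to a $T$-orbit structure on $B/H$. \emph{Step 2.} Construct the inverse: given a $T$-orbit $\scrU \subset B/H$, form $B \cdot (n_0 \scrU) \subset G/H$ and check it is a single $B$-orbit projecting dominantly, and that the two constructions are mutually inverse. \emph{Step 3.} Verify that the bijection preserves inclusions of orbit closures (hence is an order isomorphism): taking closures commutes with the operations used, since $w_0^{-1}(\cdot)$ is an isomorphism of varieties and intersecting with the open set $B/H \hookrightarrow \pi^{-1}(\text{big cell})$ preserves containment of closures on the open locus. \emph{Step 4.} Deduce preservation of codimension from the orbit-closure order isomorphism, or directly by a dimension count: $\dim \scrO = \dim(w_0^{-1}\scrO \cap B/H) + \dim(B w_0 B/B)$, so codimensions inside $\scrB^*(G/H)$ and inside $B/H$ agree. \emph{Step 5.} Specialize to codimension one: since $\scrD^*$ consists exactly of closures of codimension-one orbits in $\scrB^*(G/H)$ and these correspond to codimension-one $T$-orbits in $B/H$, whose closures are the $T$-stable prime divisors $\Div^T(B/H)$, the last assertion follows.

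The hard part will be Step 1 — establishing that the intersection $w_0^{-1}\scrO \cap B/H$ is genuinely a \emph{single} $T$-orbit rather than a union of several, and that it is exactly the right $T$-orbit to make the correspondence bijective. This requires carefully analyzing how the $U$-action degenerates when restricted to the fiber: one must show that two points of $B/H$ in the same $B$-orbit of $G/H$ (after the $w_0$-translation) are already in the same $T$-orbit, which amounts to checking that the stabilizer computations are compatible with the standard-base-point normalization $T_H = T \cap H$ being a maximal diagonalizable subgroup of $H$ (Lemma~\ref{lemma:B-rank}). The cleanest route is probably to identify $B/H$ with the fiber and use the retraction of the open cell $B w_0 B/B$ onto its base point to transport the $B$-action to the $T$-action, but verifying that this is an orbit-space bijection and not merely a surjection is the delicate point I would expect to occupy most of the argument.
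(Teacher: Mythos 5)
Your overall skeleton (Steps 2--5, and the idea of trading $B$-orbits over the big cell for $T$-orbits in a fiber) coincides with the paper's argument, and those steps are fine. The genuine problem is in Step 1, which you yourself single out as the crux: the mechanism you propose for it is incorrect. The fiber of $G/H \to G/B$ over $eB$ is $B/H$, and $B$ does \emph{not} act on it through the quotient $B \to B/U = T$; it acts transitively (indeed $B/H$ is itself a single $B$-orbit of $G/H$). Likewise ``$U$ acts trivially modulo the fiber'' is not a correct statement: $U$ moves the fiber over $eB$ around, sweeping out the preimage of the big cell. Taken literally, your mechanism cannot work: the only $B$-orbit of $G/H$ meeting the fiber over $eB$ is $B/H$ itself, so intersecting $B$-orbits with that fiber could never produce the many distinct $T$-orbits the bijection requires. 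The point you are missing is that $w_0^{-1}\scrO$ is not a $B$-orbit but a $B^{w_0}$-orbit, where $B^{w_0}$ is the \emph{opposite} Borel subgroup, and the subgroup of $B^{w_0}$ preserving the fiber $B/H$ over $eB$ is $B^{w_0} \cap B = T$.

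That single fact turns Step 1 into a one-line computation, which is exactly what the paper does: since $\scrO$ projects onto the open cell $Bw_0B/B$, one may write $\scrO = Bw_0uH/H$ with $u \in B$, and then
$$
w_0^{-1}\scrO \cap B/H = (B \cap B^{w_0})\, uH/H = TuH/H,
$$
a single $T$-orbit. (Equivalently: $Bw_0B/H \simeq B \times^T w_0B/H$, the induction being along $T = \Stab_B(w_0B/B)$.) With this in hand your Steps 2--5 go through as you describe: the inverse is $\scrU \longmapsto Bw_0\scrU$, which is a single $B$-orbit because $\scrU = TuH/H$ gives $Bw_0\scrU = Bw_0uH/H$, and closures and codimensions match because the preimage of the big cell is $U \times w_0B/H$. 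Note also that the ``delicate'' compatibility you anticipate with the normalization that $T\cap H$ be a maximal diagonalizable subgroup of $H$ (Lemma~\ref{lemma:B-rank}) plays no role in this proposition; that normalization is needed elsewhere (for instance to identify the closed $T$-orbit of $B/H$), not for this bijection, so expecting the proof to hinge on it is a symptom of the missing key fact above.
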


\begin{proof}
Let $\scrO \in \scrB^*(G/H)$. The image of $\scrO$ in $G/B$ is the dense $B$-orbit $Bw_0B/B$, therefore there exists $u \in B$ such that $\scrO = Bw_0 uH/H$. Then
$$w_0 \scrO \cap B/H = (B \cap B^{w_0}) u H/H = T uH/H,$$
that is, $w_0 \scrO \cap B/H$ is a single $T$-orbit. Conversely, if $\scrU \subset B/H$ is a $T$-orbit, then we get an element $\scrO \in \scrB^*(G/H)$ by setting $\scrO = Bw_0 \scrU$. The equalities $\scrO = Bw_0(w_0 \scrO \cap B/H)$ and $\scrU = w_0 (Bw_0\scrU) \cap B/H$ imply the first claim, and the rest is an obvious consequence.
\end{proof}

\begin{corollary}	\label{cor:torica}
$B/H$ is a smooth affine toric $T$-variety.
\end{corollary}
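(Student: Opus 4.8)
The plan is to check the three properties in turn. The toric and smoothness statements are essentially formal: $B/H$ is a homogeneous space for the connected group $B$, hence it is smooth, irreducible and in particular normal. To exhibit an open $T$-orbit I would invoke sphericity of $G/H$: its dense $B$-orbit $\scrO_0$ has dense image in $G/B$, which therefore meets $Bw_0B/B$, so $\scrO_0$ projects dominantly and lies in $\scrB^*(G/H)$. By Proposition~\ref{prop:corrispondenza-divisori} the subset $w_0^{-1}\scrO_0\cap B/H$ is a single $T$-orbit, and since that correspondence preserves codimensions it has codimension $0$ in $B/H$, i.e.\ it is dense. Thus $B/H$ is a smooth, irreducible, normal $T$-variety with a dense $T$-orbit, which is exactly a (smooth) toric $T$-variety in the sense of Subsection~\ref{ssec:spherical}.

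The real content is affineness. By Lemma~\ref{lemma:B-rank} we have $H = T_H H^\mru$, with $T_H = T\cap H$ and $H^\mru = U\cap H$. Since $U$ is normal in $B$ and $T_H$ normalizes $U$, the set $T_H U$ is a closed subgroup of $B$ containing $H$, and the projection $B\to B/U\simeq T$ induces a $B$-equivariant morphism $p\colon B/H\lra B/(T_H U)$. One checks that $T\cap T_H U = T_H$, so $B/(T_H U)\simeq T/T_H$ is a torus, in particular affine; and the fibre of $p$ over the base point is $(T_H U)/H\simeq U/H^\mru$. As $H^\mru$ is a closed subgroup of the unipotent group $U$, the quotient $U/H^\mru$ is isomorphic to an affine space $\mA^d$. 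Hence $p$ is a fibre bundle with affine fibre over an affine base, associated with the $T_H U$-torsor $B\to T/T_H$, so that $B/H = B\times^{T_H U}(U/H^\mru)$.

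It then remains to conclude that such a $p$ is an affine morphism, for in that case $B/H$ is affine over the affine variety $T/T_H$. The clean route is to pull $p$ back along the faithfully flat torsor projection $B\to T/T_H$: the bundle trivialises and $p$ becomes the projection $B\times\mA^d\to B$, which is affine, so $p$ is affine by descent of affineness along faithfully flat base change. I expect this descent step to be the main obstacle, the delicate point being that $H$ --- hence $T_H$ and the structure group $T_H U$ --- may be disconnected. When $T_H$ is a subtorus one avoids descent altogether: the torus sequence splits as $T\simeq T_H\times(T/T_H)$, giving a section of $B\to T/T_H$ that trivialises the bundle and identifies $B/H\simeq (T/T_H)\times\mA^d$, which is manifestly affine. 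For the general case I would either run the faithfully flat descent above or first quotient by the identity component $T_H^0$ and treat the residual finite part separately. Combined with the first paragraph, this shows that $B/H$ is a smooth affine toric $T$-variety.
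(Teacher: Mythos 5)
Your proposal is correct, but it proves more from scratch than the paper does, and the two arguments split differently. The paper's proof is two lines: smoothness and affineness are quoted wholesale from \cite[Lemma 2.12]{Ti1} (a homogeneous space under a solvable group is smooth and affine), and the toric property follows from Proposition~\ref{prop:corrispondenza-divisori} together with the \emph{finiteness} of $\scrB^*(G/H)$ --- finitely many $T$-orbits on an irreducible variety forces one of them to be open. You instead get the dense $T$-orbit by a slightly different (equally valid) use of the same proposition: the open $B$-orbit of $G/H$ projects dominantly, hence corresponds under the bijection to a single $T$-orbit of codimension $0$. The real divergence is affineness, where you replace the citation by a self-contained argument: the fibration $B/H \to T/T_H$ with fibre $(T_HU)/H \simeq U/H^\mru \simeq \mA^d$, trivialized either by a group-theoretic section (when $T_H$ is a subtorus) or by faithfully flat descent. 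Two remarks here. First, the bundle you build is exactly the isomorphism $B/H \simeq T\times^{T_H} U/H^\mru$ of (\ref{eqn:fiber-product}), which the paper establishes immediately after this corollary for other purposes; so your route effectively re-derives that structural fact and extracts affineness from it. Second, your caution is calibrated slightly backwards: the descent step you flag as ``the main obstacle'' is a standard fact (affineness of morphisms is fpqc-local on the base, and the torsor projection is smooth surjective), whereas your worry about disconnected $T_H$ is genuinely pertinent only to the splitting shortcut --- when $\calX(T_H)$ has torsion the surjection $\calX(T) \to \calX(T_H)$ cannot split, so no group section exists --- but both of your proposed fixes (descent, or first quotienting by $T_H^0$ and then by the finite group $T_H/T_H^0$, a finite quotient of an affine variety being affine) do close the gap. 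So the proof is complete; what the paper's version buys is brevity via \cite{Ti1}, and what yours buys is independence from that reference.
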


\begin{proof}
Since it is homogeneous under the action of a solvable group, $B/H$ is smooth and affine (see e.g. \cite[Lemma 2.12]{Ti1}). Notice also that $B/H$ is irreducible since $B$ is connected. Therefore the fact that $B/H$ is toric under the action of $T$ follows by the sphericity of $H$ thanks to Proposition~\ref{prop:corrispondenza-divisori}, since $\scrB^*(G/H)$ is a finite set.
\end{proof}

\begin{proposition}\label{prop:B/H-toric}
The following hold.
\begin{itemize}
	\item[i)] As a $T$-variety, the weight lattice of $B/H$ is $w_0\calX(G/H)$.
	\item[ii)] For any $D\in \Div_B^*(G/H)$, the $T$-invariant valuation of $B/H$ defined by the $T$-stable prime divisor $w_0 D\cap B/H \in \scrD$ coincides with $w_0 \rho(D)$.
\end{itemize}
\end{proposition}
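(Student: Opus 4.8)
The plan is to trivialize the projection $\pi\colon G/H\to G/B$ over the open Bruhat cell and to read off both the weight lattice and the valuations of $B/H$ from the resulting product structure. Set $\Omega=\pi^{-1}(Bw_0B/B)$, a $B$-stable open dense subset of $G/H$. Using the Bruhat factorization $Bw_0B=Uw_0B$ together with $U\cap w_0Bw_0^{-1}=\{e\}$, every element of $Bw_0B$ is uniquely of the form $uw_0b$ with $u\in U$, so the map $\phi\colon U\times(B/H)\to\Omega$, $\phi(u,bH)=uw_0bH$, is an isomorphism of varieties (here $B/H$ is the fibre over $eB$, and $B/H$ is affine by Corollary~\ref{cor:torica}). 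A direct computation shows that under $\phi$ the group $U$ acts only on the first factor by left translation, while $t\in T$ acts by $t\cdot(u,bH)=(tut^{-1},(w_0^{-1}tw_0)bH)$; thus the $B/H$-factor carries the $T$-action obtained from left multiplication twisted by conjugation by $w_0$.

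First I would prove (i) by matching eigenfunctions. Since $\Omega$ is open and dense, $\mC(G/H)^{(B)}=\mC(\Omega)^{(B)}$. Transporting a $B$-eigenfunction along $\phi$ gives a function on $U\times(B/H)$ which is $U$-invariant (a unipotent group has no nontrivial character) for the left-translation action on the first factor, hence equals $\mathrm{pr}_2^*g$ for a function $g$ on $B/H$; the displayed $T$-action then forces $g\in\mC(B/H)^{(T)}$. Comparing weights (and using $w_0^{-1}=w_0$) shows $f$ has weight $\chi$ exactly when $g$ has weight $w_0\chi$, and the assignment is reversible, so it is a weight-preserving bijection $\mC(G/H)^{(B)}\leftrightarrow\mC(B/H)^{(T)}$ with $\chi\leftrightarrow w_0\chi$. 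This yields $\calX_T(B/H)=w_0\calX(G/H)$, proving (i).

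For (ii) I would compare valuations through the same decomposition. Fix $D\in\scrD^*$; as $D$ projects dominantly it meets $\Omega$ in a dense open subset, and since $D\cap\Omega$ is $U$-stable while $U$ acts transitively on the first factor, $\phi^{-1}(D\cap\Omega)=U\times D'$ for a unique $T$-stable prime divisor $D'\subset B/H$. Intersecting with the fibre $\phi(\{e\}\times B/H)=w_0(B/H)$ identifies $D'$ with $w_0D\cap B/H$, i.e.\ the divisor attached to $D$ in Proposition~\ref{prop:corrispondenza-divisori}. Now take $\chi=w_0\psi$ with $\psi\in\calX_T(B/H)$ and let $f_\chi\leftrightarrow g_\psi$ as in (i), so that $\phi^*f_\chi=\mathrm{pr}_2^*g_\psi$. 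As valuations are local and pull back along the smooth projection $\mathrm{pr}_2$ (with irreducible fibres, so that $\mathrm{pr}_2^{-1}(D')=U\times D'$ is prime and carries $D'$ with multiplicity one), one gets $\nu_D(f_\chi)=\nu_{U\times D'}(\mathrm{pr}_2^*g_\psi)=\nu_{D'}(g_\psi)$. Reading the defining relation $\langle\rho(D),\chi\rangle=\nu_D(f_\chi)$ and its analogue for $B/H$, this says $\langle\rho(D),w_0\psi\rangle=\langle\rho(D'),\psi\rangle$ for all $\psi$, that is $\rho(D')=w_0\rho(D)$, which is exactly (ii).

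The technical heart, and the step I expect to need the most care, is the equivariant product decomposition $\Omega\cong U\times(B/H)$ together with the identification $\phi^{-1}(D\cap\Omega)=U\times D'$: once $\pi$ is trivialized over the big cell via $Bw_0B=Uw_0B$, the precise form of the $U$- and $T$-actions, the fact that $U$-stable divisors are pulled back from the $B/H$-factor, the matching of $D'$ with the divisor of Proposition~\ref{prop:corrispondenza-divisori}, and the invariance of valuations under $\mathrm{pr}_2$ are what make both statements fall out. Everything else is bookkeeping with the $W$-action on characters and the identity $w_0^{-1}=w_0$.
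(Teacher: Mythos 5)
Your proof is correct and takes essentially the same route as the paper: the paper's key step is the isomorphism $Bw_0B/H \simeq B\times^T w_0B/H$, which (since $B = U \rtimes T$) is exactly your trivialization $\Omega \simeq U \times (B/H)$ over the big cell, after which both arguments transport $B$-eigenfunctions and their valuations to $T$-eigenfunctions and valuations on the fibre. The only cosmetic difference is that the paper restricts functions to the fibre $w_0B/H$ carrying the $w_0$-twisted $T$-action, whereas you pull back along $\mathrm{pr}_2$ and carry the $w_0$-twist explicitly through the weight and valuation computations.
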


\begin{proof}
Up to twisting the $T$-action by $w_0$, the $T$-varieties $B/H$ and $w_0B/H$ are isomorphic. Since the $B$-orbit of $w_0B/B$ is dense in $G/B$ and since $B\cap B^{w_0}=T$, the $T$-stable prime divisors of $w_0 B/H$ coincide with the intersections $D \cap w_0 B/H$, where $D \in \Div_B^*(G/H)$. To prove the proposition we may then replace $B/H$ with $w_0 B/H$, and show instead the equality $\calX_T(w_0 B/H) = \calX_B(G/H)$ and that the $T$-invariant valuation of $w_0B/H$ defined by $D\cap w_0B/H$ is $\rho(D)$.

Consider the open subset $Bw_0B/H \subset G/H$, and notice that we have an isomorphism
$Bw_0B/H \simeq B\times^T w_0 B/H$. Then restriction to $w_0 B/H$ induces a bijection between $B$-semiinvariant rational functions on $G/H$ and $T$-semiinvariant rational functions on $w_0 B/H$, and i) follows.

Identifying $\mC(G/H)^{(B)}$ and $\mC(w_0B/H)^{(T)}$, we get as well an identification between the discrete valuation of $\mC(G/H)$ associated with a $B$-stable prime divisor $D \subset G/H$ which intersects $Bw_0B/H$ (hence $w_0 B/H$) with the discrete valuation of $\mC(w_0 B/H)$ associated with $D\cap w_0 B/H$, and ii) follows.
\end{proof}

The set of active roots of $H$ is closely related to the structure of $B/H$ as a toric variety. Consider indeed the Levi decompositions $B =TU \simeq T \ltimes U$ and $H = T_HH^\mru \simeq T_H \ltimes H^\mru$. These induce a projection $B/H \ra T/T_H$, and a $T$-equivariant isomorphism
\begin{equation}		\label{eqn:fiber-product}
	B/H \simeq T\times^{T_H} U/H^\mru.
\end{equation}
In other words, $B/H$ is a homogeneous vector bundle over $T/T_H$, with fiber the $T_H$-module $U/H^\mru$. Therefore the $T$-orbits in $B/H$ correspond naturally to the $T_H$-orbits in $U/H^\mru$, and it follows that $U/H^\mru$ possesses an open $T_H$-orbit.

Denote $\tau : \calX(T) \ra \calX(T_H)$ the restriction map and set
$$
	\ol\Psi = \{\tau(\gra) \st \gra \in \Psi\}.
$$
Since $T_H$ is a diagonalizable group, the $T_H$-module structure of $U/H^\mru$ is completely determined by its $T_H$-weights, namely by $\ol\Psi$. The fact that $U/H^\mru$ possesses an open $T_H$-orbit is then equivalent to the fact that it is a {\em multiplicity-free} $T_H$-module, that is $\ol\Psi$ is linearly independent. Given $\pi \in \calX(T_H)$, we denote by $\mC_\pi$ the one dimensional $T_H$-module defined by $\pi$. By \cite[Lemma 1.4]{Mo} the exponential map induces a $T_H$-equivariant isomorphism $\gou/\goh^\mru \ra U/H^\mru$, hence we get isomorphisms of $T_H$-modules
\begin{equation}	\label{eq:U/H^u}
	U/H^\mru \simeq \gou/\goh^\mru \simeq \bigoplus_{\pi \in \ol\Psi} \mC_\pi.
\end{equation}

In particular, $\ol\Psi$ parametrizes the set $\Div_{T_H}(U/H^\mru)$, and summarizing the previous discussion we get canonical bijections
\begin{equation}		\label{eqn:identificazioni-divisori}
	\Div_B^*(G/H) \longleftrightarrow \scrD \longleftrightarrow \Div_{T_H}(U/H^\mru) \longleftrightarrow \ol\Psi. 
\end{equation}

\begin{definition}
A root $\alpha \in \Phi^+$ is called \textit{weakly active} for $H$ if $U_\gra$ acts non-trivially on $B/H$.
\end{definition}

We denote by $\weak \subset \Phi^+$ the set of weakly active roots of $H$. 

As an immediate consequence of the definitions, notice that every active root is weakly active. Moreover, if $\gra \in \weak$, by definition we have a non-trivial homomorphism $U_\gra \ra \Aut(B/H)$, which must be injective since $U_\gra$ has no non-trivial proper subgroups (see \cite[Theorem 20.5]{Hu}). On the other hand $U_\gra$ is normalized by $T$, which by definition acts on the Lie algebra $\gou_\gra$ with the character $\gra \in \calX(T)$. By the one-to-one correspondence between one parameter unipotent subgroups of $\Aut(B/H)$ normalized by $T$ and roots of $B/H$ recalled in Subsection \ref{ssec:spherical}, it follows that  every weakly active root for $H$ is a root for $B/H$ in the sense of Definition \ref{def:radici-toriche}, and in the notation therein we have an isomorphism $U_\gra \simeq V_\gra \subset \Aut(B/H)$. Therefore we have $\weak \subset \Root(B/H)$, and in particular $\weak \subset \calX_T(B/H)$.

Following Definition \ref{def:radici-toriche}, the inclusion $\weak \subset \Root(B/H)$ yields by restriction a map
$$
		\grd_{|\weak} : \weak \lra \scrD
$$

\begin{proposition} \label{prop:divisori-stabili}
Let $\gra \in \weak$. Then $\grd(\gra)$ is uniquely determined by $\tau(\gra)$, and it is the unique $T$-stable prime divisor of $B/H$ which is not $U_\gra$-stable. 
\end{proposition}

\begin{proof}
By \eqref{eqn:fiber-product}, the $T$-stable prime divisors of $B/H$ correspond to the $T_H$-stable prime divisors of $U/H^\mru$. If $D \in \scrD$, it follows that the evaluation of $\gra$ along $D$ coincides with the evaluation of its restriction $\tau(\gra) \in \calX_{T_H}(U/H^\mru)$ along the intersection $D \cap U/H^\mru \in \Div_{T_H}(U/H^\mru)$. In particular $\tau(\gra) \in \Root(U/H^\mru)$, and $\grd(\gra)$ is uniquely determined by the restriction $\tau(\gra)$.

As we already noticed, the morphism of algebraic groups $B \ra \Aut(B/H)$ maps $U_\gra$ onto the one parameter unipotent subgroup $V_\gra \subset \Aut(B/H)$ determined by $\gra$ as a root of $B/H$. Therefore the second claim follows by Proposition~\ref{prop:sottovarietà toriche stabili}.
\end{proof}

We will need the following characterization of the active roots in terms of the corresponding evaluation.

\begin{theorem} [{\cite[Theorem 5.34 (b)]{Avd2}}]  \label{teo:equivalent}
Let $\alpha\in \mN\Phi^+$ and assume that $\gra \in \mZ \Psi$. Then $\gra \in \Psi$ if and only if there exists $D_\gra \in \scrD$ such that $\langle \rho(D_\gra), \alpha \rangle = -1$ and $\langle \rho(D), \alpha \rangle = 0$ for all $D \in \scrD \senza \{D_\gra\}$, in which case $D_\gra = \grd(\gra)$.
\end{theorem}

For $\gra \in \weak$, let $f_{-\gra} \in \mC(B/H)^{(T)}$ be an eigenfunction of weight $-\gra$ (uniquely determined up to a scalar factor). Since $\gra \in \Root(B/H)$, it follows by Definition \ref{def:radici-toriche} that $f_{-\gra}$ vanishes with order $1$ along $\grd(\gra)$, and $\grd(\gra)$ is the unique $T$-stable prime divisor of $B/H$ where $f_{-\gra}$ vanishes. Notice that, when $\gra \in \Psi$, the function $f_{-\gra}$ is nothing but the obvious lifting of the coordinate of $\gou/\goh^\mru$ corresponding to $\tau(\gra)$ via \eqref{eqn:fiber-product} and \eqref{eq:U/H^u}.


\begin{corollary}	\label{cor:weak-vs-active}
Let $\gra \in \weak$. Then $\gra \in \Psi$ if and only if $\langle D, \gra \rangle \leq 0$ for all $D \in \scrD$. If moreover $\gra \in \Psi$, then $f_{-\gra} \in \mC[B/H]$ is a global equation for $\grd(\gra)$, and $u \grd(\gra) \cap \grd(\gra)= \vuoto$ for all non trivial element $u \in U_\gra$.
\end{corollary}

\begin{proof}
The first implication follows by Theorem \ref{teo:equivalent}. Suppose that $\gra \in \weak$ and suppose that $\langle \rho (D), \gra \rangle \leq 0$ for all $D \in \scrD$. It follows that $f_{-\gra}$ has no pole on any $T$-stable divisor of the smooth variety $B/H$, hence $f_{-\gra} \in \mC[B/H]$. Moreover $f_{-\gra}$ is a global equation for $\grd(\gra)$, because it vanishes with order $1$ on $\grd(\gra)$, and it is non-zero on every other $T$-stable prime divisor of $B/H$.

We claim that $u \grd(\gra) \cap \grd(\gra) = \vuoto$ for all non-trivial elements $u \in U_\gra$. Let indeed $\partial_\gra$ be the locally nilpotent derivation of $\mC[B/H]$ associated to $\gra$. By (\ref{eqn:rooot-action}) it follows that $\partial_\alpha(f_{-\gra})$ is a non-zero function, which is constant on $B/H$. Exponentiating $\partial_\alpha$ we obtain $f_{-\alpha}(u_\gra(\xi)x) = f_{-\alpha}(x)+\xi$ for all $\xi \in \mC$ (where $\mC$ is regarded as the $T$-module of weight $\gra$ and $u_\gra : \mC \ra U_\alpha$ is a $T$-equivariant parametrization). Therefore $u(\xi) \grd(\gra) \cap \grd(\gra) = \vuoto$ for all $\xi \in \mC$ different from zero. 

Since $TH/H \subset B/H$ is the unique closed $T$-orbit, it holds $TH/H \subset \grd(\gra)$. Therefore by the previous discussion we get $U_\gra T H/H \not \subset T H/H$, namely $U_\gra \not \subset H$.
\end{proof}

\begin{corollary}
Let $\alpha,\beta\in \Psi$, then $\grd(\gra) = \grd(\grb)$ if and only if $\tau(\gra) = \tau(\grb)$. 
\end{corollary}

\begin{proof}
We already noticed in Proposition \ref{prop:divisori-stabili} that for all $\gra \in \weak$ the divisor $\grd(\gra)$ is uniquely determined by the restriction $\tau(\gra)$. Conversely, if $\gra, \grb \in \Psi$ and $\grd(\gra) = \grd(\grb)$, then by Corollary \ref{cor:weak-vs-active} it follows that the restrictions $(f_{-\gra})_{| U/H^\mru}$ and $(f_{-\grb})_{| U/H^\mru}$ both belong to $\mC[U/H^\mru]^{(T_H)}$ and they are both global equation for the $T_H$-stable prime divisor $\grd(\gra) \cap U/H^\mru = \grd(\grb) \cap U/H^\mru$. 

On the other hand by \eqref{eq:U/H^u} $U/H^\mru$ is $T_H$-equivariantly isomorphic to the toric module $\bigoplus_{\pi \in \ol\Psi} \mC_\pi$, and the $T_H$-stable prime divisors of $U/H^\mru$ are precisely the coordinate hyperplanes of $\bigoplus_{\pi \in \ol\Psi} \mC_\pi$. Therefore under the isomorphism \eqref{eq:U/H^u} the restrictions of $f_{-\gra}$ and $f_{-\grb}$ both correspond to the unique coordinate corresponding to $\grd(\gra) \cap U/H^\mru = \grd(\grb) \cap U/H^\mru$. It follows that $(f_{-\gra})_{| U/H^\mru}$ and $(f_{-\grb})_{| U/H^\mru}$ have the same $T_H$-weight, namely $\tau(\gra) = \tau(\grb)$.
\end{proof}

\begin{corollary}	\label{cor:delta-surjective}
Via the bijection $\scrD \leftrightarrow \ol\Psi$ of (\ref{eqn:identificazioni-divisori}), the map $\grd_{|\Psi} : \Psi \ra \scrD$ is identified with $\tau_{|\Psi}: \Psi \ra \ol\Psi$. In particular, $\grd_{|\Psi}$ is surjective.
\end{corollary}

\subsection{Orbits of $T$ on $B/H$ via active roots}

Since it is an affine toric variety, $B/H$ possesses a unique closed $T$-orbit, namely $TH/H$, which is contained in every $T$-stable divisor of $B/H$. Moreover, since it is smooth, the cone associated to $B/H$ is simplicial, therefore the $T$-orbits in $B/H$ are parametrized by the subsets of $\scrD$. Given $I \subset \scrD$ we denote by $\scrU_I$ the corresponding $T$-orbit in $B/H$. To be more explicit, by Proposition \ref{prop:B/H-toric} $\scrU_I$ is defined by the equality
$$
	I = \{D \in \scrD \st \scrU_I \not \subset D\}.
$$
In particular, $\scrU_\vuoto$ is the closed orbit of $B/H$, and $\scrU_\scrD$ is the open orbit. The goal of this subsection is to give an explicit description of the $T$-orbits $\scrU_I$, by giving canonical base points defined in terms of the active roots of $H$.

\begin{proposition}	\label{prop:Torb_onD}
Fix an enumeration $\scrD = \{D_1, \ldots, D_m\}$, let $1 \leq i_1 < \ldots < i_p \leq m$ and set $I = \{D_{i_1}, \ldots, D_{i_p}\}$. For every choice of elements $\grb_1, \ldots, \grb_p \in \Psi$ with $\grd(\grb_1) = D_{i_1}, \ldots, \grd(\grb_p) = D_{i_p}$ and for every choice of non-trivial elements $u_1 \in U_{\grb_1}, \ldots, u_p \in U_{\grb_p}$, it holds the equality
$$
	\scrU_I = T u_1 \cdots u_p H/H.
$$
\end{proposition}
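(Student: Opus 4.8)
The plan is to test the point $x = u_1\cdots u_p H$ against the global equations of the $T$-stable prime divisors of $B/H$, and to read off its orbit from the divisors it avoids. Since $B/H$ is a smooth, hence simplicial, affine toric $T$-variety, a $T$-orbit is uniquely determined by the set of $T$-stable prime divisors it avoids; moreover, by Corollary~\ref{cor:equivalent}, for each $\gra\in\Psi$ the eigenfunction $f_{-\gra}\in\mC[B/H]$ of weight $-\gra$ is a global equation for $\grd(\gra)$, and $\grd$ restricts to a bijection $\Psi\to\Div^T(B/H)$. As $\grb_1,\dots,\grb_p$ are precisely the active roots with $\grd(\grb_j)\in I$, the orbit $\scrU_I$ is characterized by the property that a point $y$ lies in $\scrU_I$ if and only if $\{\gra\in\Psi: f_{-\gra}(y)\neq0\}=\{\grb_1,\dots,\grb_p\}$. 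Hence the whole statement reduces to showing that $f_{-\gra}(x)\neq0$ exactly when $\gra\in\{\grb_1,\dots,\grb_p\}$; here I may assume $u_j=u_{\grb_j}(\xi_j)$ with $\xi_j\neq0$, the roots $\grb_j$ being forced by the bijectivity of $\grd$.

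Two facts drive the computation. First, the base point $eH$ sits in the closed orbit $TH/H$, which in the bundle description~(\ref{eqn:fiber-product}) is the origin of the fibre $U/H^\mru\simeq\bigoplus_{\pi\in\tau(\Psi)}\mC_\pi$; restricting an eigenfunction $f_\chi$ to this fibre yields a weight-$\tau(\chi)$ polynomial, whose value at the origin is its constant term. Since the weights $\tau(\Psi)$ are linearly independent, this gives $f_\chi(eH)\neq0$ if and only if $\tau(\chi)=0$. Second, exactly as in the proof of Lemma~\ref{lem:divisori-stabili}, each $U_{\grb_k}$ acts on $B/H$ through the one-parameter unipotent group generated by the locally nilpotent derivation $\partial_{\grb_k}$, which by~(\ref{eqn:rooot-action}) raises the $T$-weight by $\grb_k$.

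I would then expand $f_{-\gra}(u_{\grb_1}(\xi_1)\cdots u_{\grb_p}(\xi_p)H)$ as a power series in $\xi_1,\dots,\xi_p$. Because $\mC[B/H]$ is multiplicity-free as a $T$-module, each homogeneous contribution is a scalar multiple of $f_{-\gra+\sum_k n_k\grb_k}(eH)$ for a multi-index $(n_1,\dots,n_p)\in\mN^p$, and by the first fact it vanishes unless $\sum_k n_k\tau(\grb_k)=\tau(\gra)$. Linear independence of $\tau(\Psi)$ makes this equation solvable in $\mN^p$ precisely when $\gra\in\{\grb_1,\dots,\grb_p\}$, say $\gra=\grb_j$, and then only by $n_j=1$, $n_k=0$ for $k\neq j$; the corresponding term equals a nonzero multiple of $\xi_j f_0$, the coefficient being nonzero because $\langle\rho(\grd(\grb_j)),\grb_j\rangle\neq0$ for $\grb_j\in\Psi$. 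Thus $f_{-\grb_j}(x)\neq0$ for every $j$ (as $\xi_j\neq0$), while $f_{-\gra}(x)=0$ for $\gra\in\Psi\senza\{\grb_1,\dots,\grb_p\}$, and the characterization above yields $x\in\scrU_I$, that is $\scrU_I=Tx$.

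The main obstacle is a bookkeeping one: the derivations $\partial_{\grb_k}$ need not commute, so the coefficients in the expansion are genuinely complicated. What makes this harmless is that only the total weight shift $\sum_k n_k\grb_k$ governs vanishing at $eH$, and the single surviving multi-index uses one derivation, so no ordering question arises there. The other delicate step is justifying that $\tau(\chi)=0$ detects non-vanishing at the base point, which rests on combining the homogeneous-bundle structure~(\ref{eqn:fiber-product}) with the multiplicity-freeness of $U/H^\mru$.
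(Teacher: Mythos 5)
Your strategy---testing $x=u_1\cdots u_pH$ against the global equations $f_{-\gra}$ of the $T$-stable prime divisors and reading off its orbit from the divisors it avoids---is viable, and it is genuinely different from the paper's proof, which uses no coordinate expansion but instead applies Lemma~\ref{lem:divisori-stabili} to track, divisor by divisor, which $T$-stable prime divisors contain the partial products $u_j\cdots u_pH$. However, your write-up rests on a false premise: the restriction $\grd\ristretto_\Psi : \Psi \lra \Div^T(B/H)$ is \emph{not} a bijection in general. Proposition~\ref{prop:delta-vs-tau} gives only surjectivity, together with the statement that $\grd(\gra)=\grd(\grb)$ exactly when $\tau(\gra)=\tau(\grb)$, and distinct active roots can have the same restriction to $T_H$: by Remark~\ref{oss:ranks} the fibre cardinality $|\grd^{-1}(D)|$ equals $\rk \Phi_{\{D\}}$, which exceeds $1$ in general (it equals $1$ for all $D$ only in special cases, e.g.\ when $T\subset H$). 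Indeed, the very phrase ``for every choice of elements $\grb_1,\ldots,\grb_p$'' in the statement presupposes that the choice need not be unique. This premise breaks two of your steps in parallel. First, your characterization ``$y\in\scrU_I$ iff $\{\gra\in\Psi : f_{-\gra}(y)\neq 0\}=\{\grb_1,\dots,\grb_p\}$'' is wrong: since $f_{-\gra}$ and $f_{-\grb}$ cut out the same divisor whenever $\grd(\gra)=\grd(\grb)$, the left-hand set is always a union of fibres of $\grd$, so the correct right-hand side is $\Psi_I=\grd^{-1}(I)\cap\Psi$, which may strictly contain $\{\grb_1,\dots,\grb_p\}$. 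Second, the equation $\sum_k n_k\tau(\grb_k)=\tau(\gra)$ is solvable in $\mN^p$ precisely when $\tau(\gra)\in\{\tau(\grb_1),\dots,\tau(\grb_p)\}$, i.e.\ precisely when $\gra\in\Psi_I$, not when $\gra\in\{\grb_1,\dots,\grb_p\}$. Consequently your concluding vanishing statement is false: for $\gra\in\Psi_I\senza\{\grb_1,\dots,\grb_p\}$, say with $\tau(\gra)=\tau(\grb_j)$, the same expansion gives $f_{-\gra}(x)=\pm\xi_j\,\langle\rho(\grd(\grb_j)),-\gra\rangle\, f_{-\gra+\grb_j}(eH)\neq 0$, because $\grd(\grb_j)=\grd(\gra)$ forces $\langle\rho(\grd(\grb_j)),\gra\rangle=-1$ by Definition~\ref{def:radici-toriche} (or Theorem~\ref{teo:equivalent}), while $\tau(-\gra+\grb_j)=0$.

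The good news is that the two errors are the same error and they cancel: replace $\{\grb_1,\dots,\grb_p\}$ by $\Psi_I$ in both the characterization of $\scrU_I$ and in the outcome of the expansion, compute the surviving coefficient by pairing against $\gra$ (not $\grb_j$) as above, and every step goes through; you should also record the small point, needed for your ``first fact'', that a $T$-eigenfunction $f_\chi$ cannot restrict to zero on the fibre $UH/H$, since the $T$-translates of that fibre cover $B/H$. With those repairs your argument becomes a complete and correct proof, different in mechanism from the paper's purely geometric one. As written, though, it asserts and relies on statements that fail whenever some fibre of $\grd\ristretto_\Psi$ has more than one element, which is exactly the generality the proposition is meant to cover.
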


\begin{proof}
Denote $\scrU = T u_1 \cdots u_p H/H$. The point $u_1 \cdots u_p H/H$ is obtained by successively applying the elements $u_j$ on the base point $H \in B/H$, which is contained in every $T$-stable prime divisor of $B/H$. By Proposition~\ref{prop:divisori-stabili} the root group $U_{\beta_j}$ stabilizes every $D \in \scrD \senza \{\delta(\beta_j)\}$. Therefore the definition of $\grb_1, \ldots, \grb_p$ implies that $\scrU_I$ is contained in every $D$, for all $D \in \scrD \senza \{ D_{i_1}, \ldots, D_{i_p} \}$.

It remains to prove that $\scrU_I$ is not contained in $\delta(\beta_j)$ for all $j= 1, \ldots, p$. By Proposition~\ref{prop:divisori-stabili} it follows that $\delta(\beta_j)$ is stable under the action of $U_{\beta_k}$ for all $k \neq j$, whereas by Corollary~\ref{cor:weak-vs-active} we have $u_j \delta(\beta_j) \cap \delta(\beta_j) = \vuoto$. It follows that $u_{j+1} \cdots u_p H/H \in \delta(\beta_j)$ and $u_j \cdots u_p H/H \not \in \delta(\beta_j)$. Therefore $u_1 \cdots u_p H/H \not \in \delta(\beta_j)$ as well, and the claim follows.
\end{proof}

We deduce immediately from Proposition~\ref{prop:Torb_onD} the following description of the $T$-stable prime divisors of $B/H$ in terms of active roots.

\begin{corollary} \label{cor:divisors}
With the notation of Proposition~\ref{prop:Torb_onD}, we have
$$
	\grd(\grb_i) = \ol{T U_{\grb_1} \cdots \hat{U}_{\grb_i} \cdots U_{\grb_m} H/H}
$$
\end{corollary}

\section{Weakly active roots and root systems}\label{s:active}

In this section we will extend the study of the active roots to the weakly active roots, and we will give two alternative combinatorial definitions of them. This will provide the technical tools needed for the description of $\scrB(G/H)$ given in the next section. We keep the assumptions of the previous section.

Recall the \textit{dominance order} on $\calX(T)$, defined by $\grl \leq \mu$ if and only if $\mu - \grl \in \mN\grD$. Given $\gra \in \Psi$, denote $F(\gra)$ the \textit{family of active roots generated by} $\gra$, defined as
$$
	F(\gra) = \{ \grb \in \Psi \st \grb \leq \gra\}.
$$
We collect in the following proposition some properties of the families of active roots from \cite{Avd1} that will be very useful in what follows.

\begin{proposition}[{\cite[Lemma 5, Corollaries 2 and 3]{Avd1}}]	\label{prop:family}
Let $\gra\in \Psi$.
\begin{itemize}
	\item[i)] The family $F(\gra)$ is a linearly independent set of roots, and $\grd(\grb) \neq \grd(\grb')$ for all $\grb, \grb' \in F(\gra)$.
	\item[ii)] If $\grb \in \Psi$, then $\grb \in F(\gra)$ if and only if $\supp(\grb) \subset \supp(\gra)$.
\end{itemize}
\end{proposition}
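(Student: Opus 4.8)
The implication ``$\Rightarrow$'' in (ii) is immediate from the definition of the dominance order: if $\grb \in F(\gra)$ then $\gra - \grb \in \mN\grD$, so when both roots are written in the basis $\grD$ each coefficient of $\grb$ is bounded above by the corresponding coefficient of $\gra$; in particular a simple root outside $\supp(\gra)$ cannot occur in $\grb$, whence $\supp(\grb)\subset\supp(\gra)$. The real content lies in the reverse implication of (ii) together with the two assertions of (i), and the plan is to extract as much as possible from the toric description of $B/H$ before invoking the finer combinatorics of active roots.

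The first reduction is that, granting the sphericity of $H$, the linear independence in (i) is a formal consequence of the injectivity of $\grd$ on $F(\gra)$. Recall that the $T_H$-module $U/H^\mru$ is multiplicity free, that is, the set $\tau(\Psi)$ is linearly independent, and that by Proposition~\ref{prop:delta-vs-tau} two active roots have the same associated divisor if and only if they have the same restriction under $\tau$. Thus if $\grd\ristretto_{F(\gra)}$ is injective then so is $\tau\ristretto_{F(\gra)}$, and $\tau(F(\gra))$ consists of $|F(\gra)|$ distinct elements of the linearly independent set $\tau(\Psi)$; applying $\tau$ to any relation $\sum_{\grb\in F(\gra)} c_\grb\,\grb = 0$ then forces every $c_\grb = 0$, so $F(\gra)$ is linearly independent. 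Since linear independence in $\calX(T)$ by itself does not detect the restriction $\tau$, the injectivity of $\grd$ on $F(\gra)$ is the crucial point, and it is precisely here that the hypothesis $\grb,\grb'\le\gra$ must be used in an essential way.

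I would attack this point, and simultaneously the reverse implication in (ii), by translating it into evaluations via Theorem~\ref{teo:equivalent}. If $\grb,\grb'\in F(\gra)$ satisfy $\grd(\grb)=\grd(\grb')$, then $\tau(\grb)=\tau(\grb')$, i.e.\ $\grb-\grb'\in\ker\tau$, equivalently $\langle w_0\rho(D),\grb-\grb'\rangle = 0$ for every $D\in\scrD^*$; the task is to show that the only element of $\ker\tau$ which is a difference of two active roots dominated by $\gra$ is $0$. The obstacle is genuine: the valuations $w_0\rho(D)$ span only a proper subspace of $\calX_T(B/H)^\vee_\mQ$ in general, because the cone of $B/H$ need not be full dimensional, so orthogonality to all of them does not by itself force $\grb=\grb'$. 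The example $G=\mathrm{SL}_3$ with $\Psi=\{\gra_1,\gra_2\}$, where $\gra_1$ and $\gra_2$ share the same divisor (they have the same restriction to $T_H$) yet are incomparable, shows that one really needs more than sphericity; what saves the statement is that $\gra_1$ and $\gra_2$ admit no common dominator in $\Psi$, so they never lie in a single family.

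The missing ingredient is the recursive structure of active roots from Avdeev's classification. Concretely, I would attach to each $\grb\in F(\gra)$ a distinguished simple root $\sigma(\grb)\in\supp(\grb)$ controlling the ``top'' of the chain of active roots producing $\grb$, and prove by induction on $\height(\grb)$ that $\grb\mapsto\sigma(\grb)$ is injective on $F(\gra)$ and triangular with respect to the order induced on $\supp(\gra)$. This would yield at once the distinctness of the divisors $\grd(\grb)$ (hence all of (i) via the reduction above) and the reverse implication in (ii), since $\supp(\grb)\subset\supp(\gra)$ would force $\grb$ to arise inside the same chain dominated by $\gra$ and therefore to satisfy $\grb\le\gra$. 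Establishing this triangularity is the main obstacle and is the combinatorial heart of Avdeev's \cite[Lemma~5]{Avd1}; I would isolate it as a statement purely about the poset of active roots and their supports, and then feed it back through Proposition~\ref{prop:delta-vs-tau} to conclude.
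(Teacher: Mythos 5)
Your preliminary steps are correct: the ``only if'' direction of (ii) is indeed immediate from the dominance order, and your reduction of the linear independence in (i) to the injectivity of $\grd$ on $F(\gra)$ is valid, resting exactly on the sphericity of $H$ (linear independence of $\tau(\Psi)$) together with Proposition~\ref{prop:delta-vs-tau}; your $\mathrm{SL}_3$ example also correctly illustrates that sphericity alone cannot force injectivity of $\grd$ on all of $\Psi$, only on families. The problem is that everything of substance in the statement --- the injectivity of $\grd$ on $F(\gra)$ and the ``if'' direction of (ii) --- is still open when your argument ends. You describe a plan (attach a distinguished simple root $\sigma(\grb)$ to each $\grb \in F(\gra)$, prove injectivity and ``triangularity'' by induction on height), but you do not carry it out, and you yourself label it the main obstacle. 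Moreover the plan has an internal gap: even granting that $\grb \longmapsto \sigma(\grb)$ is a bijection from $F(\gra)$ onto $\supp(\gra)$ (this is Avdeev's result, quoted later in the paper as Theorem~\ref{teo:avdeev}~ii), with $\sigma = \pi$), injectivity of this labeling does not by itself give injectivity of $\grd$ on $F(\gra)$: the known implication between the two labelings runs the wrong way, namely $\pi(\grb_1) = \pi(\grb_2)$ implies $\grd(\grb_1) = \grd(\grb_2)$ (Theorem~\ref{teo:avdeev}~iii)), so one still needs an argument that distinct simple-root labels within a family produce distinct restrictions $\tau(\grb)$, and that argument is missing. Likewise, for the ``if'' direction of (ii), the assertion that $\supp(\grb) \subset \supp(\gra)$ forces $\grb$ to ``arise inside the same chain'' dominated by $\gra$, hence $\grb \leq \gra$, is stated but not proved.

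For comparison: the paper contains no proof of this proposition at all --- it is imported verbatim from Avdeev (\cite[Lemma 5, Corollaries 2 and 3]{Avd1}), which is why it is stated with that citation and nothing more. So your proposal, after its correct easy steps, in effect reproduces the paper's citation rather than replacing it: the combinatorial core you defer to is precisely the content being cited. As a self-contained proof it is incomplete; as a diagnosis of where the real content lies (and of the fact that it cannot be extracted from the toric picture of $B/H$ alone), it is accurate.
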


\subsection{Weakly active roots}

We will give two different combinatorial characterizations of the weakly active roots. To do this, we will need a couple of preliminary lemmas.

\begin{lemma}	\label{lemma:somma-vive1}
Let $\gra \in \weak$ and $\grb \in \Psi$ be such that $\langle \rho(\grd(\grb)),\gra \rangle > 0$. Then $\gra + \grb \in \weak$, and $\grd(\gra+\grb) = \grd(\gra)$.
\end{lemma}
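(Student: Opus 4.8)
The plan is to separate the toric assertion (that $\gra+\grb$ is a root of the toric variety $B/H$ with $\grd(\gra+\grb)=\grd(\gra)$) from the stronger claim that $\gra+\grb$ is \emph{weakly active}. First I would record what the hypothesis gives. Since $\gra\in\weak\subset\Root(B/H)$, Definition~\ref{def:radici-toriche} yields $\langle w_0\rho(\grd(\gra)),\gra\rangle=-1$ and $\langle w_0\rho(D),\gra\rangle\geq0$ for all $D\in\Div^T(B/H)\senza\{\grd(\gra)\}$; since $\grb\in\Psi$, Theorem~\ref{teo:equivalent} gives $\langle w_0\rho(\grd(\grb)),\grb\rangle=-1$ and $\langle w_0\rho(D),\grb\rangle=0$ for $D\neq\grd(\grb)$. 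The hypothesis $\langle w_0\rho(\grd(\grb)),\gra\rangle>0$ forces $\grd(\gra)\neq\grd(\grb)$ (otherwise the left-hand side would be $\langle w_0\rho(\grd(\gra)),\gra\rangle=-1$), and hence $\langle w_0\rho(\grd(\gra)),\grb\rangle=0$.

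Next I would check the inequalities of Definition~\ref{def:radici-toriche} for $\gra+\grb\in\calX(B/H)$ directly, divisor by divisor. On $\grd(\gra)$ one gets $-1+0=-1$; on $\grd(\grb)$ one gets $\langle w_0\rho(\grd(\grb)),\gra\rangle-1\geq0$, the first summand being a positive integer; on every other $D$ one gets a sum of two nonnegative terms. Thus $\gra+\grb\in\Root(B/H)$ with $\grd(\gra+\grb)=\grd(\gra)$, which already establishes the divisor equality on the toric side.

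It remains to show $\gra+\grb\in\weak$, i.e. that $\gra+\grb$ is a positive root of $\Phi$ whose root subgroup acts non-trivially on $B/H$; this is the main obstacle, since the previous step only produces an abstract root of the toric variety. Here I would pass to derivations. By the correspondence between roots of $B/H$ and one-parameter unipotent subgroups of $\Aut(B/H)$ normalized by $T$, the images of $e_\gra\in\Lie U_\gra$ and $e_\grb\in\Lie U_\grb$ under the map $\gob\to\mathrm{Der}(\mC[B/H])$ induced by left translation are nonzero multiples of $\partial_\gra$ and $\partial_\grb$ (nonzero exactly because $\gra,\grb\in\weak$). A direct computation with (\ref{eqn:rooot-action}), using $\langle w_0\rho(\grd(\gra)),\grb\rangle=0$, gives
$$[\partial_\gra,\partial_\grb]=-\langle w_0\rho(\grd(\grb)),\gra\rangle\,\partial_{\gra+\grb},$$
which is a nonzero multiple of $\partial_{\gra+\grb}$ by the hypothesis. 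On the other hand $[e_\gra,e_\grb]$ lies in the root space $\gog_{\gra+\grb}$, which is zero unless $\gra+\grb\in\Phi$, and since $\gob\to\mathrm{Der}(\mC[B/H])$ is a Lie algebra (anti)homomorphism, the image of $[e_\gra,e_\grb]$ is a multiple of the image of $e_{\gra+\grb}$, hence it lies in $\mC\,\partial_{\gra+\grb}$ and vanishes when $\gra+\grb\notin\Phi$.

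Comparing the two expressions, the non-vanishing of $[\partial_\gra,\partial_\grb]$ forces $\gra+\grb\in\Phi$, hence $\gra+\grb\in\Phi^+$, and forces the image of $e_{\gra+\grb}$, that is the action of $U_{\gra+\grb}$ on $B/H$, to be non-trivial; this gives $\gra+\grb\in\weak$ and finishes the argument. I expect the commutator identity and its matching with the root-space relation in $\gog$ to be the only delicate points, the toric inequalities being routine.
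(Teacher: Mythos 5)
Your proof is correct and takes essentially the same route as the paper's: first the toric computation showing $\gra+\grb\in\Root(B/H)$ with $\grd(\gra+\grb)=\grd(\gra)$, then using the non-commutativity of $\partial_\gra$ and $\partial_\grb$ together with the homomorphism $B\to\Aut(B/H)$ to force $[\gou_\gra,\gou_\grb]\neq 0$, whence $\gra+\grb\in\Phi^+$ and $U_{\gra+\grb}$ acts non-trivially on $B/H$. Your explicit commutator identity $[\partial_\gra,\partial_\grb]=-\langle w_0\rho(\grd(\grb)),\gra\rangle\,\partial_{\gra+\grb}$ is just a more computational rendering of the commutation criterion the paper invokes, so the two arguments coincide in substance.
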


\begin{proof}
Notice that $\gra+\grb \in \Root(B/H)$: indeed $\gra + \grb \in \calX(B/H)$ and by Theorem~\ref{teo:equivalent} it satisfies the condition of Definition \ref{def:radici-toriche}. Moreover, as $\langle \rho(\delta(\grb)), \gra +\grb \rangle \geq 0$, it follows that $\grd(\gra+\grb) = \grd(\gra)$.

To conclude the proof, we need to show that $\gra +\grb \in \Phi^+$ and that $U_{\gra+\grb}$ acts non-trivially on $B/H$. As in Section~\ref{ssec:spherical}, given $\grg \in \Root(B/H)$, we denote by $V_\grg$ the associated one parameter unipotent subgroup of $\Aut(B/H)$, so that $\Lie(V_\gamma)$ acts as a derivation on $\mC(B/H)$ according to (\ref{eqn:rooot-action}). In particular, given $\grg_1, \grg_2 \in \Root(B/H)$, we get that $\partial_{\grg_1}$ and $\partial_{\grg_2}$ commute if and only if
$$
\langle \rho (\grd(\grg_1)), \grg_2 \rangle = \langle \rho (\grd(\grg_2)), \grg_1 \rangle.
$$

Since $\langle \rho(\delta(\gra)), \grb \rangle = 0$ and $\langle \rho (\grd(\grb)), \gra \rangle > 0$, it follows by the previous discussion that $\partial_\gra$ and $\partial_\grb$ do not commute, hence $\Lie(V_{\gra+\grb}) = [\Lie(V_\gra),\Lie(V_\grb)]$. Consider now the homomorphism of algebraic groups $\psi : B \ra \Aut(B/H)$ induced by the action of $B$ on $B/H$. By definition, $U_\gra$ and $U_\grb$ act non-trivially on $B/H$, hence $\psi(U_\gra) = V_\gra$ and $\psi(U_\grb) = V_\grb$. Therefore
$$
	\Lie(V_{\gra+\grb}) = [\Lie(V_\gra),\Lie(V_\grb)] = d\psi [\gou_\gra,\gou_\grb].
$$
It follows that $[\gou_\gra,\gou_\grb]$ is non-zero, hence $\gra + \grb \in \Phi^+$ and $\psi(U_{\gra+\grb}) = V_{\gra+\grb}$.
\end{proof}

Given a root $\gra \in \Phi^+$, we define the following sets:
$$\Phi^+(\alpha) = (\alpha + \mN \Phi^+) \cap \Phi^+, \qquad \qquad
\Psi(\gra) = (\gra + \mN\Psi) \cap \Psi.$$

\begin{lemma}	\label{lemma:somma-radici}
Let $\alpha\in\Phi^+$ and $\grb \in \Phi^+(\alpha) \senza \{\alpha\}$. There exist $\grb_0, \ldots, \grb_n \in \Phi^+$ with $\grb_0 = \gra$, $\grb_n = \grb$ and $\grb_i - \grb_{i-1} \in \Phi^+$ for all $i \leq n$. 
\end{lemma}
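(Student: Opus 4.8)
The plan is to strip the statement down to pure root-system combinatorics and prove it by induction on the height of $\mu := \grb - \gra$. First I would record that, since every positive root is a nonnegative integral combination of simple roots and every simple root is itself positive, one has $\mN\Phi^+ = \mN\grD$; the hypothesis $\grb \in \Phi^+(\gra)\senza\{\gra\}$ thus gives $\mu = \grb - \gra \in \mN\grD$ with $\mu \neq 0$. It then suffices to construct a chain $\gra = \grb_0, \grb_1, \ldots, \grb_n = \grb$ of positive roots in which each difference $\grb_i - \grb_{i-1}$ is a \emph{simple} root, because simple roots belong to $\Phi^+$; this is stronger than what is asked and streamlines the induction.

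The inductive step amounts to finding a single simple root by which one can move one step closer, and here the positive-definite $W$-invariant form $(\cdot,\cdot)$ is the essential tool. Writing $\mu = \sum_{\grs \in \grD} c_\grs \grs$ with $c_\grs \in \mN$ not all zero, I would use the identity $(\mu,\mu) = (\mu,\grb) - (\mu,\gra)$ together with $(\mu,\mu) > 0$ to conclude that at least one of $(\mu,\grb) > 0$ and $(\mu,\gra) < 0$ holds. In the first case there is $\grs \in \grD$ with $c_\grs > 0$ and $(\grs,\grb) > 0$; since $\height(\grb) \geq \height(\gra) + 1 \geq 2$ forces $\grb \neq \grs$, the standard criterion shows $\grb - \grs$ is a root, and it is positive with $\grb - \grs \geq \gra$ because $\grb - \grs - \gra = \mu - \grs$ still lies in $\mN\grD$ (using $c_\grs \geq 1$). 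In the second case there is $\grs \in \grD$ with $c_\grs > 0$ and $(\gra,\grs) < 0$; as $\gra$ and $\grs$ are both positive one has $\gra \neq -\grs$, so $\gra + \grs$ is a positive root, and $\grb - (\gra+\grs) = \mu - \grs \in \mN\grD$ shows $\gra + \grs \leq \grb$.

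Either way I obtain a positive root lying strictly between $\gra$ and $\grb$ in the root order and differing from one endpoint by the single simple root $\grs$: namely $\grb - \grs$ in the first case, $\gra + \grs$ in the second. In both situations the new pair — $(\gra, \grb - \grs)$, respectively $(\gra + \grs, \grb)$ — has height difference $\height(\mu) - 1$, so the inductive hypothesis yields a chain for it, and appending (respectively prepending) the one-step move across $\grs$ produces the desired chain for $(\gra,\grb)$. The base case $\height(\mu) = 1$ is immediate, since then $\mu$ is itself simple and $\gra, \grb$ already form the chain. The main obstacle is precisely the existence of the simple root $\grs$ in the inductive step: the dichotomy coming from $(\mu,\mu) > 0$ is what guarantees it, and the two standard root-addition and root-subtraction criteria are what convert the sign of the pairing into an actual root one simple step away.
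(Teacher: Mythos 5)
Your proof is correct, and while it follows the same broad strategy as the paper's — induction combined with the standard criterion that a strictly negative (resp.\ positive) pairing of nonproportional roots makes their sum (resp.\ difference) a root — the implementation is genuinely different. The paper writes $\grb = \gra + \gra_1 + \cdots + \gra_m$ with $\gra_i \in \Phi^+$ and $m$ \emph{minimal}, and inducts on $m$: minimality forces $\grb - \gra \notin \Phi$, hence $(\grb,\gra) \leq 0$, hence $(\grb-\gra,\gra) \leq -(\gra,\gra) < 0$, so some summand satisfies $(\gra_i,\gra) < 0$ and is absorbed into $\gra + \gra_i \in \Phi^+$; the motion is always upward from $\gra$, by positive-root steps. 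You instead decompose $\mu = \grb - \gra$ into simple roots and induct on $\height(\mu)$, replacing the minimality trick by the symmetric dichotomy $(\mu,\grb) > 0$ or $(\mu,\gra) < 0$ forced by $(\mu,\mu) > 0$, so that you may move either down from $\grb$ or up from $\gra$, always by a single simple root. Your route buys a strictly stronger conclusion — every step of the chain lies in $\grD$, not merely in $\Phi^+$ — and avoids the minimality bookkeeping, at the price of a two-case analysis; the paper's one-sided argument is slightly shorter, and its weaker conclusion is all that the subsequent application (Lemma~\ref{lemma:ideali}) requires. The individual verifications in your argument are all sound: the equality $\mN\Phi^+ = \mN\grD$, the extraction of a suitable $\grs$ in each case from the sign of the pairing, the nonproportionality checks ($\grb \neq \pm\grs$ by height and positivity, $\gra \neq \pm\grs$ by positivity and the sign of $(\gra,\grs)$), and the positivity of $\grb - \grs$ via the decomposition $\grb - \grs = \gra + (\mu - \grs)$ with $\mu - \grs \in \mN\grD$.
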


\begin{proof}
Let $\gra_1, \ldots, \gra_m \in \Phi^+$ with $\grb = \gra + \gra_1 + \ldots + \gra_m$. Assume that $m$ is minimal with this property, we prove the claim by induction on $m$.
Set $\grg = \beta - \alpha$. If $m=1$ the claim is true. Assume $m>1$, then by the minimality of $m$ it follows that $\grg \not \in \Phi^+$. Since $\grg \in \mN \Phi^+$, it follows then $\grg \not \in \Phi$, hence $(\grb,\alpha) \leq 0$, where $(-,-)$ denotes an ad-invariant scalar product on $\got^*$. Hence
\[
(\alpha,\alpha) + (\grg,\alpha) = (\alpha + \grg, \alpha) = (\grb, \alpha) \leq 0
\]
an we get $(\grg, \alpha) \leq -(\alpha,\alpha) < 0$. Up to reordering the indices, we may assume that $(\gra_1,\alpha) < 0$, hence $\gra' = \gra + \gra_1 \in \Phi^+$. On the other hand $\grb \in \Phi^+(\gra')$ and $\grb = \gra' + \gra_2 + \ldots + \gra_m$, therefore the claim follows by induction.
\end{proof}

\begin{lemma}	\label{lemma:ideali}
Given $\alpha \in \Phi^+$, the ideal $(\gou_\alpha)$ generated by $\gou_\alpha$ in $\gou$ is the direct sum of the root spaces $\gou_\grb$ with $\grb \in \Phi^+(\alpha)$.
\end{lemma}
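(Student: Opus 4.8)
The plan is to prove the two inclusions between the ideal $(\gou_\gra)$ and the candidate submodule $\mathfrak a := \bigoplus_{\grb \in \Phi^+(\gra)} \gou_\grb$. Throughout I would use the standard fact that, for the root decomposition $\gou = \bigoplus_{\grg \in \Phi^+} \gou_\grg$, one has $[\gou_\grg, \gou_\grd] \subseteq \gou_{\grg + \grd}$, with a \emph{nonzero} bracket exactly when $\grg + \grd \in \Phi$; here all the roots involved are positive, so $\grg+\grd$ is never zero and lies in $\Phi^+$ whenever it is a root.

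For the inclusion $(\gou_\gra) \subseteq \mathfrak a$, I would first check that $\mathfrak a$ is itself an ideal of $\gou$ containing $\gou_\gra$. Since $\gra = \gra + 0 \in \Phi^+(\gra)$, we have $\gou_\gra \subseteq \mathfrak a$. For the ideal property, take $\grg \in \Phi^+$ and $\grb \in \Phi^+(\gra)$; then $[\gou_\grg, \gou_\grb] \subseteq \gou_{\grg+\grb}$, and if $\grg+\grb \in \Phi^+$ then, writing $\grb = \gra + \xi$ with $\xi \in \mN\Phi^+$, we get $\grg + \grb = \gra + (\xi+\grg) \in (\gra + \mN\Phi^+)\cap\Phi^+ = \Phi^+(\gra)$. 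Hence $[\gou, \mathfrak a] \subseteq \mathfrak a$, and since $(\gou_\gra)$ is the smallest ideal of $\gou$ containing $\gou_\gra$, we conclude $(\gou_\gra) \subseteq \mathfrak a$.

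The reverse inclusion $\mathfrak a \subseteq (\gou_\gra)$ is the substantive step, and this is where Lemma~\ref{lemma:somma-radici} enters. Fix $\grb \in \Phi^+(\gra)$; if $\grb = \gra$ there is nothing to prove, so assume $\grb \in \Phi^+(\gra)\senza\{\gra\}$. Lemma~\ref{lemma:somma-radici} produces roots $\grb_0, \ldots, \grb_n \in \Phi^+$ with $\grb_0 = \gra$, $\grb_n = \grb$ and $\grb_i - \grb_{i-1} \in \Phi^+$ for every $i$. I would then argue by induction on $i$ that $\gou_{\grb_i} \subseteq (\gou_\gra)$: the base case $\gou_{\grb_0} = \gou_\gra$ is clear, and for the inductive step set $\grg_i = \grb_i - \grb_{i-1} \in \Phi^+$, so that $\grg_i + \grb_{i-1} = \grb_i \in \Phi^+$ is a root and the Chevalley bracket $[\gou_{\grg_i}, \gou_{\grb_{i-1}}] = \gou_{\grb_i}$ is therefore nonzero. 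Since $(\gou_\gra)$ is an ideal and $\gou_{\grb_{i-1}} \subseteq (\gou_\gra)$ by the inductive hypothesis, we get $\gou_{\grb_i} = [\gou_{\grg_i}, \gou_{\grb_{i-1}}] \subseteq [\gou, (\gou_\gra)] \subseteq (\gou_\gra)$. Taking $i = n$ gives $\gou_\grb \subseteq (\gou_\gra)$, and summing over $\grb \in \Phi^+(\gra)$ yields $\mathfrak a \subseteq (\gou_\gra)$.

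The main obstacle is precisely this reverse inclusion: one must certify that each higher root space $\gou_\grb$ is genuinely reached by bracketing, not merely that it sits in the correct weight range. This is the role of Lemma~\ref{lemma:somma-radici}, which guarantees a path from $\gra$ to $\grb$ proceeding by additions of \emph{single} positive roots that remain roots at each stage; the nonvanishing of the corresponding structure constants $N_{\grg_i,\grb_{i-1}}\neq 0$ (valid because $\grg_i + \grb_{i-1}$ is a root) then lets the induction go through. Without such a chain, a bare decomposition $\grb = \gra + \grg_1 + \cdots + \grg_m$ into positive roots would not by itself ensure that the intermediate partial sums are roots, so the iterated bracket could collapse to zero and the argument would fail.
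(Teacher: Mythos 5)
Your proof is correct and follows essentially the same route as the paper: both show that $\bigoplus_{\grb \in \Phi^+(\gra)} \gou_\grb$ is an ideal containing $\gou_\gra$ (via the Chevalley bracket relations), and both obtain the reverse inclusion from Lemma~\ref{lemma:somma-radici} by climbing the chain $\gra = \grb_0, \ldots, \grb_n = \grb$ with nonvanishing brackets $[\gou_{\grb_i - \grb_{i-1}}, \gou_{\grb_{i-1}}] = \gou_{\grb_i}$. Your write-up merely makes the induction and the minimality of the generated ideal slightly more explicit than the paper does.
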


\begin{proof}
Denote $\gor = \bigoplus_{\grb \in \Phi^+(\alpha)} \gou_\grb$. We show that $\gor \subset \gou$ is an ideal and that $\gor \subset (\gou_\alpha)$, whence the lemma. To show that $\gor$ is an ideal, it is enough to notice that for all $\grb \in \Phi^+(\alpha)$ and for all $\gamma \in\Phi^+$ it holds either $[\gou_\grb, \gou_\grg] = 0$ or $[\gou_\grb, \gou_\grg] = \gou_{\grb + \grg}$, in which case $\grb + \grg \in \Phi^+(\alpha)$.

Let $\grb \in \Phi^+(\alpha)$. By Lemma~\ref{lemma:somma-radici}, there exist $\grb_0, \ldots, \grb_n \in \Phi^+$ with $\grb_0 = \gra$, $\grb_n = \grb$ and $\grb_i - \grb_{i-1} \in \Phi^+$ for all $i \leq n$. For every $i=1, \ldots, n-1$ define $\gra_i = \grb_i - \grb_{i-1}$, then we have
\[
[\gou_{\beta_i},\gou_{\gra_{i+1}}] = \gou_{\beta_{i+1}} \neq 0.
\]
It follows that $\gou_\grb \subset (\gou_\alpha)$ for all $\grb \in \Phi^+(\gra)$, hence $\gor \subset (\gou_\gra)$.
\end{proof}

\begin{theorem}\label{teo:deadker}
Let $\gra \in \Phi^+$. The following statements are equivalent:
\begin{itemize}
	\item[i)] $\gra \in \weak$;
	\item[ii)] $\Psi(\gra) \neq \vuoto$;
	\item[iii)] $\Phi^+(\gra) \cap \Psi \neq \vuoto$.
\end{itemize}
If moreover $\gra \in \weak$, then we have $\delta(\gra) = \delta(\grb)$ for all $\beta \in \Psi(\gra)$.
\end{theorem}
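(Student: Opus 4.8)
The plan is to prove the equivalences through the cycle i) $\Rightarrow$ ii) $\Rightarrow$ iii) $\Rightarrow$ i), and to treat the concluding identity $\delta(\alpha)=\delta(\beta)$ separately by a monotonicity argument for the pairing $\langle w_0\rho(-),-\rangle$.

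For i) $\Rightarrow$ ii) I would argue by ``saturating'' $\alpha$ inside $\alpha+\mN\Psi$. If $\alpha\in\weak\senza\Psi$, then Corollary~\ref{cor:weak-vs-active} yields a divisor $D$ with $\langle w_0\rho(D),\alpha\rangle>0$; writing $D=\delta(\gamma)$ for a suitable $\gamma\in\Psi$ (Proposition~\ref{prop:delta-vs-tau}), Lemma~\ref{lemma:somma-vive1} gives $\alpha+\gamma\in\weak$. Iterating produces a strictly increasing sequence of weakly active roots in $\alpha+\mN\Psi$, which must terminate since $\Phi^+$ is finite; the terminal root $\mu$ satisfies $\langle w_0\rho(D),\mu\rangle\leq 0$ for all $D$, hence $\mu\in\Psi$ by Corollary~\ref{cor:weak-vs-active}, so $\mu\in\Psi(\alpha)$. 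The implication ii) $\Rightarrow$ iii) is then immediate from $\Psi(\alpha)=(\alpha+\mN\Psi)\cap\Psi\subset(\alpha+\mN\Phi^+)\cap\Phi^+\cap\Psi=\Phi^+(\alpha)\cap\Psi$.

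The hard part will be iii) $\Rightarrow$ i). Here I would translate weak activity into an infinitesimal condition via the homomorphism $\psi\colon B\to\Aut(B/H)$ already used in Lemma~\ref{lemma:somma-vive1}: a root $\gamma$ lies in $\weak$ precisely when $\gou_\gamma\not\subset\ker(d\psi)$, and since $\ker(d\psi)$ and $\gou$ are both ideals of $\gob$, their intersection $\gon:=\ker(d\psi)\cap\gou$ is an ideal of $\gou$ with $\gamma\notin\weak\Leftrightarrow\gou_\gamma\subset\gon$. Assuming iii) holds, suppose toward a contradiction that $\alpha\notin\weak$ and pick $\beta\in\Phi^+(\alpha)\cap\Psi$. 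Then $\gou_\alpha\subset\gon$ forces the entire ideal generated by $\gou_\alpha$ into $\gon$; by Lemma~\ref{lemma:ideali} this ideal equals $\bigoplus_{\gamma\in\Phi^+(\alpha)}\gou_\gamma$, so in particular $\gou_\beta\subset\gon$ and $\beta\notin\weak$, contradicting $\beta\in\Psi\subset\weak$. The only delicate point is the correspondence ``nontrivial action $\Leftrightarrow$ nonzero derivation'', which holds because we work in characteristic $0$ and $U_\gamma$ is connected.

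Finally, for the last assertion I would exploit that adding active roots can only decrease the pairing against any divisor. Writing $\beta=\alpha+\sum_i n_i\gamma_i$ with $\gamma_i\in\Psi$, Theorem~\ref{teo:equivalent} gives $\langle w_0\rho(D),\gamma_i\rangle\leq0$ for all $D$, hence
$$
\langle w_0\rho(D),\beta\rangle=\langle w_0\rho(D),\alpha\rangle+\sum_i n_i\langle w_0\rho(D),\gamma_i\rangle\leq\langle w_0\rho(D),\alpha\rangle .
$$
Evaluating at $D=\delta(\alpha)$ and recalling $\langle w_0\rho(\delta(\alpha)),\alpha\rangle=-1$ (as $\alpha$ is a root of $B/H$) gives $\langle w_0\rho(\delta(\alpha)),\beta\rangle<0$; since $\delta(\beta)$ is the unique $T$-stable divisor on which $\beta$ pairs negatively, I conclude $\delta(\alpha)=\delta(\beta)$.
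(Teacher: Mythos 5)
Your proposal is correct and follows essentially the same route as the paper: the implication i) $\Rightarrow$ ii) rests on Lemma~\ref{lemma:somma-vive1}, Proposition~\ref{prop:delta-vs-tau} and Corollary~\ref{cor:weak-vs-active} (the paper just packages your iterative saturation into the single explicit sum $\grb = \gra + \sum_{D \neq \grd(\gra)} \langle w_0\rho(D),\gra\rangle \grb_D$, while you argue by termination in the finite set $\Phi^+$), and iii) $\Rightarrow$ i) is the paper's kernel-of-the-action argument via Lemma~\ref{lemma:ideali}, merely phrased contrapositively at the Lie algebra level. The final statement $\grd(\gra)=\grd(\grb)$ is proved by exactly the same pairing computation as in the paper.
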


\begin{proof}


i)  $\Rightarrow$ ii) For all $D \in \scrD$, fix a root $\beta_D \in \Psi$ such that $\delta(\beta_D) = D$. This is possible thanks to Corollary~\ref{cor:delta-surjective}. Let now $\gra \in \weak$ and consider the element
\begin{equation}\label{eqn:activate}
\grb = \alpha + \sum_{D\in\scrD \senza \{\grd(\gra)\}} \langle \rho(D), \gra \rangle \grb_D.
\end{equation}
By Lemma \ref{lemma:somma-vive1} it follows that $\grb \in \weak$. On the other hand, if $D \in \scrD$, then Proposition \ref{prop:B/H-toric} and Theorem~\ref{teo:equivalent} imply
$$
	\langle \rho(D), \grb \rangle =
	\left\{ \begin{array}{ll}
			0 & \text{ if } D \neq \grd(\gra) \\
			-1 & \text{ if } D = \grd(\gra)
	\end{array}\right.
$$
Therefore $\grb \in \Psi$ by Corollary \ref{cor:weak-vs-active}, and we get ii).

ii) $\Rightarrow$ iii) Obvious.

iii) $\Rightarrow$ i) Suppose that $U_\gra$ acts trivially on $B/H$, we show that $\Phi^+(\gra) \cap \Psi = \vuoto$. Denote $N$ the kernel of the action of $U$ on $B/H$. Given $u \in U$, we have $u \in N$ if and only if $ubH = bH$ for all $b\in B$. On the other hand, for all $b\in B$, the equality $ubH = bH$ holds if and only if $u\in bHb^{-1}$. Therefore $N$ is the intersection of $U$ with the biggest normal subgroup of $B$ contained in $H$. Equivalently, $N$ is the biggest $T$-stable normal subgroup of $U$ contained in $H$. It follows that $N$ is stable under conjugation by $T$, hence it is the product of the root subgroups $U_\alpha$ which are contained in it.

By assumption we have $U_\gra \subset N$, hence $\gou_\gra \subset \gon$. By Lemma~\ref{lemma:ideali} the ideal $(\gou_\gra)$ generated by $\gou_\gra$ in $\gou$ is the direct sum of the root spaces $\gou_\grb$ with $\grb \in \Phi^+(\gra)$. On the other hand $\gon$ is an ideal of $\gou$, and by definition no root space $\gou_\gra$ with $\gra \in \Psi$ is contained in $\goh$. Therefore $\Phi^+(\gra) \cap \Psi = \vuoto$.

For the last statement of the theorem, let $\gra \in \weak$ and $\beta \in \Psi(\alpha)$. By Theorem~\ref{teo:equivalent} it follows that $\langle \rho(\grd(\gra)), \grb \rangle \leq -1$. On the other hand $\langle \rho(D), \grb \rangle = 0$ for all $D \in \scrD \senza \{\grd(\grb)\}$, therefore $\delta(\beta) = \delta(\alpha)$.
\end{proof}

As a first consequence of the previous theorem, we generalize Lemma \ref{lemma:somma-vive1} to arbitrary pairs of weakly active roots.

\begin{lemma}	\label{lemma:somma-vive}
Let $\gra, \grb  \in \weak$ be such that $\langle \rho(\grd(\grb)),\gra \rangle > 0$. Then $\gra + \grb \in \weak$, and $\grd(\gra+\grb) = \grd(\gra)$.
\end{lemma}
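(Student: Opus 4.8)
The plan is to follow the same strategy as in the proof of Lemma~\ref{lemma:somma-vive1}, the essential difference being that now $\grb$ is only weakly active. Because of this I cannot invoke Theorem~\ref{teo:equivalent} to know in advance that $\gra+\grb$ is a root of the toric variety $B/H$; instead I will first show that the two root derivations $\partial_\gra,\partial_\grb$ do not commute, and then \emph{deduce} from this both that $\gra+\grb\in\Phi^+$ and that $\gra+\grb\in\weak$, reading off $\grd(\gra+\grb)$ at the end. As a preliminary remark, since $\langle w_0\rho(\grd(\grb)),\gra\rangle>0$ while $\langle w_0\rho(\grd(\gra)),\gra\rangle=-1$, we have $\grd(\gra)\neq\grd(\grb)$.

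First I would carry out the commutator computation. Writing $\partial_\gra,\partial_\grb$ for the locally nilpotent derivations of $\mC[B/H]$ attached to $\gra,\grb\in\weak\subset\Root(B/H)$ and using (\ref{eqn:rooot-action}), a direct calculation gives, for every weight $\chi$,
\[
	[\partial_\gra,\partial_\grb](f_\chi)=\big\langle \langle w_0\rho(\grd(\gra)),\grb\rangle\, w_0\rho(\grd(\grb))-\langle w_0\rho(\grd(\grb)),\gra\rangle\, w_0\rho(\grd(\gra)),\,\chi\big\rangle\, f_{\chi+\gra+\grb}.
\]
Since $B/H$ is smooth its cone is simplicial, so the two distinct ray generators $w_0\rho(\grd(\gra))$ and $w_0\rho(\grd(\grb))$ are linearly independent; as the coefficient $\langle w_0\rho(\grd(\grb)),\gra\rangle$ is strictly positive, the functional in the bracket is nonzero. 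Evaluating on the open $T$-orbit, where every $f_\chi$ is invertible, this shows $[\partial_\gra,\partial_\grb]\neq0$.

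Next I would transfer this to the Lie algebra. Let $\psi:B\lra\Aut(B/H)$ be the homomorphism induced by the action; since $\gra,\grb\in\weak$ we have $d\psi(\gou_\gra)=\mC\partial_\gra$ and $d\psi(\gou_\grb)=\mC\partial_\grb$, whence $d\psi[\gou_\gra,\gou_\grb]=\mC[\partial_\gra,\partial_\grb]\neq0$. Therefore $[\gou_\gra,\gou_\grb]\neq0$, which forces $\gra+\grb\in\Phi^+$ and $\gou_{\gra+\grb}=[\gou_\gra,\gou_\grb]$. In particular $d\psi(\gou_{\gra+\grb})\neq0$, i.e.\ $U_{\gra+\grb}$ acts non-trivially on $B/H$, so $\gra+\grb\in\weak$.

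It then remains to identify $\grd(\gra+\grb)$. As $\gra+\grb\in\weak\subset\Root(B/H)$, exactly one $D\in\scrD^*$ satisfies $\langle w_0\rho(D),\gra+\grb\rangle=-1$, the remaining ones being $\geq0$. For $D=\grd(\grb)$ one computes $\langle w_0\rho(\grd(\grb)),\gra+\grb\rangle=\langle w_0\rho(\grd(\grb)),\gra\rangle-1\geq0$, and for $D\notin\{\grd(\gra),\grd(\grb)\}$ both summands are $\geq0$; hence the only divisor that can take the value $-1$ is $\grd(\gra)$, giving $\grd(\gra+\grb)=\grd(\gra)$ (and, as a by-product, $\langle w_0\rho(\grd(\gra)),\grb\rangle=0$). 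The main obstacle is the middle step, namely proving $\gra+\grb\in\Phi^+$: unlike in Lemma~\ref{lemma:somma-vive1}, I cannot first certify that $\gra+\grb$ is a toric root via the active-root criterion, so the non-vanishing of $[\partial_\gra,\partial_\grb]$---resting on the linear independence of the ray generators and on $\langle w_0\rho(\grd(\grb)),\gra\rangle>0$---is exactly what does the work. An alternative would be to choose, using Theorem~\ref{teo:deadker}, an active root $\grb'\in\Psi(\grb)$ with $\grd(\grb')=\grd(\grb)$ and apply Lemma~\ref{lemma:somma-vive1} to $\gra$ and $\grb'$, but this only yields $\gra+\grb'\in\weak$ and does not by itself establish that $\gra+\grb$ is a root.
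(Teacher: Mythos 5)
Your proof is correct, and it takes a genuinely different route from the paper's. The paper never extracts the non-commutation of $\partial_\gra,\partial_\grb$ from the hypothesis alone: it first reduces to the active case, choosing $\grb'\in\Psi(\grb)$ via Theorem~\ref{teo:deadker} and applying Lemma~\ref{lemma:somma-vive1} to get $\gra+\grb'\in\weak$, then takes $\grg\in\Psi(\gra+\grb')$ and uses the combinatorics of families (Proposition~\ref{prop:family} together with Theorem~\ref{teo:equivalent}) to verify that $\gra+\grb$ satisfies Definition~\ref{def:radici-toriche} with $\grd(\gra+\grb)=\grd(\gra)$; only after this does it run the commutator argument of Lemma~\ref{lemma:somma-vive1}, which is legitimate at that stage because the combinatorial part has produced the vanishing $\langle w_0\rho(\grd(\gra)),\grb\rangle=0$ on which that argument rests. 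You invert the logic: your explicit bracket formula, combined with the linear independence of the two distinct ray generators of the simplicial cone of the smooth toric variety $B/H$, shows $[\partial_\gra,\partial_\grb]\neq0$ using only $\langle w_0\rho(\grd(\grb)),\gra\rangle>0$; the Lie-algebra transfer then yields $\gra+\grb\in\Phi^+$ and $\gra+\grb\in\weak$, after which $\grd(\gra+\grb)=\grd(\gra)$ is immediate arithmetic on evaluations. Your route is shorter and bypasses the active-root machinery entirely; it even sharpens a loosely worded point, since the commutation criterion quoted in the proof of Lemma~\ref{lemma:somma-vive1} (equality of the two cross-evaluations) is, as stated, only adequate when one of them vanishes or the two distinguished divisors coincide, whereas your linear-independence formulation is the precise statement. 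What the paper's route buys is uniformity with the rest of Section~\ref{s:active}: it recycles Lemma~\ref{lemma:somma-vive1} and the evaluation formalism of Theorem~\ref{teo:equivalent}, and it obtains the value $\langle w_0\rho(\grd(\gra)),\gra+\grb\rangle=-1$ before knowing that $\gra+\grb$ is a root of $\Phi$ at all.
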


\begin{proof}
Let $\grb' \in \Psi(\grb)$, then $\grd(\grb') = \grd(\grb)$ and Lemma \ref{lemma:somma-vive1} implies that $\gra + \grb' \in \weak$, and $\grd(\gra+\grb') = \grd(\gra)$. Let $\grg \in \Psi(\gra+\grb')$, then $\grg = \gra + \grb + \grg_1+ \ldots+\grg_n$ for some $\grg_1, \ldots, \grg_n \in \Psi$ and Theorem \ref{teo:deadker} implies $\grd(\grg) = \grd(\gra)$. Therefore, by Theorem \ref{teo:equivalent}, $\langle \rho(D), \grg \rangle = 0$ for all $D \in \scrD\senza \{\grd(\gra)\}$, and $\langle \rho(\grd(\gra)), \grg \rangle = -1$. 
As $\{\grg_1, \ldots, \grg_n\} \subset F(\grg)$, by Proposition~\ref{prop:family} i) it follows $\grd(\grg_i) \neq \grd(\grg)$ for all $i$, hence $\langle \rho(\grd(\gra)), \grg_i \rangle = 0$ for all $i$ by Theorem \ref{teo:equivalent}, and it follows
$$\langle \rho(\grd(\gra)), \gra+\grb \rangle = \langle \rho(\grd(\gra)), \grg \rangle = -1.$$
On the other hand the hypothesis of the lemma implies $\langle \rho(\grd(\grb)), \gra + \grb \rangle \geq 0$, hence $\langle \rho(D), \gra + \grb \rangle \geq 0$ for all $D \in \scrD \senza \{\grd(\gra)\}$ and following Definition \ref{def:radici-toriche} we get $\gra + \grb \in \Root(B/H)$ and $\grd(\gra+\grb) = \grd(\gra)$.

To conclude the proof, we need to show that $\gra +\grb \in \Phi^+$ and that $U_{\gra+\grb}$ acts non-trivially on $B/H$. This is shown with the same argument used in Lemma \ref{lemma:somma-vive1}, which applies without any change in this more general case.
\end{proof}


Given $I\subset \scrD$ we denote
$$
	\Psi_I = \{\gra \in \Psi \st \grd(\gra) \in I\}
$$
In case $I = \{D\}$ is a single element, then we will also denote $\Psi_I$ simply by $\Psi_D$.

\begin{definition}	\label{def:activated-roots}
Let $I \subset \scrD$ and $\gra \in \weak$, then we say that $\gra$ is {\em activated by} $I$ if there is $\grb \in \mN\Psi_I$ with $\gra + \grb \in \Psi$, and we say that $\gra$ \textit{stabilizes} $I$ if there is $\grb \in \mN\Psi_I$ with $\gra + \grb \in \Psi_I$.
\end{definition}

We denote by
$$
	\Theta_I =  \{\gra \in \weak \, \st \,  \exists \grb \in \mN \Psi_I \text{ with } \gra + \grb \in \Psi \}
$$
the set of the weakly active roots activated by $I$, and by $\weak_I$ the set of the weakly active roots which stabilize $I$. By Theorem \ref{teo:deadker} we have 
$$
	\weak_I = \{\gra \in \Theta_I \, \st \,  \grd(\gra) \in I \}.
$$
Notice that, for all $I \subset \scrD$, we have $\Psi \subset \Theta_I$ and $\Psi_I \subset \weak_I$.

In the language just introduced, we have the following consequence of Theorem~\ref{teo:equivalent}.

\begin{proposition} \label{prop:radici-stabili}
Let $\gra \in \weak$ and $I \subset \scrD$.  Then
\begin{itemize}
	\item[i)] $\gra \in \Theta_I$ if and only if $\langle \rho(D), \gra \rangle \leq 0$ for all $D \in \scrD \senza I$,
	\item[ii)] $\gra \in \weak_I$ if and only if $\langle \rho(D), \gra \rangle = 0$ for all $D \in \scrD \senza I$.
\end{itemize}
In particular, we have the equality $\weak_I = \mN \weak_I \cap \weak$.
\end{proposition}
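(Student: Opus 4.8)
The plan is to derive both statements from the toric characterization of active roots in Theorem~\ref{teo:equivalent} and its companion Corollary~\ref{cor:weak-vs-active}, proving part~i) first and deducing part~ii) and the final equality from it. Throughout I use the elementary observation that any $\grb \in \mN\Psi_I$ is invisible outside $I$: writing $\grb = \sum_j c_j\grb_j$ with $\grb_j \in \Psi_I$, each $\grb_j$ has $\grd(\grb_j) \in I$, so Theorem~\ref{teo:equivalent} gives $\langle w_0\rho(D), \grb\rangle = 0$ for every $D \in \scrD^* \senza I$.

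For the forward implication of i), I would pick $\grb \in \mN\Psi_I$ with $\gra + \grb \in \Psi$, apply Theorem~\ref{teo:equivalent} to $\gra + \grb$ (so that its evaluations are $-1$ on $\grd(\gra+\grb)$ and $0$ elsewhere, in particular $\leq 0$ everywhere), and subtract the vanishing contribution of $\grb$ on the divisors outside $I$ to obtain $\langle w_0\rho(D), \gra\rangle \leq 0$ for all $D \in \scrD^*\senza I$.

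The reverse implication of i) is where the real work lies, and I expect it to be the main obstacle. Assuming $\langle w_0\rho(D), \gra\rangle \leq 0$ for all $D \notin I$, for each $D \in I$ with $n_D := \langle w_0\rho(D), \gra\rangle > 0$ I fix $\grb_D \in \Psi$ with $\grd(\grb_D) = D$ — possible by the surjectivity in Proposition~\ref{prop:delta-vs-tau}, and then automatically $\grb_D \in \Psi_I$ — and set $\grb = \sum_D n_D\grb_D \in \mN\Psi_I$. A direct evaluation shows $\langle w_0\rho(D), \gra+\grb\rangle \leq 0$ for \emph{every} $D \in \scrD^*$ (it is $0$ on the divisors where $n_D > 0$, and unchanged, hence $\leq 0$, elsewhere). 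By Corollary~\ref{cor:weak-vs-active} it then suffices to prove $\gra + \grb \in \weak$, which I obtain by adding the summands $\grb_D$ one at a time. The delicate point is the ordering: since distinct $\grb_D$ move distinct divisors, adding $\grb_{D'}$ does not change $\langle w_0\rho(D), \cdot\rangle$ for $D \neq D'$, so treating the divisors of $I$ one after another and adding all $n_D$ copies of $\grb_D$ consecutively keeps the evaluation on $D$ strictly positive until the last copy is added; this is exactly the hypothesis of Lemma~\ref{lemma:somma-vive}, which therefore applies at each individual step and keeps the partial sum weakly active. Hence $\gra + \grb \in \Psi$ and $\gra \in \weak(I)$.

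Part~ii) and the final equality are then short. Recall that a root $\gra \in \weak(I)$ stabilizes $I$ if and only if $\grd(\gra) \in I$. If $\gra \in \weak_I$, choosing $\grb \in \mN\Psi_I$ with $\gra + \grb \in \Psi_I$ gives $\grd(\gra+\grb) \in I$, so Theorem~\ref{teo:equivalent} yields $\langle w_0\rho(D), \gra+\grb\rangle = 0$ for $D \notin I$; subtracting the vanishing contribution of $\grb$ upgrades the inequality from i) to the equality $\langle w_0\rho(D), \gra\rangle = 0$. Conversely, if these evaluations all vanish for $D \notin I$ then $\gra \in \weak(I)$ by i), while $\langle w_0\rho(\grd(\gra)), \gra\rangle = -1 \neq 0$ because $\gra$ is a root of $B/H$; hence $\grd(\gra)$ cannot lie outside $I$, so $\grd(\gra)\in I$ and $\gra \in \weak_I$. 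Finally $\weak_I = \mN\weak_I \cap \weak$ follows at once from ii): the inclusion $\subseteq$ is trivial, and for the reverse any $\gra \in \weak$ lying in $\mN\weak_I$ has $\langle w_0\rho(D), \gra\rangle = 0$ for $D \notin I$ by linearity, whence $\gra \in \weak_I$ again by ii).
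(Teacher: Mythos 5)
Your proof is correct and takes essentially the same route as the paper: the element $\gra + \sum_D n_D\grb_D$ you build in the converse of i) is exactly the element $\grb \in \Psi(\gra)$ of (\ref{eqn:activate}) that the paper invokes, and your careful one-copy-at-a-time application of Lemma~\ref{lemma:somma-vive} followed by Corollary~\ref{cor:weak-vs-active} merely inlines the argument the paper already gave when proving Theorem~\ref{teo:deadker} (there via Lemma~\ref{lemma:somma-vive1}). Your treatment of ii) and of the equality $\weak_I = \mN\weak_I \cap \weak$ likewise matches the paper's reasoning, with the same use of the observation that $\gra \in \weak(I)$ stabilizes $I$ if and only if $\grd(\gra) \in I$.
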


\begin{proof}
Let $\gra \in \Theta_I$, then there exists $\grb \in \Psi$ such that $\grb-\gra \in \mN \Psi_I$. Applying Theorem~\ref{teo:equivalent} to $\grb$ it follows that $\langle \rho(D), \gra \rangle \leq 0$ for all $D \in \scrD \senza I$. Conversely, if such inequalities are all satisfied, then we may consider the element $\grb \in \Psi(\gra)$ defined as in (\ref{eqn:activate}). Then by Theorem \ref{teo:deadker} we have $\grd(\grb) = \grd(\gra)$, and since $\langle \rho(D), \gra \rangle \geq 0$ for all $D \in \scrD \senza \{\grd(\gra)\}$ it follows that $\grb - \gra \in \mN \Psi_I$. This shows i). Claim ii) follows by noticing that, if $\gra \in \Theta_I$, then by definition $\gra \in \weak_I$ if and only if $\delta(\alpha) \in I$, if and only if $\langle \rho(D), \grb \rangle \geq 0$ for all $D \in \scrD \senza I$.
\end{proof}

Given $I \subset \scrD$, let $\grs_I \subset \calX(B/H)^\vee_\mQ$ be the cone of the affine open subset of $B/H$ with closed $T$-orbit $\scrU_I$. By definition $\grs_I$ is generated by the one dimensional rays in $\calX(B/H)^\vee_\mQ$ which correspond to the $T$-stable prime divisors of $B/H$ which contain $\scrU_I$, namely the elements of $\scrD \senza I$. Hence
$$
	\grs_I = \cone(\rho(D) \st D \in \scrD \senza I),
$$
and we may rephrase Proposition~\ref{prop:radici-stabili} as follows.

\begin{proposition} \label{prop:radici-stabili2}
Let $\gra \in \weak$ and $I \subset \scrD$. Then $\gra \in \Theta_I$ if and only if $\gra_{|\grs_I} \leq 0$, and $\gra \in \weak_I$ if and only if $\gra_{|\grs_I} = 0$.
\end{proposition}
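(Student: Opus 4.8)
The plan is to recognize this as a direct geometric reformulation of Proposition~\ref{prop:radici-stabili}, using only the description of $\grs_I$ as a finitely generated cone. First I would recall that $\gra \in \weak \subset \Phi^+ \subset \calX(T)$ restricts to an element of $\calX(B/H) = w_0\calX(G/H)$, and hence defines a linear functional on the dual space $w_0\calX(G/H)^\vee_\mQ$ via the pairing $\langle -, \gra\rangle$. By definition $\grs_I = \cone(w_0\rho(D) \st D \in \scrD^* \senza I)$, so every element of $\grs_I$ is a nonnegative rational combination $\sum_{D} c_D\, w_0\rho(D)$ with $c_D \geq 0$ and $D$ ranging over $\scrD^* \senza I$.

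Next I would invoke the elementary fact that a linear functional is nonpositive on a convex polyhedral cone if and only if it is nonpositive on a chosen set of generators. Indeed, by linearity $\langle \sum_D c_D\, w_0\rho(D), \gra\rangle = \sum_D c_D \langle w_0\rho(D),\gra\rangle$, and since all $c_D \geq 0$ this sum is $\leq 0$ for every choice of coefficients precisely when each $\langle w_0\rho(D),\gra\rangle \leq 0$. The same computation with equalities replacing inequalities shows that $\gra$ vanishes identically on $\grs_I$ if and only if it vanishes on each generator $w_0\rho(D)$ with $D \in \scrD^* \senza I$.

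Finally I would combine these observations with Proposition~\ref{prop:radici-stabili}. That proposition asserts that $\gra \in \weak(I)$ if and only if $\langle w_0\rho(D),\gra\rangle \leq 0$ for all $D \in \scrD^* \senza I$, which by the above is exactly the condition $\gra\ristretto_{\grs_I} \leq 0$; likewise $\gra \in \weak_I$ if and only if $\langle w_0\rho(D),\gra\rangle = 0$ for all $D \in \scrD^* \senza I$, which is exactly $\gra\ristretto_{\grs_I} = 0$. This yields both equivalences simultaneously.

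There is no serious obstacle here: the mathematical content lies entirely in Proposition~\ref{prop:radici-stabili}, and the present statement is a routine translation of a finite list of inequalities on divisors into a single condition on the cone they generate. The only point requiring a moment of care is to match the generators of $\grs_I$ with the correct index set $\scrD^* \senza I$ rather than with $I$ itself; this is forced by the description of $\grs_I$ as the cone spanned by the rays corresponding to the $T$-stable prime divisors \emph{containing} the closed orbit $\scrU_I$.
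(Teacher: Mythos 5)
Your proof is correct and matches the paper exactly: the paper states this proposition as an immediate rephrasing of Proposition~\ref{prop:radici-stabili}, justified only by the description $\grs_I = \cone(w_0\rho(D) \st D \in \scrD^* \senza I)$, and your argument simply spells out the elementary fact that a linear functional is nonpositive (resp.\ zero) on a finitely generated cone if and only if it is so on the generators. You have correctly identified both the key reduction and the one point of care, namely that the generators of $\grs_I$ are indexed by $\scrD^* \senza I$ rather than by $I$.
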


Given $\gra \in \weak$, we are now ready to describe combinatorially the action of the root subgroup $U_\gra$ on the set of $T$-orbits on $B/H$ in terms of weakly active roots.

\begin{proposition}\label{prop:Ualphaactionfinal}
Let $\gra \in \weak$ and $I \subset \scrD$. Then $\scrU_I$ is not stable under the action of $U_\gra$ if and only if $\gra \in \Theta_I$. If this is the case, then we have
$$
	U_\alpha \scrU_I  = \left\{ \begin{array}{ll}
	\scrU_I \cup \scrU_{I \cup \{\delta(\gra)\}} & \text{if $\gra \in \Theta_I \senza \weak_I$}	\\
	\scrU_I \cup \scrU_{I \senza \{\delta(\gra)\}} & \text{if $\gra \in \weak_I$}
	\end{array} \right.
$$
In particular, $U_\gra \scrU_I \subset \ol \scrU_I$ if and only if either $\gra \in \weak_I$ or $\gra \not \in \Theta_I$.
\end{proposition}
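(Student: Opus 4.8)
The plan is to move everything to the toric variety $B/H$ and read off the action of $U_\gra$ from Proposition~\ref{prop:sottovarietà toriche stabili}. Since $\gra\in\weak$, the homomorphism $\psi : B \lra \Aut(B/H)$ induced by the action maps $U_\gra$ onto the one parameter unipotent subgroup $V_\gra$ attached to $\gra$ as a root of $B/H$ (as in the proof of Lemma~\ref{lem:divisori-stabili}), so $U_\gra\scrU_I = V_\gra\scrU_I$ for every $I$ and it suffices to study $V_\gra$. Throughout I use that $\scrU_I$ corresponds to the face $\grs_I = \cone(w_0\rho(D)\st D\in\scrD^*\senza I)$ of the cone of $B/H$, that this cone is simplicial (so faces correspond to subsets of rays and codimension one means one fewer ray), and that $\gra\in\Root(B/H)$ satisfies $\langle w_0\rho(\grd(\gra)),\gra\rangle = -1$ and $\langle w_0\rho(D),\gra\rangle\geq 0$ for $D\neq\grd(\gra)$ by Definition~\ref{def:radici-toriche}.

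First I would record a symmetry. By Proposition~\ref{prop:sottovarietà toriche stabili} i), either $\scrU_I$ is $V_\gra$-stable, or $V_\gra\scrU_I = \scrU_I\cup\scrU_{I'}$ for a single $I'\neq I$. In the latter case, being the image of an irreducible variety, $V_\gra\scrU_I$ is irreducible, which forces $\scrU_I$ and $\scrU_{I'}$ to be comparable for the closure order; moreover, writing a point of $\scrU_{I'}$ as $v\cdot y$ with $v\in V_\gra$ and $y\in\scrU_I$ and using that $V_\gra\scrU_{I'}$ is $T$-stable, one gets $y\in V_\gra\scrU_{I'}$ and hence $V_\gra\scrU_{I'} = \scrU_{I'}\cup\scrU_I$. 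Thus $V_\gra$ pairs $\scrU_I$ and $\scrU_{I'}$ symmetrically, so we may always apply Proposition~\ref{prop:sottovarietà toriche stabili} iii) to the pair formed by the larger orbit and the smaller one.

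Next I would pin down $I'$ assuming $\scrU_I$ is not $V_\gra$-stable. Say $\scrU_{I'}\subset\ol\scrU_I$, so $I'\subset I$ and $\grs_I\subset\grs_{I'}$. Proposition~\ref{prop:sottovarietà toriche stabili} iii) gives $\gra\ristretto_{\grs_{I'}}\leq 0$ and identifies $\grs_I = \grs_{I'}\cap\ker\gra$ as a codimension one face; hence $\grs_{I'}$ carries exactly one ray $w_0\rho(D_0)$ beyond those of $\grs_I$, i.e. $I = I'\cup\{D_0\}$, with $\langle w_0\rho(D_0),\gra\rangle\neq 0$ and $\langle w_0\rho(D),\gra\rangle = 0$ for the remaining $D\in\scrD^*\senza I$. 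The root property of $\gra$ together with $\gra\ristretto_{\grs_{I'}}\leq 0$ forces $D_0 = \grd(\gra)$. So $I' = I\senza\{\grd(\gra)\}$ with $\grd(\gra)\in I$ and $\gra\ristretto_{\grs_I} = 0$, that is $\gra\in\weak_I$ by Proposition~\ref{prop:radici-stabili2}. Applying the same argument to $\scrU_{I'}$ in the remaining case $\scrU_I\subset\ol\scrU_{I'}$ yields $I' = I\cup\{\grd(\gra)\}$ with $\grd(\gra)\notin I$ and $\gra\in\weak_{I'}$; since $\grd(\gra)\notin I$, translating via Proposition~\ref{prop:radici-stabili2} this is exactly the condition $\gra\in\weak(I)\senza\weak_I$. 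In either case $\gra\in\weak(I)$.

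For the converse I would run these computations backwards. If $\gra\in\weak_I$ then $\grd(\gra)\in I$, and for $I' = I\senza\{\grd(\gra)\}$ the relations $\gra\ristretto_{\grs_{I'}}\leq 0$ and $\grs_I = \grs_{I'}\cap\ker\gra$ follow from Proposition~\ref{prop:radici-stabili2} and the root property, so Proposition~\ref{prop:sottovarietà toriche stabili} iii) gives $\scrU_{I'}\subset V_\gra\scrU_I$ and hence $V_\gra\scrU_I = \scrU_I\cup\scrU_{I\senza\{\grd(\gra)\}}$. If instead $\gra\in\weak(I)\senza\weak_I$, then $\grd(\gra)\notin I$ and for $I' = I\cup\{\grd(\gra)\}$ Proposition~\ref{prop:radici-stabili2} gives $\gra\ristretto_{\grs_I}\leq 0$ while the root property identifies $\grs_{I'} = \grs_I\cap\ker\gra$ as a codimension one face; thus Proposition~\ref{prop:sottovarietà toriche stabili} iii) gives $\scrU_I\subset V_\gra\scrU_{I'}$, and the symmetry of the second paragraph yields $V_\gra\scrU_I = \scrU_I\cup\scrU_{I\cup\{\grd(\gra)\}}$. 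This establishes the equivalence and the displayed formula; the final assertion then follows from the formula, since in the non-stable regime the orbit joined to $\scrU_I$ lies in $\ol\scrU_I$ exactly when $I'\subset I$, that is when $\gra\in\weak_I$. The main obstacle is precisely the matching carried out in the third paragraph, namely translating the geometric content of Proposition~\ref{prop:sottovarietà toriche stabili} iii) (a codimension one face cut out by $\ker\gra$ and the sign of $\gra$ on the face) into the arithmetic conditions defining $\weak(I)$ and $\weak_I$, and using that $\grd(\gra)$ is the only divisor on which $\gra$ is negative to single out $I' = I\triangle\{\grd(\gra)\}$; the symmetry of the second paragraph is what lets part iii), which only detects smaller orbits, also govern the case in which $\scrU_I$ moves to a larger orbit.
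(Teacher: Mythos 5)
Your proof is correct and takes essentially the same route as the paper: reduce the action of $U_\gra$ to the one-parameter subgroup $V_\gra \subset \Aut(B/H)$, apply parts i) and iii) of Proposition~\ref{prop:sottovarietà toriche stabili}, and translate the resulting face conditions on $\grs_I$ into membership in $\weak(I)$ and $\weak_I$ via Proposition~\ref{prop:radici-stabili2}. The only difference is that you make explicit the symmetry step (that $V_\gra\scrU_I = \scrU_I\cup\scrU_{I'}$ forces $V_\gra\scrU_{I'} = \scrU_{I'}\cup\scrU_I$, so part iii), which only detects smaller orbits inside the sweep of larger ones, can be applied in both cases), a point the paper uses implicitly.
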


\begin{proof}
Consider the homomorphism of algebraic groups $B \ra \Aut(B/H)$, then in the notation of Section~\ref{ssec:spherical} the root group $U_\gra$ acts on $B/H$ as the one parameter unipotent subgroup $V_\gra \subset \Aut(B/H)$. Suppose that $\scrU_I$ is not stable under the action of $U_\alpha$. Then by Proposition~\ref{prop:sottovarietà toriche stabili} i) the set $U_\gra \scrU_I$ decomposes in the union of two $T$-orbits $\scrU_I \cup \scrU_J$ for some $J$.

Suppose $|I| < |J|$, then $\scrU_I \subset \ol{\scrU_J}$ and Proposition~\ref{prop:sottovarietà toriche stabili} iii) implies $\gra_{|\grs_I} \leq 0$ and $\grs_J = \grs_I \cap \ker \gra$ is a facet of $\grs_I$. By Proposition~\ref{prop:radici-stabili2} we get then $\gra \in \Theta_I \senza \weak_I$. If instead $|I| > |J|$, then $\scrU_J \subset \ol{\scrU_I}$ and the same proposition implies $\gra_{|\grs_J} \leq 0$ and $\grs_I = \grs_J \cap \ker \gra$, hence $\gra \in \weak_I$ by Proposition~\ref{prop:radici-stabili2}.

Conversely, suppose that $\gra \in \Theta_I$. Then Proposition~\ref{prop:radici-stabili2} implies $\gra_{|\grs_I} \leq 0$, and by the description of $\grs_I$ it follows $\gra_{|\grs_I} = 0$ if and only if $\grd(\gra) \in I$. Then the claim follows by Proposition~\ref{prop:sottovarietà toriche stabili} iii) applied to the pair $\scrU_{I \senza \grd(\gra)} \subset \ol{\scrU_I}$ or to the pair $\scrU_I \subset \ol{\scrU_{I \cup \grd(\gra)}}$, depending on whether $\grd(\gra) \in I$ or $\grd(\gra) \not \in I$.
\end{proof}

\begin{remark}
In particular, given $\gra \in \weak$ and $I \subset \scrD$, we have the following properties:
\begin{itemize}
	\item[i)] If $U_\gra \scrU_I \neq \scrU_I$, then the closure $\ol{\scrU_I}$ is $U_\gra$ stable if and only if $\gra \in \weak_I$.
	\item[ii)] If $\gra \in \weak_I$, then $\gra \in \Theta_{I \senza \{\delta(\gra)\}}$.
\end{itemize}
\end{remark}

\subsection{Combinatorics related to the weakly active roots.}

Recall the following fundamental property of the active roots.

\begin{lemma}[{\cite[Lemma 7]{Avd1}}]	\label{lemma:differenza-attive-avdeev}
For all $\grb \in F(\gra)\senza\{\gra\}$ it holds $\gra - \grb \in \Phi^+$.
\end{lemma}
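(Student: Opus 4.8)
The plan is to reduce the statement to a positivity property of the scalar product and then read that off from the combinatorics of active roots. Write $\grg = \gra - \grb$. Since $\grb \in F(\gra)$ we have $\grb \leq \gra$ in the dominance order, so $\grg \in \mN\grD$, and $\grg \neq 0$ because $\grb \neq \gra$; hence it suffices to prove that $\grg$ is a root, as it will then automatically lie in $\Phi^+$. The standard way to realize a difference as a root is through the $\grb$-string through $\gra$: if $(\gra,\grb) > 0$ then, since $\gra$ and $\grb$ are distinct positive (hence non-proportional) roots, the root string forces $\gra - \grb \in \Phi$. Thus the whole statement reduces to the inequality $(\gra,\grb) > 0$, where $(-,-)$ is the fixed invariant scalar product on $\got^*$.

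To attack this positivity I would first record the pairing identities coming from the toric picture. By Proposition~\ref{prop:family} i) the divisors $\grd(\gra)$ and $\grd(\grb)$ are distinct, so Theorem~\ref{teo:equivalent} gives $\langle w_0\rho(\grd(\grb)),\gra\rangle = 0$ and $\langle w_0\rho(\grd(\gra)),\grb\rangle = 0$, while $\langle w_0\rho(\grd(\gra)),\gra\rangle = \langle w_0\rho(\grd(\grb)),\grb\rangle = -1$. Consequently $\grg = \gra-\grb$ pairs to $-1$ with $\grd(\gra)$, to $+1$ with $\grd(\grb)$, and to $0$ with every other $D \in \scrD^*$. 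These identities, together with the support condition $\supp(\grb) \subset \supp(\gra)$ from Proposition~\ref{prop:family} ii) and the linear independence of $F(\gra)$, are the combinatorial input that should pin down the sign of $(\gra,\grb)$. Concretely, I would argue by contradiction: assuming $(\gra,\grb) \leq 0$ one gets $(\grb,\grg) = (\gra,\grb) - (\grb,\grb) \leq -(\grb,\grb) < 0$, hence there is a simple root $\gra_i \in \supp(\grg) \subset \supp(\gra)$ with $(\grb,\gra_i) < 0$, so that $\grb + \gra_i \in \Phi^+$; the aim would then be to propagate this step inside $F(\gra)$ and contradict the fact, guaranteed by Avdeev's theory, that $F(\gra)$ behaves like an independent system of active roots of $\gra$.

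I expect the positivity $(\gra,\grb) > 0$ to be the real obstacle, and it is precisely the point where the hypothesis that $\gra$ and $\grb$ are \emph{active} is indispensable: for general positive roots the conclusion fails (for instance $\gra_1$ and $\gra_1 + 2\gra_2$ in type $\sfB_2$, or $\gra_2$ and $\gra_1+\gra_2+\gra_3$ in type $\sfA_3$, satisfy $\grb \leq \gra$ but $\gra - \grb \notin \Phi$). Hence the inequality cannot be deduced from the root system alone and must exploit the fine structure of active roots and of their families $F(\gra)$, which is exactly the content of \cite[Lemma 7]{Avd1} and is available to us through Theorem~\ref{teo:equivalent} and Proposition~\ref{prop:family}. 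An alternative, possibly cleaner, route would be an induction on $\height(\gra) - \height(\grb)$: the base case is trivial since a height-one difference $\grg$ is a simple root, and for the inductive step one would seek an active root strictly between $\grb$ and $\gra$ in $F(\gra)$ to which the hypothesis applies; the difficulty there is that the intermediate roots produced by Lemma~\ref{lemma:somma-radici} need not be active, so one must again invoke Avdeev's combinatorics to guarantee an intermediate step staying inside $\Psi$.
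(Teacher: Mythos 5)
Your proposal is not a proof, and its central reduction step targets a statement that is actually false. (On the comparison itself: the paper gives no proof of this lemma at all — it is imported verbatim as \cite[Lemma 7]{Avd1} and used as a black box — so your attempt has to stand on its own.) You reduce the claim to the strict inequality $(\gra,\grb)>0$ and then, as you yourself admit, leave that inequality as "the real obstacle", with only an uncompleted sketch ("propagate this step inside $F(\gra)$ and contradict\dots"). The trouble is that no completion exists, because the inequality can fail. Take $G$ of type $\sfB_2$, with $\gra_1=\epsilon_1-\epsilon_2$ long and $\gra_2=\epsilon_2$ short, and let $H=TU_{\gra_1}U_{\gra_1+2\gra_2}$. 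This is a subgroup (since $2\gra_1+2\gra_2\notin\Phi$, the set $\Phi^+\senza\Psi=\{\gra_1,\gra_1+2\gra_2\}$ is closed under addition), it is strongly solvable, and it is spherical by \cite[Theorem 1]{Avd1} — the same criterion invoked in the proof of Theorem~\ref{teo:knop-conj} — because $\Psi=\{\gra_2,\gra_1+\gra_2\}$ is linearly independent. Here $\gra=\gra_1+\gra_2=\epsilon_1$ is an active root of type 1 in Table~\ref{tab: active roots} with $\pi(\gra)=\gra_1$, and $F(\gra)\senza\{\gra\}=\{\gra_2\}$. The lemma holds, since $\gra-\gra_2=\gra_1\in\Phi^+$, yet $(\gra,\gra_2)=(\epsilon_1,\epsilon_2)=0$. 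So your intermediate target is strictly stronger than the lemma and is simply not true; consistently, your contradiction argument launched from $(\gra,\grb)\leq 0$ goes nowhere in this example: the step you describe merely produces $\grb+\gra_1=\gra\in\Psi$, which contradicts nothing.

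The failure is structural, not an accident of the sketch. The implication "$\gra-\grb\in\Phi$ forces $(\gra,\grb)>0$" holds only in simply-laced types; in types $\sfB$, $\sfC$, $\sfF_4$, $\sfG_2$ the difference of two orthogonal short roots can be a long root, and, as the example shows, exactly this configuration occurs among active roots. Hence the root-string mechanism cannot carry the whole weight: the orthogonal case must be handled by the fine structure of active roots themselves, either via Avdeev's original inductive argument (which works with the structure of $\goh$, not just with the root system), or by a case check against Theorem~\ref{teo:avdeev} and Table~\ref{tab: active roots} using Proposition~\ref{prop:family}~ii) — in the spirit of the five cases listed in the proof of Corollary~\ref{cor:weak-family}. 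Your alternative route, induction on $\height(\gra)-\height(\grb)$, suffers from the defect you already identified: Lemma~\ref{lemma:somma-radici} produces intermediate roots with no guarantee of activity, so that induction also cannot be closed without importing Avdeev's combinatorics, which your proposal invokes only as a hope rather than as an argument.
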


We will need a generalization of the previous property, which holds for any combination of roots in $F(\gra)$.

\begin{lemma} \label{lemma:differenza-attive}
Let $\gra \in \Psi$ and let $\grb \in \mN \Psi$ be such that $\grb < \gra$, then $\gra - \grb \in \Phi^+$.
\end{lemma}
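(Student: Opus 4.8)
The plan is to induct on $k = \sum_i c_i$, where $\grb = \sum_i c_i \grg_i$ is the expression of $\grb$ as a nonnegative integer combination of distinct active roots $\grg_i \in \Psi$ with $c_i \geq 1$. First I would record some reductions. Since $\gra - \grb \in \mN\grD$ and each $\grg_i$ has nonnegative simple-root coordinates, every $\grg_i$ satisfies $\supp(\grg_i) \subset \supp(\gra)$, so that $\grg_i \in F(\gra)$ by Proposition~\ref{prop:family}~ii); moreover $\grg_i \neq \gra$ because $\grb < \gra$. By Proposition~\ref{prop:family}~i) the family $F(\gra)$ is linearly independent and $\grd$ is injective on it, hence the divisors $\grd(\gra), \grd(\grg_1), \grd(\grg_2), \ldots$ are pairwise distinct. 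The base case $k = 1$, in which $\grb = \grg_1 \in F(\gra)\senza\{\gra\}$, is precisely Lemma~\ref{lemma:differenza-attive-avdeev}.

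For the organization of the inductive step I would first compute the relevant toric data. Set $\xi = \gra - \grb$, so $\xi \in \mZ\Psi \subset \calX(B/H)$ and $\xi \in \mN\grD \senza \{0\}$. Using Theorem~\ref{teo:equivalent} applied to $\gra$ and to each $\grg_i$, together with the distinctness of the divisors just established, one gets $\langle w_0\rho(\grd(\gra)), \xi\rangle = -1$, $\langle w_0\rho(\grd(\grg_i)), \xi\rangle = c_i \geq 0$, and $\langle w_0\rho(D), \xi\rangle = 0$ for every other $D \in \scrD^*$. Thus $\xi$ meets the condition of Definition~\ref{def:radici-toriche}, so $\xi \in \Root(B/H)$ with $\grd(\xi) = \grd(\gra)$; in particular $\gra = \xi + \grb \in \Psi(\xi)$. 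This records the combinatorial skeleton of $\xi$ and shows that $\xi$ is at least a root of the toric variety $B/H$.

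The remaining, and main, difficulty is to upgrade the toric root $\xi$ to an honest positive root of $\Phi$ --- which is exactly what can fail for a general element of $\Root(B/H)$. To carry out the induction I would peel off a single active root: choose $\grg = \grg_{i_0}$ maximal in the dominance order among the $\grg_i$ occurring in $\grb$, put $\grb' = \grb - \grg \in \mN\Psi$, and note $\grb' < \gra$, so that by the inductive hypothesis $\eta := \gra - \grb' \in \Phi^+$. Since $\eta - \grg = \xi \in \mN\grD \senza\{0\} \subset \mN\Phi^+$, we have $\eta \in \Phi^+(\grg)\senza\{\grg\}$, and Lemma~\ref{lemma:somma-radici} produces a chain of positive roots joining $\grg$ to $\eta$ by positive-root increments. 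What must still be shown is that the last increment can be taken to be $\grg$ itself, i.e.\ that the $\grg$-root string through $\eta$ descends at least one step, so that $\eta - \grg \in \Phi$ (and hence $\eta - \grg = \xi \in \Phi^+$). I would prove this by the scalar-product technique of Lemma~\ref{lemma:somma-radici}: it suffices to verify $(\eta,\grg) > 0$ together with $\eta + \grg \notin \Phi$, which forces $\langle \eta, \grg^\vee\rangle \geq 1$ and thus $\eta - \grg \in \Phi$.

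The hard part is precisely this positivity step, and it is here that the active-root hypothesis must be used in an essential way: for a sum of arbitrary positive roots the difference $\eta - \grg$ need not be a root, and the degenerate configurations (where a toric root of $B/H$ is not a genuine root) are exactly those excluded by Avdeev's theory. Concretely, I expect to rule them out using Lemma~\ref{lemma:differenza-attive-avdeev} applied to the active root $\gra$ (which controls $\gra - \grg \in \Phi^+$ for each summand) together with the linear independence and support constraints of Proposition~\ref{prop:family}, re-examining the $\grg$-string through $\eta$ by means of the identity $\eta = \xi + \grg$ and the toric pairings computed above. Making the choice of the dominance-maximal $\grg$ interact correctly with these constraints so as to guarantee $(\eta,\grg)>0$ is the step I anticipate to be the most delicate.
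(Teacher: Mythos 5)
Your setup (the toric computation showing $\gra-\grb \in \Root(B/H)$ with $\grd(\gra-\grb)=\grd(\gra)$) and your base case are correct and match the paper, but the proof is incomplete exactly where you say it is: the positivity step is never established, and in fact the criterion you propose is \emph{false} in concrete cases, so the argument cannot be finished along the route you outline. Take $\Phi$ of type $\sfB_2$ and $\gra = \gra_1+2\gra_2 \in \Psi$ an active root of type 2 in Table~\ref{tab: active roots}; then $\pi(\gra)=\gra_1$ and $F(\gra)=\{\gra,\gra_2\}$, so $\gra_2\in\Psi$. Take $\grb = 2\gra_2 \in \mN\Psi$, which satisfies $\grb<\gra$. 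Your induction peels off $\grg=\gra_2$, giving $\eta = \gra-\gra_2 = \gra_1+\gra_2 \in \Phi^+$ by the base case, and then requires $(\eta,\grg)>0$: but $(\gra_1+\gra_2,\gra_2) = (\gra_1,\gra_2)+(\gra_2,\gra_2) = 0$, and your fallback hypothesis $\eta+\grg\notin\Phi$ also fails since $\eta+\grg=\gra\in\Phi$. The conclusion $\eta-\grg=\gra_1\in\Phi^+$ is still true (the $\grg$-string through $\eta$ extends on both sides), but no argument based on forcing $\langle\eta,\grg^\vee\rangle\geq 1$ can detect this; you would need genuinely different information to control the string.

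The paper circumvents this Euclidean obstruction entirely. It writes $\gra-\grb = \gra_1+\ldots+\gra_n$ as an \emph{arbitrary} sum of positive roots and inducts on $n$, not on the number of active summands of $\grb$. Each $\gra_i$ is weakly active by Theorem~\ref{teo:deadker} (since $\gra\in\Phi^+(\gra_i)\cap\Psi$); a counting argument on the pairings $\langle w_0\rho(\grd(\gra_1)),-\rangle$ (which sum to zero against $\gra$, are $\leq 0$ against $\grb$, and give $-1$ against $\gra_1$) produces a second summand $\gra_2$ with $\langle w_0\rho(\grd(\gra_1)),\gra_2\rangle>0$; then Lemma~\ref{lemma:somma-vive} merges the two into a single weakly active root $\gra_1+\gra_2\in\weak\subset\Phi^+$, reducing $n$. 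The point is that Lemma~\ref{lemma:somma-vive} converts toric positivity into the root condition by a Lie-theoretic mechanism --- the derivations $\partial_{\gra_1},\partial_{\gra_2}$ of $\mC[B/H]$ fail to commute, forcing $[\gou_{\gra_1},\gou_{\gra_2}]\neq 0$ --- rather than by scalar products in $\grL_\mQ$. That is the missing idea in your proposal: the ``upgrade'' from a root of the toric variety to a root of $\Phi$ must go through the action on $B/H$, not through the geometry of the weight lattice.
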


\begin{proof}
Write $\gra-\grb = \gra_1 + \ldots + \gra_n$ with $\gra_i \in \Phi^+$, we show the claim by induction on $n$. We have $\alpha\in\Phi^+(\alpha_i)\cap\Psi$ for all $i\in\{1,\ldots,n\}$, so Theorem~\ref{teo:deadker} yields $\alpha_i\in\weak$.

Notice that by Theorem \ref{teo:equivalent} we have $\langle \rho(D),\beta \rangle \leq 0$ for all $D \in \scrD$, hence $\langle \rho(D),\alpha-\beta \rangle < 0$ at most for one $D \in \scrD$, and  if such $D$ exists then it is $\grd(\gra)$. On the other hand, write $\grb = \grb_1 + \ldots + \grb_m$ for $\grb_1,\ldots,\grb_m \in \Psi$: then $\grb_j\leq\gra$ for all $j$ and from Proposition~\ref{prop:family} i) we deduce $\grd(\grb_j) \neq \grd(\gra)$. Therefore $\langle \rho(\grd(\gra)),\gra-\grb \rangle =-1$, and following Definition \ref{def:radici-toriche} we get $\gra_1+\ldots+\gra_n \in \Root(B/H)$.

Suppose that $n > 1$, then there is at least one index $i$ with $\grd(\gra_i) \neq \grd(\gra)$, say $i=1$: this implies $\langle \rho(\grd(\gra_1)), \gra \rangle = 0$ by Theorem \ref{teo:equivalent}. Since $\langle \rho(\grd(\gra_1)), \gra_1 \rangle = -1$ and $\langle \rho(\grd(\gra_1)), \grb \rangle \leq 0$, it follows that there is at least one index $i$ such that $\langle \rho(\grd(\gra_1)), \gra_i \rangle > 0$, say $i=2$. It follows by Lemma \ref{lemma:somma-vive} that $\gra_1+\gra_2 \in \weak$, therefore we can apply the inductive hypothesis and we get $\gra \in \Phi^+$.
\end{proof}

Thanks to Theorem~\ref{teo:deadker}, we deduce the following descriptions.

\begin{corollary}	\label{cor:weak-as-differences}
Let $I \subset \scrD$, then we have the equalities
\[
	\Theta_I = \{\gra - \grb \st \gra \in \Psi, \; \grb \in \mN \Psi_I, \; \grb < \gra\},	
\]
\[
	\weak_I = \{\gra - \grb \st \gra \in \Psi_I, \; \grb \in \mN \Psi_I, \; \grb <  \gra\}.
\]
\end{corollary}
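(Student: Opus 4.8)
The plan is to prove both set equalities by double inclusion, using the substitution $\gra = \gamma + \grb$ to pass between a weakly active root $\gamma$ and a pair $(\gra,\grb)$ with $\gra \in \Psi$ (resp.\ $\gra \in \Psi_I$) and $\grb \in \mN\Psi_I$. The essential content of both statements is just a repackaging of the two results established immediately above: the characterization of $\weak$ in Theorem~\ref{teo:deadker} and the positivity of differences in Lemma~\ref{lemma:differenza-attive}. The conditions referring to $I$ will turn out to be almost tautological once the witness $\grb$ is fixed, so the two equalities can be handled in parallel.

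For the inclusions $\supseteq$, which carry the real content, I would start from $\gamma = \gra - \grb$ with $\gra \in \Psi$ (resp.\ $\gra \in \Psi_I$), $\grb \in \mN\Psi_I$ and $\grb < \gra$, and check three things. First, $\gamma \in \Phi^+$: since $\mN\Psi_I \subset \mN\Psi$, this is exactly Lemma~\ref{lemma:differenza-attive}. Second, $\gamma \in \weak$: because $\gamma + \grb = \gra \in \Psi$ with $\grb \in \mN\Psi$, the set $\Psi(\gamma) = (\gamma + \mN\Psi) \cap \Psi$ is non-empty, so Theorem~\ref{teo:deadker} gives $\gamma \in \weak$ (this uses the positivity just established, as the theorem requires $\gamma \in \Phi^+$). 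Third, $\gamma$ is activated by $I$ (resp.\ stabilizes $I$): the very same $\grb \in \mN\Psi_I$ satisfies $\gamma + \grb = \gra \in \Psi$ (resp.\ $\in \Psi_I$), which is precisely the defining condition in Definition~\ref{def:activated-roots}. Combining these yields $\gamma \in \weak(I)$ (resp.\ $\gamma \in \weak_I$).

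For the reverse inclusions $\subseteq$, I would take $\gamma \in \weak(I)$ (resp.\ $\gamma \in \weak_I$) and pick a witness $\grb \in \mN\Psi_I$ with $\gra = \gamma + \grb \in \Psi$ (resp.\ $\in \Psi_I$), as furnished directly by the definition of activation (resp.\ stabilization). Then $\gamma = \gra - \grb$ with $\gra$ and $\grb$ of the required type, and the inequality $\grb < \gra$ holds because $\gamma \in \weak \subset \Phi^+$ is a nonzero element of $\mN\grD$, so that $\gra - \grb = \gamma \in \mN\grD \senza \{0\}$ in the dominance order.

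The argument presents no serious obstacle, since all the hard work has been absorbed into Lemma~\ref{lemma:differenza-attive}. The only point requiring a little care is to keep track of the distinction between $\mN\Psi_I$ and $\mN\Psi$: one must use the finer membership $\grb \in \mN\Psi_I$ to read off activation and stabilization by $I$, while using the weaker membership $\grb \in \mN\Psi$ when invoking Lemma~\ref{lemma:differenza-attive} and Theorem~\ref{teo:deadker}. The two equalities then follow by the same scheme, with $\Psi$ replaced throughout by $\Psi_I$ in the second one.
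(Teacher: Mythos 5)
Your proof is correct and follows exactly the route the paper intends: the paper states this corollary as an immediate consequence of Theorem~\ref{teo:deadker} (for weak activity), with Lemma~\ref{lemma:differenza-attive} guaranteeing positivity of the difference and Definition~\ref{def:activated-roots} unwound verbatim for the activation/stabilization conditions, which is precisely your argument. The bookkeeping you highlight (using $\grb \in \mN\Psi_I$ for the $I$-conditions but only $\grb \in \mN\Psi$ for the two cited results) is handled correctly.
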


Let $\gra \in \weak$ and let $\grb \in \Psi$ be such that $\gra + \grb \in \Psi$, then \cite[Proposition 1]{Avd1} shows that $\gra + \grb' \in \Psi$ for all $\grb' \in \Psi$ with $\grd(\grb') = \grd(\grb)$. We will need the following generalization of this property.

\begin{proposition} 	\label{prop:attivazioni-equivalenti}
Let $\gra \in \mN \Phi^+$ and let $\grb_1, \ldots, \grb_n \in \Psi$ be such that $\gra + \sum_i a_i \grb_i \in \Psi$ for some $a_1, \ldots, a_n \in \mN$. Then $\gra \in \weak$, and $\gra + \sum_i a_i \grb_i' \in \Psi$ for all $\grb'_1, \ldots, \grb'_n \in \Psi$ such that $\grd(\grb_1) = \grd(\grb_1'), \ldots, \grd(\grb_n) = \grd(\grb_n')$. Moreover we have $\grd(\gra + \sum_i a_i \grb_i) = \grd(\gra + \sum_i a_i \grb'_i)$.
\end{proposition}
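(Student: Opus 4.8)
The plan is to deduce both the membership $\gra \in \weak$ and the activation claim from the evaluation criterion of Theorem~\ref{teo:equivalent}, after first checking that $\gra$ is a single root. I would write $\grg = \gra + \sum_i a_i \grb_i \in \Psi$. Assuming $\gra \neq 0$, we have $\sum_i a_i \grb_i \in \mN\Psi$ with $\grg - \sum_i a_i \grb_i = \gra \in \mN\grD \senza \{0\}$, so $\sum_i a_i \grb_i < \grg$ in the dominance order and Lemma~\ref{lemma:differenza-attive} gives $\gra \in \Phi^+$. Since then $\grg = \gra + \sum_i a_i \grb_i$ lies in $(\gra + \mN\Phi^+) \cap \Phi^+ = \Phi^+(\gra)$ and also in $\Psi$, the implication iii) $\Rightarrow$ i) of Theorem~\ref{teo:deadker} yields $\gra \in \weak$. (If $\gra = 0$, the linear independence of $F(\grg)$ in Proposition~\ref{prop:family} forces $\grg$ to equal one of the $\grb_i$ with coefficient $1$, and the remaining assertions become trivial.)

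The key point for the activation claim is that, for an active root, the whole tuple of evaluations on the $T$-stable divisors depends only on its image under $\grd$. Indeed Theorem~\ref{teo:equivalent}, together with the subsequent identification of $D_\grb$ with $\grd(\grb)$, tells us that for every $\grb \in \Psi$ the value $\langle w_0\rho(D), \grb \rangle$ equals $-1$ when $D = \grd(\grb)$ and $0$ for every other $D \in \scrD^*$. Hence the hypothesis $\grd(\grb_i) = \grd(\grb_i')$ gives $\langle w_0\rho(D), \grb_i \rangle = \langle w_0\rho(D), \grb_i' \rangle$ for every $D \in \scrD^*$.

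I would then set $\grg' = \gra + \sum_i a_i \grb_i'$ and verify that it satisfies the hypotheses of Theorem~\ref{teo:equivalent}: it lies in $\mN\Phi^+$ because $\gra \in \mN\Phi^+$ and each $\grb_i' \in \Psi \subset \Phi^+$, and it lies in $\mZ\Psi$ because $\gra = \grg - \sum_i a_i \grb_i \in \mZ\Psi$. By linearity of the pairing and the previous paragraph, $\langle w_0\rho(D), \grg' \rangle = \langle w_0\rho(D), \grg \rangle$ for all $D \in \scrD^*$. Since $\grg \in \Psi$, this common evaluation is $-1$ on $\grd(\grg)$ and $0$ elsewhere, so the converse direction of Theorem~\ref{teo:equivalent} applied to $\grg'$ gives simultaneously $\grg' \in \Psi$ and $\grd(\grg') = \grd(\grg)$, settling both the activation claim and the equality of divisors.

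The only delicate step I foresee is the first one: upgrading the assumption $\gra \in \mN\Phi^+$ to $\gra \in \Phi^+$, so that the statement $\gra \in \weak$ is even meaningful. This is exactly what Lemma~\ref{lemma:differenza-attive} is tailored to provide, once the dominance-order hypothesis $\sum_i a_i \grb_i < \grg$ is checked; after that, the activation claim reduces to bookkeeping with the evaluation vectors and is essentially immediate from Theorem~\ref{teo:equivalent}.
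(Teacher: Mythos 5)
Your proof is correct, and it follows a genuinely different route from the paper's. The paper argues by induction on $a = \sum_i a_i$: the base case ($n=1$, $a_1=1$) is Avdeev's result \cite[Proposition 1]{Avd1}, made applicable by Lemma~\ref{lemma:differenza-attive-avdeev}, and the inductive step replaces one active root at a time by invoking \cite[Proposition 1]{Avd1} again; the equality of divisors is then proved separately, via Lemma~\ref{lemma:differenza-attive} and Theorem~\ref{teo:deadker} applied to $\grg, \grg' \in \Psi(\gra)$. You instead reduce everything to a single application of the evaluation criterion: after Lemma~\ref{lemma:differenza-attive} upgrades $\gra \in \mN\Phi^+$ to $\gra \in \Phi^+$ (in substance the same first step as the paper's use of its special case, Lemma~\ref{lemma:differenza-attive-avdeev}), you observe that the full evaluation vector $\big(\langle w_0\rho(D), \grb\rangle\big)_{D \in \scrD^*}$ of an active root $\grb$ depends only on $\grd(\grb)$, so by linearity $\grg' = \gra + \sum_i a_i \grb_i'$ has the same evaluation vector as $\grg$; the converse direction of Theorem~\ref{teo:equivalent}, whose hypotheses $\grg' \in \mN\Phi^+$ and $\grg' \in \mZ\Psi$ you verify correctly, then yields $\grg' \in \Psi$ and $\grd(\grg') = \grd(\grg)$ simultaneously. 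There is no circularity: Lemma~\ref{lemma:differenza-attive}, Theorem~\ref{teo:deadker} and Theorem~\ref{teo:equivalent} all precede the proposition and do not depend on it. As for what each approach buys: the paper's induction delegates the real work to Avdeev's one-root statement at the cost of bookkeeping with intermediate sums, while your argument avoids both the induction and the external citation, is shorter, and produces the divisor identity as a byproduct rather than as a separate step; it is also in the spirit of the evaluation arguments the paper itself uses for Lemma~\ref{lemma:somma-vive1} and Proposition~\ref{prop:radici-stabili}. Finally, your explicit treatment of the degenerate case $\gra = 0$ (where the assertion $\gra \in \weak$ is vacuous) is slightly more careful than the paper, which tacitly assumes $\gra \neq 0$.
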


\begin{proof}
Denote $\grg = \gra + \sum_i a_i \grb_i$. We show the claim by induction on the sum $a = \sum_i a_i$. Suppose that $a = 1$, then $n=1$ and Lemma~\ref{lemma:differenza-attive-avdeev} shows $\gra = \grg - \grb_1 \in \Phi^+$. Therefore the claim follows by \cite[Proposition 1]{Avd1}.

Suppose now that $n >1$, then $\grb_n \in F(\grg)$ and Lemma~\ref{lemma:differenza-attive-avdeev} implies
$$
	\gra + a_1 \grb_1 + \ldots + a_{n-1} \grb_{n-1} + (a_n-1)\grb_n \in \Phi^+.
$$
Setting $\gra' = \gra + \grb_n'$, by \cite[Proposition 1]{Avd1} it follows
$$
	\gra' + a_1 \grb_1 + \ldots + a_{n-1} \grb_{n-1} + (a_n-1)\grb_n \in \Psi,
$$
and we conclude that $\gra + \sum_i a_i \grb_i' \in \Psi$ by the inductive hypothesis.

For the last claim, notice that $\gra \in \weak$ by Lemma~\ref{lemma:differenza-attive}. Denote $\grg' = \gra + \sum a_i \grb'_i$, then we have $\grg, \grg' \in \Psi(\gra)$, therefore $\grd(\grg') = \grd(\grg) = \grd(\gra)$ by Theorem~\ref{teo:deadker}.
\end{proof}

We now show that the semigroup generated by the set of weakly active roots $\weak_I$ which stabilize a given subset $I \subset \scrD$ is \textit{saturated} in the root lattice $\mZ \grD$, that is
\[
	\mN \weak_I = \cone(\weak_I) \cap \mZ \grD.
\]
More precisely, we will show that $\mN\weak_I$ is the intersection of the semigroup generated by the positive roots with the rational vector space generated by $\Psi_I$. In order to do this, we will need a couple of preliminary lemmas.

\begin{definition}\label{def:spherical-roots}
We denote by $\grS$ the union of the sets $\supp(\grb)$ for $\grb \in \Psi$.
\end{definition} 

\begin{proposition}	\label{prop:spherical-roots}
We have the equality $\grS = \weak \cap \grD$. In particular $\mZ \Psi = \mZ \weak = \mZ \grS$ and $\mZ\Psi \cap \mN\grD = \mN \weak = \mN \grS$.
\end{proposition}

\begin{proof}
Let $\gra \in \weak \cap \grD$, then by Theorem~\ref{teo:deadker} it follows $\Psi(\gra) \neq \vuoto$, hence $\gra \in \grS$. Suppose conversely that $\gra \in \grS$, then by definition $\Phi^+(\gra) \cap \Psi \neq \vuoto$, therefore $\gra \in \weak$ by Theorem~\ref{teo:deadker} again. The last equalities follow immediately by the inclusions $\mZ \grS \subset \mZ \weak \subset \mZ \Psi \subset \mZ \grS$ and $\mN \grS \subset \mN\weak \subset \mZ \Psi \cap \mN \grD \subset \mN \grS$.
\end{proof}

\begin{remark}\label{rem:spherical roots}
Up to a twist, the set $\grS$ of Definition \ref{def:spherical-roots} is the set of the \textit{spherical roots} attached to $G/H$ following the theory of spherical varieties (see e.g. \cite[Theorem 1.3]{Kn3} for the general case, and \cite[Theorem 5.28]{Avd2} for the case where $H$ is a strongly solvable spherical subgroup). More precisely, if $\grS_{G/H}$ is the set of the spherical roots of $G/H$, then we have $\grS_{G/H} = -w_0(\grS)$: the twist appearing here is due to the equality of weight lattices $\calX(G/H) = -w_0\calX(B/H)$ proved in Proposition \ref{prop:B/H-toric} (for this reason the subgroup $H$ in \cite{Avd2} is assumed to be contained in the opposite Borel subgroup $B^{w_0}$).
\end{remark}

\begin{proposition}  \label{prop:semigruppi-attive}
Given $I \subset \scrD$, we have
$$\mN\weak_I = \mZ \Psi_I \cap \mN \grD = \mQ \Psi_I \cap \mN \grD.$$
\end{proposition}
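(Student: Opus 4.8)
The plan is to prove the two genuinely nontrivial inclusions after disposing of the cheap ones. First I would record the chain
$$\mN\weak_I \subseteq \mZ\Psi_I\cap\mN\grD \subseteq \mQ\Psi_I\cap\mN\grD,$$
whose first inclusion follows from Corollary~\ref{cor:weak-as-differences} (each element of $\weak_I$ has the form $\gra-\grb$ with $\gra\in\Psi_I$, $\grb\in\mN\Psi_I$, hence lies in $\mZ\Psi_I$, and being a positive root it lies in $\mN\grD$), while the second is trivial. Everything then reduces to the inclusion $\mQ\Psi_I\cap\mN\grD\subseteq\mN\weak_I$; once this is established the three sets coincide.

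Fix $\grg\in\mQ\Psi_I\cap\mN\grD$. I would isolate two preliminary facts. Since $\mZ\Psi=\mZ\grS$ with $\grS\subseteq\grD$ (Lemma~\ref{lem:spherical-roots}) and the simple roots are linearly independent, one gets $\mQ\Psi\cap\mN\grD=\mZ\Psi\cap\mN\grD=\mN\weak$; as $\mQ\Psi_I\subseteq\mQ\Psi$, this already expresses $\grg$ as a non-negative integral combination $\grg=\nu_1+\dots+\nu_N$ of weakly active roots $\nu_i\in\weak$. Secondly, every $\grb\in\Psi_I$ has $\grd(\grb)\in I$, so by Theorem~\ref{teo:equivalent} $\langle w_0\rho(D),\grb\rangle=0$ for all $D\in\scrD^*\senza I$; by linearity $\grg$ satisfies the same \emph{balance condition} $\langle w_0\rho(D),\grg\rangle=0$ for every $D\notin I$.

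The heart of the proof, and the step I expect to be the main obstacle, is to upgrade the decomposition $\grg=\nu_1+\dots+\nu_N$ into one whose summands all lie in $\weak_I$. I would do this by a merging procedure driven by the balance condition. Recall that each $\nu\in\weak\subseteq\Root(B/H)$ satisfies $\langle w_0\rho(\grd(\nu)),\nu\rangle=-1$ and $\langle w_0\rho(D),\nu\rangle\geq0$ for $D\neq\grd(\nu)$ (Definition~\ref{def:radici-toriche}), and that $\nu\in\weak_I$ precisely when $\langle w_0\rho(D),\nu\rangle=0$ for all $D\notin I$ (Proposition~\ref{prop:radici-stabili}). Suppose some summand $\nu_j$ has $\grd(\nu_j)=D\notin I$. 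Evaluating the balance condition at $D$ gives $0=\sum_i\langle w_0\rho(D),\nu_i\rangle$, where the terms with $\grd(\nu_i)=D$ contribute $-1$ and all others are $\geq0$; hence some $\nu_{j_1}$ with $\grd(\nu_{j_1})\neq D$ must have $\langle w_0\rho(D),\nu_{j_1}\rangle>0$. Then Lemma~\ref{lemma:somma-vive} applies to the pair $(\nu_{j_1},\nu_j)$ and yields $\nu_{j_1}+\nu_j\in\weak$ with $\grd(\nu_{j_1}+\nu_j)=\grd(\nu_{j_1})$; replacing the two summands by their sum keeps the total equal to $\grg$ and decreases $N$ by one.

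Since $N$ strictly decreases, the procedure terminates, and at termination no summand can have divisor outside $I$, for otherwise a merge would still be available. Thus every surviving $\nu_j$ has $\grd(\nu_j)\in I$, so for each $D\notin I$ all the evaluations $\langle w_0\rho(D),\nu_j\rangle$ are $\geq0$; their sum being $0$ by balance forces each of them to vanish, whence $\nu_j\in\weak_I$ for all $j$ by Proposition~\ref{prop:radici-stabili}. This exhibits $\grg$ as an element of $\mN\weak_I$ and closes the missing inclusion. The delicate point is the bookkeeping of the merging process—checking that the balance condition is preserved (it depends only on the fixed total $\grg$) and that a merge partner always exists when a bad divisor occurs—which is exactly where Lemma~\ref{lemma:somma-vive} together with the toric root inequalities carries the weight of the argument.
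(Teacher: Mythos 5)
Your proof is correct and follows essentially the same route as the paper's: reduce to $\mN\weak$ via Lemma~\ref{lem:spherical-roots}, then merge summands of a decomposition into weakly active roots using Lemma~\ref{lemma:somma-vive} and the vanishing of the evaluations $\langle w_0\rho(D),\cdot\rangle$ for $D \notin I$, with descent on the number of summands. The only cosmetic difference is that the paper merges whenever some summand fails to lie in $\weak_I$ (treating the positive and negative evaluation cases symmetrically inside the induction), whereas you merge only when a summand's distinguished divisor falls outside $I$ and dispose of the remaining case by the final non-negativity argument.
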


\begin{proof}
By the definition of $\grS$ together with Proposition \ref{prop:spherical-roots} we have
$$
\mQ \Psi \cap \mN \grD = \mQ \Psi \cap \mN \grS = \mN \grS = \mN \weak.
$$
Therefore the claim is equivalent to the equalities
$$\mN\weak_I = \mZ \Psi_I \cap \mN \weak = \mQ \Psi_I \cap \mN \weak.$$

By the definition of $\weak_I$ we have the inclusions $\mN\weak_I \subset \mZ \Psi_I \cap \mN \weak \subset \mQ \Psi_I \cap \mN \weak$. Let $\gra \in \mQ \Psi_I \cap \mN\weak$ and write $\gra = \grb_1 + \ldots + \grb_n$ with $\grb_i \in \weak$. By Proposition~\ref{prop:radici-stabili}, we have $\langle \rho(D), \gra \rangle = 0$ for every $D \in \scrD \senza I$. Proceeding by induction on $n$, we show that we may choose $\grb_i$ in $\weak_I$ for all $i$. If $n=1$, then we have $\gra \in \weak$, hence $\gra \in \weak_I$ by Proposition~\ref{prop:radici-stabili}. Suppose $n >1$ and assume that $\grb_i \not \in \weak_I$ for some $i$. Then the same corollary implies $\langle \rho(D), \grb_i \rangle \neq 0$ for some $D \in \scrD \senza I$.

Suppose that $\langle \rho(D), \grb_i \rangle > 0$. As $\langle \rho(D), \gra \rangle = 0$, it follows $\langle \rho(D), \grb_j \rangle < 0$ for some $j \neq i$, hence 
$D = \delta(\grb_j)$ and $\grb_i + \grb_j \in \weak$ by Lemma~\ref{lemma:somma-vive}. Similarly, if $\langle \rho(D), \grb_i \rangle < 0$, then $D = \grd(\grb_i)$ and there is some $j \neq i$ with $\langle \rho(\grd(\grb_i)), \grb_j \rangle > 0$, hence $\grb_i + \grb_j \in \weak$ by Lemma~\ref{lemma:somma-vive} again. Therefore $\gra$ can be written as a sum of $n-1$ weakly active roots, and we conclude by the inductive assumption.
\end{proof}

\subsection{The root system associated to a $T$-orbit in $B/H$.}

Recall that a root subsystem $\Phi' \subset \Phi$ is \textit{closed} if, for all $\gra, \grb \in \Phi'$ such that $\gra + \grb \in \Phi$, it holds $\gra + \grb \in \Phi'$ as well. Given $I \subset \scrD$ we denote
$$
	\Phi_I = \mZ \Psi_I \cap \Phi, \qquad \qquad \Phi_I^{\pm} = \mZ \Psi_I \cap \Phi^\pm.
$$
It follows easily by its definition that $\Phi_I$ is a closed root subsystem of $\Phi$, and that $\Phi_I^+ \subset \Phi_I$ is a system of positive roots. We denote by $\grD_I \subset \Phi^+_I$ the corresponding basis and by $W_I$ the Weyl group of $\Phi_I$ (notice that in general $\grD_I \not \subset \grD$).

Proposition~\ref{prop:semigruppi-attive} readily implies the following property of the root system $\Phi_I$.

\begin{proposition}  \label{prop: I-rootsyst}
Given $I \subset \scrD$, we have the following equalities:
\begin{itemize}
	\item[i)] $\Phi^+_I = \mQ \Psi_I \cap \Phi^+ = \mN\weak_I \cap \Phi^+$;
	\item[ii)] $\weak_I = \Phi^+_I \cap \weak$ and $\Psi_I = \Phi^+_I \cap \Psi$.
\end{itemize}
In particular $\grD_I \subset \weak_I$, and $I$ is recovered by $\Phi_I$.
\end{proposition}

\begin{proof}
By Proposition~\ref{prop:semigruppi-attive} we get the equalities $\Phi^+_I = \mQ \Psi_I \cap \Phi^+ = \mN\weak_I \cap \Phi^+$, and the inclusion $\grD_I \subset \weak_I$ follows as well. Combining with Proposition~\ref{prop:radici-stabili} we get then $\weak_I = \Phi^+_I \cap \weak$, and since $\Psi_I = \weak_I \cap \Psi$ the last equality follows as well. The last claim follows by noticing that $I = \grd(\weak_I) = \grd(\Phi^+_I \cap \weak)$.
\end{proof}

We say that a root subsystem $\Phi' \subset \Phi$ is \textit{parabolic} if there exists $w \in W$ such that $w(\Phi')$ is generated by a subset of simple roots of $\Phi$. This is equivalent to the property that $\Phi' = \mQ \Phi' \cap \Phi$ (see \cite[Ch.~VI, \S~1, Proposition~24]{Bou}), therefore we get the following corollary.


\begin{corollary} \label{cor: sistemi-parabolici}
Let $I \subset \scrD$, then $\Phi_I$ is a parabolic root subsystem of $\Phi$.
\end{corollary}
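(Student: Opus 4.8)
The plan is to verify the Bourbaki characterization of parabolic subsystems recalled just above the statement, namely that $\Phi' \subset \Phi$ is parabolic if and only if $\Phi' = \mQ\Phi' \cap \Phi$. Applied to $\Phi' = \Phi_I$, the inclusion $\Phi_I \subset \mQ\Phi_I \cap \Phi$ is automatic, since $\Phi_I \subset \mQ\Phi_I$ and $\Phi_I \subset \Phi$ by definition. The content is therefore the reverse inclusion $\mQ\Phi_I \cap \Phi \subset \Phi_I$.

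First I would identify the rational span of $\Phi_I$ with that of $\Psi_I$. On the one hand $\Psi_I \subset \Phi_I$, because every $\gra \in \Psi_I$ lies in $\Psi \subset \Phi^+$ and trivially in $\mZ\Psi_I$, so $\Psi_I \subset \mZ\Psi_I \cap \Phi = \Phi_I$; hence $\mQ\Psi_I \subset \mQ\Phi_I$. On the other hand $\Phi_I = \mZ\Psi_I \cap \Phi \subset \mZ\Psi_I \subset \mQ\Psi_I$, so $\mQ\Phi_I \subset \mQ\Psi_I$. This yields the equality $\mQ\Phi_I = \mQ\Psi_I$, which reduces the claim to showing $\mQ\Psi_I \cap \Phi = \Phi_I$.

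Here is where I would invoke Proposition~\ref{prop: I-rootsyst}, whose first identity reads $\Phi^+_I = \mQ\Psi_I \cap \Phi^+$. Splitting $\Phi = \Phi^+ \cup \Phi^-$ and using that $\mQ\Psi_I$ is stable under $\grg \mapsto -\grg$ together with $\Phi^- = -\Phi^+$, I obtain $\mQ\Psi_I \cap \Phi^- = -(\mQ\Psi_I \cap \Phi^+) = -\Phi_I^+ = \Phi_I^-$. Hence $\mQ\Psi_I \cap \Phi = (\mQ\Psi_I \cap \Phi^+) \cup (\mQ\Psi_I \cap \Phi^-) = \Phi_I^+ \cup \Phi_I^- = \Phi_I$, which gives the reverse inclusion and the claim.

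Since the genuine work has already been carried out in Proposition~\ref{prop: I-rootsyst}, and ultimately in Proposition~\ref{prop:semigruppi-attive} which supplies the saturation of $\mN\weak_I$, there is no real obstacle remaining at this stage: the corollary is a formal consequence of the equality $\Phi_I^+ = \mQ\Psi_I \cap \Phi^+$ and the symmetry of $\mQ\Psi_I$ under negation. Should one wish to avoid citing the Bourbaki criterion, the very same computation exhibits $\Phi_I$ as the intersection of $\Phi$ with the rational subspace $\mQ\Psi_I$, which is exactly the defining feature of a parabolic subsystem.
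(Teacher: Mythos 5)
Your proof is correct and follows exactly the route the paper intends: the corollary is stated as an immediate consequence of the Bourbaki criterion $\Phi' = \mQ\Phi' \cap \Phi$ together with the identity $\Phi_I^+ = \mQ\Psi_I \cap \Phi^+$ from Proposition~\ref{prop: I-rootsyst}, and your write-up simply makes explicit the two routine steps (the equality $\mQ\Phi_I = \mQ\Psi_I$ and the negation symmetry giving $\mQ\Psi_I \cap \Phi^- = \Phi_I^-$) that the paper leaves to the reader.
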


Since $\Phi_I \subset \Phi$ is a closed root subsystem, to every $I \subset \scrD$ we may also attach a reductive subgroup $G_I$ of $G$, namely the subgroup generated by $T$ together with the root subgroups $U_\gra$ with $\gra \in \Phi_I$. We denote by $B_I$ the Borel subgroup of $G_I$ associated with $\Phi^+_I$, that is $B_I = G_I \cap B$, and by $U_I$ its unipotent radical. The following proposition provides a first link between the root system $\Phi_I$ and the geometry of the corresponding $T$-orbit $\scrU_I$.

\begin{proposition} \label{prop:radici-reticoli}
Let $I \subset \scrD$, then $\ol{\scrU_I} = \ol{B_I H/H}$ and $\calX(\scrU_I) = \mZ \Phi_I + \calX(\scrU_\vuoto)$.
\end{proposition}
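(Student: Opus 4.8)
The plan is to prove the two assertions separately, relying throughout on the description of $\scrU_I$ as a $T$-orbit in the toric variety $B/H$ and on the interpretation of $\calX(\scrU_I)$ as the lattice of $T$-weights of eigenfunctions in $\mC(\scrU_I)^{(T)}$. Recall from Proposition~\ref{prop:radici-stabili} that $\weak_I$ consists precisely of the weakly active roots $\gra$ with $\langle w_0\rho(D),\gra\rangle = 0$ for all $D\in\scrD^*\senza I$, and that by Proposition~\ref{prop:sottovarietà toriche stabili}(iii) such roots act on $\scrU_I$ through one-parameter unipotent subgroups preserving $\scrU_I$. I would begin by showing the inclusion $\Phi_I\subset\calX(\scrU_I)$. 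By Proposition~\ref{prop: I-rootsyst} we have $\Phi_I^+ = \mN\weak_I\cap\Phi^+$, so every $\gra\in\Phi_I^+$ is a nonnegative integer combination of elements of $\weak_I$, and $\Phi_I = \Phi_I^+\cup(-\Phi_I^+)$; hence it suffices to see that each $\gra\in\weak_I$ lies in $\calX(\scrU_I)$, and that $\calX(\scrU_I)$ is a group (which it is, being a weight lattice). For $\gra\in\weak_I$, the root subgroup $U_\gra$ acts on $B/H$ preserving $\scrU_I$ by Proposition~\ref{prop:Ualphaactionfinal}, and the associated derivation $\partial_\gra$ raises $T$-weights by $\gra$ according to $(\ref{eqn:rooot-action})$; concretely, a $T$-eigenfunction $f_{-\gra}\in\mC[B/H]$ of weight $-\gra$ restricts to a nonzero rational $T$-eigenfunction on $\scrU_I$ (it is nonvanishing there precisely because $\grd(\gra)\in I$, so $\scrU_I\not\subset\grd(\gra)$), witnessing $-\gra$, hence $\gra$, as an element of $\calX(\scrU_I)$.

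For the equality $\calX(\scrU_I) = \mZ\Phi_I + \calX(\scrU_\vuoto)$, I would use the fibration structure. Recall $\scrU_\vuoto = TH/H\simeq T/T_H$ is the closed orbit, so $\calX(\scrU_\vuoto)=\calX_T(T/T_H)$ records the $T$-weights that are trivial on $T_H$, i.e.\ those killed by the restriction map $\tau:\calX(T)\lra\calX(T_H)$. The orbit $\scrU_I$ differs from $\scrU_\vuoto$ by moving along the divisors $D\in I$ via the unipotent groups $U_{\grb}$ with $\grd(\grb)\in I$, $\grb\in\Psi_I$, as in Proposition~\ref{prop:Torb_onD}, where $\scrU_I = Tu_1\cdots u_p H$ with each $u_j\in U_{\grb_j}$ and $\{\grd(\grb_1),\dots,\grd(\grb_p)\}=I$. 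The extra weights appearing in $\calX(\scrU_I)$ relative to $\calX(\scrU_\vuoto)$ are exactly those detected by the $T_H$-weights $\tau(\grb)$, $\grb\in\Psi_I$, equivalently by $\tau(\Psi_I)$; under the identifications $(\ref{eqn:identificazioni-divisori})$ this is the span corresponding to $I$. The containment $\supseteq$ is immediate from the first part together with $\calX(\scrU_\vuoto)\subset\calX(\scrU_I)$ (which holds since $\scrU_\vuoto\subset\ol{\scrU_I}$). For $\subseteq$, I would argue that any $T$-eigenfunction on $\scrU_I$ has weight whose image under $\tau$ lies in $\mZ\tau(\Psi_I)$, because the only nonconstant $T_H$-eigencoordinates surviving on the fiber over $\scrU_\vuoto$ are those indexed by $\tau(\Psi_I)$; lifting back through $\tau$ and correcting by an element of $\ker\tau\cap\calX(T)$, which lands in $\calX(\scrU_\vuoto)$, expresses the weight as an element of $\mZ\Phi_I+\calX(\scrU_\vuoto)$, using $\tau(\Phi_I)=\tau(\mZ\Psi_I\cap\Phi)$ spanning the right sublattice.

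The main obstacle I anticipate is the $\subseteq$ direction of the equality, specifically controlling exactly which weights occur on $\scrU_I$ and showing no \emph{extra} weights beyond $\mZ\Phi_I+\calX(\scrU_\vuoto)$ can appear. The subtlety is that $\mZ\Phi_I$ is strictly larger than $\mZ\Psi_I$ in general ($\Phi_I$ being the full parabolic subsystem $\mZ\Psi_I\cap\Phi$ by Corollary~\ref{cor: sistemi-parabolici}), so I must verify that the additional lattice directions contributed by $\Phi_I$ over $\Psi_I$ are already absorbed modulo $\calX(\scrU_\vuoto)$ — that is, that $\mZ\Phi_I+\calX(\scrU_\vuoto)=\mZ\Psi_I+\calX(\scrU_\vuoto)$. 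This should follow because $\Phi_I\subset\mZ\Psi_I$ by definition of $\Phi_I=\mZ\Psi_I\cap\Phi$, which in fact makes $\mZ\Phi_I\subset\mZ\Psi_I$ and renders the identification cleaner than it first appears; the rank computation $\rk(B/H)=\rk G-\rk H$ and the linear independence of $\tau(\Psi)$ (hence of $\tau(\Psi_I)$) from Section~\ref{s:BmodH} then pin down $\calX(\scrU_I)$ exactly, since $\dim\scrU_I-\dim\scrU_\vuoto=|I|=\rk\mZ\Psi_I$ accounts precisely for the new weight directions.
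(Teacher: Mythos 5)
Your proposal is correct in substance but follows a genuinely different route from the paper. The paper reduces the inclusion to active roots via $\mZ\Phi_I=\mZ\Psi_I$, then by monotonicity ($J\subset I$ implies $\calX(\scrU_J)\subset\calX(\scrU_I)$) reduces to the case $|I|=1$, where it argues group-theoretically: for $\grb\in\Psi_I$ and $B_0=TU_\grb$ one has $\ol{\scrU_I}=B_0H/H\simeq B_0/(B_0\cap H)$, and $B_0\cap H=T\cap H$, so $\ol{\scrU_I}\simeq (T/T\cap H)\ltimes U_\grb$ carries a $T$-eigenfunction of weight $\grb$; the equality is then dismissed as "an easy induction on $|I|$". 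You instead prove the inclusion by restricting toric eigenfunctions to $\scrU_I$, and prove the equality by exploiting the bundle structure $B/H\simeq T\times^{T_H}U/H^\mru$: in effect you identify $\Stab_T$ of a point of $\scrU_I$ with $\bigcap_{\pi\in\tau(\Psi_I)}\ker\pi\subset T_H$, so that $\calX(\scrU_I)$ is the set of characters whose restriction to $T_H$ lies in $\mZ\tau(\Psi_I)$ (duality for diagonalizable groups), whence $\calX(\scrU_I)=\mZ\Psi_I+\ker\tau=\mZ\Psi_I+\calX(\scrU_\vuoto)$. Your argument handles all $I$ at once with no induction, and your treatment of the equality is more explicit than the paper's; the paper's approach buys regularity of the relevant eigenfunction for free from the concrete description of $\ol{\scrU_I}$ as a homogeneous space.

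Two points need repair, both local. First, for $\gra\in\weak_I\senza\Psi_I$ there is in general \emph{no regular} eigenfunction $f_{-\gra}\in\mC[B/H]$ of weight $-\gra$: by Corollary~\ref{cor:weak-vs-active}, $\gra\notin\Psi$ means $\langle w_0\rho(D),\gra\rangle>0$ for some $D$, and then $f_{-\gra}$ is only rational, with a pole along $w_0D$. Either restrict your reduction to $\gra\in\Psi_I$ (sufficient, since $\mZ\Phi_I=\mZ\Psi_I$, as you yourself note at the end), where Corollary~\ref{cor:equivalent} makes $f_{-\gra}$ a regular global equation of $\grd(\gra)$ and the restriction argument is clean; or keep $\weak_I$ but add that by Proposition~\ref{prop:radici-stabili} all poles of $f_{-\gra}$ lie along divisors indexed by $I$, none of which contains $\scrU_I$, so the restriction is still a well-defined non-zero eigenfunction on the orbit. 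Second, two smaller slips: Proposition~\ref{prop:Ualphaactionfinal} gives that $U_\gra$ preserves $\ol{\scrU_I}$, not $\scrU_I$ itself (irrelevant to your actual argument); and in your final paragraph the count $\rk\mZ\Psi_I=|I|$ is false in general, since $\grd\ristretto_{\Psi_I}$ need not be injective (cf.\ Remark~\ref{oss:ranks}: $\rk\Phi_{\{D\}}=|\grd^{-1}(D)|$). This does not harm the proof: differences of active roots with the same image under $\grd$ have equal restriction to $T_H$ by Proposition~\ref{prop:delta-vs-tau}, so the excess rank of $\mZ\Psi_I$ is absorbed into $\ker\tau=\calX(\scrU_\vuoto)$, exactly as your lattice identity requires.
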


\begin{proof}
By Proposition~\ref{prop:Ualphaactionfinal} it follows that $\ol{\scrU_I}$ is $U_\gra$-stable for all $\gra \in \weak_I$. On the other hand by Proposition~\ref{prop: I-rootsyst} every $\gra \in \Phi^+_I \senza \weak_I$ acts trivially on $B/H$, therefore $\ol \scrU_I$ is $B_I$-stable. Moreover we have $\scrU_I \subset B_I H/H$ by Proposition \ref{prop:Torb_onD} and $B_I H/H\subseteq \overline{\scrU_I}$ by Propositions~\ref{prop:Ualphaactionfinal} and~\ref{prop: I-rootsyst}, and the first claim follows.

To show the second claim, it follows by the previous discussion that $\calX(\scrU_I) = \calX(B_I H/H)$. Reasoning as in \eqref{eqn:fiber-product}, the projection $B_I/ B_I \cap H \ra T/T_H$ induces a $T$-equivariant isomorphism
$$
	B_I/ B_I \cap H \simeq T \times^{T_H} U_I/U_I \cap H.
$$
Being a smooth and affine toric variety, $B_I/ B_I \cap H$ has trivial class group, therefore we get a surjective homomorphism $\phi : \calX(B_I H/H) \ra \mathrm{Div}^T(B_I/ B_I \cap H)$ defined by sending the character $\chi$ to the divisor of a rational $T$-eigenfunction of weight $\chi$ (see e.g. \cite[Theorem 4.1.3]{CLS}). Since the $T$-stable prime divisors of $B_I/ B_I \cap H$ correspond to the $T_H$-stable hyperplanes of the fiber $U_I/U_I \cap H$ (which is a affine space), it follows that the kernel of $\phi$ equals $\calX(T/T_H) = \calX(\scrU_\vuoto)$. On the other hand by Corollary \ref{cor:weak-vs-active} every $T$-stable prime divisor on $B_I H/H$ has a global equation of weight $-\gra$ with $\gra \in \Psi_I$, therefore $\calX(\scrU_I) = \calX(B_I H/H)$ is generated by $\calX(\scrU_\vuoto)$ together with $\mZ \Psi_I = \mZ \Phi_I$.
\end{proof}

\begin{corollary}
Let $I \subset \scrD$, then the following inequalities hold
\[
	|I| \leq \rk \Phi_I \leq \dim(T/T_H) + |I|
\]
\end{corollary}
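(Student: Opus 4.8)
The plan is to reduce everything to a single rank--nullity computation for the restriction map $\tau \colon \calX(T) \to \calX(T_H)$, tensored with $\mQ$ to give $\tau_\mQ \colon \calX(T)_\mQ \to \calX(T_H)_\mQ$; both inequalities will then fall out at once from the dimension of the intersection $\mQ\Psi_I \cap \ker\tau_\mQ$.

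First I would record that $\rk \Phi_I = \dim_\mQ \mQ\Psi_I$. Indeed, since $\Psi_I \subset \Phi^+ \subset \Phi$ and $\Psi_I \subset \mZ\Psi_I$, we have $\Psi_I \subseteq \mZ\Psi_I \cap \Phi = \Phi_I$, whence $\mQ\Psi_I \subseteq \mQ\Phi_I$; the reverse inclusion is clear from $\Phi_I \subseteq \mZ\Psi_I$, so $\mQ\Phi_I = \mQ\Psi_I$ and the rank of the root system $\Phi_I$ equals $\dim_\mQ \mQ\Psi_I$. Next, applying rank--nullity to $\tau_\mQ$ restricted to $\mQ\Psi_I$ gives $\dim_\mQ \mQ\Psi_I = \dim_\mQ \tau_\mQ(\mQ\Psi_I) + \dim_\mQ(\mQ\Psi_I \cap \ker \tau_\mQ)$. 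Here the crucial input is that $\tau(\Psi)$ is linearly independent (as noted in Section~\ref{s:BmodH}, this is the multiplicity-freeness of $U/H^\mru$); since $\tau(\Psi_I) = I$ by the very definition of $\Psi_I$, the linearly independent set $I$ spans $\tau_\mQ(\mQ\Psi_I)$, so $\dim_\mQ \tau_\mQ(\mQ\Psi_I) = |I|$. Combining these, I obtain the single identity $\rk \Phi_I = |I| + \dim_\mQ(\mQ\Psi_I \cap \ker\tau_\mQ)$.

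From this identity both bounds are immediate. The lower bound $|I| \le \rk\Phi_I$ holds simply because the correction term $\dim_\mQ(\mQ\Psi_I \cap \ker\tau_\mQ)$ is nonnegative. For the upper bound I would note that $\mQ\Psi_I \cap \ker\tau_\mQ \subseteq \ker\tau_\mQ$, so the correction term is at most $\dim_\mQ \ker\tau_\mQ$; and $\dim_\mQ \ker\tau_\mQ = \dim(T/T_H)$, since the characters of $T$ trivial on $T_H$ are exactly $\ker\tau$, i.e.\ the character lattice of $T/T_H$, whose rank is $\dim(T/T_H)$.

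The only point requiring care — which I would flag as the main subtlety rather than a genuine obstacle — is this last identification $\dim_\mQ \ker\tau_\mQ = \dim(T/T_H)$ when $T_H$ fails to be connected. Writing $T_H^\circ$ for the identity component, a character of $T$ trivial on $T_H^\circ$ becomes trivial on $T_H$ after raising to the (finite) order of $T_H/T_H^\circ$; hence the groups of characters trivial on $T_H$ and on $T_H^\circ$ have the same rank, equal to $\dim(T/T_H^\circ) = \dim(T/T_H)$. With this in hand the displayed identity yields both inequalities and completes the proof.
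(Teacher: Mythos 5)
Your proof is correct, and it takes a genuinely different route from the paper's. The paper deduces the corollary in one line from the toric geometry of $B/H$ developed immediately before: by Proposition~\ref{prop:radici-reticoli} one has $\Phi_I \subset \calX(\scrU_I)$ and $\calX(\scrU_I) = \mZ\Phi_I + \calX(\scrU_\vuoto)$, and combining these with the equality $\rk\calX(\scrU_I) = \dim(\scrU_I) = \dim(T/T_H) + |I|$ gives the upper bound from the inclusion and the lower bound from the sum decomposition (since $\rk\calX(\scrU_\vuoto) = \dim(T/T_H)$). You never touch the orbit $\scrU_I$: you argue purely linear-algebraically with the restriction map $\tau$, via rank--nullity on $\mQ\Psi_I$, the linear independence of $\tau(\Psi)$ (the multiplicity-freeness of $U/H^\mru$ recorded in Section~\ref{s:BmodH}), and the identification $\dim_\mQ \ker\tau_\mQ = \dim(T/T_H)$. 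This is more elementary and self-contained --- it bypasses Proposition~\ref{prop:radici-reticoli} entirely --- and it proves slightly more, namely the exact formula $\rk\Phi_I = |I| + \dim_\mQ(\mQ\Psi_I \cap \ker\tau_\mQ)$, which refines both inequalities and, for instance, immediately recovers the case $T \subset H$ of Remark~\ref{oss:ranks}, where $\ker\tau_\mQ = 0$. What the paper's route buys instead is the link between $\rk\Phi_I$ and the weight lattice of the $T$-orbit $\scrU_I$, which is the form in which these ranks are actually exploited later (e.g.\ in the rank formula of Theorem~\ref{teo:parametrizzazione}). Two minor simplifications are available to you: the detour through $T_H^\circ$ is unnecessary, since $T/T_H$ is itself a torus whose character lattice is exactly $\ker\tau$, so $\rk\ker\tau = \dim(T/T_H)$ at once; and your opening identity $\rk\Phi_I = \dim_\mQ\mQ\Psi_I$ is precisely the first assertion of Remark~\ref{oss:ranks}, so it could be cited rather than reproved.
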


\begin{proof}
This follows from the equality $\rk \calX(\scrU_I) = \dim(\scrU_I) = \dim(T/T_H) + |I|$ together with the inclusion $\Psi_I \subset \Phi_I$.
\end{proof}

\begin{remark}	\label{oss:ranks}
By the definition of $\Phi_I$ we have $\rk (\Phi_I) = \dim(\mQ \Psi_I)$. In some cases this rank is easily computable:
\begin{itemize}
	\item[-] $0 \leq \rk(\Phi_I) \leq |\grS|$.
	\item[-] $\rk(\Phi_I) = 0$ if and only if $I = \vuoto$.
	\item[-] $\rk(\Phi_I) = |\grS|$ if and only if $I = \scrD$.
	\item[-] If $I = \{D\}$ then $\rk(\Phi_I) = |\grd^{-1}(D)|$ (this follows by \cite[Corollary 1]{Avd1}).
	\item[-] If $T \subset H$ then $\rk(\Phi_I) = |I|$ for all $I \subset \scrD$ (this follows by \cite[Theorem 1]{Avd1}).
\end{itemize}
\end{remark}

\subsection{Explicit description of the basis $\grD_I$}


By making use of the classification of the active roots, we will give in this subsection some more explicit description of the root systems $\Phi_I$ and of their bases $\grD_I$. We will not use these results until the last section of the paper, where we will prove a bound for the number of $B$-orbits in $G/H$.

Suppose that $H \subset B$ is a spherical subgroup of $G$ (possibly not containing $T$). In the following theorem we recall Avdeev's classification of the active roots, as well as some of their properties (see \cite{Avd1}, Proposition 3, Corollary 6, Theorem 3, and Lemma 10), and deduce some corollaries.

\begin{theorem} [{\cite{Avd1}}]	\label{teo:avdeev}
Let $\grb \in \Psi$, then the following properties hold.
\begin{itemize}
	\item[i)] There exists a unique simple root $\pi(\grb) \in \supp(\grb)$ with the following property: if $\grb = \grb_1 + \grb_2$ for some $\grb_1, \grb_2 \in \Phi^+$, then $\grb_1 \in \Psi$ if and only if $\pi(\grb) \not \in \supp(\grb_1)$.
	\item[ii)] The map $\grb' \mapsto \pi(\grb')$ induces a bijection between $F(\grb)$ and $\supp(\grb)$.
	\item[iii)] If $\grb_1, \grb_2 \in \Psi$ and $\pi(\grb_1) = \pi(\grb_2)$, then $\grd(\grb_1) = \grd(\grb_2)$.
	\item[iv)] The active root $\grb$ appears in Table~\ref{tab: active roots}\footnote{Notice that when $\supp(\grb)$ is of type $\sfF_4$ our enumeration of the set $\supp(\grb)$ in Table~\ref{tab: active roots} differs from the one in \cite{Avd1}: following \cite{Bou} for us $\alpha_1$ and $\alpha_2$ are the long simple roots.}, and $[\grb : \pi(\grb)] = 1$.
\end{itemize}
\end{theorem}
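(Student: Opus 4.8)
Since this statement is drawn verbatim from \cite{Avd1} (it collects Proposition~3, Corollary~6, Theorem~3 and Lemma~10 of that paper), the cleanest route is simply to cite those four results, and this is what I would ultimately do. Nevertheless, let me describe the conceptual strategy underlying these facts, using only the structural properties of $\Psi$ already available. The guiding principle is that everything about an active root $\grb$ is controlled by its support: passing from $\grb$ to the family $F(\grb)$ translates root-theoretic statements into statements about the subsystem generated by $\supp(\grb)$, where the rigidity coming from the multiplicity-freeness of $\tau(\Psi)$ can be exploited.

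For i) and ii) the plan is to start from the characterization of $F(\grb)$ in Proposition~\ref{prop:family}: this family is linearly independent, its elements have pairwise distinct images under $\grd$, and $\grb' \in F(\grb)$ if and only if $\supp(\grb') \subset \supp(\grb)$. First I would locate inside each active root the single simple root whose presence obstructs activeness of proper summands, thereby showing that $\grb' \longmapsto \pi(\grb')$ is well defined. The reference point here is Lemma~\ref{lemma:differenza-attive-avdeev}, which forces $\grb - \grb' \in \Phi^+$ whenever $\grb' \in F(\grb)$, so that the way an element of $F(\grb)$ can be split off is governed purely by support inclusion; the uniqueness of $\pi(\grb)$ and the equivalence in i) would then reduce to showing that exactly one simple root of $\supp(\grb)$ fails to lie in the support of any \emph{active} proper summand. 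Granting i), the bijection in ii) is a counting argument: $\pi$ is injective on $F(\grb)$ because the family is linearly independent with distinct $\grd$-values, it is surjective onto $\supp(\grb)$ because every simple root in the support is realized as $\pi(\grb')$ for a suitable $\grb' \leq \grb$, and matching cardinalities $|F(\grb)| = |\supp(\grb)|$ closes the argument.

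For iii) the plan is to combine the support characterization with the fact, recorded in Theorem~\ref{teo:deadker} and Proposition~\ref{prop:delta-vs-tau}, that $\grd$ is constant on each set $\Psi(\gra)$ and is detected by the restriction $\tau$: two active roots sharing the same distinguished simple root can, after adding suitable active roots, be steered into a common set $\Psi(\gra)$, forcing $\grd(\grb_1) = \grd(\grb_2)$. Property iv) is the genuinely classificatory statement. Here I would reduce to the case $\supp(\grb) = \grD$ with $\grb$ the maximal element of its family, then run through the irreducible root systems, in each case determining which positive roots can be active by testing them against the linear independence of $\tau(\Psi)$ and against the splitting behaviour dictated by i). The coefficient claim $[\grb : \pi(\grb)] = 1$ should drop out of the same analysis, since a distinguished simple root occurring with multiplicity $\geq 2$ would produce an active proper summand violating i).

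The main obstacle is property iv): the explicit tabulation requires a case-by-case examination of each irreducible type, together with the verification that each candidate genuinely arises as an active root of some strongly solvable spherical $H$, rather than being merely combinatorially admissible. The other delicate point is establishing the \emph{uniqueness} of $\pi(\grb)$ in i), as opposed to its mere existence: it is exactly this rigidity that makes both the bijection in ii) and the finite classification in iv) possible, so I expect it to be the technical heart of the argument, and in practice I would import it directly from \cite{Avd1} rather than reprove it here.
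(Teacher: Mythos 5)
The paper gives no proof of this theorem: it is stated as a recollection of Avdeev's results, citing precisely Proposition~3, Corollary~6, Theorem~3 and Lemma~10 of \cite{Avd1}, which is exactly what you propose to do. Your supplementary sketch of the underlying ideas is reasonable but non-binding, since you explicitly defer the technical core (uniqueness of $\pi(\grb)$ and the classification in iv)) to \cite{Avd1}; so your approach coincides with the paper's.
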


\begin{table}
\begin{center}
\begin{tabular}{|c||c|c|}
\hline
Type & $\supp(\grb)$ & $\grb$ \\
\hline
\hline
1. & any of rank $n$ & $\alpha_1+\cdots+\alpha_n$ \\
\hline
2. & $\mathsf{B}_n$ & $\alpha_1+\cdots+\alpha_{n-1} + 2\alpha_n$  \\
\hline
3. & $\mathsf{C}_n$ & $2\alpha_1+\cdots+2\alpha_{n-1} + \alpha_n$  \\
\hline
4. & $\mathsf{F}_4$ & $\alpha_1 + \alpha_2 + 2\gra_3 + 2\gra_4$ \\
\hline
5. & $\mathsf{G}_2$ & $2\alpha_1+\alpha_2$ \\
\hline
6. & $\mathsf{G}_2$ & $3\alpha_1+\alpha_2$ \\
\hline
\end{tabular}
\end{center}
\caption{Active roots} \label{tab: active roots}
\end{table}

We will call the integer appearing in the first column of Table~\ref{tab: active roots} the \textit{type} of an active root.

Let $\grb \in \Psi$, we say that a subset $A \subset \supp(\grb)$ is \textit{connected} (resp. \textit{co-connected}) if $A$ (resp. $\supp(\grb) \senza A$) is connected as a set of vertices in the Dynkin diagram of $\Phi$. If $\grb' \in F(\grb)$, it follows by a direct inspection of Table \ref{tab: active roots} that $\supp(\grb') \subset \supp(\grb)$ is co-connected, whereas Theorem~\ref{teo:avdeev} i) shows that, if $\grb' \neq \grb$, then $\pi(\grb) \not \in \supp(\grb')$.

Notice that Theorem~\ref{teo:avdeev} allows to construct the whole family $F(\grb)$ from $\grb$ and $\pi(\grb)$. In particular we have the following property.

\begin{corollary} \label{cor:support-family}
Let $\grb \in \Psi$ and let $A \subset \supp(\grb)$. Then $A = \supp(\grb')$ for some $\grb' \in F(\grb) \senza \{\grb\}$ if and only if $A$ is connected and co-connected, and $\pi(\grb) \not \in A$.
\end{corollary}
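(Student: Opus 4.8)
The statement has two implications. The \emph{only if} direction is already recorded in the remark preceding the corollary: if $\grb' \in F(\grb) \senza \{\grb\}$, then $\supp(\grb')$ is connected (being the support of a positive root), it is co-connected (because $\grb - \grb' \in \Phi^+$ by Lemma~\ref{lemma:differenza-attive-avdeev}), and $\pi(\grb) \notin \supp(\grb')$ by Theorem~\ref{teo:avdeev} i). So the plan is to concentrate on the converse, which I would obtain by a counting argument after setting up the right map.

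Set $n = |\supp(\grb)|$ and let $\mathcal{S}$ denote the set of subsets $A \subseteq \supp(\grb)$ that are nonempty, proper, connected, co-connected, and satisfy $\pi(\grb) \notin A$. By the \emph{only if} direction, the assignment $\grb' \longmapsto \supp(\grb')$ is a well-defined map $F(\grb) \senza \{\grb\} \to \mathcal{S}$, and the converse is exactly the surjectivity of this map. First I would compute $|\mathcal{S}|$. The diagram $\supp(\grb)$ is connected and, being a subdiagram of a Dynkin diagram, contains no cycle, so it is a tree with $n$ nodes and $n - 1$ edges. In a tree a nonempty proper subset $A$ has both $A$ and its complement connected if and only if $A$ is one of the two components obtained by deleting a single edge, and $A$ in turn determines that edge. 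For each of the $n - 1$ edges exactly one of the two resulting components avoids the fixed node $\pi(\grb)$, so $|\mathcal{S}| = n - 1$. On the other hand $|F(\grb) \senza \{\grb\}| = |\supp(\grb)| - 1 = n - 1$ by Theorem~\ref{teo:avdeev} ii). Hence the two finite sets have the same cardinality, and it suffices to prove that $\grb' \longmapsto \supp(\grb')$ is injective.

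For injectivity I would argue as follows. Suppose $\grb_1', \grb_2' \in F(\grb)$ have the same support $A$, and set $\grg_i = \pi(\grb_i')$, so that $\grg_i \in A$. Since $\grb_i' \in F(\grb)$ and the dominance order is transitive, we have $F(\grb_i') \subseteq F(\grb)$. Applying Theorem~\ref{teo:avdeev} ii) to $\grb_1'$, the map $\pi \colon F(\grb_1') \to \supp(\grb_1') = A$ is a bijection, so there is an element of $F(\grb_1') \subseteq F(\grb)$ whose value under $\pi$ equals $\grg_2$. But $\grb_2'$ also lies in $F(\grb)$ with $\pi(\grb_2') = \grg_2$, and $\pi$ is injective on $F(\grb)$ by Theorem~\ref{teo:avdeev} ii) again; therefore that element is $\grb_2'$, which shows $\grb_2' \in F(\grb_1')$, i.e.\ $\grb_2' \leq \grb_1'$. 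Exchanging the roles of $\grb_1'$ and $\grb_2'$ gives $\grb_1' \leq \grb_2'$, whence $\grb_1' = \grb_2'$. Combined with the cardinality count this yields the surjectivity we wanted, completing the \emph{if} direction.

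The main obstacle is precisely this injectivity step. The naive route is to recover $\grb'$ from its support $A$ through the explicit classification of active roots in Table~\ref{tab: active roots}, which would force a case analysis and is genuinely delicate, since a single connected diagram (for instance one of type $\mathsf{B}_k$) may support more than one active root. The argument above avoids the table entirely by feeding the bijectivity of $\pi$ from Theorem~\ref{teo:avdeev} ii) back into the subfamilies $F(\grb_i')$; the only external inputs are that bijection, the remark supplying the \emph{only if} direction, and the tree structure of $\supp(\grb)$.
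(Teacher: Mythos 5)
Your proof is correct and takes essentially the same route as the paper: both get the \emph{only if} direction from the preceding remark and then count, matching the $|\supp(\grb)|-1$ edges of the tree $\supp(\grb)$ (via the same edge--subset bijection) against $|F(\grb)\senza\{\grb\}| = |\supp(\grb)|-1$ from Theorem~\ref{teo:avdeev} ii). The injectivity of $\grb' \longmapsto \supp(\grb')$, which you single out as the main obstacle and prove via subfamilies, is left implicit in the paper; note it also follows in one line from Proposition~\ref{prop:family} ii), since equal supports force $\grb_1' \in F(\grb_2')$ and $\grb_2' \in F(\grb_1')$, i.e.\ $\grb_1' \leq \grb_2' \leq \grb_1'$.
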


\begin{proof}
As already noticed, every active root $\grb' \in F(\grb) \senza \{\grb\}$ satisfies the properties of the claim. The corollary follows then by Theorem~\ref{teo:avdeev} ii) by noticing that the number of subsets $A\subset \supp(\grb)$ which are connected and co-connected such that $\pi(\grb) \not \in A$ is the number of edges of the Dynkin diagram of $\supp(\grb)$. Indeed, associating to $A$ the unique edge connecting a simple root of $A$ to a simple root of $\supp(\grb)\senza A$ is a bijection. On the other hand, the number of such edges is precisely $|\supp(\grb)|-1$.
\end{proof}

In particular we get the following characterization of the pairs $(\gra, \grb) \in \weak \times \Psi$ with $\grb \in \Psi(\gra)$.

\begin{corollary} \label{cor:weak-family}
Let $\gra \in \Phi^+$ and $\grb \in \Psi$, and suppose that $\gra < \grb$. Then $\grb \in \Psi(\gra)$ if and only if $\pi(\grb) \in \supp(\gra)$.
\end{corollary}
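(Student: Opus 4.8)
The plan is to prove the two implications separately. Since $\grb \in \Psi$ by hypothesis, the membership $\grb \in \Psi(\gra)$ amounts to $\grb - \gra \in \mN\Psi$. For the implication $\grb \in \Psi(\gra) \Rightarrow \pi(\grb) \in \supp(\gra)$ I would argue purely formally. Write $\grb - \gra = \sum_i c_i \grg_i$ with $\grg_i \in \Psi$ and positive integers $c_i$; each $\grg_i$ satisfies $\grg_i \leq \grb - \gra < \grb$, so $\grg_i \in F(\grb) \senza \{\grb\}$, and Theorem~\ref{teo:avdeev} i) forces $\pi(\grb) \notin \supp(\grg_i)$, i.e.\ $[\grg_i : \pi(\grb)] = 0$. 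Reading off the coefficient of $\pi(\grb)$ in $\grb = \gra + \sum_i c_i \grg_i$ and using $[\grb : \pi(\grb)] = 1$ from Theorem~\ref{teo:avdeev} iv), I obtain $[\gra : \pi(\grb)] = 1$, hence $\pi(\grb) \in \supp(\gra)$.

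For the converse, assume $\pi(\grb) \in \supp(\gra)$. Since $\gra \leq \grb$ we have $1 \leq [\gra : \pi(\grb)] \leq [\grb : \pi(\grb)] = 1$, so $[\gra : \pi(\grb)] = 1$ and $\supp(\grb - \gra) \subseteq \supp(\grb) \senza \{\pi(\grb)\}$. It remains to show $\grb - \gra \in \mN\Psi$, which I would prove by induction on $\height(\grb - \gra)$, allowing the weaker hypothesis $\gra \leq \grb$. The base case $\gra = \grb$ is trivial. For the inductive step, assuming $\gra < \grb$, the key point is to produce an active root $\grg \in F(\grb) \senza \{\grb\}$ such that $\gra + \grg \in \Phi^+$ and $\gra + \grg \leq \grb$. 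Granting this, $\gra + \grg$ is a positive root $\leq \grb$, strictly above $\gra$, with $\pi(\grb) \in \supp(\gra) \subseteq \supp(\gra + \grg)$; the inductive hypothesis yields $\grb - (\gra + \grg) \in \mN\Psi$, and therefore $\grb - \gra = \grg + \big(\grb - (\gra + \grg)\big) \in \mN\Psi$.

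The existence of such a $\grg$ is the heart of the argument and the step I expect to be the main obstacle. To find it I would appeal to the explicit description of $F(\grb)$: by Corollary~\ref{cor:support-family} its members other than $\grb$ are indexed by the connected and co-connected subsets $A \subseteq \supp(\grb)$ avoiding $\pi(\grb)$, while $\grb$ itself is one of the roots listed in Table~\ref{tab: active roots}. In the model case where $\supp(\grb)$ is of type $\sfA$, the root $\gra$ is an interval containing $\pi(\grb)$ and $\grb - \gra$ is the sum of the \emph{prefix} and \emph{suffix} active roots of $F(\grb)$; taking $\grg$ to be either of these makes $\gra + \grg$ an interval, hence a root, and the step is immediate. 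The difficulty in general is that in the non-simply-laced types (rows $2$--$6$ of Table~\ref{tab: active roots}) a coefficient $[\grb : \gra_i]$ may equal $2$ or $3$, so $\supp(\gra)$ and $\supp(\grb - \gra)$ can overlap and $\grb - \gra$ is no longer a plain sum of ``side'' active roots. I would therefore run the ascent step case by case along Table~\ref{tab: active roots}, each time choosing $\grg$ to be the active root supported on the connected component of $\supp(\grb - \gra)$ adjacent to $\supp(\gra)$, and verifying with Lemma~\ref{lemma:differenza-attive-avdeev} and the explicit shape of $\grb$ that $\gra + \grg$ is again a root. This uniform compatibility check is the only genuinely non-formal part of the proof.
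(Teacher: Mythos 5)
Your ``only if'' direction is complete and correct: each $\grg_i$ in $\grb-\gra=\sum_i c_i\grg_i$ lies in $F(\grb)\senza\{\grb\}$, so $\grb-\grg_i\in\Phi^+$ by Lemma~\ref{lemma:differenza-attive-avdeev}, Theorem~\ref{teo:avdeev}~i) then forces $\pi(\grb)\notin\supp(\grg_i)$, and the coefficient count against $[\grb:\pi(\grb)]=1$ finishes; this half is actually more explicit than in the paper. The gap is in the ``if'' direction: your induction is soundly organized, but everything has been shifted onto the claim that some $\grg\in F(\grb)\senza\{\grb\}$ satisfies $\gra+\grg\in\Phi^+$ and $\gra+\grg\leq\grb$, and you do not prove this claim --- you explicitly leave it as an unexecuted case-by-case check. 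It is not a routine check as set up: because your increments $\grg$ may have support meeting $\supp(\gra)$, the statement ``$\gra+\grg\in\Phi$'' is not a root-system generality; your recipe (take $\grg$ supported on the component of $\supp(\grb-\gra)$ adjacent to $\supp(\gra)$) is not even defined in the terminal case $\supp(\gra)=\supp(\grb)$, where $\supp(\grb-\gra)\subset\supp(\gra)$; and a component of $\supp(\grb-\gra)$ carries an element of $F(\grb)$ only after verifying co-connectedness and $\pi(\grb)\notin A$, as Corollary~\ref{cor:support-family} requires. Note also that your ``model case'' argument via intervals covers only type $\sfA$, while row 1 of Table~\ref{tab: active roots} occurs in every Dynkin type.

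The paper's proof avoids precisely this difficulty by inducting on support rather than on height. When $\supp(\gra)\neq\supp(\grb)$ it enlarges $\gra$ by an active root $\grb'$ whose support is a connected component $A$ of $\supp(\grb)\senza\supp(\gra)$: such an $A$ is automatically connected and co-connected, and it avoids $\pi(\grb)$ since $\pi(\grb)\in\supp(\gra)$, so Corollary~\ref{cor:support-family} provides $\grb'$; and since $\supp(\grb')$ and $\supp(\gra)$ are disjoint and adjacent, $\langle\grb',\gra^\vee\rangle<0$, whence $\gra+\grb'\in\Phi^+$ with no case analysis, and $\gra+\grb'\leq\grb$. This reduces everything to the single situation $\supp(\gra)=\supp(\grb)$, and only there does the paper open Table~\ref{tab: active roots}: in types 2--6 the root $\gra$ is pinned down explicitly (e.g.\ $\gra=\grb-\gra_n$ in type 2) and $\grb-\gra$ is visibly a sum of active roots, while type 1 cannot occur since there $\gra=\grb$. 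Restructure your induction this way --- use your ascent step only while $\supp(\gra)\neq\supp(\grb)$, with $\grg$ chosen disjoint from $\supp(\gra)$, and settle the equal-support case by the finite inspection --- and your argument closes, becoming essentially the paper's proof.
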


\begin{proof}
Since $\grb \in \Phi^+(\gra)$, by Theorem~\ref{teo:deadker} it follows that $\gra \in \weak$. If $\supp(\gra) \neq \supp(\grb)$, let $A \subset \supp(\grb) \senza \supp(\gra)$ be a connected component. Notice that $A$ is both connected and co-connected in $\supp(\grb)$, therefore by Corollary~\ref{cor:support-family} there is $\grb' \in \Psi$ with $\supp(\grb') = A$. On the other hand $\langle \grb', \gra^\vee \rangle < 0$, therefore $\gra + \grb' \in \Phi^+$, and since $\gra + \grb' \leq \grb$ by Theorem \ref{teo:deadker} we still have $\gra + \grb' \in \weak$. On the other hand, $\grb -\grb' \in \Phi^+$ by Lemma \ref{lemma:differenza-attive-avdeev}, hence $\pi(\grb) \not \in \supp(\grb')$ by Theorem \ref{teo:avdeev}. Therefore $\pi(\grb) \in \supp(\gra)$ if and only if $\pi(\grb) \in \supp(\gra+\grb')$. 

It follows that the claim holds for $\gra$ if and only if it holds for $\gra+\grb'$. Therefore we may assume that $\supp(\gra) =\supp(\grb)$, in which case the claim can be easily checked by making use of Theorem~\ref{teo:avdeev} and Table~\ref{tab: active roots}:
\begin{itemize}
	\item[i)] If $\grb$ is of type 1, then $\gra = \grb$, which is absurd.
	\item[ii)] If $\grb$ is of type 2, then $\gra = \grb - \gra_n$, and $\gra_n \in \Psi$.
	\item[iii)] If $\grb$ is of type 3, then $\gra = \grb - (\gra_1 + \ldots + \gra_i)$ for some $i < n$, and $\gra_1 + \ldots + \gra_i \in \Psi$ for all $i < n$.
	\item[iv)] If $\grb$ is of type 4, then either $\gra = \grb - (\gra_3 + \gra_4)$ or $\gra = \grb - \gra_4$, and 	both $\gra_3 + \gra_4$ and $\gra_4$ are active roots.
	\item[v)] If $\grb$ is of type 5, then $\gra = \grb - \gra_1$, and $\gra_1 \in \Psi$.
	\item[vi)] If $\grb$ is of type 6, then either $\gra = \grb - \gra_1$ or $\gra = \grb - 2\gra_1$, and $\gra_1 \in \Psi$.
	\qedhere.
\end{itemize}
\end{proof}

In particular, the previous corollary shows that $\grb \in \Psi(\pi(\grb))$ for all $\grb \in \Psi$. In this case we can be even more precise.

\begin{corollary}	\label{cor:massimali}
Let $\grb \in \Psi$ and $\grb' \in F(\grb)$, then $\grb'$ is maximal in $F(\grb) \senza \{\grb\}$ if and only if $\supp(\grb')$ is a connected component of $\supp(\grb) \senza \{\pi(\grb)\}$ if and only if $\pi(\grb)$ and $\pi(\grb')$ are non-orthogonal. Moreover
$$\pi(\grb) = \grb - \sum_{\grg \in \max (F(\grb) \senza \{\grb\}) } a_{\grb,\grg} \grg$$
where $\max (F(\grb) \senza \{\grb\})$ denotes the set of the maximal elements in $F(\grb) \senza \{\grb\}$ and where $a_{\grb,\grg} \in \mN$ is the maximum such that $a_{\grb,\grg} \grg < \grb$.
\end{corollary}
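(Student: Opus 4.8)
My plan is to reduce the whole statement to combinatorics on the Dynkin diagram of $\supp(\grb)$, which is a tree, invoking Avdeev's explicit classification only at the very end. The two structural tools are the bijection $\grb'\mapsto\pi(\grb')$ between $F(\grb)$ and $\supp(\grb)$ of Theorem~\ref{teo:avdeev} ii), and the bijection $\grb'\mapsto\supp(\grb')$ between $F(\grb)\senza\{\grb\}$ and the family of subsets $A\subset\supp(\grb)$ that are connected, co-connected and avoid $\pi(\grb)$, which is Corollary~\ref{cor:support-family} together with the counting in its proof. Rooting the tree $\supp(\grb)$ at $\pi(\grb)$, each such $A$ is the full subtree $D(u)$ hanging below its vertex $u$ closest to $\pi(\grb)$, and $A\mapsto u$ identifies these subsets with $\supp(\grb)\senza\{\pi(\grb)\}$; the connected components of $\supp(\grb)\senza\{\pi(\grb)\}$ are precisely the $D(u)$ with $u$ a neighbour of $\pi(\grb)$.

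The \emph{key lemma} I would establish first is that $\pi(\grb')$ equals the top vertex of $\supp(\grb')$. Composing the two bijections above gives a self-bijection $f$ of $\supp(\grb)\senza\{\pi(\grb)\}$ carrying the top vertex $u$ of $\supp(\grb')$ to $\pi(\grb')$. Since $\pi(\grb')\in\supp(\grb')=D(u)$, writing $u'\preceq u$ for the ancestor relation of the rooted tree (that is, $u'\in D(u)$) we have $f(u)\preceq u$ for every $u$. A bijection of a finite poset with $f(u)\preceq u$ for all $u$ must be the identity, by iterating $f$ along the resulting decreasing chains and using injectivity; hence $f=\mathrm{id}$, and $\pi(\grb')$ is the top vertex of $\supp(\grb')$.

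Granting the lemma, the three equivalences follow. Dominance on $F(\grb)\senza\{\grb\}$ agrees with inclusion of supports: one direction is clear since passing to a larger element only increases coefficients, and conversely if $\supp(\grb')\subset\supp(\grb'')$ then $\supp(\grb')$ is connected, co-connected inside $\supp(\grb'')$ and avoids its top vertex $\pi(\grb'')$, so Corollary~\ref{cor:support-family} applied to $\grb''$ produces an element of $F(\grb'')$ with support $\supp(\grb')$, necessarily $\grb'$ by injectivity, whence $\grb'\leq\grb''$. Therefore $\grb'$ is maximal in $F(\grb)\senza\{\grb\}$ if and only if $\supp(\grb')$ is inclusion-maximal among the admissible subsets, if and only if $\supp(\grb')$ is a connected component of $\supp(\grb)\senza\{\pi(\grb)\}$, if and only if its top vertex $\pi(\grb')$ is a neighbour of $\pi(\grb)$, i.e.\ $\pi(\grb)$ and $\pi(\grb')$ are non-orthogonal.

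For the displayed formula, the first part identifies $\max(F(\grb)\senza\{\grb\})=\{\grg_1,\dots,\grg_k\}$ with $\supp(\grg_i)=C_i$ the connected components of $\supp(\grb)\senza\{\pi(\grb)\}$; these are pairwise disjoint and avoid $\pi(\grb)$, and $[\grb:\pi(\grb)]=1$ by Theorem~\ref{teo:avdeev} iv). It then suffices to prove that the restriction $\grb|_{C_i}$ of $\grb$ to each component is a positive integer multiple $a_{\grb,\grg_i}\grg_i$ of the corresponding maximal root: granting this, $\grb-\sum_i a_{\grb,\grg_i}\grg_i$ has coefficient $1$ on $\pi(\grb)$ and $0$ elsewhere, so it equals $\pi(\grb)$, while $a_{\grb,\grg_i}$ is indeed the largest $a$ with $a\grg_i<\grb$, since $\grg_i$ is supported on $C_i$ alone and $\grb|_{C_i}=a_{\grb,\grg_i}\grg_i$. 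I expect this proportionality to be the main obstacle, in that it resists a uniform argument (a decomposition $\grb-\pi(\grb)\in\mN\Psi$ only shows $\grb|_{C_i}\in\mN\Psi$, not that it is a multiple of $\grg_i$) and must be read off from Avdeev's Table~\ref{tab: active roots}. Running through the six types, with $\pi(\grb)$ as prescribed by the classification (for instance $\pi(\grb)=\alpha_n$ for $\sfC_n$ and $\pi(\grb)=\alpha_2$ for $\sfF_4$ and for $\sfG_2$), one checks directly that each $\grb|_{C_i}$ is proportional to an active root, with multiplicities $1$, $1$, $2$, $(1,2)$, $2$, $3$ in the six cases. This finite verification, in the spirit of the case analysis already used for Corollary~\ref{cor:weak-family}, is the genuine computational content of the statement.
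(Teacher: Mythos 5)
Your key lemma and the derivation of the three equivalences from it are correct, and this part genuinely differs from the paper's argument. The paper never shows that $\pi(\grb')$ is the vertex of $\supp(\grb')$ closest to $\pi(\grb)$ for \emph{arbitrary} $\grb' \in F(\grb)\senza\{\grb\}$; it only identifies $\pi(\grb')$ for \emph{maximal} $\grb'$, by a direct argument combining Lemma~\ref{lemma:differenza-attive-avdeev} with Theorem~\ref{teo:avdeev}~i). Your route (admissible subsets are exactly the full subtrees of the tree rooted at $\pi(\grb)$, plus the observation that a bijection $f$ of a finite poset with $f(u)\preceq u$ for all $u$ is the identity) is clean, buys the stronger uniform statement, and needs no further input from the classification in this part.

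The gap is in the verification of the displayed formula, which you rightly call the real content. First, $\pi(\grb)$ is \emph{not} prescribed by the classification: Theorem~\ref{teo:avdeev}~iv) only forces $[\grb:\pi(\grb)]=1$, so for type 2 every $\gra_j$ with $j\leq n-1$ is admissible, and for type 4 both $\gra_1$ and $\gra_2$ are; your check covers one position per type. This is not harmless: in $\sfB_2$ one necessarily has $\pi(\grb)=\gra_1=\gra_{n-1}$, the unique component of $\supp(\grb)\senza\{\pi(\grb)\}$ is $\{\gra_n\}$, and $\grb$ restricted to it is $2\gra_n$, so the multiplicity is $2$, not $1$ as in your list; the same happens in any $\sfB_n$ with $\pi(\grb)=\gra_{n-1}$. (The formula still holds there, with $\grb-(\gra_1+\cdots+\gra_{n-2})-2\gra_n=\gra_{n-1}$, so the statement survives; your verification does not.)

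Second, and more seriously, ``each $\grb|_{C_i}$ is proportional to an active root'' is not the statement you need: you need $\grb|_{C_i}\in\mN\grg_i$ where $\grg_i$ is \emph{the} element of $F(\grb)$ with support $C_i$, and several roots of active-root shape with support $C_i$ can be dominated by $\grb$. In type 2 with $\pi(\grb)=\gra_j$, $j<n-1$, both $\gra_{j+1}+\cdots+\gra_n$ and $\gra_{j+1}+\cdots+\gra_{n-1}+2\gra_n$ qualify, and the formula comes out right only for the second: with the first choice one would get $\grb-\sum a_{\grb,\grg}\grg=\gra_j+\gra_n\neq\pi(\grb)$. To pin down $\grg_i$ you must use the characterization $F(\grb)\senza\{\grb\}=\{\grg\in\Phi^+ \st \grb-\grg\in\Phi^+,\ \pi(\grb)\notin\supp(\grg)\}$, which follows from Theorem~\ref{teo:avdeev}~i) together with Lemma~\ref{lemma:differenza-attive-avdeev}; this step is absent from your sketch. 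The case analysis should therefore be redone as: for each entry of Table~\ref{tab: active roots} and each simple root with coefficient $1$ playing the role of $\pi(\grb)$, compute $\max(F(\grb)\senza\{\grb\})$ via the characterization above, and then check the identity.
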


\begin{proof}
Notice that every connected component of $\supp(\grb) \senza \{\pi(\grb)\}$ is co-connected. The first claim follows then by Corollary~\ref{cor:support-family}, together with Proposition~\ref{prop:family} ii).

Let $\gra' \in \supp(\grb')$ non-orthogonal to $\pi(\grb)$, notice that such root is unique since $\supp(\grb)$ contains no loops. Then $\supp(\grb') \senza \{\gra'\}$ is connected and co-connected in $\supp(\grb)$, and since it does not contain $\pi(\grb)$ by Corollary~\ref{cor:support-family} there is $\grb'' \in F(\grb)$ such that $\supp(\grb'') = \supp(\grb') \senza \{\gra'\}$. Since $\supp(\grb'') \subset \supp(\grb')$, Proposition~\ref{prop:family} implies that $\grb'' \in F(\grb')$ and $\grb'-\grb'' \in \Phi^+$. On the other hand by Theorem~\ref{teo:avdeev} i) $\pi(\grb') \in \supp(\grb') \senza \supp(\grb'')$, therefore it must be $\pi(\grb') = \gra'$.

The last claim can be easily deduced by the previous discussion together with a direct inspection based on Theorem~\ref{teo:avdeev} and Table~\ref{tab: active roots}.
\end{proof}

Another technical property that we will need and that we can deduce from Theorem~\ref{teo:avdeev} is the following.

\begin{lemma}	\label{lemma:radici-adiacenti}
Let $\grb \in \Psi$ and let $\grb_1, \grb_2 \in F(\grb)$. If $\supp(\grb_1) \cup \supp(\grb_2)$ is connected, then $\grb_1$ and $\grb_2$ are comparable.
\end{lemma}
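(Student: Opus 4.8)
The plan is to reduce comparability in the dominance order to an inclusion of supports, and then to read off the latter from the tree structure of the Dynkin diagram of $\supp(\grb)$.

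First I would prove that, for $\grb_1,\grb_2 \in F(\grb)$, one has $\grb_1 \leq \grb_2$ if and only if $\supp(\grb_1) \subseteq \supp(\grb_2)$. The forward implication is general: if $\grb_2 - \grb_1 \in \mN\grD$ then $[\grb_2:\gra] \geq [\grb_1:\gra]$ for every $\gra \in \grD$, whence $\supp(\grb_1)\subseteq\supp(\grb_2)$. The converse is precisely Proposition~\ref{prop:family} ii) applied with $\gra = \grb_2$: since $\grb_1 \in \Psi$ and $\supp(\grb_1)\subseteq\supp(\grb_2)$, we get $\grb_1 \in F(\grb_2)$, i.e.\ $\grb_1 \leq \grb_2$. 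Thus $\grb_1$ and $\grb_2$ are comparable if and only if one of $\supp(\grb_1),\supp(\grb_2)$ contains the other, and it suffices to show that connectedness of $\supp(\grb_1)\cup\supp(\grb_2)$ forces such an inclusion.

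If $\grb_1 = \grb$ or $\grb_2 = \grb$ the inclusion is automatic, so I may assume $\grb_1,\grb_2 \in F(\grb)\senza\{\grb\}$. By Theorem~\ref{teo:avdeev} iv) the Dynkin diagram $\grG$ of $\supp(\grb)$ is one of the irreducible diagrams of Table~\ref{tab: active roots}, hence a tree, and by Corollary~\ref{cor:support-family} each $\supp(\grb_i)$ is a connected, co-connected subset of $\grG$ not containing $\pi(\grb)$. The point to check is that a connected, co-connected vertex set $A$ of a tree is joined to its complement by exactly one edge: indeed $\grG$ has one fewer edge than vertices, while the subtrees induced on $A$ and on its complement together account for all edges but one. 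Rooting $\grG$ at $\pi(\grb)$, this identifies $\supp(\grb_i)$ with the set of descendants of a single vertex $v_i \neq \pi(\grb)$, the lower endpoint of the cutting edge.

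Finally I would invoke the elementary fact that two such rooted subtrees are nested when $v_1,v_2$ are comparable for the tree order (one a descendant of the other), and are otherwise vertex-disjoint with no edge between them, so that their union is disconnected; this last separation is immediate because $\grG$ is acyclic. Hence connectedness of $\supp(\grb_1)\cup\supp(\grb_2)$ forces $v_1,v_2$ comparable, so one support contains the other, and the equivalence of the first step yields comparability of $\grb_1$ and $\grb_2$. The only step needing genuine care is the passage from Corollary~\ref{cor:support-family} to the rooted-subtree description, namely the single-edge-cut claim; everything surrounding it is formal.
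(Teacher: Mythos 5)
Your proof is correct, but it takes a genuinely different route from the paper's at the decisive step. Both arguments share the same scaffolding: Proposition~\ref{prop:family}~ii) converts comparability into nesting of supports, and Corollary~\ref{cor:support-family} says that each $\supp(\grb_i)$, for $\grb_i \in F(\grb)\senza\{\grb\}$, is connected, co-connected and avoids $\pi(\grb)$. From there the paper stays inside the active-root machinery: it first shows that $\supp(\grb_1)\cup\supp(\grb_2)$ is itself co-connected (its complement is an intersection of two subtrees of a tree, hence connected), then invokes the \emph{existence} direction of Corollary~\ref{cor:support-family} to produce $\gamma \in F(\grb)$ with $\supp(\gamma)=\supp(\grb_1)\cup\supp(\grb_2)$, and finally forces $\gamma \in \{\grb_1,\grb_2\}$, since otherwise Lemma~\ref{lemma:differenza-attive-avdeev} and Theorem~\ref{teo:avdeev}~i) would place $\pi(\gamma)$ outside both supports, contradicting $\pi(\gamma)\in\supp(\gamma)$. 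You never construct such a $\gamma$: you use only the \emph{necessity} direction of Corollary~\ref{cor:support-family} and replace everything else by elementary rooted-tree combinatorics, namely the single-edge-cut count identifying each $\supp(\grb_i)$ with a descendant set in the tree rooted at $\pi(\grb)$, and the dichotomy that two descendant sets are either nested or have disconnected (indeed edgeless) union. What each approach buys: yours is more self-contained, needing neither Lemma~\ref{lemma:differenza-attive-avdeev} nor Theorem~\ref{teo:avdeev}~i) beyond what is already packaged in Corollary~\ref{cor:support-family}, and it also disposes of the case $\grb_i=\grb$ explicitly at the outset, which the paper's appeal to Corollary~\ref{cor:support-family} silently requires (that corollary only yields $\gamma$ when the union avoids $\pi(\grb)$); the paper's proof is shorter because it re-uses the full strength of Avdeev's classification already in place. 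All the steps you flag as needing care --- the edge count, the descendant-set identification, the nesting dichotomy --- check out.
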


\begin{proof}
By Corollary~\ref{cor:support-family} the complementary
$$
\supp(\grb) \senza (\supp(\grb_1)\cup\supp(\grb_2)) = (\supp(\grb)\senza\supp(\grb_1))\cap (\supp(\grb)\senza\supp(\grb_2))
$$
is the intersection of two connected subsets of $\supp(\grb)$, so it is connected since the Dynkin diagram of $\supp(\grb)$ has no loops. Therefore $\supp(\grb_1)\cup\supp(\grb_2)$ is connected and co-connected, and by Corollary~\ref{cor:support-family} there exists $\gamma\in F(\grb)$ such that $\supp(\gamma)=\supp(\grb_1)\cup\supp(\grb_2)$.

Suppose that $\gamma$ is different both from $\grb_1$ and $\grb_2$. For $i = 1,2$, we have then $\grb_i\in F(\gamma)$ by Proposition~\ref{prop:family} ii), and $\gamma-\grb_i\in\Phi^+$ by Lemma~\ref{lemma:differenza-attive-avdeev}. Therefore by Theorem~\ref{teo:avdeev} i) $\pi(\gamma)\notin\supp(\grb_i)$, which is absurd by the definition of $\grg$. Therefore $\gamma$ is either $\grb_1$ or $\grb_2$, hence $\grb_1$ and $\grb_2$ are comparable by Proposition~\ref{prop:family} ii).
\end{proof}

Let $I\subset \scrD$ and $\grb \in \Psi_I$. Denote $F_I(\grb) = (F(\grb) \cap \Psi_I) \senza \{\grb\}$ and define
$$\grb^\sharp_I = \grb - \sum_{\grg \in \max (F_I(\grb)) } a_{\grb,\grg} \grg$$
where $\max (F_I(\grb))$ is the set of the maximal elements in $F_I(\grb)$ and where $a_{\grb,\grg} \in \mN$ is the maximum such that $a_{\grb,\grg} \grg < \grb$. By Theorem~\ref{teo:deadker} it holds $\grb_I^\sharp \in \weak_I$, and $\grd(\grb_I^\sharp) = \grd(\grb)$: indeed, being pairwise incomparable, by Lemma~\ref{lemma:radici-adiacenti} the elements in $\max (F_I(\grb))$ have pairwise disjoint support.

\begin{theorem}	\label{teo:DeltaI-rango-massimo}
Let $I \subset \scrD$, then $\grD_I = \{\grb^\sharp_I \st \grb \in \Psi_I\}$.
\end{theorem}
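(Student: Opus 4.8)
Write $S=\{\grb^\sharp_I \st \grb\in\Psi_I\}$. Recall from the discussion preceding the statement that $\grb^\sharp_I\in\weak_I\subseteq\Phi^+_I$ for every $\grb\in\Psi_I$, so $S\subseteq\Phi^+_I$. The plan is to deduce $S=\grD_I$ from two facts: \emph{(ii)} the set $S$ generates the root lattice $\mZ\Phi_I=\mZ\Psi_I$, and \emph{(i)} every element of $S$ is a simple root of $\Phi_I$. Granting both, $S\subseteq\grD_I$ while $\mZ S=\mZ\grD_I$; since the simple roots $\grD_I$ are $\mZ$-linearly independent, a subset of them spanning the full root lattice must be all of them, whence $S=\grD_I$.

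I would prove \emph{(ii)} by induction on the dominance order restricted to $\Psi_I$. If $\grb$ is minimal in $\Psi_I$ then $F_I(\grb)=\vuoto$ and $\grb^\sharp_I=\grb\in S$. In general the defining formula gives $\grb=\grb^\sharp_I+\sum_{\grg\in\max F_I(\grb)}a_{\grb,\grg}\grg$ with $\grb^\sharp_I\in S$ and with each $\grg\in\Psi_I$ strictly below $\grb$; by induction $\grg\in\mN S$, hence $\grb\in\mN S$. Thus $\Psi_I\subseteq\mN S$, and since $\mZ\Psi_I=\mZ\Phi_I$ and $S\subseteq\Phi^+_I\subseteq\mZ\Phi_I$ we get $\mZ S=\mZ\Phi_I$.

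The core of the argument is \emph{(i)}: that $\eta:=\grb^\sharp_I$ is indecomposable in $\Phi^+_I$. First, since every $\grg\in\max F_I(\grb)$ lies in $F(\grb)\senza\{\grb\}$, Theorem~\ref{teo:avdeev}(i) (via Corollary~\ref{cor:support-family}) gives $\pi(\grb)\notin\supp(\grg)$, and as $[\grb:\pi(\grb)]=1$ by Theorem~\ref{teo:avdeev}(iv) we obtain $[\eta:\pi(\grb)]=1$. Suppose now $\eta=\mu+\nu$ with $\mu,\nu\in\Phi^+_I$. Then $\pi(\grb)$ lies in the support of exactly one of them, say $\mu$, so $\supp(\nu)\subseteq\supp(\grb)\senza\{\pi(\grb)\}$; being the support of a root it is connected, hence contained in a single connected component $C$ of $\supp(\grb)\senza\{\pi(\grb)\}$. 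By Corollary~\ref{cor:massimali}, $C=\supp(\grg_C)$ for the maximal element $\grg_C\in\max(F(\grb)\senza\{\grb\})$ attached to $C$. Using that the elements of $\max F_I(\grb)$ have pairwise disjoint connected supports (Lemma~\ref{lemma:radici-adiacenti}), the coefficients of $\eta$ on $C$ coincide with those of $\grb$ minus the contributions of the $\grg_j\in\max F_I(\grb)$ whose (disjoint) supports lie in $C$. One then checks, by inspecting the six possible shapes of $\grb$ on $\supp(\grb)$ listed in Table~\ref{tab: active roots}, that the resulting coefficient pattern on $C$ admits no splitting with $\nu\leq\eta$ and $\eta-\nu\in\Phi^+_I$; this contradiction proves $\eta\in\grD_I$.

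The main obstacle is precisely this last case analysis in \emph{(i)}: translating the \emph{relative} construction $\grb\mapsto\grb^\sharp_I$ — which subtracts only the top layer of the $I$-subfamily $F_I(\grb)$, rather than the whole of $\max(F(\grb)\senza\{\grb\})$ as in the formula for $\pi(\grb)$ in Corollary~\ref{cor:massimali} — into the explicit coefficient patterns of Avdeev's classification, and ruling out a decomposition inside the \emph{a priori} smaller root system $\Phi_I$. The disjointness of the supports of $\max F_I(\grb)$ is what makes it possible to localise the check to a single connected component $C$ and thereby reduce it to the rank-one-at-a-time situation governed by Table~\ref{tab: active roots}; I expect the verification on $C$ to be where all the real work lies, everything else being the bookkeeping of \emph{(ii)} and the lattice argument that closes the proof.
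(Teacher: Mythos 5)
Your overall frame is sound, and part of it is genuinely nice: step \emph{(ii)} is correct (and simpler than anything in the paper's proof), and the closing lattice argument deducing $S=\grD_I$ from \emph{(i)} and \emph{(ii)} is valid. The genuine gap is step \emph{(i)}, which you yourself flag as ``where all the real work lies'' and leave as an unperformed verification --- and the method you propose for it cannot succeed as described. Indecomposability of $\eta=\grb^\sharp_I$ in $\Phi^+_I$ is \emph{not} detectable from the coefficient pattern of $\eta$ on $C$ together with Table~\ref{tab: active roots}: whenever $\supp(\grb)$ is not simply laced, $\eta$ genuinely does split as a sum of two \emph{ambient} positive roots satisfying all of your local constraints. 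Concretely, take $\grb$ of type 3, so $\grb=2e_1$ in the standard $\sfC_n$ coordinates, and $F_I(\grb)=\{e_1-e_{j+1}\}$ with $j+1<n$ (realizable, e.g., in $G=\mathrm{Sp}_{2n}$ with $H=T\prod_{\gra\in\Phi^+\senza F(2e_1)}U_\gra$ and $I=\{\grd(2e_1),\grd(e_1-e_{j+1})\}$). Then $a_{\grb,e_1-e_{j+1}}=2$ and $\eta=2e_{j+1}=2\gra_{j+1}+\cdots+2\gra_{n-1}+\gra_n$, which splits in $\Phi^+$ as $(e_{j+1}-e_k)+(e_{j+1}+e_k)$ for any $j+1<k\le n$: here $\nu=e_{j+1}-e_k$ is supported in $C$, $\nu\le\eta$, and $\mu=e_{j+1}+e_k$ contains $\pi(\grb)=\gra_n$ with coefficient $1$. (Similarly $e_1+e_n=(e_1)+(e_n)$ for type 2, and $2\gra_1+\gra_2=\gra_1+(\gra_1+\gra_2)$ for type 5.) What excludes such a splitting from $\Phi^+_I$ is not the shape of $\eta$ but the global fact that $\mu$ and $\nu$ cannot both lie in $\mZ\Psi_I$; and $\mZ\Psi_I$ depends on all of $\Psi_I$, including active roots from families unrelated to $\grb$, so no check localized to $C$ and to the six shapes of $\grb$ can decide it.

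This is exactly why the paper proves the two inclusions in the opposite order and with different tools. It first establishes $\grD_I\subseteq S$ directly (this is where its Table-1 case analysis lives, in the form of the inequality $\grb_1+\cdots+\grb_p\le\sum_{\grg\in\max(F_I(\grb))}a_{\grb,\grg}\grg$); once this is known, any putative decomposition of $\grb^\sharp_I$ in $\Phi^+_I$ refines into a sum of elements of $\grD_I\subseteq S$, so indecomposability only has to be tested against the special decompositions $\grb^\sharp_I=(\grg_1)^\sharp_I+\cdots+(\grg_n)^\sharp_I$ with $\grg_i\in\Psi_I$. These are then killed precisely by the global, valuation-theoretic machinery your sketch omits: Definition~\ref{def:radici-toriche} and Theorem~\ref{teo:equivalent} force some $\grg_i$ with $\grd(\grg_i)=\grd(\grb)$; \cite[Corollary 11]{Avd1} and Corollary~\ref{cor:weak-family} then give $\grb\in\Psi((\grg_i)^\sharp_I)$; and Proposition~\ref{prop:radici-stabili} forces the correction terms into $\Psi_I$, whence $\grb^\sharp_I\le(\grg_i)^\sharp_I$ and $n=1$. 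Since your logical order makes $\grD_I\subseteq S$ unavailable when proving \emph{(i)}, you need a standalone proof of indecomposability against \emph{arbitrary} decompositions in $\Phi^+_I$, and that requires this same global input (at the very least results like Proposition~\ref{prop: I-rootsyst}, $\Phi^+_I\cap\weak=\weak_I$, and Proposition~\ref{prop:radici-stabili}), not a component-by-component coefficient inspection.
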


\begin{proof}
Suppose that $\gra \in \grD_I$, then by Corollary~\ref{cor:weak-as-differences} we may write $\gra = \grb - (\grb_1 + \ldots + \grb_p)$ for some $\grb \in \Psi_I$ and some $\grb_1, \ldots, \grb_p \in F_I(\grb)$. Suppose that $p > 0$, otherwise there is nothing to show. We claim that
\begin{equation}	\label{eqn:formula-beta_I}
\grb_1 + \ldots + \grb_p \leq \sum_{\grg \in \max (F_I(\grb)) } a_{\grb,\grg}\grg.
\end{equation}

The inequality is clear if the supports of $\grb_1, \ldots, \grb_p$ are pairwise disconnected, e.g. if the active root $\grb$ is of type 1. Suppose that this is not the case and that (up to reordering the indices) $\supp(\grb_1) \cap \supp(\grb_2) \neq \vuoto$. Then $\grb_1$ and $\grb_2$ are comparable by Lemma~\ref{lemma:radici-adiacenti}, and we may assume that $\grb_1 \leq \grb_2$. Since $\grb_1 + \grb_2 < \grb$, arguing with Theorem~\ref{teo:avdeev} and Table~\ref{tab: active roots} we have the following possibilities:
\begin{itemize}
	\item[i)] $\grb$ is of type 2, and $\grb_1 = \grb_2 = \gra_n$;
	\item[ii)] $\grb$ is of type 3, and there are indices $i,j$ with $1 \leq i \leq j < n$ such that $\grb_1 = \gra_1 + \ldots + \gra_i$ and $\grb_2 = \gra_1+ \ldots + \gra_j$;
	\item[iii)] $\grb$ is of type 4, and there are indices $i,j$ with $3 \leq i \leq j \leq 4$ such that $\grb_1 = \gra_j + \ldots + \gra_4$ and $\grb_2 = \gra_i+ \ldots + \gra_4$;
	\item[iv)] $\grb$ is of type 5 or 6, and $\grb_1 = \grb_2 = \gra_1$.
\end{itemize}
If $\grg \in \max(F_I(\grb))$ is such that $\grb_2 \leq \grg$, in all these cases we get that $\grb_1 + \grb_2 \leq a_{\grb,\grg} \grg$.

By making use of Theorem \ref{teo:avdeev} it is easy to see that for all $\grb' \in F(\grb) \senza \{\grb\}$ the family $F(\grb')$ is totally ordered by the dominance order. On the other hand, we see by Table \ref{tab: active roots} that the coefficient of the active root $\grb$ along a simple root can be at most 2, unless $\grb$ is of Type 6. Provided that $\grb$ is not of type 6, it follows that no root $\grb_i$ with $i > 2$ is in $F(\grg)$, since otherwise $\grb_1 + \grb_2 + \grb_i \not \leq \grb$, and inequality \eqref{eqn:formula-beta_I} follows by summing up all the roots with intersecting supports. Finally, the inequality is immediately checked if $\grb = 3\gra_1 + \gra_2$ is of type 6, in which case $\gra = \gra_2$ and $\grb_1 = \grb_2 = \grb_3 = \gra_1$.

As a consequence of \eqref{eqn:formula-beta_I}, we get the inequality $\grb^\sharp_I \leq \gra$. Since by assumption $\gra$ is minimal in $\Phi^+_I$ and since $\grb^\sharp_I \in \Phi^+_I$, it follows that $\gra = \grb^\sharp_I$. Therefore we have proved the inclusion $\grD_I \subset \{\grb^\sharp_I \st \grb \in I\}$. We now show that for all $\grb \in \Psi$ the element $\grb_I^\sharp$ is indecomposable in $\Phi_I^+$.

Suppose that $\grb \in \Psi_I$ and let $\grg_1, \ldots, \grg_n \in \Psi_I$ be such that
$$
	\grb_I^\sharp = (\grg_1)_I^\sharp + \ldots + (\grg_n)_I^\sharp.
$$
By Definition~\ref{def:radici-toriche}, for all $i$ the evaluation of $(\grg_i)_I^\sharp$ takes exactly one negative value on $\scrD$. As $\grd(\grb_I^\sharp) = \grd(\grb)$, there exists $\grg \in \{\grg_1, \ldots, \grg_n\}$ with $\grd(\grb) = \grd(\grg)$. Therefore $\grg_I^\sharp \leq \grb_I^\sharp < \grb$, and $\pi(\grg) = \pi(\grg_I^\sharp) \in \supp(\grb)$. By \cite[Corollary 11]{Avd1} we get then $\pi(\grb) = \pi(\grg_I^\sharp)$, and Corollary~\ref{cor:weak-family} shows that $\grb \in \Psi(\grg_I^\sharp)$. Therefore there are $\grb_1, \ldots, \grb_m \in F(\grb)$ such that $\grb = \grg_I^\sharp + \grb_1+ \ldots + \grb_m$. Then $\langle \rho(\grd(\grb_i)),\grg_I^\sharp \rangle > 0$ for all $i=1, \ldots, m$, and by Proposition~\ref{prop:radici-stabili} it follows $\grd(\grb_i) \in I$, that is $\grb_i \in \Psi_I$. Therefore the definition of $\grb_I^\sharp$ implies $\grb_I^\sharp \leq \grg_I^\sharp$, that is $n=1$ and $\grb_I^\sharp = \grg_I^\sharp$.
\end{proof}

We assume for the rest of the section that $H \subset G$ is a spherical subgroup such that $T \subset H \subset B$. Then $\grd_{|\Psi} : \Psi \ra \scrD$ is bijective by Corollary~\ref{cor:delta-surjective}, and $\pi : \Psi \ra \grD$ is injective by Theorem~\ref{teo:avdeev} iii). We will identify the set of divisors $\scrD$ with the set of active roots $\Psi$.

Define a graph $\mathcal G(\Psi)$ with set of vertices $\Psi$ as follows: two active roots $\grb, \grb'$ are connected by an edge if and only if they are not strongly orthogonal (namely, neither their sum nor their difference is in $\Phi$). Given $I \subset \Psi$, we denote by $\mathcal G(I)$ the associated subgraph of $\mathcal G(\Psi)$. Notice that, if $\grb, \grb'$ belong to the same connected component of $\calG(I)$, then they belong to the same irreducible component of $\Phi_I$: if indeed $\{\grb, \grb'\} \subset I$ is an edge of $\mathcal G(I)$, then either $\grb + \grb' \in \Phi_I$ or $\grb - \grb' \in \Phi_I$.

If $I \subset \Psi$ we denote by $\Phi'_I$ the root system generated by the simple roots $\pi(\grb)$ with $\grb \in I$, and by $W'_I$ the Weyl group of $\Phi'_I$.

\begin{proposition}	\label{prop:intervalli}
Suppose that $T \subset H$, let $I \subset \Psi$ and suppose that $\pi(I)$ is connected. Then the followings hold:
\begin{itemize}
	\item[i)] $\Phi_I$ is an irreducible parabolic subsystems of $\Phi$ of rank $|I|$.
	\item[ii)] If $\Phi$ is simply laced, then $\Phi_I$ and $\Phi'_I$ are isomorphic root systems.
	\item[iii)] If $\Phi$ is not simply laced and $\pi(I)$ contains both long and short roots, then $\Phi_I$ and $\Phi'_I$ are isomorphic root systems.
\end{itemize}
\end{proposition}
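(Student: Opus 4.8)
The plan is to work throughout under the identifications available when $T\subset H$: by Theorem~\ref{teo:avdeev} the map $\pi\colon\Psi\to\grD$ is injective and (since $\grd\ristretto_\Psi$ is bijective) we regard $I\subset\Psi$, so that $\Psi_I=I$ and $\Phi_I=\mZ I\cap\Phi$. That $\Phi_I$ is parabolic is Corollary~\ref{cor: sistemi-parabolici}, and $\rk\Phi_I=|I|$ is the last item of Remark~\ref{oss:ranks}. By Theorem~\ref{teo:DeltaI-rango-massimo} the set $\grD_I=\{\grb^\sharp_I\st\grb\in I\}$ is a basis of $\Phi_I$, and since $|\grD_I|=\rk\Phi_I=|I|$ the assignment $\grb\mapsto\grb^\sharp_I$ is a bijection $I\to\grD_I$. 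As $\pi$ is injective, I thus obtain through the common index $\grb\in I$ two bijections $\grb^\sharp_I\leftrightarrow\grb\leftrightarrow\pi(\grb)$, identifying $\grD_I$, $I$ and $\pi(I)$; note that $\pi(\grb)\in\supp(\grb^\sharp_I)$ with coefficient $1$, since the roots subtracted in forming $\grb^\sharp_I$ lie in $F(\grb)\senza\{\grb\}$ and hence omit $\pi(\grb)$ from their supports by Theorem~\ref{teo:avdeev}(i), while $[\grb:\pi(\grb)]=1$ by Theorem~\ref{teo:avdeev}(iv).

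To finish (i) I must prove $\Phi_I$ irreducible. Since $\mZ\Phi_I=\mZ\Psi_I=\mZ I$, any irreducible component of $\Phi_I$ containing all of $I$ must span $\mZ I$, hence equal $\Phi_I$; and, by the observation on the graph $\calG(I)$ preceding the statement, active roots lying in one connected component of $\calG(I)$ lie in one irreducible component of $\Phi_I$. Thus it suffices to show that $\calG(I)$ is connected, and I will deduce this from the connectedness of $\pi(I)$ via the claim: if $\grb,\grb'\in I$ and $\pi(\grb),\pi(\grb')$ are adjacent, then $\grb,\grb'$ are not strongly orthogonal, i.e.\ $\{\grb,\grb'\}$ is an edge of $\calG(I)$. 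Indeed $\pi(\grb)\in\supp(\grb)$ and $\pi(\grb')\in\supp(\grb')$ are then joined, so $\supp(\grb)\cup\supp(\grb')$ is connected; if $\grb,\grb'$ are comparable the claim is Lemma~\ref{lemma:differenza-attive-avdeev}, and if their supports are disjoint then every cross term of $(\grb,\grb')$ is $\leq0$ with at least one $<0$, forcing $(\grb,\grb')<0$ and hence $\grb+\grb'\in\Phi$. The remaining configurations---incomparable active roots with overlapping supports---are settled by a direct inspection of the families $F(\grb),F(\grb')$ through Theorem~\ref{teo:avdeev} and Table~\ref{tab: active roots}, the point being that the prescribed values $\pi(\grb),\pi(\grb')$ and $\grd(\grb),\grd(\grb')$ constrain such a pair enough to exclude strong orthogonality.

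For (ii) and (iii) the target $\Phi'_I$ is the standard parabolic with simple roots $\pi(I)$, irreducible of rank $|I|$, while by (i) $\Phi_I$ is irreducible of rank $|I|$ with simple roots $\grD_I$; an isomorphism $\Phi_I\simeq\Phi'_I$ therefore amounts to checking that the bijection $\grb^\sharp_I\mapsto\pi(\grb)$ preserves the Cartan integers, i.e.\ is an isomorphism of Dynkin diagrams. For a pair $\grb,\grb'\in I$ the relevant local datum is the rank-$2$ subsystem $\Phi\cap\mQ\langle\grb^\sharp_I,{\grb'}^\sharp_I\rangle$, whose simple roots are exactly $\grb^\sharp_I$ and ${\grb'}^\sharp_I$; I read off its type from the explicit shapes of $\grb,\grb'$ in Table~\ref{tab: active roots} (which determine $\grb^\sharp_I,{\grb'}^\sharp_I$ via Theorem~\ref{teo:DeltaI-rango-massimo}) and compare it with the rank-$2$ subsystem $\langle\pi(\grb),\pi(\grb')\rangle$ of $\Phi'_I$. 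When $\Phi$ is simply laced, both $\Phi_I$ and $\Phi'_I$ are simply laced (a subsystem of a simply-laced system is simply laced), each such piece is $\sfA_1\times\sfA_1$ or $\sfA_2$, and the computation shows $\grb^\sharp_I,{\grb'}^\sharp_I$ are non-orthogonal exactly when $\pi(\grb),\pi(\grb')$ are; since an irreducible simply-laced diagram is determined by its graph, this yields (ii). When $\Phi$ is not simply laced, the hypothesis that $\pi(I)$ contains both long and short roots fixes the long/short pattern and the arrow directions on $\pi(I)$, and the same pairwise computation shows that the edge multiplicities and directions of $\grD_I$ agree with them, yielding (iii).

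The main obstacle is precisely this pairwise analysis: determining $\grb^\sharp_I$ concretely enough to identify the rank-$2$ systems $\Phi\cap\mQ\langle\grb^\sharp_I,{\grb'}^\sharp_I\rangle$, disposing of the incomparable-overlapping case in (i), and matching the arrow directions in the mixed-length situation of (iii). All of these are exactly the configurations governed by Theorem~\ref{teo:avdeev}, Corollary~\ref{cor:weak-family} and Lemma~\ref{lemma:radici-adiacenti}, so the remaining work is a finite verification running through the six types of Table~\ref{tab: active roots}.
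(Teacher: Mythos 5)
Your skeleton parallels the paper's (identify $\grD_I=\{\grb^\sharp_I\}$ via Theorem~\ref{teo:DeltaI-rango-massimo}, compare it with $\pi(I)$), and the easy steps are right: parabolicity from Corollary~\ref{cor: sistemi-parabolici}, rank from Remark~\ref{oss:ranks}, the comparable case via Lemma~\ref{lemma:differenza-attive-avdeev}, and the disjoint-support case via negativity of the inner product. But at each of the three load-bearing points you defer the argument to an unperformed ``direct inspection,'' and this is a genuine gap, not a routine finite check. In (i), your trichotomy leaves open the case of incomparable active roots with overlapping supports; the paper's key observation is that this case is \emph{empty}: if $\supp(\grb)\cap\supp(\grb')\neq\vuoto$ and $\pi(\grb),\pi(\grb')$ are adjacent, then, the Dynkin diagram being a tree, either $\pi(\grb)\in\supp(\grb')$ or $\pi(\grb')\in\supp(\grb)$; in the latter case Theorem~\ref{teo:avdeev}~ii) produces $\grb''\in F(\grb)$ with $\pi(\grb'')=\pi(\grb')$, and injectivity of $\pi$ (valid since $T\subset H$) forces $\grb''=\grb'$, so $\grb'\in F(\grb)$ and the pair is comparable after all. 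This is a structural argument using the family--support bijection; it cannot be replaced by inspecting Table~\ref{tab: active roots}, which only lists the shapes of \emph{individual} active roots and says nothing about which pairs can coexist in $\Psi$.

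The same issue recurs in (ii) and (iii). Your pairwise rank-$2$ reduction is not even well-posed as stated, because $\grb^\sharp_I$ depends on all of $I$ (through $\max F_I(\grb)$), not just on the pair $\grb,\grb'$. What makes the comparison of $\grD_I$ with $\pi(I)$ work is Corollary~\ref{cor:massimali}, which you never invoke: if $\pi(\grb')\in\pi(I)$ is adjacent to $\pi(\grb)$, then $\grb'$ is maximal in $F(\grb)\senza\{\grb\}$, hence belongs to $\max(F_I(\grb))$ and is subtracted off in $\grb^\sharp_I$; this yields $\supp(\grb^\sharp_I)\cap\pi(I)=\{\pi(\grb)\}$, and then the no-loop (tree) argument controls all remaining inner products, giving adjacency preservation in the simply laced case. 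For (iii), the paper does not match edge multiplicities and arrows pairwise at all: it shows that $\Phi_I$ contains roots of both lengths and then exploits that $\Phi_I$ and $\Phi'_I$ are irreducible \emph{parabolic} subsystems of $\Phi_0$ of the same rank, so that in types $\sfB_n$ and $\sfC_n$ the isomorphism type is forced; the $\sfF_4$, $|I|=3$ case is genuinely delicate, since both $\sfB_3$ and $\sfC_3$ occur depending on $I$, and requires a separate argument counting long and short roots in $\grD_I$. As written, your proposal is a correct plan whose decisive verifications are missing, and the one case you explicitly flag as remaining in (i) is one you would never be able to ``settle'' by the method you propose, since the real task is to prove it does not occur.
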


\begin{proof}
i) Let $\grb, \grb' \in I$, denote $\gra =\pi(\grb)$ and $\gra' =\pi(\grb')$ and suppose $\langle \gra', \gra^\vee\rangle < 0$. Suppose that there is some active root whose family contains both $\grb$ and $\grb'$: then $\grb$ and $\grb'$ are comparable by Lemma ~\ref{lemma:radici-adiacenti}, hence $\{\grb,\grb'\}$ is an edge of $\calG(I)$. Therefore $\grb, \grb'$ belong to the same component of $\Phi_I$.

Suppose that there is no active root whose family contains both $\grb$ and $\grb'$. We claim that in this case $\supp(\grb) \cap \supp(\grb') = \vuoto$. Indeed, if this intersection is not empty then $\gra\in\supp(\grb')$ or $\gra'\in\supp(\grb)$ (because the Dynkin diagram of $G$ has no loops). Assume that $\gra'\in\supp(\grb)$, then by Theorem~\ref{teo:avdeev} ii) there is $\grb''\in F(\grb)$ with $\pi(\grb'')=\gra'$, which implies $\grb''=\grb'$ by the injectivity of $\pi$. Therefore $\grb'\in F(\grb)$, contradicting the assumption.

Therefore $\supp(\grb) \cap \supp(\grb') = \vuoto$, and since $\gra \in \supp(\grb)$ and $\gra' \in \supp(\grb')$ are non-orthogonal Theorem~\ref{teo:avdeev} iv) implies that $\langle \grb', \grb^\vee \rangle = \langle \gra', \gra^\vee\rangle < 0$. Therefore $\grb + \grb' \in \Phi^+_I$, hence $\grb, \grb'$ belong to the same component of $\Phi_I$. It follows that $\Phi_I$ is an irreducible subsystem of $\Phi$ (whose rank is $|I|$ by Remark~\ref{oss:ranks}), and it is parabolic by Corollary~\ref{cor: sistemi-parabolici}. 

ii) By Theorem~\ref{teo:DeltaI-rango-massimo} $\grD_I = \{\grb^\sharp_I \st \grb \in I\}$ is a basis of $\Phi_I$. Let $\grb \in I$. 
By Corollary~\ref{cor:massimali}, if $\gra'\in\pi(I)$ adjacent to $\pi(\grb)$ then the root $\beta'\in I$ such that $\pi(\beta')=\gra$ is maximal in $F(\grb)\senza\{\grb\}$, hence $\beta'\in\max(F_I(\grb))$, which implies that $\grb^\sharp_I$ doesn't have $\gra'$ in its support. In other words $\supp(\grb_I^\sharp) \cap \pi(I)$ contains $\pi(\grb)$ but none of the roots of $\pi(I)$ adjacent to it: thanks to the fact that $\supp(\grb_I^\sharp) \cap \pi(I)$ is connected, we deduce that $\supp(\grb_I^\sharp) \cap \pi(I) = \{\pi(\grb)\}$. Notice also that all elements of $\supp(\grb_I^\sharp)\senza\{\pi(\grb)\}$ are orthogonal to the elements of $\pi(I)\senza\{\pi(\grb)\}$, otherwise the Dynkin diagram of $\Phi$ would have a loop. Since all the roots in $\Phi$ have the same length, it follows that the root systems generated by $\grD_I$ and by $\pi(I)$ are isomorphic.

iii) Let $\grD_0 \subset \grD$ be the connected component containing $\pi(I)$. Denote $\Phi_0$ the corresponding irreducible subsystem of $\Phi$ and enumerate $\grD_0 = \{\gra_1, \ldots, \gra_n\}$ as in \cite{Bou}. We also enumerate $I$ by setting $\grb_i = \pi^{-1}(\gra_i)$ for all $\gra_i \in \pi(I)$. If $\pi(I) = \grD_0$, then the claim is clear, since i) implies then $\Phi_I = \Phi'_I = \Phi_0$. In particular we may assume that $\Phi'_I$ is not of type $\sfF_4$ nor of type $\sfG_2$.

Suppose first that $\Phi_0$ is of type $\sfB_n$ or $\sfC_n$. Then $\pi(I) = \{\gra_p, \ldots, \gra_n\}$ for some $p$ with $1 < p < n$. Notice that the claim follows once we prove that $\Phi_I$ contains roots of different length: indeed by i) $\Phi_I$ and $\Phi'_I$ are both irreducible parabolic subsystems of $\Phi_0$ of rank $|I|$. By Corollary~\ref{cor:massimali} it follows that $I$ contains all the maximal elements in $F(\grb_n) \senza \{\grb_n\}$, therefore $(\grb_n)_I^\sharp = \pi(\grb_n) = \gra_n$ and we get $\gra_n \in \Phi^+_I$.

Suppose that $\Phi_0$ is of type $\sfB_n$, then $\gra_n$ is short. On the other hand a positive root $\grg$ with $[\grg : \gra_{n-1}] = 1$ is short if and only if $\grg = \gra_i + \ldots + \gra_n$ for some $i \leq n$, therefore either $\grb_{n-1}$ is long or $\grb_{n-1}-\gra_n$ is long, and the claim follows since $\Phi_I$ is a parabolic subsystem of $\Phi_0$.

Suppose that $\Phi_0$ is of type $\sfC_n$, then $\gra_n$ is long. On the other hand a positive root $\grg$ with $[\grg:\gra_{n-1}] = 1$ is necessarily short, and the claim follows.

Suppose now that $\Phi_0$ is of type $\sfF_4$, then by the assumption $\pi(I) \supset \{\gra_2, \gra_3\}$. If $\bigcup_{\grb \in I} \supp(\grb)$ is of type $\sfB$ or $\sfC$ the same arguments as before apply, suppose that this is not the case. A direct inspection based on Theorem~\ref{teo:avdeev} shows that $\grb_3$ is necessarily a short root, and that either $\grb_2$ is a long root or $\grb_2 - \grb_3$ is a long root. Therefore $\Phi_I$ contains roots of different lengths, hence it is of type $\sfB_{|I|}$ or $\sfC_{|I|}$. If $|I| = 2$ then the claim follows.

Suppose that $|I| = 3$. If $\grb_1 \in I$, then it follows by Corollary \ref{cor:massimali} that $I$ contains all the maximal elements in $F(\grb_1) \senza \{\grb_1\}$ and in $F(\grb_2) \senza \{\grb_2\}$. Therefore $(\grb_i)^\sharp_I = \pi(\grb_i) = \gra_i$ for $i=1,2$, and it follows that $\grD_I$ contains two long roots, hence $\Phi_I$ is of type $\sfB_3$. Similarly, if $\grb_4 \in I$ we conclude that $\grD_I$ contains two short roots, hence $\Phi_I$ is of type $\sfC_3$.
\end{proof}

\section{Combinatorial parameters for the orbits of $B$ on $G/H$}\label{s:Borbits}

We are now ready to give a combinatorial parametrization of the $B$-orbits in $G/H$, where $H$ is a spherical subgroup of $G$ contained in $B$, in terms of the root systems introduced in the previous sections.

\subsection{Reduced and extended pairs}

Consider the projection $G/H \ra G/B$. By the Bruhat decomposition, every $B$-orbit $\scrO$ in $G/H$ uniquely determines a Weyl group element, namely the element $w\in W$ such that the image of $\scrO$ in $G/B$ is the Schubert cell $BwB/B$. Given $w \in W$ and $I \subset \scrD$, notice that $w\scrU_I$ is a well defined $T$-orbit in $G/H$, mapping on the $T$-fixed point $wB/B \in G/B$. Therefore to every pair $(w,I)$ with $w \in W$ and $I \subset \scrD$, we may associate a $B$-orbit in $G/H$ by setting $\scrO_{w,I} = Bw \scrU_I$.

Notice that every $B$-orbit in $G/H$ is of the shape $\scrO_{w,I}$ for some $w \in W$ and some $I \subset \scrD$. Suppose indeed that $\scrO$ is a $B$-orbit in $G/H$ which projects on the Schubert cell $BwB/B$. Then, reasoning as in Proposition~\ref{prop:corrispondenza-divisori}, the intersection $w^{-1} \scrO \cap B/H$ is a $(B \cap B^w)$-orbit in $B/H$, hence it is $T$-stable. If $\scrU_I$ is any $T$-orbit in $w^{-1} \scrO \cap B/H$, we have then the equality $\scrO = \scrO_{w,I}$. While $w$ is uniquely determined by $\scrO$, in general there are several choices for $I \subset \scrD$.

We will say that $I \subset \scrD$ is a \textit{representative} for a $B$-orbit $\scrO$ if $\scrO = \scrO_{w,I}$ for some $w \in W$. The goal of the following theorem is to show that there are canonical minimal and maximal representatives for the $B$-orbits in $G/H$, and to give combinatorial characterizations of such pairs.

\begin{theorem}	\label{teo:parametrizzazione}
Let $\scrO$ be a $B$-orbit in $G/H$. There exist a unique minimal representative $\m$ and a unique maximal representative $\M$ for $\scrO$, and $I$ is a representative for $\scrO$ if and only if $\m \subset I \subset \M$. Suppose moreover that $\scrO = \scrO_{w,I}$ and denote
$$I(w) = \delta\big(\Theta_I \senza \Phi^+(w) \big).$$
\begin{itemize}
\item[i)]  We have the equalities $\m = I \senza I(w)$ and $\M = I \cup I(w)$.
\item[ii)] The following formulae hold:
\begin{gather*}
	\dim(\scrO) = \dim(B/H) + l(w) - |\scrD \senza \M|,\\
	\rk(\scrO) = \rk(B/H) + |\m|.
\end{gather*}
More precisely, $\calX_B(\scrO) = w \calX_T(\scrU_\m)$, and in particular $w(\Phi_\m) \subset \calX_B(\scrO)$.
\end{itemize}
\end{theorem}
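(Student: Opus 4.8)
The plan is to reduce everything to the analysis of the single slice $Q = w^{-1}\scrO\cap B/H$ and then to track how the root subgroups of $B\cap B^w$ move the $T$-orbits of $B/H$ inside it. As already observed before the statement, $\scrO = Bw\scrU_I$ is a single $B$-orbit and $Q$ is a single $(B\cap B^w)$-orbit which is a union of $T$-orbits, so the representatives $I'$ of $\scrO$ are exactly the subsets with $\scrU_{I'}\subset Q$. Decomposing $B\cap B^w = T\prod_{\beta\in\Phi^+\senza\Phi^+(w)}U_\beta$ and using the identity $U_{w(\beta)}\,w\scrU_{I'} = w\,U_\beta\scrU_{I'}$, valid with $U_{w(\beta)}\subset B$ precisely when $\beta\notin\Phi^+(w)$, I would read off from Proposition~\ref{prop:Ualphaactionfinal} that acting by such a $U_\beta$ fixes $\scrU_{I'}$ unless $\beta\in\weak(I')\senza\Phi^+(w)$, in which case it toggles the divisor $\delta(\beta)$: it adds $\delta(\beta)$ when $\beta\in\weak(I')\senza\weak_{I'}$ and removes it when $\beta\in\weak_{I'}$. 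Thus the toggleable directions out of $\scrU_{I'}$ are exactly the divisors in $I'(w) = \delta(\weak(I')\senza\Phi^+(w))$, and $\m,\M$ should be obtained by performing all the removals, resp. all the additions.

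Next I would pin down the endpoints. Since $Q$ is irreducible, being the image of $(B\cap B^w)\times\scrU_I$, it has a unique dense $T$-orbit $\scrU_\M$, so $\M$ is the unique maximal representative, and since $\scrU_J\subset\ol{\scrU_{J'}}$ iff $J\subset J'$, every representative $J$ satisfies $J\subset\M$. The substance of part~(i) is then the reverse statement, that the set of representatives is a full Boolean interval $[\m,\M]$ together with the representative-independent formula, which is cleanest to phrase as the single identity $I'(w)=\M\senza\m$ for every representative $I'$ (this is equivalent to $\m=I'\senza I'(w)$ and $\M=I'\cup I'(w)$, and yields the existence and uniqueness of $\m$ as well). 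I expect this to be the main obstacle: one must show that every $J$ with $\m\subset J\subset\M$ is reachable by toggles, which is not automatic because enlarging or shrinking $I'$ changes $\weak(I')$, so to insert a prescribed $D\in\M\senza J$ one needs a weakly active root $\beta\notin\Phi^+(w)$ with $\delta(\beta)=D$ whose strictly positive divisors already lie in $J$. Here I would invoke the combinatorial control of Section~\ref{s:active}: the sign characterizations of $\weak(I)$ and $\weak_I$ (Propositions~\ref{prop:radici-stabili} and~\ref{prop:radici-stabili2}), the saturation $\mN\weak_I=\mQ\Psi_I\cap\mN\grD$ (Proposition~\ref{prop:semigruppi-attive}) and the description in Corollary~\ref{cor:weak-as-differences}, in order to realize the required roots and to check that additions and removals do not interfere, so that the reachable set is exactly $[\m,\M]$ and $I'(w)$ is constant on it.

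Finally I would deduce the numerical formulas. The fibers of $\scrO\to BwB/B$ are isomorphic to $Q$, whose dimension equals that of its dense orbit $\scrU_\M$; combined with $\dim\scrU_J=\dim(T/T_H)+|J|$ and $\dim(B/H)=\dim(T/T_H)+|\scrD^*|$ this gives $\dim\scrO = l(w)+\dim(B/H)-|\scrD^*\senza\M|$. For the rank statement, the $T$-orbits of $\scrO$ lying over the $T$-fixed point $wB/B$ are precisely the $w\scrU_J$ with $J$ a representative, of dimension $\dim(T/T_H)+|J|$, so the minimal $T$-orbits of $\scrO$ are closed and equal $w\scrU_\m$; hence $w\scrU_\m$ consists of standard base points of $\scrO$. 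By Lemma~\ref{lemma:B-rank} this gives $\calX_B(\scrO)=\calX_T(w\scrU_\m)=w\,\calX_T(\scrU_\m)$, whence $\rk(\scrO)=\rk(B/H)+|\m|$ using $\rk\calX_T(\scrU_\m)=\dim\scrU_\m$ and $\rk(B/H)=\dim(T/T_H)$, and the inclusion $w(\Phi_\m)\subset\calX_B(\scrO)$ follows at once from $\Phi_\m\subset\calX(\scrU_\m)$ (Proposition~\ref{prop:radici-reticoli}).
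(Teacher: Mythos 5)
Your skeleton is the same as the paper's (pass to the slice $Q=w^{-1}\scrO\cap B/H$, track the root groups $U_\beta$, $\beta\notin\Phi^+(w)$, via Proposition~\ref{prop:Ualphaactionfinal}, and get the rank via standard base points and Lemma~\ref{lemma:B-rank}), and your dimension count is fine; but the two hardest steps are, respectively, missing and incorrect. For part i) you correctly isolate the obstacle --- a root inserting $D\in\M\senza J$ is a priori only activated by $J$, not by $\m$ --- but you then merely announce that Propositions~\ref{prop:radici-stabili}, \ref{prop:radici-stabili2}, \ref{prop:semigruppi-attive} and Corollary~\ref{cor:weak-as-differences} would ``realize the required roots''. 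They do not: none of them controls simultaneously the $\grd$-value of a root, its activation by $\m$ alone, and the condition of lying outside $\Phi^+(w)$. The paper's proof of exactly this point is its technical core: starting from a chain $\gra_1,\ldots,\gra_n\in\weak\senza\Phi^+(w)$ carrying $\scrU_\m$ to $\scrU_\M$, in which each $\gra_i$ is only activated by $\m\cup\{\grd(\gra_1),\ldots,\grd(\gra_{i-1})\}$, it inductively rewrites the activating roots using Proposition~\ref{prop:attivazioni-equivalenti} (which you never invoke) so as to write, for each $D\in\M\senza\m$, some $\grb\in\Psi_D$ as $\grb=\gra'+\grg'$ with $\grg'\in\mN\Psi_\m$ and $\gra'\in\mN(\weak\senza\Phi^+(w))$; then Lemma~\ref{lemma:differenza-attive} gives $\gra'=\grb-\grg'\in\Phi^+$, whence $\gra'\in\weak(\m)\senza\Phi^+(w)$ and $\grd(\gra')=D$ by Theorem~\ref{teo:deadker}. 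Without this argument the identity $I(w)=\M\senza\m$, i.e.\ part i), is unproved. Note also that the interval statement itself needs no toggle analysis at all: $Q$ is a $(B\cap B^w)$-orbit, hence irreducible, affine and open in its closure, so as an affine toric variety it has a unique closed and a unique dense $T$-orbit and consists exactly of the $T$-orbits in between; this is how the paper obtains the first claim, reserving the combinatorics for i).

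The rank argument is genuinely flawed. From the fiber over $wB/B$ you infer that ``the minimal $T$-orbits of $\scrO$ are closed and equal $w\scrU_\m$''; this does not follow and is false in general. Take $G=\mathrm{SL}_2$, $H=T$ and $\scrO$ the open $B$-orbit in $G/T$: then $\scrO\simeq\mA^1\times\mC^*$ with $T$ acting with weights $(\gra,-\gra)$, so \emph{every} $T$-orbit in $\scrO$ is closed of dimension one, and $w\scrU_\m$ is only one member of a continuum of minimal closed orbits; moreover the closed orbits $\{xy=c\}$ project to non-closed $T$-orbits of the cell, so one cannot locate standard base points by looking over $wB/B$. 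Consequently your proposal never shows what Lemma~\ref{lemma:B-rank} actually requires, namely that $w\scrU_\m$ contains a standard base point. The paper proves this differently: $x_\m$ is standard for $Q$ as a $(B\cap B^w)$-homogeneous space (standard points have closed $T$-orbits, and the affine variety $Q$ has $\scrU_\m$ as its unique closed $T$-orbit), and standardness transfers to $wx_\m\in\scrO$ by conjugation because $\Stab_G(x_\m)\subset B$, which yields $\Stab_B(wx_\m)=w\Stab_{B\cap B^w}(x_\m)w^{-1}$ and $\Stab_T(wx_\m)=w\Stab_T(x_\m)w^{-1}$. Once that is in place, your concluding deductions of $\calX_B(\scrO)=w\calX_T(\scrU_\m)$, of the rank formula, and of $w(\Phi_\m)\subset\calX_B(\scrO)$ are correct.
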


\begin{proof}
Let $w \in W$ and $I \subset \scrD$ be such that $\scrO = \scrO_{w,I}$ and denote $Z = w^{-1}\scrO \cap B/H$. By the discussion at the beginning of this section, the first claim is equivalent to the fact that $Z$ contains a unique minimal $T$-orbit and a unique maximal $T$-orbit (ordered via inclusion of closures). Since $Z = (B \cap B^w) \scrU_I$, it follows that $Z$ is homogeneous under the action of a connected solvable group, and reasoning as in Corollary~\ref{cor:torica} it follows that $Z$ is an irreducible $T$-stable affine subvariety of $B/H$.

In particular, $Z$ is itself a toric variety under the action of $T$. Since it is irreducible, there exists a unique maximal $T$-orbit $\scrU_\M \subset Z$, and since it is affine there exists a unique closed $T$-orbit $\scrU_\m \subset Z$. To show that $\scrU_J \subset Z$ for every $J$ with $\m \subset J \subset \M$, notice that $Z$ is open in its closure. Therefore the boundary $\ol Z \senza Z$ is closed and $T$-stable, and if $\scrU_J \subset \ol Z \senza Z$ then it must be $\m \not \subset J$ because $\scrU_\m \subset Z$. Therefore $Z$ is the union of all $\scrU_J \subset B/H$ such that $\scrU_\m \subset \ol \scrU_J \subset \ol \scrU_\M$.

i) First at all, notice that $I(w) \subset \M$. Indeed $B\cap B^w$ is the product of the root subgroups $U_\gra$ with $\gra \in \Phi^+ \senza \Phi^+(w)$. If $\gra \in \Theta_I \senza \Phi^+(w)$, by Proposition \ref{prop:Ualphaactionfinal} it follows then $\scrU_{I\cup \grd(\gra)} \subset  U_\gra \scrU_I  \subset (B \cap B^w) \scrU_I$, hence $I \cup\{\grd(\gra)\} \subset \M$ by the maximality of $\M$.

Similarly, notice that $\m \subset I \senza I(w)$: if indeed $\gra \in \Theta_I \senza \Phi^+(w)$ and $\grd(\gra) \in I$, then $\scrU_{I\senza \grd(\gra)} \subset  U_\gra \scrU_I  \subset (B \cap B^w) \scrU_I$, hence  $\m \subset I \senza \{\grd(\gra)\}$ by the minimality of $\m$. At this point, to conclude it is enough to show the equality $\M = \m \cup I(w)$. We have already proved the inclusion $\m \cup I(w)\subset \M$. Since $\m(w) \subset I(w)$, the reverse inclusion $\M \subset \m \cup I(w)$ follows if we prove that $\M \senza \m \subset \m(w)$.

We claim that
\begin{equation}	\label{eqn:formulone}
	\forall D \in \M \senza \m \qquad \exists \grb \in \Psi_D \quad \exists \grg \in \mN \Psi_\m \quad \st \quad \grb - \grg \in \mN (\weak \senza \Phi^+(w)).
\end{equation}
Indeed, given $\gra \in \Phi^+$, we have $U_\gra \subset B \cap B^w$ if and only if $\gra \not \in \Phi^+(w)$. Since $\scrU_\M \subset (B \cap B^w) \scrU_\m$, it follows then by Proposition~\ref{prop:Ualphaactionfinal} that there is a sequence $\gra_1, \ldots, \gra_n \in \weak \senza \Phi^+(w)$ such that 
$\M \senza \m = \{\grd(\gra_1), \ldots, \grd(\gra_n)\}$ and $U_{\gra_n} \cdots U_{\gra_1} \scrU_\m \supset \scrU_M$. Moreover, every root $\gra_i$ has to be activated by $\m \cup \{\grd(\gra_1), \ldots, \grd(\gra_{i-1})\}$, that is: for all $i = 1, \ldots, n$ there are $\grg_i \in \mN \Psi_\m$, $\grb_{i,1} \in \Psi_{\grd(\gra_1)}$, $\ldots$, $\grb_{i,i-1} \in \Psi_{\grd(\gra_{i-1})}$ and non-negative integers $a_{i,1}$, $\ldots$, $a_{i,i-1}$ such that
\begin{align*}
	\grb_1 := \gra_1 + \grg_1 \in \Psi_{\grd(\gra_1)}	\\
	\grb_2 := \gra_2 + \grg_2 + a_{1,1} \grb_{1,1} \in \Psi_{\grd(\gra_2)} \\
	\grb_3 := \gra_3 + \grg_3 + a_{2,1} \grb_{2,1} + a_{2,2} \grb_{2,2} \in \Psi_{\grd(\gra_3)} \\
	\qquad \qquad \cdots \cdots \cdots \cdots \cdots \cdots \cdots \cdots \qquad \qquad \\
	\grb_n := \gra_n + \grg_n + a_{n,1} \grb_{n,1} + \ldots + a_{n,n-1} \grb_{n,n-1} \in \Psi_{\grd(\gra_n)}
\end{align*}
By Proposition~\ref{prop:attivazioni-equivalenti} it follows then
\begin{align*}
	\grb_1' := \gra_1 + \grg_1 \in \Psi_{\grd(\gra_1)}	\\
	\grb_2' := \gra_2 + \grg_2 + a_{1,1} \grb_1' \in \Psi_{\grd(\gra_2)} \\
	\grb_3' := \gra_3 + \grg_3 + a_{2,1} \grb_1' + a_{2,2} \grb_2' \in \Psi_{\grd(\gra_3)} \\
	\qquad \qquad \cdots \cdots \cdots \cdots \cdots \cdots \cdots \cdots \qquad \qquad \\
	\grb_n' := \gra_n + \grg_n + a_{n,1} \grb_1' + \ldots + a_{n,n-1} \grb_{n-1}' \in \Psi_{\grd(\gra_n)}
\end{align*}
In particular, for every $i = 1, \ldots, n$, it follows that $\grb'_i = \gra'_i + \grg'_i$ with $\grg'_i \in \mN \Psi_\m$ and $\gra'_i \in \mN \gra_1 + \ldots + \mN \gra_i$, and (\ref{eqn:formulone}) follows.

Let now $D \in \M \senza \m$, $\grb \in \Psi_D$ and $\grg \in \mN \Psi_\m$ as in (\ref{eqn:formulone}), and set $\gra = \grb - \grg$. Then by Lemma~\ref{lemma:differenza-attive} we have $\gra \in \Phi^+$, hence $\gra \in \Theta_{\m}$. On the other hand we have $w(\gra) \in \mN \Phi^+$, hence $\gra \in \Phi^+ \senza \Phi^+(w)$. Therefore $\gra \in \Theta_{\m} \senza \Phi^+(w)$, and by Theorem~\ref{teo:deadker} we get $D = \delta(\gra)$, which shows that $D \in \m(w)$.

ii) For the dimension formula, notice that the action of $G$ on $G/H$ induces an isomorphism of varieties
$$
	U \cap (wU^-w^{-1}) \times wB/H \lra BwB/H
$$
If $\scrO \subset BwB/H$ it follows that $\dim (\scrO) = l(w) + \dim (w^{-1}\scrO \cap B/H)$. On the other hand, since $B/H$ is a smooth affine toric variety, if $I \subset \scrD$ we have
$$
	\dim(\scrU_I) = \dim(B/H) - |\scrD \senza I| =  \dim(\scrU_\vuoto) + |I|,
$$
therefore the formula follows by the definition of $\M$.

For the rank formula, fix a point $x_\m \in \scrU_\m$. Notice that $x_\m$ is a standard base point in $(B \cap B^w)\scrU_\m$, as a homogeneous space under $B \cap B^w$. Indeed, if $x \in (B \cap B^w)\scrU_\m$ is a standard base point, then $Tx$ is a closed $T$-orbit by Lemma~\ref{lemma:B-rank}. On the other hand $\scrU_\m \subset (B \cap B^w)\scrU_\m$ is the unique closed $T$-orbit, therefore $Tx=Tx_\m$ and $x_m$ is a standard base point too.

It follows that $wx_\m \in \scrO$ is a standard base point as well. Indeed, since $x_\m \in B/H$ and $H\subset B$, we have $\Stab_G(x_\m) = bHb^{-1} = \Stab_B(x_\m)$, hence
$$
\Stab_B(wx_\m) = B \cap w \Stab_G(x_\m) w^{-1} = B \cap w \Stab_B(x_\m) w^{-1}.
$$
On the other hand the latter equals $w \Stab_{B \cap B^w} (x_\m)w^{-1}$, and since $\Stab_T(wx_\m) = w\Stab_T(x_\m) w^{-1}$ 
it follows that $wx_\m \in \scrO$ is a standard base point because $x_\m \in (B \cap B^w)\scrU_\m$ is so.

By Lemma~\ref{lemma:B-rank} we have then
$$
	\calX_B(\scrO) = \calX_T(w\scrU_\m) = w\calX_T(\scrU_\m),
$$
therefore $\rk(\scrO) = \dim(\scrU_\m) = \dim(\scrU_\vuoto) + |\m|$. On the other hand, by Lemma~\ref{lemma:B-rank} again, we have $\rk(B/H) =  \dim(\scrU_\vuoto)$, and the rank formula follows. Finally, the inclusion $w(\Phi_\m) \subset \calX_B(\scrO)$ follows by Proposition~\ref{prop:radici-reticoli}.
\end{proof}

Given $w \in W$ and $I\subset \scrD$ we will denote by $\m_{w,I}$ the minimal representative of $\scrO_{w,I}$ and by $\M_{w,I}$ the maximal representative of $\scrO_{w,I}$. 

\begin{definition}
Let $w \in W$ and $I\subset \scrD$. We say that $(w,I)$ is a \textit{reduced pair} if $I = \m_{w,I}$ (i.e., if $I \cap I(w) = \vuoto$), and we say that it is an \textit{extended pair} if $I = \M_{w,I}$ (i.e., if $I(w) \subset I$). Given $w \in W$ and $I \subset \scrD$, we will call $(w,\m_{w,I})$ and $(w,\M_{w,I})$ respectively the \textit{reduction} and the \textit{extension} of $(w,I)$.
\end{definition}

\begin{remark}\label{rem:pairs}
Given $I \subset \scrD$, we have by definition $\weak_I = \Theta_I \cap \grd^{-1}(I)$, whereas by Theorem~\ref{teo:parametrizzazione} we have the inclusion $\Theta_I \senza \weak_{\M_{w,I}} \subset \Phi^+(w)$. The following characterizations of reduced pairs and of extended pairs follow immediately:
\begin{itemize}
	\item[i)] The pair $(w,I)$ is reduced if and only if $\weak_I \subset \Phi^+(w)$, if and only if $\Phi^+_I \subset \Phi^+(w)$.
	\item[ii)] The pair $(w,I)$ is extended if and only if $\Theta_I \senza \weak_I \subset \Phi^+(w)$. 
\end{itemize}
\end{remark}

To summarize:

\begin{corollary} \label{cor:parametrizzazione}
The map $(w,I) \mapsto \scrO_{w,I}$ gives a parametrization by reduced (resp. extended) pairs
$$
	\{(w,I) \st \Phi^+_I \subset \Phi^+(w) \} \longleftrightarrow \scrB(G/H) \longleftrightarrow \{(w,I) \st \Theta_I \senza \weak_I \subset \Phi^+(w) \} .
$$
Moreover:
\begin{itemize}
	\item[i)] If $(w,I)$ is reduced, then $\rk (\scrO_{w,I}) = \rk(B/H) + |I|$.
	\item[ii)] If $(w,I)$ is extended, then $\dim (\scrO_{w,I}) = \dim(B/H) + l(w) - |\scrD \senza I|$.
\end{itemize}
\end{corollary}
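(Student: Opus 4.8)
The plan is to obtain the statement as a formal consequence of Theorem~\ref{teo:parametrizzazione} and Remark~\ref{rem:pairs}, so that the genuine content is already in place and what remains is essentially bookkeeping. First I would recall that, as observed just before Theorem~\ref{teo:parametrizzazione}, every $B$-orbit $\scrO \subset G/H$ has the form $\scrO_{w,I}$, and that the element $w$ is \emph{uniquely} determined by $\scrO$, since it labels the Schubert cell $BwB/B$ onto which $\scrO$ projects under the Bruhat decomposition. Hence the only ambiguity in writing $\scrO = \scrO_{w,I}$ lies in the choice of $I \subset \scrD^*$, and by Theorem~\ref{teo:parametrizzazione} the admissible $I$ are precisely those with $\m \subset I \subset \M$, where $\m$ and $\M$ are the unique minimal and maximal representatives of $\scrO$.

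For the parametrization by reduced pairs I would argue as follows. A reduced pair is by definition one with $I = \m_{w,I}$. Surjectivity of $(w,I) \longmapsto \scrO_{w,I}$ onto $\scrB(G/H)$ is immediate: any orbit $\scrO = \scrO_{w,I}$ equals $\scrO_{w,\m_{w,I}}$, and $(w,\m_{w,I})$ is reduced. For injectivity, if $(w,I)$ and $(w',I')$ are reduced with $\scrO_{w,I} = \scrO_{w',I'}$, then $w = w'$ by the uniqueness of the Weyl element, whence $I$ and $I'$ are both the minimal representative of this common orbit, so $I = I'$ by the uniqueness part of Theorem~\ref{teo:parametrizzazione}. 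Finally, Remark~\ref{rem:pairs} i) identifies the set of reduced pairs with $\{(w,I) \st \Phi^+_I \subset \Phi^+(w)\}$. The parametrization by extended pairs is entirely parallel: one uses the maximal representative $\M$ in place of $\m$, and Remark~\ref{rem:pairs} ii) to identify the set of extended pairs with $\{(w,I) \st \weak(I) \senza \weak_I \subset \Phi^+(w)\}$.

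It then remains to read off the two numerical formulas from Theorem~\ref{teo:parametrizzazione} ii). If $(w,I)$ is reduced then $I = \m$, so the rank formula yields $\rk(\scrO_{w,I}) = \rk(B/H) + |\m| = \rk(B/H) + |I|$, proving i). If $(w,I)$ is extended then $I = \M$, so the dimension formula yields $\dim(\scrO_{w,I}) = \dim(B/H) + l(w) - |\scrD^* \senza \M| = \dim(B/H) + l(w) - |\scrD^* \senza I|$, proving ii). The only step demanding genuine care is the injectivity of the two maps, and this rests squarely on the two uniqueness assertions---of $w$ and of the extremal representative---so I expect no new difficulty to arise beyond invoking Theorem~\ref{teo:parametrizzazione} and Remark~\ref{rem:pairs} correctly.
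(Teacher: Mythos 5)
Your proposal is correct and follows essentially the same route as the paper: Corollary~\ref{cor:parametrizzazione} is stated there as an immediate summary (``To summarize:'') of Theorem~\ref{teo:parametrizzazione} and Remark~\ref{rem:pairs}, and your argument simply makes explicit the bookkeeping the paper leaves implicit --- uniqueness of $w$ via the Bruhat decomposition, uniqueness of the extremal representatives $\m$ and $\M$, the combinatorial characterizations of reduced and extended pairs from the remark, and substitution into the rank and dimension formulas. No gap, and no genuinely different idea is involved.
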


\begin{remark}
The following properties easily follow from Remark~\ref{rem:pairs}:
\begin{itemize}
	\item[i)] All the pairs of the shape $(w,\vuoto)$ are reduced, and all the pairs of the shape $(w,\scrD)$ are extended. All the pairs of the shape $(w_0, I)$ are both reduced and extended.
	\item[ii)] If $(w,I)$ is reduced and if $J \subset I$, then $(w,J)$ is also reduced.
	\item[iii)] If $(w,I)$ is reduced (resp. extended) and if $w \preceq v$ (i.e. $w$ is a right subexpression of $v$), then $(v,I)$ is also reduced (resp. extended).
\end{itemize}
\end{remark}

\begin{corollary}	\label{cor: orbite chiuse}
Suppose that $(w,I)$ is reduced. Then $\scrO_{w,I}$ is closed in $G/H$ if and only if all of the following conditions hold:
\begin{itemize}
	\item[i)] $I = \vuoto$;
	\item[ii)] $\Phi^+(w) \subset \Psi$;
	\item[iii)] $\Psi_{\grd(\Phi^+(w))} = \Phi^+(w)$;
	\item[iv)] $\grd_{|\Phi^+(w)} : \Phi^+(w) \ra \scrD$ is injective.
\end{itemize}
\end{corollary}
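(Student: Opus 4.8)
\textit{Strategy.} The plan is to first reduce to the case $I=\vuoto$, and then to characterise closedness of $\scrO_{w,\vuoto}$ by passing to $H$-orbits on $G/B$ and translating the resulting group-theoretic condition into a linear-algebra statement inside $\gou=\Lie U$.

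\textit{Necessity of} i). First I would show that closedness forces $I=\vuoto$. Suppose $I\neq\vuoto$. Since $B/H$ is affine toric with unique closed $T$-orbit $\scrU_\vuoto$, we have $\scrU_\vuoto\subset\ol{\scrU_I}$; and as $B/H=p^{-1}(eB/B)$ is closed in $G/H$ (where $p\colon G/H\to G/B$ is the projection and $eB/B$ is the closed Schubert point), this closure agrees with the one taken in $G/H$. Hence $w\scrU_\vuoto\subset\ol{\scrO_{w,I}}$, and since $\ol{\scrO_{w,I}}$ is closed and $B$-stable we get $\scrO_{w,\vuoto}=Bw\scrU_\vuoto\subset\ol{\scrO_{w,I}}$. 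By Remark~\ref{oss:compatibilità} the pair $(w,\vuoto)$ is again reduced, and it is distinct from $(w,I)$, so $\scrO_{w,\vuoto}\neq\scrO_{w,I}$ by Corollary~\ref{cor:parametrizzazione}. Thus $\scrO_{w,I}$ is not closed, and closedness indeed requires $I=\vuoto$.

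\textit{Duality.} For $I=\vuoto$ we have $\scrO_{w,\vuoto}=BwH/H$, whose preimage in $G$ is the double coset $BwH$; this is closed if and only if $\scrO_{w,\vuoto}$ is. Inversion in $G$ is a homeomorphism carrying $BwH$ to $Hw^{-1}B$, the preimage of the $H$-orbit $Hw^{-1}B/B\subset G/B$, so $\scrO_{w,\vuoto}$ is closed in $G/H$ if and only if $Hw^{-1}B/B$ is closed in $G/B$. Now $G/B$ is projective and $H^\circ$ is connected solvable, so any closed $H$-orbit is a finite union of $H^\circ$-orbits, each of which is complete and hence a point by Borel's fixed point theorem; a closed $H$-orbit therefore consists of $H^\circ$-fixed flags. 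Consequently $\scrO_{w,\vuoto}$ is closed iff $w^{-1}B/B$ is fixed by $H^\circ$, that is, iff ${}^wH^\circ\subset B$. Writing $H^\circ=T_H^\circ\,H^\mru$ with $T_H^\circ$ normalised into $T$ by $w$, and using that $B$ is connected, this is equivalent to $\operatorname{Ad}(w)\,\goh^\mru\subset\gou$, where $\goh^\mru=\Lie(H^\mru)$.

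\textit{Weightspace analysis (the crux).} Since $\operatorname{Ad}(w)$ maps $\gou_\gra$ into the negative part of $\gog$ exactly when $\gra\in\Phi^+(w)$, the condition becomes $\goh^\mru\subset\bigoplus_{\gra\in\Phi^+\senza\Phi^+(w)}\gou_\gra$. Both sides are $T_H$-stable, so I would verify the inclusion on each $T_H$-weightspace. Recall from Section~\ref{s:BmodH} the multiplicity-free decomposition $\gou/\goh^\mru\cong\bigoplus_{\pi\in\tau(\Psi)}\mC_\pi$: thus $(\goh^\mru)_\pi$ is all of $\gou_\pi$ when $\pi\notin\tau(\Psi)$, and a hyperplane containing every non-active line $\gou_\gra$ when $\pi=\tau(D)$. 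The non-active lines force $\Phi^+(w)\subset\Psi$, which is ii). For each $D$ the hyperplane condition forces $|\Psi_D\cap\Phi^+(w)|\le 1$, with equality only if $\Psi_D$ is a singleton; the essential point is that an active line $\gou_\gra$ with $\gra\in\Psi_D$ never lies in $\goh^\mru$, so a nontrivial active hyperplane can never be a coordinate hyperplane. Running this over all $D$ translates precisely into the injectivity of $\grd\ristretto_{\Phi^+(w)}$ together with $\Psi_{\grd(\Phi^+(w))}=\Phi^+(w)$, which is iii). The main obstacle is exactly this weightspace computation: the equivalence with ii) is immediate, but extracting iii) requires the singleton-forcing argument and a careful identification of which hyperplane $(\goh^\mru)_\pi$ actually is.
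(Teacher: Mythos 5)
Your proof is correct, but it takes a genuinely different route from the paper's. The paper deduces condition i) from Ressayre's theorem (\cite{Re}) that closed $B$-orbits have minimal rank, then invokes Brion's result (\cite[Proposition 2.2]{Br2}) that all closed $B$-orbits in $G/H$ are equidimensional: since $B/H$ is itself a closed $B$-orbit, $\scrO_{w,\vuoto}$ is closed if and only if $\dim \scrO_{w,\vuoto} = \dim (B/H)$, and this numerical condition is unwound into ii) and iii) purely by counting, using the dimension formula of Theorem~\ref{teo:parametrizzazione} and the computation of the maximal representative $\M_{w,\vuoto} = \grd(\Psi \senza \Phi^+(w))$, so that closedness becomes $|\scrD^* \senza \M_{w,\vuoto}| = l(w)$. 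You instead get i) from the elementary fact that $\scrU_\vuoto$ lies in every $T$-orbit closure of the affine toric variety $B/H$, which is closed in $G/H$ (this replaces the appeal to \cite{Re}), and you characterize closedness of $BwH/H$ via the classical duality with $H$-orbits on $G/B$: Borel's fixed point theorem reduces closedness to ${}^{w}H^\circ \subset B$, hence to $\mathrm{Ad}(w)\,\goh^\mru \subset \gou$, and your $T_H$-weight-space analysis of $\goh^\mru$ inside $\gou$ --- using the multiplicity-free structure $\gou/\goh^\mru \simeq \bigoplus_{\pi \in \tau(\Psi)} \mC_\pi$ together with the fact that no active line $\gou_\gra$ lies in $\goh^\mru$ --- yields exactly ii) and iii). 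The singleton-forcing step you flag as the crux (a hyperplane $(\goh^\mru)_\pi$ containing no active line can equal a coordinate hyperplane only when $\Psi_D$ is a singleton) is sound, and I verified that it translates precisely into the injectivity of $\grd\ristretto_{\Phi^+(w)}$ together with $\Psi_{\grd(\Phi^+(w))} = \Phi^+(w)$. What each approach buys: yours is more self-contained, avoiding both external results and giving ii), iii) a transparent group-theoretic meaning (they say precisely that $wH^\mru w^{-1} \subset U$); the paper's is shorter because the rank/dimension formulas and the maximal-representative machinery of Section~\ref{s:Borbits} are already in place.
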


\begin{proof}
As a consequence of the main theorem in \cite{Re}, every closed $B$-orbit has minimal rank. Therefore every closed $B$-orbit in $G/H$ is of the shape $\scrO_{w,\vuoto}$ for some $w \in W$. On the other hand $B/H$ is a closed $B$-orbit, and by \cite[Proposition 2.2]{Br2} all closed $B$-orbits have the same dimension, therefore $\scrO_{w,\vuoto}$ is closed if and only if $\dim(\scrO_{w,\vuoto}) = \dim(B/H)$.

By Theorem~\ref{teo:parametrizzazione} the maximal representative of $\scrO_{w,\vuoto}$ is $\M = \grd(\Psi \senza \Phi^+(w))$, therefore $\dim(\scrO_{w,\vuoto}) = \dim(B/H)$ if and only if $|\scrD \senza \M| = l(w)$. On the other hand
$$
\scrD \senza \grd(\Psi \senza \Phi^+(w)) \subset \grd(\Psi \cap \Phi^+(w))
$$
and the latter has cardinality at most $l(w)$, therefore $|\scrD \senza \M| = l(w)$ if and only if $\Phi^+(w) \subset \Psi$ and $\scrD \senza \grd(\Psi \senza \Phi^+(w)) = \grd(\Phi^+(w))$ has cardinality $l(w)$. Assuming ii), the claim follows by noticing that the equality $\scrD \senza \grd(\Psi \senza \Phi^+(w)) = \grd(\Phi^+(w))$ is equivalent to iii), whereas the equality $|\grd(\Phi^+(w))| = l(w)$ is equivalent to iv).
\end{proof}

\begin{example}	\label{ex:maximal-rank}
In this example, we study the special case where $H$ contains a maximal torus of $G$, and we show how the theory developed in this section simplifies under this assumption. Assume that $T \subset H$.

\begin{itemize}
	\item[i)] Notice that $\dim(B/H) = |\Psi|$ and $\rk(B/H) = 0$. The first formula is clear, whereas the second one follows by Lemma \ref{lemma:B-rank} because $B/H$ contains a $T$-fixed point.
	\item[ii)] The restriction $\grd_{|\Psi} : \Psi \ra \scrD$ is bijective by Corollary~\ref{cor:delta-surjective}, therefore we may identify $\Psi$ and $\scrD$ (as we will in the following points), and regard $\grd_{|\weak}$ as a map $\weak \ra \Psi$ which extends the identity.
	\item[iii)] Let $(w,I)$ be a reduced pair, then by Corollary~\ref{cor: orbite chiuse} we have
	$$\scrO_{w,I} \text{ is closed} \iff I = \vuoto \text{ and } \Phi^+(w) \subset \Psi$$
	\item[iv)] Let $(w,I)$ be an extended pair and denote $\Phi^+(w,I) = \Phi^+(w) \senza (\Psi \senza I)$, then we have
	$$\dim \scrO_{w,I} = |\Psi|  + |\Phi^+(w,I)|.
	$$
	Indeed $\Psi \senza I = \Psi \cap (\Theta_I \senza \weak_I)$, therefore $\Psi \senza I \subset \Phi^+(w)$ by Remark \ref{rem:pairs} and the formula follows by by Corollary~\ref{cor:parametrizzazione}.
	\item[v)] Let $(w,I)$ be a reduced pair, then $\rk(\scrO_{w,I}) = |I|$ and
		$$\dim \scrO_{w,I} \geq |\Psi| + |\Phi^+_I|.$$
The rank formula follows by Theorem \ref{teo:parametrizzazione} thanks to i). The dimension formula follows by iii), thanks to the inclusion $\Phi^+_I \subset \Phi^+(w,\M)$ (where $(w,\M)$ denotes the extension of $(w,I)$): indeed $\Phi^+_I \subset \Phi^+(w)$ by Remark \ref{rem:pairs}, and if we regard $\grd_{|\weak}$ as a map $\weak \ra \Psi$ then by Proposition \ref{prop: I-rootsyst} we have $\Psi \cap \Phi^+_I = I \subset \M$.
\end{itemize}
\end{example}

\begin{example}[see \cite{Ti1}] \label{ex:timashev}
Let $U' = \prod_{\gra \in \Phi^+ \senza \grD} U_\gra$ be the derived subgroup of $U$, then $H = T U'$ is a spherical subgroup of $G$ which is contained in $B$. In particular $T \subset H$, therefore the discussion of Example \ref{ex:maximal-rank} applies. Notice that $\Psi = \grD$ is the set of the simple roots, therefore we can identify $\scrD$ with $\grD$ via the map $\grd$. Notice that $\weak = \Psi = \grD$: indeed by definition $\weak \subset \Phi^+$, and by Theorem \ref{teo:deadker} for all $\gra \in \weak$ there exists $\grb \in \Psi$ such that $\gra \leq \grb$.

Given $w \in W$ and $I \subset \grD$, we have by definition $I(w) = I \senza \Phi^+(w)$. Therefore by Remark \ref{rem:pairs} the pair $(w,I)$ is reduced if and only if $I \subset \Phi^+(w)$, whereas it is extended if and only if $\grD \senza \Phi^+(w) \subset I$. In particular if $(w,I)$ is reduced then we have $\rk (\scrO_{w,I}) = |I|$ and
$$
	\dim (\scrO_{w,I}) = l(w) + |I| + |\grD \senza \Phi^+(w)|.
$$	
\end{example}

\subsection{Comparison with the wonderful case}\label{s:reduction}

An important class of spherical subgroups of a reductive group $G$ is that of \textit{wonderful subgroups} (see e.g. \cite{BL}). This class plays a prominent role in the theory of spherical varieties, both in their classification and in the study of their geometry. Wonderful subgroups can be characterized in terms of their \textit{spherical roots}: by definition, the spherical roots of a spherical subgroup $H \subset G$ are a distinguished set of elements in the weight lattice $\calX(G/H)$ (see the references in Remark \ref{rem:spherical roots} for the definition), and $H$ is wonderful if and only if the corresponding spherical roots form a basis of $\calX(G/H)$.

To any spherical subgroup $H$ one may canonically associate a wonderful subgroup $\ol H$ containing $H$, called the \textit{spherical closure} of $H$. Given a spherical subgroup $H$ contained in $B$, the aim of this subsection is to compare the set of $B$-orbits in $G/H$ with that of $G/\ol H$, and to show that these two sets are canonically identified. We keep the notations introduced in Section~\ref{ssec:spherical}.

Restricting to the strongly solvable case, we say that a spherical subgroup $H \subset G$ contained in $B$ is wonderful if $w_0 \calX(G/H) = \mZ \grS$, where $\grS$ is the set introduced in Definition \ref{def:spherical-roots}. The fact that this definition agrees with the general one is a consequence of Proposition \ref{prop:B/H-toric} (see Remark \ref{rem:spherical roots}).

If $H$ is a spherical subgroup of $G$, then $N_G(H)$ acts by conjugation on $\calX(H)$. The spherical closure $\overline H$ of $H$ is by definition the kernel of this action, and we say that $H$ is \textit{spherically closed} if $H = \ol H$. Notice that we have inclusions $H \subset \ol H$ and $Z(G) \subset \overline H \subset N_G(H)$, where $Z(G)$ denotes the center of $G$. By \cite[Corollary 5.25]{Avd2}, it follows that if $H$ is contained in $B$, then $\ol H$ is also contained in $B$.

By a general theorem of Knop \cite[Theorem 7.5 and Corollary 7.6]{Kn3}, the spherical closure of a spherical subgroup of $G$ is wonderful. In the case of a strongly solvable spherical subgroup $H \subset G$, the converse also is true: $H$ is wonderful if and only if it is spherically closed (see \cite[Corollary 3.42]{Avd2}).

Let $H \subset G$ be a wonderful spherical subgroup contained in $B$, and denote $\grS_{G/H} = -w_0(\grS)$. Then by definition $\grS_{G/H}$ is a basis for $\calX(G/H)$, whose elements are simple roots of $G$.

\begin{lemma}\label{lemma:connected}
Suppose that $G$ is adjoint and let $H \subset G$ be a strongly solvable wonderful subgroup. Then $H$ is connected.
\end{lemma}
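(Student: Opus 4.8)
The plan is to reduce the statement to the connectedness of the diagonalizable group $T_H = T \cap H$, and then to read off that connectedness from the weight lattice of $B/H$. First I would recall that, under our standing assumptions, $T_H$ is a maximal diagonalizable subgroup of $H$ and $H = T_H \ltimes H^\mru$, where $H^\mru = U \cap H$ is a closed connected unipotent subgroup of $U$, hence connected. Consequently $H$ is connected if and only if $T_H$ is connected, i.e.\ if and only if $T_H$ is a subtorus of $T$.

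Next I would translate the connectedness of $T_H$ into a statement about character lattices. For a closed subgroup of a torus, connectedness is equivalent to the character group $\calX(T_H)$ being torsion-free, and by the duality between diagonalizable groups and finitely generated abelian groups this holds precisely when the kernel lattice $\calX_T(T/T_H) = \ker\big(\calX(T) \lra \calX(T_H)\big)$ is saturated (that is, a direct summand) in $\calX(T)$.

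It then remains to identify this kernel lattice. From the description of $B/H$ as an affine toric $T$-variety we have $\calX_T(T/T_H) = \calX_B(B/H)$, and since $H$ is wonderful the characterization recalled in this subsection gives $\calX(B/H) = \mZ\Psi$. By Lemma~\ref{lem:spherical-roots} we have $\mZ\Psi = \mZ\grS$ with $\grS = \weak \cap \grD \subset \grD$. Finally, because $G$ is adjoint, $\calX(T) = \mZ\Phi$ admits the set of simple roots $\grD$ as a $\mZ$-basis; a sublattice spanned by a subset of a basis is a direct summand (its quotient is free, being spanned by $\grD \senza \grS$), so $\mZ\grS$ is saturated in $\calX(T)$. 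Therefore $T_H$ is a subtorus and $H$ is connected.

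The argument is short, and its crux --- which is also the only place the adjointness of $G$ enters --- is the last observation: for a general reductive $G$ the lattice $\calX(T)$ is larger than the root lattice and $\mZ\grS$ need not be a direct summand, so saturation could fail, whereas for adjoint $G$ the identity $\mZ\Psi = \mZ\grS$ with $\grS \subset \grD$ makes saturation automatic. The one point to handle with care is thus the chain of identifications $\calX_T(T/T_H) = \calX(B/H) = \mZ\Psi = \mZ\grS$, each link of which is furnished by an earlier result (the toric description of $B/H$, the wonderful characterization via $\calX(B/H) = \mZ\Psi$, and Lemma~\ref{lem:spherical-roots}).
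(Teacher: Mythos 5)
Your reduction of the statement to the connectedness of $T_H = T \cap H$ is fine ($H = T_H \ltimes H^\mru$ by Lemma~\ref{lemma:B-rank}, and $H^\mru$ is connected in characteristic zero), and so is the criterion that $T_H$ is a torus precisely when the kernel $K := \calX_T(T/T_H)$ of the restriction $\calX(T) \lra \calX(T_H)$ is saturated in $\calX(T)$. The genuine gap is the middle link of your chain $\calX_T(T/T_H) = \calX_B(B/H) = \calX(B/H) = \mZ\Psi$. The lattice occurring in the wonderful characterization is the weight lattice of $B/H$ \emph{as a toric $T$-variety}, namely $\calX_T(B/H) = w_0\calX(G/H)$ (Proposition~\ref{prop:B/H-toric}); this is the character lattice of the \emph{open} $T$-orbit of $B/H$. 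By contrast, $\calX_B(B/H) = \calX_T(T/T_H)$ from Lemma~\ref{lemma:B-rank} is the character lattice of the \emph{closed} $T$-orbit $TH/H$. These two lattices differ as soon as $\Psi \neq \vuoto$: their ranks are respectively $\dim B/H = \rk G - \rk H + |\scrD^*|$ and $\rk G - \rk H$. A concrete counterexample to your identification, with $G$ adjoint and $H$ wonderful, is $G = \mathrm{PGL}_2$ and $H = T$: there $T_H = T$, so $K = \calX_T(T/T_H) = 0$, while $\Psi = \{\gra\}$ and $\mZ\Psi = \mZ\gra = \calX(T)$. Thus the equality $K = \mZ\grS$ on which your argument rests is false; the saturation you actually establish (that of $\mZ\grS$ in $\calX(T)$) is not the saturation you need (that of $K$).

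The strategy can be repaired, but it requires an additional step. Wonderfulness plus adjointness give that $\mZ\grS = \calX_T(B/H)$ is saturated in $\calX(T) = \mZ\grD$; here $\mZ\grS$ is the lattice of characters of $T$ trivial on the generic stabilizer $S \subset T_H$, i.e.\ on the kernel of the $T_H$-action on $U/H^\mru$. Since $K \subset \mZ\grS$, one must still show that $K$ is saturated in $\mZ\grS$; this holds for any strongly solvable spherical $H$, because the quotient $\mZ\grS/K$ is identified with the group of characters of $T_H$ trivial on $S$, which equals $\mZ\tau(\Psi)$ and is free, $\tau(\Psi)$ being linearly independent (the multiplicity-freeness of the $T_H$-module $U/H^\mru$). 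Then $\calX(T)/K$ is an extension of a free group by a free group, hence free, and $T_H$ is a torus. Note that the paper's own proof takes a different and shorter route, avoiding the toric dictionary entirely: it compares $H$ with its identity component $H^\circ$, uses that $\calX(G/H)$ is a finite-index sublattice of $\calX(G/H^\circ)$, that $\calX(G/H) = \mZ\grS^*$ with $\grS^* \subset \grD$ is saturated in the root lattice by adjointness (forcing $\calX(G/H) = \calX(G/H^\circ)$), and concludes via the isomorphism $H/H^\circ \simeq \calX(G/H^\circ)/\calX(G/H)$.
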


\begin{proof}
Let $H^\circ \subset H$ be the identity component. Since $H^\circ$ has finite index in $H$ and since $H$ has an open orbit on $G/B$, it follows that $H^\circ$ has an open orbit on $G/B$ as well, hence it is spherical.  The pull-back of rational functions along the projection $G/H^\circ \ra G/H$ identifies the weight lattice $\calX(G/H)$ with a sublattice of $\calX(G/H^\circ)$, which has finite index because $H/H^\circ \simeq \calX(G/H^\circ)/\calX(G/H)$ (see e.g. \cite[Lemma 2.4]{Ga}). 

Since $G$ is adjoint we have $\calX(G/H^\circ) \subset \calX(T)= \mZ\grD$. On the other hand $\calX(G/H) = \mZ \grS_{G/H}$ is generated by a subset of $\grD$. Since $\calX(G/H) \subset \calX(G/H^\circ) \subset \mZ \grD$ and since the first inclusion has finite index, it follows the equality $\calX(G/H) = \calX(G/H^\circ)$. The equality $H = H^\circ$ follows then by applying the isomorphism $H/H^\circ \simeq \calX(G/H^\circ)/\calX(G/H)$ once more.
\end{proof}

\begin{corollary}\label{cor:wonderful}
Suppose that $H \subset G$ is spherical and strongly solvable. Then the projection $G/H\to G/\ol H$ induces a bijection between $\scrB(G/H)$ and $\scrB(G/\ol H)$.
\end{corollary}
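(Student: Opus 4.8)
The plan is to show that $H$ and $\ol H$ give rise to literally the same combinatorial parameters $(\scrD^*, I\mapsto\Phi_I)$, so that the parametrization of Corollary~\ref{cor:parametrizzazione} identifies $\scrB(G/H)$ with $\scrB(G/\ol H)$, and then to check that this identification is the one induced by the projection $\pi\colon G/H\to G/\ol H$. First I would record that $\pi$ is the quotient of $G/H$ by the right action of the diagonalizable group $A=\ol H/H$, which commutes with the left $B$-action; hence $\pi$ is $G$-equivariant and dominant, the induced map $\pi_*\colon\scrB(G/H)\to\scrB(G/\ol H)$ is surjective, and $\pi_*(\scrO_{w,I})=Bw\,\pi(\scrU_I)$.

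Next I would check that the active roots agree, $\Psi_H=\Psi_{\ol H}$. Since $\ol H$ is by definition the kernel of the action of $N_G(H)$ on $\Div^B(G/H)$, it is normal in $N_G(H)$. As $N_G(H)/H$ is diagonalizable, its unipotent radical is trivial, so $N_G(H)^\mru\subset H$ and therefore $N_G(H)^\mru=H^\mru$; and because $\ol H^\mru$ is a connected unipotent subgroup which is normal in $N_G(H)$, it is squeezed into $H^\mru\subset\ol H^\mru\subset N_G(H)^\mru=H^\mru$, giving $\ol H^\mru=H^\mru$ and hence $\Psi_H=\Psi_{\ol H}=:\Psi$.

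The crucial step is the identification of the colors. Because $A$ acts trivially on $\Div^B(G/H)$, and this action commutes with the left action of $T$ and of $w_0$, every $T$-stable prime divisor of $B/H$ is $A$-stable; consequently $E\mapsto\pi(E)$ and $D'\mapsto\pi^{-1}(D')$ are mutually inverse bijections between $\Div^T(B/H)$ and $\Div^T(B/\ol H)$, i.e.\ $\scrD^*_H\cong\scrD^*_{\ol H}$ by Proposition~\ref{prop:corrispondenza-divisori}. (That no divisor upstairs splits when $A$ is disconnected follows from the fact that each $T$-stable prime $E$ is already $A$-stable, so $E=\pi^{-1}(\pi(E))$.) Since $\pi$ is $U_\gra$-equivariant for every $\gra\in\Phi^+$, Lemma~\ref{lem:divisori-stabili} shows that this bijection carries the unique non-$U_\gra$-stable divisor of $B/H$ to that of $B/\ol H$, i.e.\ $\grd_H(\gra)$ to $\grd_{\ol H}(\gra)$ for each $\gra\in\Psi$; thus the two maps $\grd\colon\Psi\to\scrD^*$ coincide under the identification. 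Consequently $\Psi_I$, and therefore $\Phi_I=\mZ\Psi_I\cap\Phi$ together with $\Phi^+_I$, are the same for $H$ and $\ol H$, so $(w,I)$ is a reduced pair for $H$ if and only if it is one for $\ol H$ by Remark~\ref{rem:pairs}. By Corollary~\ref{cor:parametrizzazione} both $\scrB(G/H)$ and $\scrB(G/\ol H)$ are then in bijection with this common set of reduced pairs, and since $\pi_*(\scrO^H_{w,I})=Bw\,\pi(\scrU^H_I)=\scrO^{\ol H}_{w,I}$, the map $\pi_*$ is precisely the resulting bijection.

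The main obstacle is this color identification: one must be certain that passing to the spherical closure neither merges nor splits the $B$-stable divisors lying over the open Bruhat cell, and that the datum $\grd$ is preserved. This is exactly where the defining property of $\ol H$, namely the triviality of the $A$-action on $\Div^B(G/H)$, is indispensable, combined with the $U_\gra$-equivariance of $\pi$ to match the maps $\grd$; the preliminary equality $\Psi_H=\Psi_{\ol H}$ is what makes it meaningful to speak of "the same $\gra$'' on both sides. Everything else is then a formal consequence of the parametrization already established.
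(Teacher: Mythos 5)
You take a genuinely different route from the paper: the paper's proof never compares combinatorial invariants of $H$ and $\ol H$ at all, but instead replaces $H$ by $HZ(G)$ so as to assume $G$ adjoint, deduces from Lemma~\ref{lemma:connected} that the quotient $\ol H/H$ is connected, and then quotes \cite[Lemma 3]{Br3}, which says that a projection with connected fibres $G/H \to G/H'$ induces a bijection on $B$-orbits. Your plan of matching $(\Psi,\scrD^*,\grd)$ for $H$ and $\ol H$ and then applying Corollary~\ref{cor:parametrizzazione} on both sides is attractive, and would in addition show that the bijection respects reduced pairs; but as written it has a genuine gap.

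The gap is that every piece of machinery you invoke for $\ol H$ --- Lemma~\ref{lem:divisori-stabili}, Theorem~\ref{teo:equivalent}, the sets $\Psi_I$, $\Phi_I$, and above all Corollary~\ref{cor:parametrizzazione} --- is established in the paper under the standing normalization that the strongly solvable subgroup meets $T$ in one of its maximal diagonalizable subgroups; this hypothesis enters through Lemma~\ref{lemma:B-rank}, the decomposition $B/H \simeq T\times^{T_H}U/H^\mru$, and Avdeev's theory. For $\ol H$ the only input quoted in the paper, \cite[Corollary 5.25]{Avd2}, gives $\ol H\subset B$ and nothing more: you never verify that $T\cap\ol H$ is a maximal diagonalizable subgroup of $\ol H$, and you cannot simply conjugate $\ol H$ inside $B$ to arrange this, since that would break the identity $\pi_*(\scrO^{H}_{w,I})=\scrO^{\ol H}_{w,I}$ on which your conclusion rests. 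The missing statement is true and can be proved in the spirit of your own argument: each $\bar h\in\ol H$ acts on $B/H$ on the right as a $T$-equivariant automorphism of the variety, hence permutes the closed $T$-orbits of this affine toric variety; there is exactly one, namely $TH/H$ (Section~\ref{s:toricvar}), so $\bar h H\in TH/H$, i.e.\ $\ol H\subset TH$. This yields $\ol H=(T\cap\ol H)\ltimes H^\mru$ (every diagonalizable subgroup of $\ol H$ then embeds in $\ol H/H^\mru\simeq T\cap\ol H$), which is the required normalization; it also gives $\ol H^\mru=\ol H\cap U\subset TH\cap U=H^\mru$ at once, hence $\Psi_H=\Psi_{\ol H}$, so that your opening step --- which currently leans on the diagonalizability of $N_G(H)/H$, a correct but external fact that the paper neither states nor cites --- becomes self-contained. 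With this supplement, the rest of your argument (the $A$-stability of the $T$-stable prime divisors, the bijection $E\mapsto\pi(E)$ on $\Div^T$, the matching of $\grd$ via $U_\gra$-equivariance, and the identification of reduced pairs) does go through.
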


\begin{proof}
Denote $H' = H Z(G)$. Since, as we already recalled, $\ol H$ contains both $H$ and $Z(G)$, we have $H' \subset \ol H$. The projection $G/H\to G/H'$ induces a bijection between $\mathscr{B}(G/H)$ and $\mathscr{B}(G/H')$. Therefore we may replace $H$ with $H'$, and since $Z(G) \subset H'$ we may also assume that $G$ is adjoint. On the other hand by Lemma~\ref{lemma:connected} it follows that the quotient $\ol H/H$ is the image of a connected variety, hence it is connected as well and the claim follows by \cite[Lemma 3]{Br3}.
\end{proof}

\section{The Weyl group action on $\scrB(G/H)$}\label{s:W}

\subsection{Preliminaries}	\label{ssec:prel-W-action}

We denote by $M(W)$ the \textit{Richardson-Springer monoid}, namely the monoid generated by elements $m(s_\gra)$ ($\gra \in \grD$) with defining relations $m(s_\gra)^2 = m(s_\gra)$ for all $\gra \in \grD$ and the braid relations
$$m(s_\gra) m(s_\grb) m(s_\gra) \ldots = m(s_\grb) m(s_\gra) m(s_\grb) \ldots$$
for all $\gra, \grb \in \grD$, the number of factors on both sides being the order of $s_\gra s_\grb$ in $W$ (see \cite[3.10]{RS1}).

As a set, $M(W)$ is identified with $W$, and given $w \in W$ we will denote by $m(w) \in M(W)$ the corresponding element. Hence we may consider the Richardson-Springer monoid as the Weyl group with a different multiplication, defined by the following rule: if $w \in W$ and $\gra \in \grD$, then
$$
	m(s_\gra) m(w) = \left\{ \begin{array}{ll}
			m(s_\gra w) & \text{ if } l(s_\gra w) > l(w)\\
			m(w) & \text{ if } l(s_\gra w) < l(w)
	\end{array}\right.
$$
From a geometrical point of view, the multiplication on $M(W)$ coincides with the one defined on the Weyl group by the multiplication of Bruhat cells, namely we have $\ol{B wB w' B} = \ol{Bw''B}$, where $w'' \in W$ is the element defined by the equality $m(w'') = m(w) m(w')$. Sometime we will identify $M(W)$ and $W$ as sets, in that case we will denote the product in $M(W)$ of two elements $w, w' \in W$ by $w * w'$.

Given a spherical homogeneous variety $G/H$, both $W$ and $M(W)$ act on the set of $B$-orbits $\scrB(G/H)$. The action of $M(W)$ was defined by Richardson and Springer in the case of a symmetric homogeneous variety (see \cite[4.7]{RS1}), and the definition carries over without modifications to the spherical case. To define the action of $M(W)$ in the case  of a simple reflection, given $\gra \in \grD$ and $\scrO \in \scrB(G/H)$, consider $P_\gra \scrO$. This is an irreducible $B$-variety, and since $G/H$ is spherical it decomposes into finitely many $B$-orbits. The element $m(s_\gra) \cdot \scrO$ is defined then as the open $B$-orbit in $P_\gra  \scrO$. This definition extends to an action of the monoid $M(W)$ on $\scrB(G/H)$, and it allows to define a partial order  $\preceq$ on $\scrB(G/H)$ (called the \textit{weak order}) as follows:
$$
	\scrO \preceq \scrO' \text{ if and only if } \quad \exists w \in W \; : \; \scrO' = m(w) \cdot \scrO 
$$

The action of $W$ on $\scrB(G/H)$ is much more subtle than that of $M(W)$ and was defined by Knop \cite{Kn}. We recall the definition of this action in the case of a spherical subgroup $H \subset G$ contained in $B$, where the involved considerations turn out to be easier. By a case-by-case consideration (see \cite[Lemma 3.2]{Kn} and \cite[Lemma 5 iv)]{Br3}), the $B$-stable variety $P_\gra \scrO$ decomposes in the union of two $B$-orbits or in the union of three $B$-orbits. More precisely we have the following possibilities:
\begin{itemize}
	\item[U)] Suppose that $P_\gra \scrO = \scrO \cup \scrO'$ decomposes in the union of two orbits, and assume for simplicity that $\scrO$ is the open one. Then $\dim \scrO' = \dim \scrO -1$ and $\calX(\scrO') = s_\gra \calX(\scrO)$, and we define $s_\gra \cdot \scrO = \scrO'$.
	\item[T)] Suppose that $P_\gra \scrO = \scrO \cup \scrO' \cup \scrO''$ decomposes in the union of three orbits, and assume for simplicity that $\scrO$ is the open one. Then $\dim \scrO' = \dim \scrO'' = \dim \scrO-1$ and $s_\gra \calX(\scrO') = \calX(\scrO'') \subset \calX(\scrO) = s_\gra \calX(\scrO)$, where $\calX(\scrO)/\calX(\scrO') \simeq \mathbb Z$, and we define $s_\gra \cdot \scrO = \scrO$ and $s_\gra \cdot \scrO' = \scrO''$.
\end{itemize}

By \cite[Theorem 5.9]{Kn}, the $s_\gra$-actions defined above glue together into an action of $W$ on $\scrB(G/H)$. As a consequence of the previous analysis, notice that the rank of a $B$-orbit is invariant for this action, which agrees with the action of $W$ on the weight lattices $\calX(\scrO)$, where $\scrO \in \scrB(G/H)$.

\begin{example}
Consider the case $H = B$. Then $\scrB(G/B) = \{BwB/B \st w \in W \}$ is the set of the Schubert cells. Notice that all $B$-orbits of $G/B$ have rank 0: indeed any such orbit is homogeneous under the action of a suitable unipotent subgroup of $B$, hence every $B$-semiinvariant function on such orbit is constant. If $\gra \in \grD$ and $w \in W$, it follows that $P_\gra w B/B = BwB/B \cup Bs_\gra w B/B$ is of type (U), hence $s_\gra \cdot BwB/B = Bs_\gra w B/B$. Therefore the $W$ action on $\scrB(G/B)$ is induced by the action of $W$ on itself by right multiplication.
\end{example}

In some special cases the rank uniquely identifies the $W$-orbit, in particular this happens when the rank of a $B$-orbit is maximal and minimal. This is summarized in the following theorem, which holds for any spherical subgroup $H$: the case of maximal rank being due to Knop (see \cite[Theorem 6.2]{Kn}) and that of minimal rank to Ressayre (see \cite[Corollary 3.1 and Theorem 4.2]{Re}).

\begin{theorem}	\label{teo: ranghi speciali}
Let $\scrO, \scrO' \in \mathscr{B}(G/H)$ and suppose that $\rk \scrO = \rk \scrO' = \rk G/H$ (resp. $\rk \scrO = \rk \scrO' = \rk G - \rk H$). Then there exists $w \in W$ such that $\scrO' = w \cdot \scrO$.
\end{theorem}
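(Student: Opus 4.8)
The plan is to treat the two extreme values of the rank separately, since each case is an instance of a general result on Knop's $W$-action that is valid for an arbitrary spherical subgroup $H \subset G$. Throughout I use that this action preserves the rank of a $B$-orbit: this is built into its definition, as both moves (U) and (T) recalled above act on the weight lattices $\calX(\scrO)$ through a simple reflection, hence do not change their rank.

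For the maximal rank case I would argue as follows. The rank of any $B$-orbit in $G/H$ is bounded above by $\rk G/H$, the rank of the open $B$-orbit, and the orbits attaining this bound are precisely those of maximal rank. The assertion that these form a single $W$-orbit is exactly the content of \cite[Theorem 6.2]{Kn}, where Knop describes the set of maximal rank orbits as a homogeneous $W$-set and computes the relevant stabilizer. Thus, whenever $\rk \scrO = \rk \scrO' = \rk G/H$, there is automatically some $w \in W$ with $\scrO' = w \cdot \scrO$, which is the claim.

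For the minimal rank case the first step is to match the value $\rk G - \rk H$ with the closed $B$-orbits. In the present setting $B/H$ is itself a closed $B$-orbit with $\rk B/H = \rk G - \rk H$, and by Corollary~\ref{cor:parametrizzazione} every $B$-orbit $\scrO_{w,I}$ satisfies $\rk \scrO_{w,I} = \rk B/H + |I| \geq \rk B/H$; hence $\rk G - \rk H$ is indeed the minimal rank, attained exactly on the orbits $\scrO_{w,\vuoto}$. By \cite[Corollary 3.1 and Theorem 4.2]{Re} these coincide with the closed $B$-orbits and are permuted transitively by $W$, which again produces the desired $w$.

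The genuine difficulty sits entirely inside the two cited theorems: transitivity of the $W$-action on the extreme-rank orbits is far from elementary and rests on Knop's classification of the maximal rank orbits via the generic isotropy group (for the upper extreme) and on Ressayre's analysis of closed orbits (for the lower extreme). Within the framework of this paper there is nothing further to verify beyond identifying the numerical invariants $\rk G/H$ and $\rk G - \rk H$ with the open and the closed orbits respectively, and then invoking the references.
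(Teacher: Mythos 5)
Your proposal follows the same route as the paper, which in fact offers no independent proof of this statement at all: it presents the theorem as a summary of known results, citing exactly \cite[Theorem 6.2]{Kn} for the maximal-rank case and \cite[Corollary 3.1 and Theorem 4.2]{Re} for the minimal-rank case, just as you do. One assertion in your minimal-rank paragraph is false, however, and should be deleted: the orbits of rank $\rk G - \rk H$ do \emph{not} coincide with the closed $B$-orbits in general. Closed orbits do have minimal rank, but the converse fails, as the paper's own results show: for $H = TU'$ in $G = \mathrm{SL}_3$ (Example~\ref{ex:timashev}) the orbit $\scrO_{w_0,\vuoto}$ has rank $0 = \rk G - \rk H$ but dimension $l(w_0) = 3 > 2 = \dim B/H$, hence is not closed; indeed by Corollary~\ref{cor: orbite chiuse} only three of the six rank-zero orbits are closed in that example, while all six form a single $W$-orbit. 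The error is not load-bearing, because Ressayre's transitivity theorem concerns all orbits of minimal rank, not only the closed ones; moreover his Corollary 3.1 is what identifies $\rk G - \rk H$ as the minimal possible rank for an \emph{arbitrary} spherical $H$, which also frees the argument from its reliance on Corollary~\ref{cor:parametrizzazione}, valid only in the strongly solvable setting, whereas the theorem is asserted for any spherical subgroup. Once that misidentification is removed, your proof is precisely the paper's.
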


\subsection{Stabilizers for the $W$-action on $\scrB(G/H)$}

We now describe the actions of $W$ and of $M(W)$ on $\scrB(G/H)$ in the case of a strongly solvable spherical subgroup $H \subset G$ contained in $B$, in terms of the combinatorial parametrization of Corollary~\ref{cor:parametrizzazione}.

In order to study the actions of $W$ and of $M(W)$ in terms of reduced and extended pairs, we take a closer look at the possible cases arising in the decomposition of the $B$-stable subsets $P_\gra \scrO_{w,I}$, where $\scrO_{w,I} \in \scrB(G/H)$ and $\gra \in \grD$.

\begin{lemma} \label{lemma:triple-T}
Let $w \in W$ and $\gra \in \grD$. Let $\grb$ be the unique positive root in the set $\{w^{-1}(\gra), -w^{-1}(\gra)\}$, then the following hold.
\begin{itemize}
	\item[i)] Suppose that $(w,I)$ is a reduced pair, then
$$
	s_\gra \cdot \scrO_{w,I} = \left\{ \begin{array}{ll}
			\scrO_{w,I} & \text{ if } \grb \in \weak_I \\
			\scrO_{s_\gra w,I} & \text{ otherwise }
	\end{array}\right.
$$
(where all the orbits above are expressed in terms of reduced pairs).
	\item[ii)] Suppose that $(w,I)$ is an extended pair, then
$$
	m(s_\gra) \cdot \scrO_{w,I} = \left\{ \begin{array}{ll}
			\scrO_{s_\gra \!* w, I\cup \{\grd(\grb)\}} & \text{ if } \grb \in \Theta_I \\
			\scrO_{s_\gra \!* w,I} & \text{ if } \grb \not \in \Theta_I
	\end{array}\right.
$$
(where all the orbits above are expressed in terms of extended pairs). In particular, $m(s_\gra) \cdot \scrO_{w,I} = \scrO_{w,I}$ if and only if $\grb \in \Phi^+(w) \senza (\Theta_I \senza \weak_I)$.
\end{itemize}
\end{lemma}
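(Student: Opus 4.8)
The plan is to analyze, for a simple root $\gra\in\grD$ and an orbit $\scrO_{w,I}$, the $B$-variety $P_\gra\scrO_{w,I}$ by transporting the whole computation back to the toric variety $B/H$, where the action of root subgroups is completely described by Proposition~\ref{prop:Ualphaactionfinal}. Fix a representative $n\in N_G(T)$ of $w$ and a base point $x_I\in\scrU_I$, so that $\scrO_{w,I}=Bnx_I$. The computational heart is the identity $n^{-1}U_{-\gra}n=U_{w^{-1}(-\gra)}=U_\grb$, which turns the $P_\gra$-action in the fibre direction into the action of the single root subgroup $U_\grb$ on $\scrU_I\subset B/H$. Everything then splits according to the sign of $\grb$, that is, according to whether $l(s_\gra w)$ is larger or smaller than $l(w)$, noting that $w(\grb)=-\gra\in\grD^-$, so that $\grb\in\Phi^+(w)$ if and only if $\grb\in\Phi^+$.

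For part ii) I would compute the dense orbit of $P_\gra\scrO_{w,I}=\ol{U_{-\gra}\scrO_{w,I}}$. Consider first $\grb\in\Phi^+$, so that $s_\gra\!*w=w$. A generic point of $U_{-\gra}\scrO_{w,I}$ is $B$-conjugate to $n u\,x_I$ with $u\in U_\grb$ generic; since $U_\grb\subset B$ this point stays in the fibre, and $B n u\,x_I=\scrO_{w,K}$ where $\scrU_K\subset B/H$ is the dense $T$-orbit occurring in $U_\grb\scrU_I$. By Proposition~\ref{prop:Ualphaactionfinal}, $\scrU_K=\scrU_{I\cup\{\grd(\grb)\}}$ if $\grb\in\weak(I)$ (the subcases $\grb\in\weak_I$ and $\grb\in\weak(I)\senza\weak_I$ merging, since $\grb\in\weak_I$ forces $\grd(\grb)\in I$) and $\scrU_K=\scrU_I$ if $\grb\notin\weak(I)$. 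If instead $\grb\in\Phi^-$, then $s_\gra\!*w=s_\gra w$ and $\grb\notin\weak(I)$ automatically (as $\weak(I)\subset\Phi^+$); here $\scrO_{s_\gra w,I}=Bn_\gra n\scrU_I\subset P_\gra\scrO_{w,I}$ for a representative $n_\gra$ of $s_\gra$, and it is dense because its dimension equals $\dim\scrO_{w,I}+1$ by the extended-pair dimension formula of Corollary~\ref{cor:parametrizzazione} (note that $(s_\gra w,I)$ is extended, as $\Phi^+(w)\subset\Phi^+(s_\gra w)$). This gives the displayed formula; that the output pairs are extended follows from Remark~\ref{rem:pairs}. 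The ``in particular'' clause is then read off: $m(s_\gra)\cdot\scrO_{w,I}=\scrO_{w,I}$ forces $s_\gra\!*w=w$ (i.e.\ $\grb\in\Phi^+$, equivalently $\grb\in\Phi^+(w)$) and $I\cup\{\grd(\grb)\}=I$ in the $\weak(I)$ case (i.e.\ $\grd(\grb)\in I$, equivalently $\grb\in\weak_I$), which is exactly $\grb\in\Phi^+(w)\senza(\weak(I)\senza\weak_I)$.

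For part i) I would deduce the $W$-action from the type (U)/(T) dichotomy recalled in Section~\ref{ssec:prel-W-action}, using that this action preserves the rank and that a reduced pair has rank $\rk(B/H)+|I|$ (Corollary~\ref{cor:parametrizzazione}). Since $s_\gra\cdot\scrO_{w,I}$ has the same rank and the same underlying Weyl element up to $s_\gra$, the only candidates are $\scrO_{w,I}$ and $\scrO_{s_\gra w,I}$. A crucial point is that $(s_\gra w,I)$ is again reduced whenever $\grb\notin\weak_I$: indeed, if $\grb\in\Phi^+_I$ then $w(\grb)=-\gra$ is a negative \emph{simple} root, hence indecomposable, so $\grb$ cannot be a sum of two elements of $\Phi^+_I$ and is therefore simple in $\Phi_I$; as $\grD_I\subset\weak_I$ by Proposition~\ref{prop: I-rootsyst}, this would give $\grb\in\weak_I$, a contradiction. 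Thus $\grb\notin\weak_I$ forces $\grb\notin\Phi^+_I$, whence $\Phi^+_I\subset\Phi^+(w)\senza\{\grb\}=\Phi^+(s_\gra w)$ and $(s_\gra w,I)$ is reduced. It then remains to decide between the two candidates, i.e.\ whether $P_\gra\scrO_{w,I}$ is of type (U) (forcing $s_\gra\cdot\scrO_{w,I}=\scrO_{s_\gra w,I}$) or of type (T) with $\scrO_{w,I}$ dense (forcing $s_\gra\cdot\scrO_{w,I}=\scrO_{w,I}$). For this I would use that the type (T) configuration requires a codimension-one orbit of rank $|I|-1$ inside $\ol{\scrO_{w,I}}$ obtained by lowering the fibre, which by the last assertion of Proposition~\ref{prop:Ualphaactionfinal} ($U_\grb\scrU_I\subset\ol{\scrU_I}$ if and only if $\grb\in\weak_I$) is available precisely when $\grb\in\weak_I$; in that case one identifies the two lowered orbits as $\scrO_{w,I\senza\{\grd(\grb)\}}$ and $\scrO_{s_\gra w,I\senza\{\grd(\grb)\}}$, exhibiting type (T) and the fixed point, while if $\grb\notin\weak_I$ no such lowering exists and the configuration is of type (U).

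The main obstacle I anticipate is exactly this last step: cleanly ruling out a spurious rank jump in the $M(W)$-action when $\grb\in\Phi^+\senza\weak_I$, so that $P_\gra\scrO_{w,I}$ is genuinely of type (U) rather than a type (T) triple in which $\scrO_{w,I}$ is a small orbit. The decisive ingredients are the rank-invariance of the $W$-action together with the lowering criterion of Proposition~\ref{prop:Ualphaactionfinal}, which tie the availability of a fibre-lowering—and hence the type (T) phenomenon—exactly to the condition $\grd(\grb)\in I$, i.e.\ $\grb\in\weak_I$.
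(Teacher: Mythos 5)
You follow the same route as the paper --- reduce to the toric slice $B/H$ via $n^{-1}U_{-\gra}n=U_\grb$ and then apply Proposition~\ref{prop:Ualphaactionfinal} --- and your part ii) is in substance the paper's identification of the dense $B$-orbit of $P_\gra\scrO_{w,I}$. One secondary gap there: the assertion that the output pair $(w,I\cup\{\grd(\grb)\})$ is again \emph{extended} does not follow from Remark~\ref{rem:pairs} alone, because a root $\grg\in\weak(I\cup\{\grd(\grb)\})\senza\weak_{I\cup\{\grd(\grb)\}}$ need not lie in $\weak(I)$ (its pairing with $w_0\rho(\grd(\grb))$ may be positive), so the hypothesis $\weak(I)\senza\weak_I\subset\Phi^+(w)$ says nothing about it; one needs an extra argument, e.g.\ correcting $\grg$ by multiples of $\grb$ via Lemma~\ref{lemma:somma-vive}, or the paper's computation of maximal representatives through Theorem~\ref{teo:parametrizzazione}.

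The genuine gap is in part i), at exactly the step you flagged as the main obstacle: your proposed resolution of it is false. You claim that for a reduced pair $(w,I)$ with $\grb\in\Phi^+\senza\weak_I$ the variety $P_\gra\scrO_{w,I}$ is of type (U). It need not be: if $\grb\in\weak(I)\senza\weak_{\M_{w,I}}$ (the paper's case T2), the raising branch of Proposition~\ref{prop:Ualphaactionfinal} produces a third orbit $Bw\scrU_{I\cup\{\grd(\grb)\}}$, of rank one higher, whose closure contains $\scrO_{w,I}$, and $P_\gra\scrO_{w,I}$ is then a type (T) triple in which $\scrO_{w,I}$ is one of the two codimension-one orbits. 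The smallest instance is $G=\mathrm{SL}_2$, $H=T$, $(w,I)=(s_\gra,\vuoto)$: here $\grb=\gra\notin\weak_\vuoto=\vuoto$, yet $P_\gra\scrO_{s_\gra,\vuoto}=G/T$ consists of three $B$-orbits, not two. What is equivalent to $\grb\in\weak_I$ is not ``type (T)'' but ``$\scrO_{w,I}$ is the \emph{dense} orbit of a type (T) triple'': fibre-lowering detects the density of $\scrO_{w,I}$, not the number of orbits in $P_\gra\scrO_{w,I}$. Since your argument denies that case T2 occurs, it never verifies that in this configuration $s_\gra$ swaps $\scrO_{w,I}$ with the other codimension-one orbit $\scrO_{s_\gra w,I}$ rather than fixing it. The conclusion of i) is still true and your argument is repairable: an orbit is $s_\gra$-fixed iff it is dense in a type (T) triple, and for $\grb\in\weak(I)\senza\weak_I$ the inclusion $\scrO_{w,I}\subset\ol{Bw\scrU_{I\cup\{\grd(\grb)\}}}$ rules out density unless the two orbits coincide (in which case the configuration is type (U)); either way $\scrO_{w,I}$ is moved, and your rank/candidate argument, or the structure of a type (T) triple, then forces the image to be $\scrO_{s_\gra w,I}$. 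But this repair amounts precisely to the (U)/(T1)/(T2) case analysis that the paper carries out and that your proposal tried to bypass.
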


\begin{proof}
Let $w \in W$ and $I \subset \scrD$. By the results recalled in Section~\ref{ssec:prel-W-action} the $B$-stable subset $P_\gra \scrO_{w,I} = P_\gra w\scrU_I$ decomposes either in the union of two $B$-orbits which are permuted by the action of $s_\gra$, or in the union of three $B$-orbits, an open one fixed by $s_\gra$ and two of codimension one which are permuted by $s_\gra$.

Notice that if $\scrO_{v,J} \subset P_\gra \scrO_{w,I}$ is the open orbit, then it must be $v = s_\gra*w$, namely $v$ is the longest element between $w$ and $s_\gra w$. Indeed $P_\gra \scrO_{w,I} \cap B(s_\gra*w)B/H$ is a $B$-stable dense open subset, hence it must be $\scrO_{v,J} \subset  B(s_\gra*w)B/H$ because $P_\gra \scrO_{w,I}$ is irreducible and and the intersection of two open non-empty subsets therein is always non-empty. It follows that $\scrO_{v,J} \subset P_\gra \scrO_{w,I}$ is the open orbit if and only if $v$ has maximal length and $J$ has maximal cardinality among all the reduced pairs $(v',J')$ with $\scrO_{v',J'} \subset P_\gra \scrO_{w,I}$.

Recall that $w < s_\gra w$ in $W$ if and only if $w^{-1}(\gra) \in \Phi^+$, in which case $\Phi^+(s_\gra w) = \Phi^+(w) \cup \{w^{-1}(\gra)\}$. We will distinguish two different cases, depending on $s_\gra w < w$ or $s_\gra w > w$.

\textit{Case 1.} Suppose that $\grb = -w^{-1}(\gra)$  and denote $v = s_\gra w$,  so that $l(v) = l(w)-1$. The decomposition
$$
	P_\gra  = Bs_\gra \cup Bs_\gra B s_\gra = B s_\gra \cup B U_{-\gra}
$$
implies that $P_\gra \scrO_{w,I} = Bv \scrU_I \cup Bw U_{\grb} \scrU_I$. By Proposition~\ref{prop:Ualphaactionfinal} we get
$$
	U_\grb \scrU_I  = \left\{ \begin{array}{ll}
	\scrU_I & \text{if $\grb \in \Phi^+ \senza \Theta_I$}\\
	\scrU_I \cup \scrU_{I \cup \{\delta(\grb)\}} & \text{if $\grb \in \Theta_I \senza \weak_I$}\\
	\scrU_I \cup \scrU_{I \senza \{\delta(\grb)\}} & \text{if $\grb \in \weak_I$}
	\end{array} \right.
$$
By the discussion in Subsection \ref{ssec:prel-W-action} we have the following three possibilities, that we denote by U), T1) and T2) and that we describe in detail here below. Denote $\m = \m_{w,I}$ and $\M = \M_{w,I}$:
\begin{itemize}
	\item[U)] Suppose that $\grb \in \Phi^+ \senza \Theta_{\m}$ or that $\grb \in \Theta_{\m} \cap \weak_{\M} \senza \weak_\m$.\\
By Theorem \ref{teo:parametrizzazione}, we have in both these cases that $U_\grb \scrU_I \subset (B \cap B^w)\scrU_I$: indeed in the first case $U_\grb \scrU_\m = \scrU_\m$, whereas in the second case $U_\grb \scrU_\m = \scrU_\m\cup \scrU_{\m \cup \{\grd(\grb)\}}$, so that all subsets of $\scrD$ that arise by applying $U_\grb$ to $\scrU_\m$ are still representatives for $\scrO_{w,I}$. Therefore $BwU_\grb \scrU_I = \scrO_{w,I}$, and $P_\gra \scrO_{w,I} = \scrO_{v,I} \cup \scrO_{w,I}$ decomposes in the union of two orbits, and since $v < w$ the open orbit is $\scrO_{w,I}$. Therefore we have:
\begin{itemize}
	\item $\dim \scrO_{v,I} = \dim \scrO_{w,I} -1$;
	\item $\rk \scrO_{v,I} = \rk \scrO_{w,I}$;
	\item $\M_{v,I} = \M$;
	\item $\m_{v,I} =\m$.
\end{itemize}
\[	\xy
0;/r5pc/:*\dir{*};
p+(0,0.2)="a",*+!UL{\scrO_{w,\m} = \scrO_{w,\M}},"a",
p+(0,-.5)*\dir{*}="c",
**\dir{-},*+!UL{\scrO_{v,\m}=\scrO_{v,\M}},"c",
\endxy
\]	
	\item[T1)] Suppose that $\grb \in \weak_\m$.\\
We have in this case $U_\grb \scrU_I = \scrU_I \cup \scrU_{I'}$, where $I' = I \senza \{\grd(\grb)\}$, and since $\m \not \subset I'$ Theorem~\ref{teo:parametrizzazione} shows that $\scrO_{w,I'} \neq \scrO_{w,I}$. Therefore $P_\gra \scrO_{w,I} = \scrO_{v,I} \cup \scrO_{w,I'} \cup \scrO_{w,I}$ decomposes in the union of three orbits. Since $v < w$ and $\m_{w,I'} = \m \senza \{\grd(\grb)\}$, it follows that $\scrO_{w,I}$ is the open orbit. On the other hand by Proposition \ref{prop:Ualphaactionfinal} we have $\grb \in \Theta_{I'}$, and since $\grb \not \in \Phi^+(v)$ by assumption, by Theorem \ref{teo:parametrizzazione} we get $\grd(\grb) \in I(v)$, namely $\scrO_{v,I} = \scrO_{v,I'}$. Therefore the following hold:
\begin{itemize}
	\item $\dim \scrO_{v,I} = \dim \scrO_{w,I'}  = \dim \scrO_{w,I} - 1$;
	\item $\rk \scrO_{v,I} = \rk \scrO_{w,I'} = \rk \scrO_{w,I} - 1$;
	\item $\M_{v,I} = \M$, $\M_{w,I'} = \M \senza \{\grd(\grb)\}$;
	\item $\m_{v,I} = \m_{w,I'} = \m \senza \{\grd(\grb)\}$.
\end{itemize}
\[	\xy
0;/r5pc/:*\dir{*};
p+(0,0.2)="a",*+!UL{\scrO_{w,\m} = \scrO_{w,\M}},"a",
p+(-.5,-.5)*\dir{*},**\dir{-},
p+(-1.3,-.5)="b",*+!UL{\scrO_{v,\m \senza \{\grd(\grb)\}}=\scrO_{v,\M}},"b",
p+(.5,-.5)*\dir{*}="c",
**\dir{-},*+!UL{\scrO_{w,\m \senza \{\grd(\grb)\}}=\scrO_{w,\M \senza \{\grd(\grb)\}}},"c",
\endxy
\]	

	\item[T2)] Suppose that $\grb \in \Theta_{\m} \senza \weak_\M$.\\
Then $U_\grb \scrU_I = \scrU_I \cup \scrU_{I'}$, where we set $I' = I \cup \{\grd(\grb)\}$, and by Theorem~\ref{teo:parametrizzazione} we have $\scrO_{w,I'} \neq \scrO_{w,I}$. Therefore $P_\gra \scrO_{w,I} = \scrO_{v,I} \cup \scrO_{w,I} \cup \scrO_{w,I'}$ decomposes in the union of three orbits as represented by the following diagram, and we have:
\begin{itemize}
	\item $\dim \scrO_{v,I} = \dim \scrO_{w,I}  = \dim \scrO_{w,I'} - 1$;
	\item $\rk \scrO_{v,I} = \rk \scrO_{w,I} = \rk \scrO_{w,I'} - 1$;
	\item $\M_{v,I} = \M_{w,I'} = \M \cup \{\grd(\grb)\}$;
	\item $\m_{v,I} = \m$, $\m_{w,I'} = \m \cup \{\grd(\grb)\}$.
\end{itemize}
\[	\xy
0;/r5pc/:*\dir{*};
p+(0,0.2)="a",*+!UL{\scrO_{w,\m \cup \{\grd(\grb)\}} = \scrO_{w,\M \cup \{\grd(\grb)\}}},"a",
p+(-.5,-.5)*\dir{*},**\dir{-},
p+(-1.3,-.5)="b",*+!UL{\scrO_{v,\m}=\scrO_{v,\M \cup \{\grd(\grb)\}}},"b",
p+(.5,-.5)*\dir{*}="c",
**\dir{-},*+!UL{\scrO_{w,\m}=\scrO_{w,M}},"c",
\endxy
\]	
\end{itemize}

\textit{Case 2.} Suppose now that $\grb = w^{-1}(\gra)$ and denote $v=s_\gra w$, so that $l(v) = l(w)+1$. The decomposition
$$
	P_\gra  = B \cup B s_\gra B = B \cup B s_\gra U_\gra.
$$
implies that $P_\gra \scrO_{w,I} = Bw\scrU_I \cup B v U_\grb \scrU_I$. Since $(w,I)$ is reduced we have $\grb \not \in \weak_I$ and $\grb \not \in \Theta_I \senza \weak_M$. By the discussion in Subsection \ref{ssec:prel-W-action} we have the following two possibilities, that we denote by U) and T) and that we describe in detail here below. Denote $\m = \m_{w,I}$, $\M = \M_{w,I}$, $\M' = \M_{v,I}$:
\begin{itemize}
	\item[U)] Suppose that $\grb \in \Phi^+ \senza \Theta_{\m}$ or $\grb \in \Theta_{\m} \cap \weak_{\M'} \senza \weak_\m$.\\
Then $U_\grb \scrU_I \subset (B \cap B^v)\scrU_I$, hence $BvU_\grb \scrU_I = \scrO_{v,I}$. Therefore $P_\gra \scrO_{w,I} = \scrO_{w,I} \cup \scrO_{v,I}$ decomposes in the union of two orbits as represented in the following diagram, where we have:
\begin{itemize}
	\item $\dim \scrO_{w,I} = \dim \scrO_{v,I} -1$;
	\item $\rk \scrO_{w,I} = \rk \scrO_{v,I}$;
	\item $\M_{v,I} = \M$;
	\item $\m_{v,I} = \m$.
\end{itemize}
\[	\xy
0;/r5pc/:*\dir{*};
p+(0,0.2)="a",*+!UL{\scrO_{v,\m} = \scrO_{v,\M}},"a",
p+(0,-.5)*\dir{*}="c",
**\dir{-},*+!UL{\scrO_{w,\m}=\scrO_{w,\M}},"c",
\endxy
\]	
	\item[T)] Suppose that $\grb \in \Theta_{\m} \senza \weak_{\M'}$.\\
Then $U_\grb \scrU_I = \scrU_I \cup \scrU_{I'}$, where we set $I' = I \cup \{\grd(\grb)\}$, and by Theorem~\ref{teo:parametrizzazione} it follows $\scrO_{v,I'} \neq \scrO_{v,I}$. Therefore $P_\gra \scrO_{w,I} = \scrO_{w,I} \cup \scrO_{v,I} \cup \scrO_{v,I'}$ decomposes in the union of three orbits as represented by the following diagram, where we have:
\begin{itemize}
	\item $\dim \scrO_{v,I} = \dim \scrO_{w,I}  = \dim \scrO_{v,I'} - 1$;
	\item $\rk \scrO_{v,I} = \rk \scrO_{w,I} = \rk \scrO_{v,I'} - 1$;
	\item $\M_{v,I} = \M \senza \{\grd(\grb)\}$, $\M_{v,I'} = \M$;
	\item $\m_{v,I} = \m$, $\m_{v,I'} = \m \cup \{\grd(\grb)\}$.
\end{itemize}
\[	\xy
0;/r5pc/:*\dir{*};
p+(0,0.2)="a",*+!UL{\scrO_{v,\m \cup \{\grd(\grb)\}} = \scrO_{v,\M}},"a",
p+(-.5,-.5)*\dir{*},**\dir{-},
p+(-1.3,-.5)="b",*+!UL{\scrO_{w,\m}=\scrO_{w,\M}},"b",
p+(.5,-.5)*\dir{*}="c",
**\dir{-},*+!UL{\scrO_{v,\m}=\scrO_{v,\M \senza \{\grd(\grb)\}}},"c",
\endxy
\]	
\end{itemize}

The claims follow now by applying the definitions of the actions of $s_\gra$ and of $m(s_\gra)$ in all the possibilities presented above.
\end{proof}

\begin{remark}
In what follows, reduced pairs will play a main role in the understanding of the Weyl group action on $\scrB(G/H)$. In particular, we will be interested in the stabilizer of $\scrO_{w,I}$ under the action of $W$. If we restrict to the simple reflections which stabilize $\scrO_{w,I}$, we see by Lemma \ref{lemma:triple-T} that  $s_\gra \cdot \scrO_{w,I} =\scrO_{w,I}$ if and only if $-w^{-1}(\gra)$ is a weakly active root which stabilizes $I$ in the sense of Definition \ref{def:activated-roots}. Such simple reflections are parametrized by the set of weakly active roots $-w^{-1}(\gra) \in w^{-1}(\grD^-) \cap \weak_I$. Notice that
$$
w^{-1}(\grD^-) \cap \Phi^+_I = w^{-1}(\grD^-) \cap \weak_I = w^{-1}(\grD^-) \cap \grD_I.
$$
Indeed, we have the inclusions $\grD_I \subset \weak_I \subset \Phi^+_I \subset \Phi^+(w)$, and by definition $\grD_I$ is a basis for $\Phi_I$. If $\gra \in \grD$ and $-w^{-1}(\gra) \in \Phi^+_I$, It follows that there exist $\grb_1,\ldots, \grb_n \in \grD_I$ such that $-w^{-1}(\gra) = \grb_1+ \ldots + \grb_n$.  Therefore $-\gra = w(\grb_1) + \ldots + w(\grb_n)$, and since $\gra \in \grD$ we get $n=1$.
\end{remark}

\begin{corollary}
Let $(w,I)$ be an extended pair. The orbit $\scrO_{w,I}$ is minimal w.r.t. the weak order on $\scrB(G/H)$ if and only if
$$
	w^{-1}(\grD^-) \cap \Phi^+ \subset \Theta_I \senza \weak_I.
$$
\end{corollary}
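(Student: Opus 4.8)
The plan is to reduce the statement to the single–reflection description of the monoid action already recorded in Lemma~\ref{lemma:triple-T}~ii). First I would establish a general criterion for minimality in the weak order: an orbit $\scrO\in\scrB(G/H)$ is minimal if and only if $m(s_\gra)\cdot\scrO\neq\scrO$ for every $\gra\in\grD$. The justification I have in mind is that the weak order is generated by the covering relations produced by the simple generators $m(s_\gra)$, and that for each $\gra\in\grD$ the variety $P_\gra\scrO$ strictly contains $\scrO$: its image in $G/B$ is $P_\gra\cdot BwB/B=BwB/B\cup Bs_\gra wB/B$, a union of two distinct Schubert cells, so $P_\gra\scrO$ always splits into at least two $B$-orbits. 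Hence $\scrO$ fails to be minimal exactly when it is the open orbit of some $P_\gra\scrO$ (a strictly smaller orbit then lies below it and is sent to $\scrO$ by $m(s_\gra)$, using the idempotency $m(s_\gra)^2=m(s_\gra)$); since $m(s_\gra)\cdot\scrO$ is by definition the open orbit of $P_\gra\scrO$, this is precisely the equality $m(s_\gra)\cdot\scrO=\scrO$.

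Next I would feed the extended pair $(w,I)$ into the last assertion of Lemma~\ref{lemma:triple-T}~ii), which says that $m(s_\gra)\cdot\scrO_{w,I}=\scrO_{w,I}$ holds if and only if $\grb:=-w^{-1}(\gra)$ lies in $\Phi^+(w)\senza(\weak(I)\senza\weak_I)$. Combining this with the criterion above, $\scrO_{w,I}$ is minimal if and only if, for every $\gra\in\grD$, the root $-w^{-1}(\gra)$ does \emph{not} belong to $\Phi^+(w)\senza(\weak(I)\senza\weak_I)$.

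Finally I would convert the quantifier over $\grD$ into the set appearing in the statement. For $\gra\in\grD$ put $\grb=-w^{-1}(\gra)$; if $\grb\in\Phi^+$ then $w(\grb)=-\gra\in\grD^-$, so $\grb\in\Phi^+(w)$, and conversely $\Phi^+(w)\subset\Phi^+$. Thus the simple roots $\gra$ with $\grb\notin\Phi^+$ impose no condition at all (for them $\grb\notin\Phi^+(w)$ automatically), while those with $\grb\in\Phi^+$ run exactly through $w^{-1}(\grD^-)\cap\Phi^+=\{-w^{-1}(\gra)\st\gra\in\grD\}\cap\Phi^+$, each such $\grb$ automatically lying in $\Phi^+(w)$. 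For such $\grb$ the requirement ``$\grb\notin\Phi^+(w)\senza(\weak(I)\senza\weak_I)$'' collapses to ``$\grb\in\weak(I)\senza\weak_I$''. Hence minimality of $\scrO_{w,I}$ is equivalent to $w^{-1}(\grD^-)\cap\Phi^+\subset\weak(I)\senza\weak_I$, as claimed.

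I expect the only delicate point to be the first paragraph, namely the clean passage from the abstract definition of the weak order to the pointwise criterion $m(s_\gra)\cdot\scrO=\scrO$; in particular one must be certain that $P_\gra\scrO\supsetneq\scrO$ always holds, so that ``being the open orbit of $P_\gra\scrO$'' genuinely exhibits a smaller orbit underneath $\scrO$. Once this is secured, the remaining steps amount to direct bookkeeping with the formula of Lemma~\ref{lemma:triple-T}~ii) together with the elementary identity $w^{-1}(\grD^-)\cap\Phi^+=\{-w^{-1}(\gra)\st\gra\in\grD\}\cap\Phi^+$.
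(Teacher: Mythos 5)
Your proposal is correct and takes essentially the same route as the paper: the paper's proof likewise reduces minimality in the weak order to the criterion that $m(s_\gra)\cdot\scrO_{w,I}\neq\scrO_{w,I}$ for all $\gra\in\grD$, and then invokes the fixed-point characterization in Lemma~\ref{lemma:triple-T}~ii). The only difference is that you spell out the justification of that criterion (via $P_\gra\scrO\supsetneq\scrO$ and the covering relations) and the bookkeeping identifying the relevant roots with $w^{-1}(\grD^-)\cap\Phi^+$, both of which the paper treats as immediate.
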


\begin{proof}
Let $\gra \in \grD$. Then $P_\gra \scrO_{w,I} \cap B s_\gra wH/H \neq \vuoto$, therefore $P_\gra \scrO_{w,I}$ contains at least two $B$-orbits. In particular, it follows that $\scrO_{w,I}$ is minimal w.r.t. the weak order if and only if $m(s_\gra) \cdot \scrO_{w,I} \neq \scrO_{w,I}$ for all $\gra \in \grD$. On the other hand, by the analysis in the proof of Proposition \ref{lemma:triple-T} we have that $m(s_\gra) \cdot \scrO_{w,I} \neq \scrO_{w,I}$ if and only if either $w^{-1}(\gra) \in \Phi^+$ or $-w^{-1}(\gra) \in \Theta_I \senza \weak_I$. Therefore $\scrO_{w,I}$ is minimal if and only if $w^{-1}(\grD^-) \cap \Phi^+ \subset \Theta_I \senza \weak_I$.
\end{proof}

We now focus on the action of $W$ on $\scrB(G/H)$. First we show that the minimal representative of an orbit is a complete invariant for the action of $W$, namely it is invariant and it distinguishes the $W$-orbits. Then we will describe the stabilizers of the action in terms of reduced pairs.

\begin{theorem}	\label{teo:stabilizzatori1}
Let $(w,I)$, $(v,J)$ be reduced pairs. Then $\scrO_{w,I}$ and $\scrO_{v,J}$ are in the same $W$-orbit if and only if $I = J$, in which case we have $\scrO_{v,I} = vw^{-1} \cdot \scrO_{w,I}$.
\end{theorem}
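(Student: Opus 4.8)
The plan is to show that, for a fixed $I\subset\scrD^*$, Knop's action of $W$ on the set $\{\scrO_{w,I}:(w,I)\text{ reduced}\}$ is modeled by the natural left action of $W$ on the coset space $W/W_I$. The ``only if'' direction is then immediate: by Lemma~\ref{lemma:triple-T} i) every simple reflection sends a reduced orbit $\scrO_{w,I}$ either to itself or to $\scrO_{s_\gra w,I}$, so it never alters the second coordinate; since the simple reflections generate $W$ and the reduced pair attached to a $B$-orbit is unique (Corollary~\ref{cor:parametrizzazione}), two orbits lying in the same $W$-orbit must have the same $I$. It remains to prove the ``if'' direction together with the explicit formula.

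First I would set up the bijection with cosets. By Remark~\ref{rem:pairs} i) the pair $(w,I)$ is reduced precisely when $w(\Phi^+_I)\subset\Phi^-$. Since $\Phi_I$ is a parabolic subsystem (Corollary~\ref{cor: sistemi-parabolici}), $W_I$ is a parabolic subgroup of $W$ and the standard theory of coset representatives applies to the positive system $\Phi^+_I$: each coset $wW_I$ contains a unique minimal element $\bar w$ with $\bar w(\Phi^+_I)\subset\Phi^+$, hence a unique element $\bar w w_{0,I}$ (with $w_{0,I}$ the longest element of $W_I$) sending $\Phi^+_I$ into $\Phi^-$. Thus $w\mapsto wW_I$ restricts to a bijection between $\{w:(w,I)\text{ reduced}\}$ and $W/W_I$, and I may define $\Theta_I\colon W/W_I\to\scrB(G/H)$ by sending a coset to $\scrO_{\tilde w,I}$, where $\tilde w$ is its reduced representative.

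The heart of the argument, and the step I expect to be the main obstacle, is to prove that $\Theta_I$ is $W$-equivariant, i.e.\ $s_\gra\cdot\Theta_I(wW_I)=\Theta_I(s_\gra wW_I)$ for all $\gra\in\grD$ and all reduced $w$. Writing $\grb=-w^{-1}(\gra)$, the key observation is that $w^{-1}s_\gra w=s_\grb$ lies in $W_I$ if and only if $\grb\in\Phi_I$, and that for a reduced pair this happens exactly when $\grb\in\weak_I$. Indeed $\weak_I=\Phi^+_I\cap\weak\subset\Phi_I$ by Proposition~\ref{prop: I-rootsyst}; conversely reducedness forces $\grb\notin\Phi^-_I$ (otherwise $\gra=w(-\grb)\in\Phi^-$, absurd), while if $\grb\in\Phi^+_I$ then $\gra$ is simple in $\Phi$ and lies in $w(\Phi_I)$, hence is simple in $w(\Phi_I)$, so $\grb=w^{-1}(-\gra)\in\grD_I\subset\weak_I$. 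With this in hand the two cases of Lemma~\ref{lemma:triple-T} i) match the two cases of left multiplication: if $\grb\in\weak_I$ then $s_\gra wW_I=wW_I$ and both sides equal $\scrO_{w,I}$; if $\grb\notin\weak_I$ then $(s_\gra w,I)$ is reduced, so $s_\gra w$ is the reduced representative of its coset and both sides equal $\scrO_{s_\gra w,I}$. As the simple reflections generate $W$, equivariance follows.

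Finally I would conclude. Since left multiplication on $W/W_I$ is transitive and $\Theta_I$ is surjective onto $\{\scrO_{w,I}:(w,I)\text{ reduced}\}$ and $W$-equivariant, this whole set is a single $W$-orbit, which is the ``if'' direction. Moreover, for reduced pairs $(w,I)$ and $(v,I)$,
$$
vw^{-1}\cdot\scrO_{w,I}=vw^{-1}\cdot\Theta_I(wW_I)=\Theta_I(vw^{-1}wW_I)=\Theta_I(vW_I)=\scrO_{v,I},
$$
using that $v$ and $w$ are the reduced representatives of their own cosets. This is exactly the asserted formula, completing the proof.
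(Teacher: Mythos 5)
Your proof is correct, but it is organized quite differently from the paper's, so a comparison is worthwhile. The paper proves transitivity by a chain argument: for a reduced pair $(w,I)$ it fixes a reduced expression $w_0w^{-1}=s_{\gra_1}\cdots s_{\gra_n}$, notes that the inversion sets of the intermediate elements $w_i$ only grow (so each $(w_i,I)$ stays reduced, and the root $-w_{i+1}^{-1}(\gra_i)$ is negative, hence never in $\weak_I$), and applies Lemma~\ref{lemma:triple-T} i) step by step to get $\scrO_{w_0,I}=w_0w^{-1}\cdot\scrO_{w,I}$; in this way it never has to analyze when a simple reflection \emph{fixes} an orbit. You instead package everything as a $W$-equivariant bijection $\Theta_I\colon W/W_I\to\scrB(G/H)$, and the crux of your argument --- that for a reduced pair $-w^{-1}(\gra)\in\Phi_I$ forces $-w^{-1}(\gra)\in\grD_I\subset\weak_I$ --- is precisely the unproved remark the paper records just before Lemma~\ref{lemma:triple-T}, namely the equalities $w^{-1}(\grD^-)\cap\Phi^+_I=w^{-1}(\grD^-)\cap\weak_I=w^{-1}(\grD^-)\cap\grD_I$; your justification (a simple root of $\Phi$ lying in a subsystem is indecomposable, hence simple, in that subsystem, and reducedness matches up the two bases) is sound, as is the appeal to $\grD_I\subset\weak_I$ from Proposition~\ref{prop: I-rootsyst}. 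What your route buys: the equivariant bijection yields $\Stab_W(\scrO_{w,I})=wW_Iw^{-1}$ at no extra cost, i.e.\ Theorem~\ref{teo:stabilizzatori2} and hence Corollary~\ref{cor:numero-Borbite}, which the paper establishes separately by a longer induction (Lemma~\ref{lemma:stabilizzatori2} and the two-step proof of Theorem~\ref{teo:stabilizzatori2}). What it costs: you must invoke the standard but external fact that each coset of the reflection subgroup $W_I$ contains a unique element sending $\Phi^+_I$ into $\Phi^-$, legitimate here since $\Phi_I$ is a parabolic root subsystem with Weyl group $W_I$ (Corollary~\ref{cor: sistemi-parabolici}), whereas the paper's argument needs nothing beyond reduced expressions and Lemma~\ref{lemma:triple-T}.
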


\begin{proof}
By Lemma~\ref{lemma:triple-T} i) it follows that the minimal representative $I$ is an invariant for the action of $W$. To show that it uniquely determines the $W$-orbit, we show that if $(w,I)$ is reduced then $\scrO_{w_0,I} = w_0 w^{-1} \cdot \scrO_{w,I}$. In particular, this will imply that every $W$-orbit in $\scrB(G/H)$ contains a unique element which projects dominantly on $G/B$.

Suppose that $(w,I)$ is reduced and let $w_0 w^{-1} = s_{\gra_1} \cdots s_{\gra_n}$ be a reduced expression. Then $w_{i+1}^{-1}(\gra_i) \in \Phi^+$ for every $i=1, \ldots n$, where we denote
$$
	w_j = \left\{ \begin{array}{ll}
			w & \text{ if } j = n+1 \\
			s_{\gra_j} \cdots s_{\gra_n} w & \text{ if } 0 < j \leq n
	\end{array}\right.
$$
Therefore, for every $i \leq n$, we have $\Phi^+(w_i) = \Phi^+(w_{i+1}) \cup \{w_{i+1}^{-1}(\gra_i)\}$. Since $(w,I)$ is reduced, it follows that $\Phi^+_I \subset \Phi^+(w)$, thus $\Phi^+_I \subset \Phi^+(w_i)$ for all $i \leq n$. It follows that $(w_i,I)$ is reduced as well for all $i \leq n$, and by Lemma~\ref{lemma:triple-T} we get $\scrO_{w_{i-1},I} = s_{i-1} \cdot \scrO_{w_i,I}$. Combining all the steps we get $\scrO_{w_0,I} = w_0 w^{-1} \cdot \scrO_{w,I}$, and the last claim also follows.
\end{proof}

\begin{corollary}	\label{cor: Worbits}
The Weyl group $W$ has $2^{|\scrD|}$ orbits in $\scrB(G/H)$, and a complete set of representatives is given by the orbits which project dominantly on $G/B$, namely by the subsets of $\scrD$.
\end{corollary}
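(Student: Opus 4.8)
The plan is to read the statement off directly from Theorem~\ref{teo:stabilizzatori1}, which already carries all the essential content; what remains is bookkeeping. First I would introduce the map that sends a $W$-orbit in $\scrB(G/H)$ to a subset of $\scrD^*$ as follows: pick any orbit $\scrO$ in the given $W$-orbit, write it as $\scrO = \scrO_{w,I}$ with $(w,I)$ reduced, and record the minimal representative $I$. By Theorem~\ref{teo:stabilizzatori1} two reduced pairs $(w,I)$ and $(v,J)$ yield $W$-equivalent orbits precisely when $I = J$; hence this assignment is both well defined (independent of the chosen $\scrO$ and of its reduced presentation) and injective on the set of $W$-orbits.

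Next I would verify surjectivity onto the power set of $\scrD^*$. For every $I \subset \scrD^*$ the pair $(w_0,I)$ is reduced by Remark~\ref{oss:compatibilità} i), so that $I = \m_{w_0,I}$ is the minimal representative of $\scrO_{w_0,I}$ and therefore lies in the image. Consequently the map is a bijection between the set of $W$-orbits and the subsets of $\scrD^*$, which gives the count $2^{|\scrD^*|}$.

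Finally, for the assertion about representatives I would invoke the identity $\scrO_{w_0,I} = w_0 w^{-1} \cdot \scrO_{w,I}$ established inside the proof of Theorem~\ref{teo:stabilizzatori1}: it shows that every $W$-orbit meets the set of $B$-orbits projecting dominantly on $G/B$, and by the injectivity above it meets this set in exactly one point, namely $\scrO_{w_0,I}$. By Proposition~\ref{prop:corrispondenza-divisori} the $B$-orbits projecting dominantly on $G/B$ are precisely those with $w = w_0$, and they are parametrized by the $T$-orbits $\scrU_I$ in $B/H$, that is, by the subsets $I \subset \scrD^*$. Thus the orbits $\scrO_{w_0,I}$ form a complete and irredundant set of representatives indexed by the subsets of $\scrD^*$.

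Since the two nontrivial ingredients, namely the $W$-invariance of the minimal representative and its separation of $W$-orbits, are supplied by Theorem~\ref{teo:stabilizzatori1}, there is no genuine obstacle here. The only point requiring a little care is matching the phrase \emph{projecting dominantly on $G/B$} with the condition $w = w_0$, which is immediate from the Bruhat decomposition together with Proposition~\ref{prop:corrispondenza-divisori}.
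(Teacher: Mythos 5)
Your proposal is correct and follows the same route as the paper, which deduces the corollary directly from Theorem~\ref{teo:stabilizzatori1}: the minimal representative $I$ is a complete invariant of the $W$-orbit, every pair $(w_0,I)$ is reduced, and the identity $\scrO_{w_0,I} = w_0w^{-1}\cdot\scrO_{w,I}$ from that theorem's proof shows each $W$-orbit meets the dominantly-projecting orbits in exactly the point $\scrO_{w_0,I}$. Your bookkeeping (well-definedness, injectivity, surjectivity, and the identification of dominant projection with $w = w_0$ via the Bruhat decomposition and Proposition~\ref{prop:corrispondenza-divisori}) is exactly what the paper leaves implicit.
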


It follows by Theorem~\ref{teo:stabilizzatori1} that every $W$-orbit contains a distinguished element.

\begin{corollary}
Let $(w,I)$ be a reduced pair, then the element $w^{-1} \cdot\scrO_{w,I}$ depends only on $I$ and not on $w$.
\end{corollary}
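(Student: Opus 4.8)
The plan is to read this off directly from Theorem~\ref{teo:stabilizzatori1}, whose proof already contains the essential computation. Indeed, the argument establishing that theorem shows that for every reduced pair $(w,I)$ one has the identity
$$
	\scrO_{w_0,I} = w_0 w^{-1} \cdot \scrO_{w,I}.
$$
The right-hand side a priori involves $w$, but the left-hand side $\scrO_{w_0,I}$ manifestly does not: it is the $B$-orbit $Bw_0\scrU_I$, which depends only on $I$. (Note that $(w_0,I)$ is itself a reduced pair by Remark~\ref{oss:compatibilità} i), so $\scrO_{w_0,I}$ is a bona fide element of $\scrB(G/H)$.) The whole point is therefore to rearrange this identity so as to isolate $w^{-1}\cdot\scrO_{w,I}$.

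First I would invoke the fact, recorded after the case analysis (U)/(T) and attributed to \cite[Theorem 5.9]{Kn}, that the simple-reflection maps $s_\gra$ glue into a genuine \emph{action} of the whole group $W$ on $\scrB(G/H)$. This is the only structural input needed: because it is an action, we may apply $w_0^{-1}$ to both sides of the displayed identity and use associativity of the $W$-action together with $w_0^{-1}(w_0 w^{-1}) = w^{-1}$. This yields
$$
	w^{-1} \cdot \scrO_{w,I} = w_0^{-1} \cdot \scrO_{w_0,I}.
$$
The right-hand side no longer contains $w$, so $w^{-1}\cdot\scrO_{w,I}$ depends only on $I$, which is exactly the claim.

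The step I would expect to be the only point requiring care is precisely the passage from "a compatible family of $s_\gra$-operations" to "an honest $W$-action", since the relation $w_0^{-1}(w_0 w^{-1})\cdot X = w^{-1}\cdot X$ uses associativity rather than just the individual reflection rules of Lemma~\ref{lemma:triple-T}. This is not an obstacle in practice, as it is guaranteed by Knop's theorem cited above; but it is the hypothesis that makes the rearrangement legitimate. Everything else is a one-line manipulation, so I would keep the proof extremely short, simply citing Theorem~\ref{teo:stabilizzatori1} for the identity and the $W$-action for the cancellation.
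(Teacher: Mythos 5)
Your proposal is correct and matches the paper's (implicit) argument: the paper states this corollary as an immediate consequence of Theorem~\ref{teo:stabilizzatori1}, whose conclusion $\scrO_{v,I} = vw^{-1}\cdot\scrO_{w,I}$ for any two reduced pairs $(w,I)$, $(v,I)$ rearranges, via the group-action property, to $v^{-1}\cdot\scrO_{v,I} = w^{-1}\cdot\scrO_{w,I}$. Your specialization to $v = w_0$ (legitimate, since $(w_0,I)$ is reduced) is only a cosmetic variant of the same cancellation, and your identification of Knop's theorem that the $s_\gra$-operations define a genuine $W$-action as the key structural input is exactly right.
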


Given a reduced pair $(w,I)$, we set $\scrO_I^\sharp = w^{-1} \cdot\scrO_{w,I}$. We now turn to the description of the stabilizers for the action of $W$ on $\scrB(G/H)$, and we prove the following theorem.

\begin{theorem}	\label{teo:stabilizzatori2}
Let $(w,I)$ be a reduced pair. Then $\Stab_W(\scrO_{w,I}) = wW_Iw^{-1}$.
\end{theorem}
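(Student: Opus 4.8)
The plan is to reduce the statement to a single orbit and then use a cardinality count together with one inclusion. Since Knop's construction glues into a genuine action of $W$ (see \cite[Theorem 5.9]{Kn}) and $\scrO_{w,I} = w\cdot\scrO_I^\sharp$ by the definition of $\scrO_I^\sharp$, for any reduced pair $(w,I)$ one has $\Stab_W(\scrO_{w,I}) = w\,\Stab_W(\scrO_I^\sharp)\,w^{-1}$. Writing $S = \Stab_W(\scrO_I^\sharp)$, it therefore suffices to prove $S = W_I$, which I would do by establishing the two complementary facts $|S| = |W_I|$ and $W_I \subseteq S$, and then conjugating back.

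For the cardinality, recall from Theorem~\ref{teo:stabilizzatori1} that the $W$-orbit of $\scrO_I^\sharp$ is exactly the set of the (pairwise distinct, by Corollary~\ref{cor:parametrizzazione}) orbits $\scrO_{v,I}$ attached to reduced pairs $(v,I)$. Hence $|W|/|S|$ equals the number of $v \in W$ with $\Phi^+_I \subset \Phi^+(v)$, i.e. with $v(\Phi^+_I) \subset \Phi^-$. Writing $v = v'x$ with $x \in W_I$ and $v'$ the minimal-length representative of the coset $vW_I$, so that $v'(\Phi^+_I) \subset \Phi^+$ and therefore $v'(\Phi^-_I) \subset \Phi^-$, the condition $v(\Phi^+_I) \subset \Phi^-$ forces $x(\Phi^+_I) \subset \Phi^-_I$, that is $x = w_{0,I}$, the longest element of $W_I$. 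Thus the reduced $v$ are precisely the elements $v'w_{0,I}$, one for each coset in $W/W_I$, the orbit has $|W/W_I|$ elements, and consequently $|S| = |W_I|$.

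The inclusion $W_I \subseteq S$ is the heart of the matter, and the obstacle is that the reflections $s_{w(\grg)} = w s_\grg w^{-1}$ generating $wW_Iw^{-1}$ are in general \emph{not} simple, so that Lemma~\ref{lemma:triple-T}~i) does not apply to them directly. I would circumvent this by choosing, for each $\grg \in \grD_I$, a reduced pair adapted to $\grg$. Since $\Phi_I$ is a parabolic subsystem (Corollary~\ref{cor: sistemi-parabolici}), there exists $y \in W$ with $y(\grD_I) \subset \grD$ and $y(\Phi^+_I) \subset \Phi^+$; put $w = y\,w_{0,I}$. Then $w(\Phi^+_I) = y(\Phi^-_I) \subset \Phi^-$, so $(w,I)$ is reduced, while $\gra := -w(\grg) = y(-w_{0,I}(\grg)) = y(\grg^*)$ is a \emph{simple} root of $\Phi$, where $\grg^* = -w_{0,I}(\grg) \in \grD_I$. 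Applying Lemma~\ref{lemma:triple-T}~i) to the reduced pair $(w,I)$ and the simple root $\gra$, the reflection $s_\gra$ fixes $\scrO_{w,I}$ precisely when $-w^{-1}(\gra) \in \weak_I$; and indeed $-w^{-1}(\gra) = \grg \in \grD_I \subset \weak_I$ by Proposition~\ref{prop: I-rootsyst}. Hence $s_\gra = s_{w(\grg)} \in \Stab_W(\scrO_{w,I}) = wSw^{-1}$, and since $s_{w(\grg)} = w s_\grg w^{-1}$ this gives $s_\grg \in S$. As $\grg$ runs over $\grD_I$ these reflections generate $W_I$, whence $W_I \subset S$.

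Combining the two facts yields $S = W_I$, and conjugating back gives $\Stab_W(\scrO_{w,I}) = wW_Iw^{-1}$ for the original reduced pair. The only genuinely delicate point is the adapted-representative construction $w = y\,w_{0,I}$, which trades the non-simple reflection $s_{w(\grg)}$ for a simple one; everything else is bookkeeping with Theorem~\ref{teo:stabilizzatori1}, Lemma~\ref{lemma:triple-T}, and the standard facts that $w_{0,I}$ sends $\Phi^+_I$ to $\Phi^-_I$ and that the base $\grD_I$ of a parabolic subsystem is $W$-conjugate into $\grD$.
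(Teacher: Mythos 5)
Your proof is correct, but it takes a genuinely different route from the paper's. The paper makes the same initial reduction to $\Stab_W(\scrO_I^\sharp)=W_I$, but then establishes \emph{both} inclusions by inductions on length driven by Lemma~\ref{lemma:triple-T}: the inclusion $\Stab_W(\scrO_{w,I})\subset wW_Iw^{-1}$ is Lemma~\ref{lemma:stabilizzatori2} (from a reduced expression of a stabilizing element one extracts reflections $s_\gra$ with $\gra\in w(\weak_I)$ and inducts), and the reverse inclusion is again an induction on $l(v)$ for $v\in W_I$, exploiting that $w_I$ is the unique element of $W_I$ with $\Phi^+_I\subset\Phi^+(w_I)$. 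You replace the first induction by an orbit--stabilizer count: Theorem~\ref{teo:stabilizzatori1} together with the bijectivity of the parametrization (Corollary~\ref{cor:parametrizzazione}) identifies the $W$-orbit of $\scrO_I^\sharp$ with the set of reduced pairs $(v,I)$, of which there is exactly one per coset of $W/W_I$. And you replace the second induction by the observation that it suffices to treat the generators $s_\grg$, $\grg\in\grD_I$, of $W_I$, which you turn into simple reflections of $W$ by passing to the adapted reduced pair $(y\,w_{0,I},I)$ with $y(\grD_I)\subset\grD$, $y(\Phi^+_I)\subset\Phi^+$, so that Lemma~\ref{lemma:triple-T}~i) applies verbatim. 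What your route buys is brevity and a conceptual point: the classification of $W$-orbits already pins down the \emph{size} of the stabilizer, so only the easy inclusion requires a geometric argument. What the paper's route buys is that it stays entirely at the level of the local moves of Lemma~\ref{lemma:triple-T}, with no auxiliary root-system combinatorics, and it produces along the way the explicit identification $\scrO_I^\sharp=\scrO_{w_I,I}$.

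One assertion in your counting step deserves justification: you claim that the minimal-length representative $v'$ of a coset $vW_I$ satisfies $v'(\Phi^+_I)\subset\Phi^+$. This is true, but since in general $\grD_I\not\subset\grD$ it is not the textbook statement about standard parabolic subgroups; it follows, for instance, from the criterion that $l(vs_\grg)<l(v)$ if and only if $v(\grg)\in\Phi^-$ (for $\grg\in\Phi^+$), applied to the roots of $\Phi^+_I$. Alternatively, you can bypass minimal length entirely: $v^{-1}(\Phi^-)\cap\Phi_I$ is a positive system of $\Phi_I$, and $W_I$ acts simply transitively on positive systems of $\Phi_I$, so each coset $vW_I$ contains exactly one element mapping $\Phi^+_I$ into $\Phi^-$ — which is all your count needs. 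With either fix the argument is complete.
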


Together with Theorem \ref{teo:stabilizzatori1}, the previous theorem gives a formula to compute the number of $B$-orbits in $G/H$ in terms of the root systems $\Phi_I$.

\begin{corollary}	\label{cor:numero-Borbite}
The number of $B$-orbits in $G/H$ is given by the formula
\[
	\sum_{I \subset \scrD} |W/W_I|.
\]
\end{corollary}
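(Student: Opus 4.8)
The plan is to combine the two preceding theorems through the orbit--stabilizer relation. First I would observe that $\scrB(G/H)$ decomposes as a disjoint union of $W$-orbits, so that $|\scrB(G/H)|$ equals the sum of the cardinalities of these orbits. By Theorem~\ref{teo:stabilizzatori1}, equivalently by Corollary~\ref{cor: Worbits}, the set of $W$-orbits is in natural bijection with the collection of subsets $I \subset \scrD^*$: each such $I$ determines a single orbit, namely the one containing $\scrO_{w,I}$ for any $w$ for which $(w,I)$ is reduced. A convenient choice is $(w_0,I)$, which is always reduced by Remark~\ref{oss:compatibilità} i). This identifies the index set of the sum with the set of $W$-orbits and guarantees that each orbit is counted exactly once.

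Next, for a fixed subset $I \subset \scrD^*$, I would compute the cardinality of the associated $W$-orbit by the orbit--stabilizer theorem. Choosing any reduced pair $(w,I)$, the orbit through $\scrO_{w,I}$ has cardinality $|W|/|\Stab_W(\scrO_{w,I})|$. By Theorem~\ref{teo:stabilizzatori2} the stabilizer equals $w W_I w^{-1}$, which is a conjugate of $W_I$ and hence has cardinality $|W_I|$. Therefore the orbit has cardinality $|W|/|W_I| = |W/W_I|$, a quantity depending only on $I$ and not on the particular $w$ chosen to exhibit the orbit.

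Finally I would sum these cardinalities over the complete set of orbit representatives, that is over all subsets $I \subset \scrD^*$, to obtain
$$
	|\scrB(G/H)| = \sum_{I \subset \scrD^*} |W/W_I|,
$$
as claimed. I do not expect any genuine obstacle in this argument beyond correctly invoking the two theorems; the only point demanding a little care is precisely that the summation index ranges over the set of distinct $W$-orbits, so that no orbit is over- or undercounted. This is exactly the content of Corollary~\ref{cor: Worbits}, which ensures that the subsets of $\scrD^*$ form a complete and irredundant set of orbit representatives.
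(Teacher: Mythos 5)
Your proof is correct and follows essentially the same route as the paper, which presents this corollary as an immediate consequence of Theorems~\ref{teo:stabilizzatori1} and~\ref{teo:stabilizzatori2}: the $W$-orbits in $\scrB(G/H)$ are indexed by the subsets $I \subset \scrD^*$, and by orbit--stabilizer each such orbit has cardinality $|W/W_I|$ since the stabilizer $wW_Iw^{-1}$ is conjugate to $W_I$. Your added care about $(w_0,I)$ always being reduced and about irredundancy of the representatives is exactly the content the paper delegates to Corollary~\ref{cor: Worbits}.
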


To prove Theorem \ref{teo:stabilizzatori2} we proceed by steps. First we prove one of the two inclusions.

\begin{lemma}	\label{lemma:stabilizzatori2}
Let $(w,I)$ be a reduced pair. Then $\Stab_W(\scrO_{w,I}) \subset wW_Iw^{-1}$.
\end{lemma}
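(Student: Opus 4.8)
The plan is to prove, by induction on the length $\ell(u)$, a statement slightly stronger than the lemma, which is what actually propagates under the induction: for all reduced pairs $(w,I)$ and $(v,I)$ and every $u \in W$ with $u\cdot\scrO_{w,I} = \scrO_{v,I}$, one has $v^{-1}uw \in W_I$. The lemma is the special case $v=w$, which reads $w^{-1}uw\in W_I$, i.e. $u\in wW_Iw^{-1}$. The reason for allowing the target orbit to vary is that peeling a simple reflection off $u$ typically moves $\scrO_{w,I}$ to a \emph{different} orbit of its $W$-orbit, so an induction confined to $\Stab_W(\scrO_{w,I})$ would not close. Throughout the induction $w$ and $I$ stay fixed, and the second index of the target always remains $I$, because Knop's action preserves the minimal representative (Theorem~\ref{teo:stabilizzatori1}); hence any orbit of the form $u'\cdot\scrO_{w,I}$ is again $\scrO_{v_1,I}$ for a reduced pair $(v_1,I)$.

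For the base case $\ell(u)=0$ we have $u=e$ and $\scrO_{w,I}=\scrO_{v,I}$; since reduced pairs biject with $B$-orbits (Corollary~\ref{cor:parametrizzazione}) this forces $v=w$, so $v^{-1}uw=e\in W_I$. For the inductive step I would choose a simple root $\gra$ with $\ell(s_\gra u)=\ell(u)-1$ and write $u=s_\gra u'$. Since Knop's $s_\gra$ is an involution, the relation $u\cdot\scrO_{w,I}=\scrO_{v,I}$ gives $u'\cdot\scrO_{w,I}=s_\gra\cdot\scrO_{v,I}$; writing $\scrO_{v_1,I}:=u'\cdot\scrO_{w,I}$ (a reduced pair, by the remark above) this becomes $s_\gra\cdot\scrO_{v_1,I}=\scrO_{v,I}$, with $\ell(u')<\ell(u)$. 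The induction hypothesis applied to $u'$ yields $v_1^{-1}u'w\in W_I$, and it remains to control $v$, $v_1$, $s_\gra$ via Lemma~\ref{lemma:triple-T}(i) with $\grb:=-v_1^{-1}(\gra)$. If $\grb\in\weak_I$, then $s_\gra\cdot\scrO_{v_1,I}=\scrO_{v_1,I}$, so $v=v_1$; moreover $\grb\in\weak_I\subset\Phi_I$ gives $v_1^{-1}(\gra)=-\grb\in\Phi_I$, hence $v_1^{-1}s_\gra v_1=s_{v_1^{-1}(\gra)}\in W_I$, and $v^{-1}uw=(v_1^{-1}s_\gra v_1)(v_1^{-1}u'w)\in W_I$. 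Otherwise $s_\gra\cdot\scrO_{v_1,I}=\scrO_{s_\gra v_1,I}$ is the other (reduced) alternative of the lemma, so $v=s_\gra v_1$ and $v^{-1}uw=v_1^{-1}s_\gra\cdot s_\gra u'\cdot w=v_1^{-1}u'w\in W_I$. In both branches $v^{-1}uw\in W_I$, closing the induction; taking $v=w$ gives the lemma.

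The step I expect to be the real obstacle is \emph{not} any estimate but the choice of invariant to induct on, which is dictated by a genuine rigidity issue. A purely lattice-theoretic argument — using that Knop's action is compatible with the $W$-action on weight lattices together with $\calX_B(\scrO_{w,I})=w\calX_T(\scrU_I)$ (Theorem~\ref{teo:parametrizzazione}) — only shows that $w^{-1}uw$ preserves the parabolic subsystem $\Phi_I=\mZ\Psi_I\cap\Phi$, i.e. lies in the normalizer $N_W(\Phi_I)$. It cannot by itself rule out the ``outer'' elements of $N_W(\Phi_I)\setminus W_I$ (such as diagram automorphisms realized inside $W$), and these are exactly what must be excluded to land in $W_I$. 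Distinguishing $W_I$ from $N_W(\Phi_I)$ is where the finer geometry has to be used, and the induction above resolves it precisely because the ``fixing'' alternative $\grb\in\weak_I$ of Lemma~\ref{lemma:triple-T} is the condition placing the involved reflection $s_\gra$ inside the conjugate $v_1W_Iv_1^{-1}$.

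The remaining points are routine bookkeeping: checking that each orbit produced stays of the form $\scrO_{\cdot,I}$ with the same $I$ (Theorem~\ref{teo:stabilizzatori1}) and reduced (guaranteed by the formulation of Lemma~\ref{lemma:triple-T}), that the uniqueness of reduced representatives (Corollary~\ref{cor:parametrizzazione}) is used to identify $v$ with $v_1$ or $s_\gra v_1$, and that $\weak_I\subset\Phi^+_I\subset\Phi_I$ (Proposition~\ref{prop: I-rootsyst}) is what lets every reflection index land in $\Phi_I$ so that its reflection belongs to $W_I$. None of these involve a hard computation once the generalized inductive statement is set up correctly.
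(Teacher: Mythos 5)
Your proof is correct, and all the ingredients you invoke are legitimately available at this point of the paper (Lemma~\ref{lemma:triple-T}, Theorem~\ref{teo:stabilizzatori1}, Corollary~\ref{cor:parametrizzazione}, Proposition~\ref{prop: I-rootsyst} all precede the lemma and none of them depends on it, so there is no circularity). The engine is the same as in the paper --- induction on length driven by the dichotomy of Lemma~\ref{lemma:triple-T}(i) together with the uniqueness of reduced representatives --- but the organization of the induction is genuinely different. The paper inducts on $v \in \Stab_W(\scrO_{w,I})$ with the statement unchanged, and pays for this with a more intricate step: it fixes a reduced expression $v = s_{\gra_m}\cdots s_{\gra_1}$, follows the orbits $\scrO_i = v_i\cdot\scrO_{w,I}$ along the partial products, locates the \emph{first} index $n$ where the orbit stalls, and extracts the conjugated reflection $s_\gra = v_{n-1}^{-1}s_{\gra_n}v_{n-1}$, which is shown to lie simultaneously in $\Stab_W(\scrO_{w,I})$ and in $wW_Iw^{-1}$; peeling it off gives $v' = vs_\gra$ of smaller length and the induction closes. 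You instead strengthen the inductive statement to ``transporter'' form ($u\cdot\scrO_{w,I}=\scrO_{v,I}$ implies $v^{-1}uw\in W_I$), which is exactly what propagates when one strips a single simple reflection off the left of $u$; the two branches of Lemma~\ref{lemma:triple-T}(i) then handle the step with no bookkeeping of reduced expressions at all. What your route buys is a cleaner, shorter step and, as a byproduct, a uniform statement covering translations between distinct orbits of the same $W$-orbit (essentially re-deriving the transport formula of Theorem~\ref{teo:stabilizzatori1}); what the paper's route buys is that it never needs to enlarge the statement, at the cost of the extraction trick. One small remark: as written, your appeal to Theorem~\ref{teo:stabilizzatori1} is only for the $W$-invariance of the minimal representative (needed to know that $u'\cdot\scrO_{w,I}$ is again of the form $\scrO_{v_1,I}$); if you wanted the argument self-contained you could fold that into the inductive statement as well, since it too follows step by step from Lemma~\ref{lemma:triple-T}(i).
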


\begin{proof}
Let $v \in \Stab_W(\scrO_{w,I})$, we proceed by induction on the length of $v$. If $l(v) = 1$, then by Lemma~\ref{lemma:triple-T} we have $v = s_\gra$ for some $\gra \in w(\weak_I)$, hence we assume $l(v) > 1$. Let $v = s_{\gra_m} \cdots s_{\gra_1}$ be a reduced expression, for $i \leq m$ we set $v_i = s_{\gra_i} \cdots s_{\gra_1}$ and $\scrO_i = v_i \cdot \scrO_{w,I}$. Suppose $\scrO_i \neq \scrO_{i-1}$ for all $i \leq m$, then Lemma~\ref{lemma:triple-T} implies $v \cdot \scrO_{w,I} = \scrO_{vw,I}$, hence $vw = w$ and $v=e$. Otherwise, let $n \leq m$ be such that $\scrO_n = \scrO_{n-1}$, and assume that $n$ is minimal with this property.

If $n = 1$ the claim follows by the inductive hypothesis, suppose $n > 1$. Denote $\gra = v_{n-1}^{-1} (\gra_n)$, then $v = v' s_\gra$, where $v' = s_{\gra_m} \cdots \hat{s}_{\gra_n} \cdots s_{\gra_1}$. By construction
$$
\scrO_{n-1} = v_{n-1} \cdot \scrO_{w,I} = \scrO_{v_{n-1} w,I}
$$
and $s_{\gra_n} \in \Stab_W(\scrO_{n-1})$, and it follows $s_\gra \in \Stab_W(\scrO_{w,I})$. On the other hand we have $w^{-1} v_{n-1}^{-1} (\gra_n) \in  \weak_I$ by Lemma~\ref{lemma:triple-T}, hence $\gra \in w(\weak_I)$ is of the desired shape and $s_\gra \in w W_I w^{-1}$. Finally $v' \in \Stab_W(\scrO_{w,I})$ and $l(v') < l(v)$, therefore we may apply the inductive hypothesis and it follows $v \in w W_I w^{-1}$.
\end{proof}

\begin{corollary}
Let $w_I \in W_I$ be the longest element, then we have $\scrO_I^\sharp = \scrO_{w_I,I}$.
\end{corollary}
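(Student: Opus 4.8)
The plan is to pin down the reduced pair representing $\scrO_I^\sharp$ and to show that its Weyl-group component is exactly $w_I$. By Theorem~\ref{teo:stabilizzatori1} the orbit $\scrO_I^\sharp = w^{-1}\cdot\scrO_{w,I}$ lies in the $W$-orbit whose minimal representative is $I$; hence I may write $\scrO_I^\sharp = \scrO_{v,I}$ for the unique reduced pair $(v,I)$ recording its image in $G/B$ together with its minimal representative. Everything then reduces to proving $v=w_I$, which I would establish in two steps: first that $v\in W_I$, then that reducedness pins $v$ down to the longest element of $W_I$.

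First I would show $v\in W_I$. Since $(v,I)$ is reduced, I can feed $w=v$ into the ($w$-independent) definition of $\scrO_I^\sharp$, obtaining $\scrO_I^\sharp = v^{-1}\cdot\scrO_{v,I}=v^{-1}\cdot\scrO_I^\sharp$, so that $v^{-1}\in\Stab_W(\scrO_{v,I})$. At this point I invoke Lemma~\ref{lemma:stabilizzatori2}, which gives the inclusion $\Stab_W(\scrO_{v,I})\subset vW_Iv^{-1}$. Thus $v^{-1}=vhv^{-1}$ for some $h\in W_I$, and a one-line manipulation (right-multiplying by $v$) yields $h=v^{-1}$, whence $v\in W_I$.

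Second, I would exploit reducedness together with the containment just found. By Remark~\ref{rem:pairs}(i), the pair $(v,I)$ being reduced means $v(\Phi^+_I)\subset\Phi^-$; since $v\in W_I$ stabilizes $\Phi_I$, this forces $v(\Phi^+_I)\subset\Phi_I\cap\Phi^-=\Phi^-_I$, and the longest element is the unique element of $W_I$ carrying $\Phi^+_I$ into $\Phi^-_I$, so $v=w_I$ and therefore $\scrO_I^\sharp=\scrO_{w_I,I}$. The same computation shows $(w_I,I)$ is indeed reduced, since $w_I(\Phi^+_I)=\Phi^-_I\subset\Phi^-$, so the right-hand side makes sense as a reduced pair.

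The delicate point here is to avoid circularity: this corollary is meant to feed into the proof of the reverse inclusion of Theorem~\ref{teo:stabilizzatori2}, so I must rely only on Lemma~\ref{lemma:stabilizzatori2} and Theorem~\ref{teo:stabilizzatori1}, never on the full stabilizer equality. A more direct alternative—writing $w_I$ as a product of simple reflections of $\grD$ and checking via Lemma~\ref{lemma:triple-T} that it fixes $\scrO_{w_I,I}$—would be considerably more painful, because the simple roots $\grD_I$ of $\Phi_I$ need not be simple in $\Phi$, so tracking the chain of type (U)/(T) moves is awkward; the stabilizer-constraint route above sidesteps this entirely.
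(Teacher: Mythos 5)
Your proof is correct and takes essentially the same route as the paper's: write $\scrO_I^\sharp = \scrO_{v,I}$ with $(v,I)$ reduced, use the $w$-independence of the definition of $\scrO_I^\sharp$ together with Lemma~\ref{lemma:stabilizzatori2} to conclude $v \in W_I$, and then use reducedness to force $v = w_I$. The additional details you supply (the invariance of the minimal representative via Theorem~\ref{teo:stabilizzatori1}, and the uniqueness of the element of $W_I$ carrying $\Phi^+_I$ into $\Phi^-$) simply make explicit what the paper's terser proof leaves implicit.
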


\begin{proof}
Let $w \in W$ be such that $\scrO^\sharp_I = \scrO_{w,I}$. By the definition of $\scrO^\sharp_I$ we have the equality $\scrO_{w,I} = w^{-1} \cdot \scrO_{w,I}$, hence $w \in \Stab_W(\scrO_{w,I})$, and by Lemma~\ref{lemma:stabilizzatori2} we get $w \in W_I$. On the other hand $W_I$ contains a unique element such that $\Phi^+_I \subset \Phi^+(w)$, namely $w_I$.
\end{proof}

\begin{proof}[Proof of Theorem~\ref{teo:stabilizzatori2}]
Since $\scrO_{w,I} = w \cdot \scrO^\sharp_I$, by the previous corollary it is enough to show the equality $\Stab_W(\scrO^\sharp_I) = W_I$. The inclusion $\Stab_W(\scrO^\sharp_I) \subset W_I$ follows from Lemma~\ref{lemma:stabilizzatori2}, we show the other inclusion.

Let $v \in W_I$, we show $v \in \Stab_W(\scrO^\sharp_I)$ proceeding by induction on $l(v)$, where we regard $v$ as an element of $W$. If $l(v) = 1$, then we have $v = s_\gra$ for some $\gra \in \grD \cap \Phi^+_I \subset \grD_I$, hence $\gra = -w_I(\grb)$ for some other root $\grb \in \grD_I$. On the other hand $\grD_I \subset \weak_I$, and Lemma~\ref{lemma:triple-T} implies $s_\gra \in \Stab_W(\scrO_I^\sharp)$.

Suppose now that $l(v) > 1$, and let $v = s_{\gra_n} \cdots s_{\gra_1}$ be a reduced expression of $v$ as an element of $W$. If $i \leq n$, we denote $v_i = s_{\gra_i} \cdots s_{\gra_1}$ and $\scrO_i = v_i \cdot \scrO^\sharp_I$. Suppose $\scrO_i \neq \scrO_{i-1}$ for all $i \leq n$, then Lemma~\ref{lemma:triple-T} implies that $v \cdot \scrO_{w_I,I} = \scrO_{vw_I,I}$ and that $(vw_I,I)$ is a reduced pair. On the other hand $vw_I \in W_I$, and $w_I$ is the unique element in $W_I$ such that $\Phi^+_I \subset \Phi^+(w_I)$. Therefore $vw_I = w_I$, which is absurd since $v \neq e$. Therefore $s_{\gra_k} \in \Stab_W(\scrO_{k-1})$ for some $k \leq n$. Assume that $k$ is minimal with this property, then Lemma~\ref{lemma:triple-T} implies
$$
	\scrO_{k-1} = v_{k-1} \cdot\scrO_I^\sharp = \scrO_{v_{k-1} w_I,I}.
$$

By construction $s_{\gra_k} \in \Stab_W(\scrO_{k-1})$, hence $-w_I v^{-1}_{k-1} (\gra_k) \in \weak_I$  by Lemma~\ref{lemma:triple-T}. Denote $\gra = v^{-1}_{k-1} (\gra_k)$, then $s_\gra \in \Stab_W(\scrO_I^\sharp)$ and $\gra \in \Phi^+_I$, hence $s_\gra \in W_I$. On the other hand $v = s_{\gra_n} \cdots \hat{s}_{\gra_k} \cdots s_{\gra_1} s_\gra$, therefore $s_{\gra_n} \cdots \hat{s}_{\gra_k} \cdots s_{\gra_1} \in W_I$ and applying the inductive hypothesis it follows $v \in \Stab_W(\scrO_I^\sharp)$.
\end{proof}

\subsection{Reduced pairs and weight polytopes}

Building upon Theorem~\ref{teo:stabilizzatori2}, we now produce a combinatorial model for the action of $W$ on $\scrB(G/H)$ in terms of weight polytopes.

Consider the Weyl group $W_I$. The system of positive roots $\Phi^+_I$ induces a Bruhat order $\leq_I$ on $W_I$, which is compatible with the restriction of the Bruhat order $\leq$ on $W$ in the following sense.

\begin{lemma} \label{lem: I-bruhat}
Let $(w,I)$ be a reduced pair. Let $v_1, v_2 \in W_I$ be such that $v_1 \leq_I v_2$, then $wv_2 \leq wv_1$. In particular, the left coset $wW_I$ possesses a unique minimal element and a unique maximal element with respect to $\leq$, namely $ww_I$ and $w$.
\end{lemma}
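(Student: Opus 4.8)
The plan is to reduce the whole statement to a single observation about how left multiplication by $w$ interacts with one reflection, exploiting that reducedness of $(w,I)$ forces $w(\Phi^+_I) \subset \Phi^-$. Indeed, by Remark~\ref{rem:pairs} the pair $(w,I)$ is reduced exactly when $\Phi^+_I \subset \Phi^+(w)$, so every element of $\Phi^+_I$ is sent by $w$ into $\Phi^-$; this is the only feature of $w$ that I will use.

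First I would recall the two standard facts about Bruhat order that drive the argument. (a) For a positive root $\delta \in \Phi^+$ with reflection $s_\delta$ and any $y \in W$, the elements $y$ and $ys_\delta$ are always Bruhat-comparable, and $ys_\delta < y$ precisely when $l(ys_\delta) < l(y)$, that is when $y(\delta) \in \Phi^-$. (b) The Bruhat order $\leq_I$ on $W_I$, viewed as a Coxeter group in its own right with root system $\Phi_I$ and positive system $\Phi^+_I$, is the transitive closure of the relation sending $x$ to $xs_\delta$ whenever $\delta \in \Phi^+_I$ and $l_I(xs_\delta) > l_I(x)$, the latter being equivalent to $x(\delta) \in \Phi^+_I$.

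The core step is then immediate: suppose $x \in W_I$ and $\delta \in \Phi^+_I$ satisfy $x(\delta) \in \Phi^+_I$ (a single length-increasing reflection step in $W_I$). Since $x(\delta) \in \Phi^+_I \subset \Phi^+(w)$, we get $(wx)(\delta) = w(x(\delta)) \in \Phi^-$, and fact (a) applied to $y = wx$ gives $w(xs_\delta) = (wx)s_\delta < wx$ in $W$. The main assertion now follows by chaining: given $v_1 \leq_I v_2$, choose via (b) a sequence $v_1 = x_0,\ x_1 = x_0 s_{\delta_1},\ \ldots,\ x_k = x_{k-1}s_{\delta_k} = v_2$ of length-increasing reflection steps in $W_I$; the core step yields $wx_i < wx_{i-1}$ for each $i$, and transitivity of $\leq$ gives $wv_2 = wx_k \leq wx_0 = wv_1$. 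For the final sentence, recall that $e$ and $w_I$ are respectively the minimum and maximum of $(W_I,\leq_I)$; order reversal then gives $ww_I \leq wu \leq w$ for every $u \in W_I$, so $w$ and $ww_I$ are the (hence unique) maximum and minimum of the coset $wW_I$ for $\leq$.

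The point requiring care — and the only potential obstacle — is that $W_I$ must be treated as an abstract Coxeter group with its own length $l_I$ and reflections $s_\delta$ ($\delta \in \Phi^+_I$), and that $l_I$ is \emph{not} the restriction of $l$, since in general $\grD_I \not\subset \grD$. A naive attempt to compute $l(wu)$ directly (for instance hoping for $l(wu)=l(w)-l_I(u)$) fails for exactly this reason. The argument above sidesteps the issue entirely: $W$ and $W_I$ meet only in the core step, where the sole input is the set-theoretic inclusion $x(\delta)\in\Phi^+_I\subset\Phi^+(w)$, and no comparison of the two length functions is ever invoked.
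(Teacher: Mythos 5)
Your proof is correct, and it takes a genuinely different route from the paper. The paper argues geometrically: it first reverses the order inside $W_I$ by the longest element ($v_1 \leq_I v_2$ gives $w_Iv_2 \leq_I w_Iv_1$), interprets this as the closure relation $w_Iv_2 \in \ol{B_Iw_Iv_1B_I}$ for Bruhat cells of the reductive subgroup $G_I$, then uses reducedness in the form $\Phi^+(ww_I) \cap \Phi^+_I = \vuoto$ to collapse the mixed double coset, $Bww_IB_I = Bww_IT$, and finally chains the inclusions $wv_2 \in Bww_IB_Iw_Iv_2B_I \subset \ol{Bwv_1B}$ to conclude $wv_2 \leq wv_1$. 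You instead stay entirely inside Coxeter combinatorics: you factor $v_1 \leq_I v_2$ into length-increasing reflection steps in $W_I$ and observe that each such step, composed with $w$, becomes length-decreasing in $W$ because $x(\delta) \in \Phi^+_I \subset \Phi^+(w)$ forces $(wx)(\delta) \in \Phi^-$; no passage through $w_I$, $G_I$, or orbit closures is needed for the main inequality. Both arguments use exactly the same input from reducedness, namely $\Phi^+_I \subset \Phi^+(w)$. What your approach buys is self-containedness and elementarity — only the standard characterizations of Bruhat order via reflections are invoked, and you correctly flag (and avoid) the trap that $l_I$ is not the restriction of $l$ since $\grD_I \not\subset \grD$. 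What the paper's approach buys is coherence with its geometric setting: the closure relations of double cosets it manipulates are the same kind of objects ($B$-orbit closures) that the lemma is ultimately applied to, e.g.\ in Lemma~\ref{lemma:closure} and Proposition~\ref{prop: bruhat-compatibile}.
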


\begin{proof}
Recall that $G_I \subset G$ is the reductive subgroup generated by $T$ together with the root spaces $U_\gra$ with $\gra \in \Phi_I$, and $B_I \subset G_I$ is the Borel subgroup $B \cap G_I$.
As $v_1 \leq_I v_2$, it follows $w_I v_2 \leq_I w_I v_1$, hence $w_I v_2 \in \ol{B_I w_I v_1 B_I}$.

Since $\Phi_I^+ \subset \Phi^+(w) \cap \Phi^+(w_I)$, it follows $\Phi^+(ww_I) \cap \Phi^+_I = \vuoto$, hence we get the equality $Bww_IT = Bww_I B_I$. Therefore
$$
w v_2 \in Bw w_I B_I w_I v_2 B_I \subset \ol{Bw w_I B_I w_I v_1 B_I} \subset \ol{Bwv_1 B_I} \subset \ol{Bwv_1 B} 
$$
and the claim follows.
\end{proof}

Denote by $\grL$ the weight lattice of $T$ and let $\grl \in \grL$ be a regular dominant weight. Denote $P = \conv(W\grl)$ the weight polytope of $\grl$ in $\grL_\mQ$, then the vertices of $P$ correspond bijectively to the elements of $W$. By a \textit{subpolytope} of $P$ we mean the convex hull of a subset of vertices of $P$. Denote $\scrS(P)$ the set of subpolytopes of $P$, then the Weyl group acts naturally on $\scrS(P)$. Given $I \subset \scrD$ we set $\scrS_I = \conv(W_I \grl)$, and for a reduced pair $(w,I)$ we set
$$
	\scrS_{w,I} = w \scrS_I = \conv (wW_I \grl) = \conv\big(vw (\grl) \st v \in \Stab_W(\scrO_{w,I})\big)
$$
(where the last equality follows from Theorem~\ref{teo:stabilizzatori2}). Denote moreover
$$
	\scrC_{w,I} = \cone(-w(\weak_I)) = \cone(-w(\Phi^+_I)) = -w\big(\mQ \Psi_I \cap \mQ_{\geq 0} \grD\big)
$$

\begin{theorem}	\label{teo:subpolytopes}
The map $\scrO_{w,I} \mapsto \scrS_{w,I}$ is a $W$-equivariant embedding of $\scrB(G/H)$ into $\scrS(P)$. Moreover we have the equality
$$\scrS_{w,I} = P \cap (\scrC_{w,I}+w\grl),$$
and in particular $\dim (\scrS_{w,I}) = \rk(\Phi_I)$.
\end{theorem}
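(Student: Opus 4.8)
The plan is to treat the three assertions separately: the embedding follows from the stabilizer description of Section~\ref{s:W}, while the cone formula reduces to a convexity statement about the weight polytope. First, the map is well defined, since by Corollary~\ref{cor:parametrizzazione} each $B$-orbit has a unique reduced representative $(w,I)$, so $\scrS_{w,I}$ is unambiguous. For $W$-equivariance I would argue as follows: given $u\in W$, Theorem~\ref{teo:stabilizzatori1} gives $u\cdot\scrO_{w,I}=\scrO_{v,I}$ with the same $I$, and combining this with Theorem~\ref{teo:stabilizzatori2} one finds $v\in uwW_I$, say $v=uwg$ with $g\in W_I$. As $g$ permutes $W_I$ we get $\scrS_{v,I}=uwg\,\conv(W_I\grl)=uw\,\conv(W_I\grl)=u\scrS_{w,I}$. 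For injectivity, note that since $\grl$ is regular the vertices of $P$ are indexed by $W$, and the vertex set of $\scrS_{w,I}=\conv(wW_I\grl)$ is exactly the left coset $wW_I$. Hence $\scrS_{w,I}=\scrS_{v,J}$ forces $wW_I=vW_J$, which upon comparison of cosets yields $W_I=W_J$, hence $\Phi_I=\Phi_J$ (a reflection subgroup determines its root system), hence $I=J$ by Proposition~\ref{prop: I-rootsyst}. Finally $wW_I=vW_I$ together with the reducedness of both pairs forces $w=v$, because by Lemma~\ref{lem: I-bruhat} each coset of $W_I$ contains a unique element $u$ with $\Phi^+_I\subset\Phi^+(u)$.

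For the cone formula I would first reduce to the case $w=e$. Since $\scrC_{w,I}=-w\cone(\Phi^+_I)$ and $P$ is $W$-stable, we have $w\grl+\scrC_{w,I}=w\big(\grl-\cone(\Phi^+_I)\big)$ and $P\cap(w\grl+\scrC_{w,I})=w\big(P\cap(\grl-\cone(\Phi^+_I))\big)$, while $\scrS_{w,I}=w\scrS_I$; so the assertion is equivalent to the identity
$$\conv(W_I\grl)=P\cap\big(\grl-\cone(\Phi^+_I)\big).$$
The inclusion $\subseteq$ is easy: $W_I\grl\subset P$, and since $\grl$ is dominant for $\Phi_I$ one has $\grl-x\grl\in\cone(\Phi^+_I)$ for all $x\in W_I$, so $W_I\grl$ lies in the convex set $\grl-\cone(\Phi^+_I)$.

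The reverse inclusion is where I expect the real work. My plan is to use the classical description of weight polytopes: for a dominant weight $\mu$ one has $p\in\conv(W\mu)$ if and only if $\mu-x^{-1}p\in\cone(\grD)$ for every $x\in W$, and the analogous statement for the root system $\Phi_I$ inside $\mQ\Psi_I$ characterizes $\conv(W_I\grl)$ (note that $\grl$ is both dominant and regular for $\Phi_I$, being so for $\Phi\supset\Phi_I$). So, given $p\in P\cap(\grl-\cone(\Phi^+_I))$ and $x\in W_I\subset W$, the ambient description yields $\grl-x^{-1}p\in\cone(\grD)$; on the other hand $p\in\grl+\mQ\Psi_I$ forces $\grl-x^{-1}p\in\mQ\Psi_I$, using that $W_I$ preserves $\mQ\Psi_I$. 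The crucial point is then the compatibility $\cone(\grD)\cap\mQ\Psi_I=\cone(\Phi^+_I)$ between the ambient dominance cone and the parabolic subsystem $\Phi_I$; this is exactly the passage to rational cones in the semigroup equality $\mN\weak_I=\mQ\Psi_I\cap\mN\grD$ of Proposition~\ref{prop:semigruppi-attive}, and is already built into the three equivalent expressions for $\scrC_{w,I}$. It gives $\grl-x^{-1}p\in\cone(\Phi^+_I)$ for all $x\in W_I$, whence $p\in\conv(W_I\grl)$ by the intrinsic description for $\Phi_I$, completing the equality.

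The dimension statement is then immediate: the affine hull of $W_I\grl$ is the translate by $\grl$ of the subspace spanned by the differences $x\grl-\grl$ with $x\in W_I$; since $\grl$ is regular, $\grl-s_\alpha\grl=\langle\grl,\alpha^\vee\rangle\alpha$ is a nonzero multiple of $\alpha$ for each $\alpha\in\grD_I$, so this subspace is $\mQ\Psi_I$ and $\dim\scrS_{w,I}=\rk(\Phi_I)$. Overall, I expect the only genuinely delicate step to be the reverse inclusion, which hinges on invoking $\cone(\grD)\cap\mQ\Psi_I=\cone(\Phi^+_I)$; the remaining steps are bookkeeping with Corollary~\ref{cor:parametrizzazione} and the stabilizer theorems of Section~\ref{s:W}.
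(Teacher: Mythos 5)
Your proof is correct, and on the cone formula it is actually more complete than the paper's own argument, which after establishing the embedding merely remarks that ``the last claim also follows''. The differences are worth recording. For $W$-equivariance the paper works one $W$-orbit at a time, using the equality $\Stab_W(\scrO_{w,I}) = \Stab_W(\scrS_{w,I})$ together with the matching of simple-reflection actions from Lemma~\ref{lemma:triple-T}; your derivation, writing $u\cdot\scrO_{w,I} = \scrO_{v,I}$ with $v \in uwW_I$ directly from Theorems~\ref{teo:stabilizzatori1} and~\ref{teo:stabilizzatori2}, is an equivalent and arguably cleaner piece of group theory. For injectivity the paper first pins down $w$ (as the Bruhat-maximal element of the coset, equivalently the dominance-minimal vertex of $\scrS_{w,I}$, via Lemma~\ref{lem: I-bruhat}) and then recovers $I$ from the semigroup generated by the vertex differences via Proposition~\ref{prop: I-rootsyst}; you instead read off the coset $wW_I$ from the vertex set and pass through the implication $W_I = W_J \Rightarrow \Phi_I = \Phi_J$. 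That implication is the one place where you are too glib: a set of roots is not in general determined by the reflection group it generates (the two simple roots of $\sfA_2$ already generate the full Weyl group), so you should justify it; the fix is that any $\gamma \in \Phi$ with $s_\gamma \in W_I$ is orthogonal to the fixed space of $W_I$, hence lies in $\mQ\Phi_I \cap \Phi$, and this equals $\Phi_I$ precisely because $\Phi_I$ is parabolic (Corollary~\ref{cor: sistemi-parabolici}). Finally, your proof of $\scrS_{w,I} = P \cap (\scrC_{w,I}+w\grl)$ --- reduction to $w = e$, the classical description of $\conv(W\grl)$ as the intersection of the vertex cones $x(\grl - \cone(\grD))$, applied both ambiently and intrinsically to $\Phi_I$, plus the compatibility $\cone(\grD) \cap \mQ\Psi_I = \cone(\Phi^+_I)$ --- is sound, and that compatibility is indeed the rational-cone form of Proposition~\ref{prop:semigruppi-attive}, which the paper itself uses silently in the chain of equalities defining $\scrC_{w,I}$. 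The dimension count via the affine hull of $wW_I\grl$ is also fine. In short: same skeleton for the embedding with reshuffled details, and a genuine gain in completeness where the paper is terse.
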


\begin{proof}
Let $I \subset \scrD$ and denote by $\scrB_I(G/H)$ the corresponding $W$-orbit in $\scrB(G/H)$. We claim that the map $\scrO_{w,I} \mapsto \scrS_{w,I}$, regarded as a map $\scrB_I(G/H) \ra \scrS(P)$, is injective and $W$-equivariant. Indeed, since $(w,I)$ is reduced, by construction we have
$$
\Stab_W(\scrO_{w,I}) = \Stab_W(\scrS_{w,I}).
$$
Since $\scrB_I(G/H)$ is a single $W$-orbit, this shows that the map is injective, and since by Lemma~\ref{lemma:triple-T} the actions of the simple reflections on $\scrO_{w,I}$ and on $\scrS_{w,I}$ coincide, it shows that it is $W$-equivariant as well.

In particular the map $\scrB(G/H) \ra \scrS(P)$ is $W$-equivariant and we need only to show the injectivity, namely that $I$ is determined by $\scrS_{w,I}$. First of all, notice that $w$ is determined by $\scrS_{w,I}$. Indeed, by Lemma~\ref{lem: I-bruhat}, $w$ is maximal in $wW_I$ w.r.t. the Bruhat order, and is uniquely determined by this property. It follows that $w\grl$ is the unique minimal vertex of $\scrS_{w,I}$ w.r.t. the dominance order, hence $w$ is uniquely determined by $\scrS_{w,I}$. Then the equality $\scrS_{w,I} = \conv (wW_I \grl)$ implies that the set of differences $\{v\grl - w\grl \st v\grl \in \scrS_{w,I} \}$ generates the semigroup $w(\mN\Phi^+_I)$, hence we recover $I$ from $w$ and $\scrS_{w,I}$ thanks to Proposition~\ref{prop: I-rootsyst}. The last claim also follows.
\end{proof}

We conclude this subsection by showing that the parametrization of orbits via subpolytopes of $P$ of Theorem~\ref{teo:subpolytopes} is compatible with the Bruhat order on $\scrB(G/H)$ in the following sense.

\begin{proposition}	\label{prop: bruhat-compatibile}
Let $(w,I)$ and $(v,J)$ be reduced pairs and suppose that $\scrS_{v,J} \subset \scrS_{w,I}$. Then we have $\scrO_{v,J} \subset \ol{\scrO_{w,I}}$.
\end{proposition}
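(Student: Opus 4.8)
The plan is to translate the inclusion of subpolytopes into a combinatorial relation between the two reduced pairs, and then to realise the resulting closure relation by combining the toric geometry of $B/H$ with the big-cell geometry of the reductive subgroup $G_I$. First I would analyse the hypothesis $\scrS_{v,J}\subseteq\scrS_{w,I}$. Since $\grl$ is regular, the vertices of $P$ are the points $u\grl$ ($u\in W$), and as $\grl$ is still regular for $\Phi_I$ the vertices of $\scrS_{w,I}=\conv(wW_I\grl)$ are exactly the $|W_I|$ points of $wW_I\grl$, and likewise for $\scrS_{v,J}$. A vertex of $P$ lying in the subpolytope $\scrS_{w,I}$ is automatically a vertex of $\scrS_{w,I}$, so the inclusion forces $vW_J\grl\subseteq wW_I\grl$, that is $vW_J\subseteq wW_I$. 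Evaluating at the identity gives $v\in wW_I$, whence $wW_I=vW_I$ and $W_J\subseteq W_I$; by Proposition~\ref{prop: I-rootsyst} the set $I$ is recovered from $\Phi_I$ as $\grd(\Phi^+_I\cap\weak)$, so $W_J\subseteq W_I$ yields $\weak_J\subseteq\weak_I$ and therefore $J\subseteq I$.

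Next I would split the desired relation into two steps. Since $J\subseteq I$ we have $\scrU_J\subseteq\ol{\scrU_I}$ in the affine toric variety $B/H$, and applying $v$ and then $B$ gives at once $\scrO_{v,J}=Bv\scrU_J\subseteq B\,\ol{v\scrU_I}\subseteq\ol{Bv\scrU_I}=\ol{\scrO_{v,I}}$ (here $\scrO_{v,I}=Bv\scrU_I$, no reducedness being required). Thus it remains to prove the key step $\scrO_{v,I}\subseteq\ol{\scrO_{w,I}}$ for $v\in wW_I$, after which $\scrO_{v,J}\subseteq\ol{\scrO_{v,I}}\subseteq\ol{\scrO_{w,I}}$ follows.

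For the key step I would pass to $G_I$ with Borel $B_I=G_I\cap B$ and Weyl group $W_I$. By Lemma~\ref{lem: I-bruhat} the element $w$ is the longest element of the coset $wW_I$ and $c:=ww_I$ is the shortest; hence $w=cw_I$ with $c(\Phi^+_I)\subseteq\Phi^+$, so that $cB_Ic^{-1}\subseteq B$ and thus $cB_I\subseteq Bc$, while $v=cu$ for some $u\in W_I$. Inside the irreducible $G_I$-variety $Z=G_IH/H$ the big cell $B_Iw_IB_I$ is dense, so $B_Iw_IB_IH/H$ is dense in $Z$; since by Corollary~\ref{cor: B_I action} the orbit $\scrU_I$ is dense in the $B_I$-orbit $B_IH/H$, the single $B_I$-orbit $B_Iw_I\scrU_I$ contains the generic point of $B_Iw_IB_IH/H$ and is therefore dense in $Z$. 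Consequently $\ol{B_Iw_I\scrU_I}=Z\supseteq B_Iu\scrU_I$. Applying $c$ and using $cB_I\subseteq Bc$ gives $cB_Iu\scrU_I\subseteq\ol{cB_Iw_I\scrU_I}\subseteq\ol{Bcw_I\scrU_I}=\ol{\scrO_{w,I}}$, and saturating with $B$ (recall $\ol{\scrO_{w,I}}$ is $B$-stable and closed) yields $\scrO_{v,I}=Bcu\scrU_I\subseteq\ol{\scrO_{w,I}}$, completing the argument.

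The main obstacle is precisely this key step $\scrO_{v,I}\subseteq\ol{\scrO_{w,I}}$: the two orbits lie in the same $W$-orbit but over different Bruhat cells, so the Knop action of $W$ (which does not refine the closure order) gives no direct information, and a naive chain of $P_\gra$-moves runs into the type (T) configurations of Lemma~\ref{lemma:triple-T}, where $\scrO_{v,I}$ and $\scrO_{w,I}$ can occur as incomparable siblings below a common orbit. Reducing, via the minimal coset representative $c$, to the statement that the dense $B_I$-orbit of $Z=G_IH/H$ has the whole of $Z$ as its closure is what circumvents this difficulty. I would also take care to justify that the inclusion of subpolytopes is equivalent to $vW_J\subseteq wW_I$ (the converse being immediate from taking convex hulls), which makes the reduction to $v\in wW_I$ and $J\subseteq I$ completely rigorous.
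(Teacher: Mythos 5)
Your proof is correct, and while its first half (translating $\scrS_{v,J}\subset\scrS_{w,I}$ into $vW_J\subset wW_I$, hence $v\in wW_I$, $W_J\subset W_I$ and $J\subset I$) coincides with the paper's reduction, the closure step takes a genuinely different route. The paper proves a slightly stronger auxiliary statement (Lemma~\ref{lemma:closure}: $\scrO_{v,J}\subset\ol{\scrO_{w,I}}$ for any $J\subset\M_{w,I}$ and $v\in wW_I$) by induction along a reduced word $w^{-1}v=s_{\gra_1}\cdots s_{\gra_m}$ with $\gra_i\in\grD_I$, descending one letter at a time via the $T$-stable curve fact $xs_\gra\in\ol{BxU_\gra}$ for $\gra\in\Phi^+(x)$ (from \cite{Ca2}) combined with the $U_{\gra_i}$-stability of $\ol{\scrU_\M}$ given by Corollary~\ref{cor: B_I action}. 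You instead dispose of the passage from $J$ to $I$ by pure toric geometry ($J\subset I$ gives $\scrU_J\subset\ol{\scrU_I}$ by the orbit--cone correspondence), and prove the remaining inclusion $\scrO_{v,I}\subset\ol{\scrO_{w,I}}$, $v\in wW_I$, in one shot: $B_Iw_I\scrU_I$ is dense in $Z=G_IH/H$ because the big cell $B_Iw_IB_I$ is dense in $G_I$ and $\scrU_I$ is dense in $B_IH/H$ (Corollary~\ref{cor: B_I action}), and conjugation by the minimal coset representative $c=ww_I$ pushes everything into $\ol{Bw\scrU_I}=\ol{\scrO_{w,I}}$ via $cB_I\subset Bc$. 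The key inequality you need there, $c(\Phi^+_I)\subset\Phi^+$, is stated a little quickly as a consequence of Lemma~\ref{lem: I-bruhat}; it is in fact exactly the relation $\Phi^+(ww_I)\cap\Phi^+_I=\vuoto$ established inside the proof of that lemma (or, directly, reducedness gives $w(\Phi^+_I)\subset\Phi^-$ and $w_I(\Phi^+_I)=\Phi^-_I$). Your argument avoids both the curve lemma and the word-by-word induction, at the cost of a few point-set verifications (that $\scrU_I\subset B_IH/H$, that images of dense sets remain dense under the maps involved, and that closures should consistently be read inside $G/H$ rather than writing $\ol{B_Iw_I\scrU_I}=Z$); the paper's route buys the more general Lemma~\ref{lemma:closure}, which yields closure relations for all representatives $J\subset\M_{w,I}$ and hence somewhat more information about the Bruhat order than the proposition itself requires.
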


The proposition is an easy consequence of the following lemma.

\begin{lemma}\label{lemma:closure}
Let $(w,I)$ be a reduced pair and let $J \subset \M_{w,I}$. If $v \in wW_I$, then $\scrO_{v,J} \subset \ol{\scrO_{w,I}}$.
\end{lemma}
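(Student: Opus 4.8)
The plan is to reduce the assertion to a single inclusion of double cosets in $G$ and then transport it to $G/H$ by continuity of the action map. Write $M=\M_{w,I}$ and recall that $M$ is a representative of $\scrO_{w,I}$, so that $\scrO_{w,I}=Bw\scrU_M$; let $B_I=G_I\cap B$ be the Borel subgroup of the reductive group $G_I$ attached to $I$. First I would check that $\ol{\scrU_M}$ is $B_I$-stable: it is $T$-stable, and for $\grg\in\Phi^+_I$ the subgroup $U_\grg$ either acts trivially on $B/H$ (when $\grg\notin\weak$) or satisfies $\grg\in\Phi^+_I\cap\weak=\weak_I\subseteq\weak_M$, in which case $U_\grg\scrU_M\subseteq\ol{\scrU_M}$ by Proposition~\ref{prop:Ualphaactionfinal}; either way $\ol{\scrU_M}$ is $U_\grg$-stable. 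Since $J\subseteq M$ gives $\scrU_J\subseteq\ol{\scrU_M}$ by the description of $T$-orbit closures in $B/H$, we have $\scrO_{v,J}=Bv\scrU_J\subseteq Bv\,\ol{\scrU_M}=BvB_I\,\ol{\scrU_M}$, and everything reduces to proving
$$BvB_I\subseteq\ol{BwB_I}\qquad(v\in wW_I).$$

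Granting this double-coset inclusion, I would conclude as follows. Using $B_I\,\ol{\scrU_M}=\ol{\scrU_M}$ and the continuity of the action $\mu\colon G\times G/H\to G/H$, together with $B_I\scrU_M\subseteq\ol{\scrU_M}$, one gets
$$\scrO_{v,J}\subseteq BvB_I\,\ol{\scrU_M}\subseteq\ol{BwB_I}\cdot\ol{\scrU_M}\subseteq\ol{BwB_I\,\scrU_M}=\ol{Bw\scrU_M}=\ol{\scrO_{w,I}},$$
where the middle inclusion is $\mu\big(\ol{BwB_I}\times\ol{\scrU_M}\big)\subseteq\ol{\mu(BwB_I\times\scrU_M)}$ and the first equality uses $BwB_I\scrU_M\subseteq Bw\,\ol{\scrU_M}$. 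This is exactly the wanted containment $\scrO_{v,J}\subseteq\ol{\scrO_{w,I}}$.

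It remains to prove $BvB_I\subseteq\ol{BwB_I}$. Write $v=wv_0$ with $v_0\in W_I$ and argue by induction on the length of a reduced expression $v_0=s_{\grg_1}\cdots s_{\grg_k}$ in $(W_I,\grD_I)$. The key is the rank-one step: \emph{if $u\in W$ and $\grg\in\grD_I$ satisfy $u(\grg)\in\Phi^-$, then $Bus_\grg B_I\subseteq\ol{BuB_I}$.} To see this, let $P^I_\grg=B_I\cup B_Is_\grg B_I$ be the minimal parabolic of $G_I$ associated with $\grg$; its big cell $U_{-\grg}B_I$ is dense in $P^I_\grg$, so $\dot s_\grg\in\ol{U_{-\grg}B_I}$. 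Conjugating by a representative $\dot u$ and using $\dot uU_{-\grg}\dot u^{-1}=U_{-u(\grg)}\subseteq B$ (as $-u(\grg)\in\Phi^+$) gives $\dot u\dot s_\grg\in\ol{U_{-u(\grg)}\dot uB_I}\subseteq\ol{B\dot uB_I}$, and saturating on the left by $B$ and on the right by $B_I$ yields the claim. For the inductive step set $v_0'=s_{\grg_1}\cdots s_{\grg_{k-1}}$ and $\grg=\grg_k$; then $v_0'(\grg)\in\Phi^+_I$, and since $(w,I)$ is reduced we have $\Phi^+_I\subseteq\Phi^+(w)$ by Remark~\ref{rem:pairs}, so $(wv_0')(\grg)=w\big(v_0'(\grg)\big)\in\Phi^-$. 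Applying the rank-one step with $u=wv_0'$ and invoking the inductive hypothesis gives $Bwv_0B_I=B(wv_0')s_\grg B_I\subseteq\ol{Bwv_0'B_I}\subseteq\ol{BwB_I}$.

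The main point to get right is that one must keep $B_I$, and not the full $B$, on the right throughout: $B_I$ is precisely the subgroup preserving $\ol{\scrU_M}$, and the reducedness hypothesis $\Phi^+_I\subseteq\Phi^+(w)$ is exactly what forces every right multiplication by a simple reflection of $\Phi_I$ to strictly decrease the $W$-length of $w$, so that the rank-one straightening never escapes $\ol{BwB_I}$. In this sense the argument is the $G/B_I$-level refinement of the Bruhat-order computation already performed in Lemma~\ref{lem: I-bruhat}.
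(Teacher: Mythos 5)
Your proof is correct, and while it shares its skeleton with the paper's argument, it is organized around a different key lemma. The common core: both proofs exploit the maximal representative $\M_{w,I}$, both use that the closure of the corresponding $T$-orbit is stable under $B_I$ (in the paper this is Corollary~\ref{cor: B_I action}, which you essentially re-derive from Propositions~\ref{prop: I-rootsyst} and~\ref{prop:Ualphaactionfinal}), and both induct on a reduced expression of $w^{-1}v$ in $(W_I,\grD_I)$, with the reducedness hypothesis $\Phi^+_I\subset\Phi^+(w)$ guaranteeing that each rank-one step moves down in the Bruhat order. Where you diverge is in the rank-one input and its packaging: the paper stays inside $G/H$, citing the description of $T$-stable curves in $G/B$ (\cite[Proposition 3.9]{Ca2}) for the fact that $xs_\gra\in\ol{BxU_\gra}$ when $\gra\in\Phi^+(x)$, and then contracts the chain $\ol{BwU_{\gra_1}\cdots U_{\gra_n}\scrU_J}\supset\ol{Bw_1U_{\gra_2}\cdots U_{\gra_n}\scrU_J}\supset\cdots\supset Bv\scrU_J$ directly; you instead isolate the purely group-theoretic inclusion $BvB_I\subset\ol{BwB_I}$ of $(B\times B_I)$-orbits in $G$, prove it from scratch via the density of the big cell $U_{-\grg}B_I$ in the minimal parabolic $P^I_\grg\subset G_I$, and transport it to $G/H$ by continuity of the action map. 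The two rank-one facts are equivalent in substance (both amount to density of a big cell in a rank-one group), but your formulation buys self-containedness — no appeal to the $T$-stable curve literature — and a clean, reusable double-coset statement that is the natural $B\backslash G/B_I$ analogue of Lemma~\ref{lem: I-bruhat}; the cost is the extra point-set bookkeeping (the inclusion $\ol{A}\cdot\ol{B}\subset\ol{A\cdot B}$ under the action map, which indeed holds since $\ol{A\times B}=\ol{A}\times\ol{B}$ for the Zariski topology) that the paper avoids by manipulating $B$-stable subsets of $G/H$ throughout.
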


\begin{proof}
We will make use of the following fact, which follows by the description of the $T$-stable curves in $G/B$ (see \cite[Proposition 3.9]{Ca2}): if $x \in W$ and $\gra \in \Phi^+(x)$, then $x s_\gra \in \ol{B x U_\gra}$.

Let $\gra_1, \ldots, \gra_m \in \grD_I$ be such that $w^{-1} v = s_{\gra_1} \cdots s_{\gra_m}$ is a reduced expression and, for $i \leq m$, denote $w_i = w s_{\gra_1} \cdots s_{\gra_i}$. Then by Lemma~\ref{lem: I-bruhat} we have the following chain in the Bruhat order
$$
	v = w_n < \ldots < w_i < \ldots < w_1 < w.
$$
Denote $\M = \M_{w,I}$. As $J \subset \M$, it follows $\scrO_{v,J} \subset \ol{\scrO_{v,\M}}$, therefore to prove the theorem it is enough to consider the case $J = \M$. In particular, we have the inclusion $I \subset J$, hence $\grD_I \subset \weak_I \subset \weak_J$. It follows then by Proposition~\ref{prop:radici-reticoli} that $\ol{\scrU_J} = \ol{B_J \scrU_J}$ is $U_{\gra_i}$-stable for all $i$. In particular we have that $U_{\gra_1} U_{\gra_2} \cdots U_{\gra_n} \scrU_J \subset \ol{\scrU_J}$, and by the remark at the beginning of the proof we get the inclusions
\[
	\ol{\scrO_{w,I}} = \ol{Bw \scrU_J} = \ol{Bw U_{\gra_1} \cdots U_{\gra_n} \scrU_J} \supset \ol{Bw_1 U_{\gra_2} \cdots U_{\gra_n} \scrU_J} \supset \cdots \supset \ol{Bw_n \scrU_J} \supset Bv \scrU_J.
\qedhere
\]
\end{proof}

\begin{proof}[Proof of Proposition~\ref{prop: bruhat-compatibile}]
Notice that $vW_J \subset wW_I$, hence $w^{-1}v \in W_I$ implies $W_J \subset W_I$. In particular it follows $\Phi^+_J \subset \Phi^+_I$, therefore
$$
	J = \delta(\Phi^+_J \cap \weak) \subset \delta(\Phi^+_I \cap \weak) = I,
$$
and the claim follows by Lemma~\ref{lemma:closure}.
\end{proof}

\section{A bound for the number of $B$-orbits in $G/H$.}	\label{s:Knop-bound} 

A conjecture of Knop states that the homogeneous variety $G/TU'$ of Example~\ref{ex:timashev} has the largest number of $B$-orbits among all the homogeneous spherical $G$-varieties. In this section we prove such bound in the setting of solvable spherical subgroups. That is, we prove the following theorem.

\begin{theorem} \label{teo:knop-conj}
The spherical variety $G/TU'$ has the largest number of $B$-orbits among the homogeneous spherical varieties $G/H$ with $H$ a solvable subgroup of $G$. 
\end{theorem}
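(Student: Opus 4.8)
The plan is to reduce to the strongly solvable case, translate the statement into a numerical inequality by means of the counting formula of Corollary~\ref{cor:numero-Borbite}, and then prove that inequality by exhibiting an explicit injection of index sets. Throughout, for a set of simple roots $J$ I write $W_J$ for the standard parabolic subgroup of $W$ generated by $\{s_\gra\st\gra\in J\}$; this is consistent with the notation for $G/TU'$, where $\scrD^*=\Psi=\grD$ and $\Phi_J=\mZ J\cap\Phi$ is a standard parabolic (Example~\ref{ex:timashev}).

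\textbf{Reduction to the strongly solvable case.} Let $H\subset G$ be a solvable spherical subgroup. Its identity component $H^\circ$ is connected and solvable, hence contained in a Borel subgroup by the Borel fixed point theorem, so $H^\circ$ is strongly solvable. The natural projection $G/H^\circ\lra G/H$ is a $G$-equivariant finite surjective morphism, so $B$ acts with finitely many orbits on $G/H^\circ$ (whence $H^\circ$ is spherical) and every $B$-orbit in $G/H$ is the image of a $B$-orbit in $G/H^\circ$; therefore $|\scrB(G/H)|\le|\scrB(G/H^\circ)|$. As $TU'$ is itself solvable, it suffices to prove $|\scrB(G/H)|\le|\scrB(G/TU')|$ for every strongly solvable spherical $H\subset B$. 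By Corollary~\ref{cor:numero-Borbite} and Example~\ref{ex:timashev} this amounts, after dividing both sides by $|W|$, to the inequality
$$\sum_{I\subseteq\scrD^*}\frac1{|W_I|}\ \le\ \sum_{J\subseteq\grD}\frac1{|W_J|}.$$

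\textbf{The injection.} By Theorem~\ref{teo:avdeev}(iii) the assignment $\pi(\grb)\longmapsto\grd(\grb)$ is well defined, giving a map $p\colon\grS\lra\scrD^*$ on the set $\grS=\weak\cap\grD$ of Lemma~\ref{lem:spherical-roots}; since $\grd\ristretto_\Psi$ is surjective (Proposition~\ref{prop:delta-vs-tau}) so is $p$, and by construction $\pi(\Psi_I)=p^{-1}(I)$ for every $I\subseteq\scrD^*$. Consequently $I\longmapsto p^{-1}(I)$ is an injection from the subsets of $\scrD^*$ into the subsets of $\grS\subseteq\grD$. Granting the key inequality $|W_I|\ge|W_{\pi(\Psi_I)}|$ (with $W_{\pi(\Psi_I)}$ the standard parabolic on the simple roots $\pi(\Psi_I)$), the desired bound follows at once from
$$\sum_{I\subseteq\scrD^*}\frac1{|W_I|}\ \le\ \sum_{I\subseteq\scrD^*}\frac1{|W_{p^{-1}(I)}|}\ \le\ \sum_{J\subseteq\grD}\frac1{|W_J|},$$
where the second step uses that all terms are positive and that $I\mapsto p^{-1}(I)$ is injective. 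For $H=TU'$ one has $\grS=\scrD^*=\grD$, $p=\mathrm{id}$ and $\Phi_I=\mZ I\cap\Phi$, so the key inequality is an equality and the whole chain collapses to equalities; this shows the bound is sharp and attained exactly by $G/TU'$.

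\textbf{The key inequality, and the main obstacle.} It remains to prove $|W_I|\ge|W_{\pi(\Psi_I)}|$; note that both sides depend only on the set $\Psi_I\subseteq\Psi$ of active roots, so the statement is purely combinatorial and one need not assume $T\subset H$. Writing $J=\pi(\Psi_I)$ and using the explicit basis $\grD_I=\{\grb^\sharp_I\st\grb\in\Psi_I\}$ of $\Phi_I$ from Theorem~\ref{teo:DeltaI-rango-massimo} together with the identity $\pi(\grb^\sharp_I)=\pi(\grb)$, one sees that $\pi$ maps $\grD_I$ onto $J$, so that $\rk\Phi_I=|\grD_I|\ge|J|$. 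To upgrade this rank estimate to the required inequality of Weyl group orders I would decompose $J$ into the connected components of its induced Dynkin subdiagram and treat each separately, reducing to the case where $\pi(\Psi_I)$ is connected, and then compare the irreducible parabolic subsystem $\Phi_I$ with the standard parabolic generated by $J$ by transporting the argument of Proposition~\ref{prop:intervalli}. The hard part is precisely the non-simply-laced situation: there $\Phi_I$ may be strictly larger than the system generated by $J$, and one must run through the entries of Table~\ref{tab: active roots} to check, case by case, that the Dynkin type of $\Phi_I$ dominates that of $\langle J\rangle$, so that $|W_J|=|W_{\langle J\rangle}|\le|W_{\Phi_I}|=|W_I|$. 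This type-by-type domination is the only nontrivial ingredient; everything else is formal.
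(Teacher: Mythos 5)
Your overall architecture — reduce to the strongly solvable case, rewrite the bound via Corollary~\ref{cor:numero-Borbite} as $\sum_{I\subseteq\scrD^*}1/|W_I|\le\sum_{J\subseteq\grD}1/|W_J|$, and conclude by an injection of index sets plus a termwise inequality — is sound and parallel in spirit to the paper's. But there is a genuine gap at exactly the point you label the ``main obstacle'': the key inequality $|W_I|\ge|W_{\pi(\Psi_I)}|$ is never proved, and the route you propose for it does not go through. Proposition~\ref{prop:intervalli} is stated and proved only under the hypothesis $T\subset H$, and its proof genuinely uses that hypothesis: there $\grd\ristretto_\Psi$ is bijective and $\pi$ is injective (Proposition~\ref{prop:delta-vs-tau} together with Theorem~\ref{teo:avdeev}~iii)), each fiber of $\grd$ is a single active root, and $\rk\Phi_I=|I|$ (Remark~\ref{oss:ranks}). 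None of this survives for a general strongly solvable $H$: $\grd$ and $\pi$ can fail to be injective, a single divisor $D\in I$ can carry several active roots — so $\pi(\Psi_I)$ can even be disconnected when $|I|=1$ (e.g.\ the diagonal Borel subgroup of $\mathrm{SL}_2\times\mathrm{SL}_2$, where $\scrD^*=\{D\}$ and $\Psi_D=\{\gra_1,\gra_2\}$) — and $\rk\Phi_I$ can exceed $|I|$ by up to $\dim T/T_H$. Saying that ``one need not assume $T\subset H$ because both sides depend only on $\Psi_I$'' conflates the statement being combinatorial with the available tools applying: the case analysis you defer is not a transport of Proposition~\ref{prop:intervalli} but a substantially new argument, and it is the mathematical heart of the theorem.

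This is precisely the difficulty the paper's proof is designed to avoid, by an intermediate reduction that your proposal is missing. After the same reduction to strongly solvable $H$, the paper chooses one representative active root $\grb_{i_k}$ in each maximal fiber of $\grd$, sets $\Psi'=\bigcup_k F(\grb_{i_k})$, and replaces $H$ by the maximal rank subgroup $H'=T\prod_{\gra\in\Phi^+\senza\Psi'}U_\gra$; by Avdeev's results $\Psi'$ is linearly independent and $\grd\ristretto_{\Psi'}$ is bijective, and since $\Psi'_I\subset\Psi_I$ one gets $W'_I\subset W_I$, hence $|\scrB(G/H)|\le|\scrB(G/H')|$ termwise. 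Only then, for maximal rank subgroups, is the comparison with standard parabolics carried out (Lemma~\ref{lemma:Knop-maximal-rank}), where Proposition~\ref{prop:intervalli} legitimately applies. Note that this chain only requires $|W_I|\ge|W_{\pi(\Psi'_I)}|$ with $\pi(\Psi'_I)\subseteq\pi(\Psi_I)$, which is weaker than your key inequality; yours may well be true, but nothing in the paper establishes it, and proving it would require redoing the structure theory of Section~3 without the maximal rank hypothesis. Two smaller slips: your map $p$ is defined only on $\pi(\Psi)$, not on all of $\grS$ (supports of active roots are generally larger than their $\pi$-images, so your two descriptions of $p^{-1}(I)$ are inconsistent as written), and $\pi(\grb^\sharp_I)$ is not defined, since $\grb^\sharp_I$ is in general only weakly active. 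If you replace your injection $I\mapsto\pi(\Psi_I)$ by $I\mapsto\pi(\Psi'_I)$ for the paper's $\Psi'$, your argument essentially becomes the paper's proof.
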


The set $\scrB(G/TU')$ is nicely described in terms of faces $\scrF(P)$ of the weight polytope $P$ of a regular dominant weight. Indeed, in this case the embedding of Theorem~\ref{teo:subpolytopes} induces a $W$-equivariant bijection between $\scrB(G/TU')$ and $\scrF(P)$ (see also \cite[Proposition 3.5]{Ti1} and \cite[Section 3]{Vi1} for a description of $\scrF(P)$ and of $\scrB(G/TU')$ in terms of the $W$-action).

We first prove Theorem~\ref{teo:knop-conj} in the basic case of a maximal rank strongly solvable spherical subgroup of $G$.

\begin{lemma} \label{lemma:Knop-maximal-rank}
The spherical variety $G/TU'$ has the largest number of $B$-orbits among the homogeneous spherical varieties $G/H$ with $H$ a strongly solvable subgroup of $G$ of maximal rank.
\end{lemma}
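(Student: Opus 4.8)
The plan is to read both cardinalities off the formula of Corollary~\ref{cor:numero-Borbite} and to compare them term by term. Since $H$ has maximal rank we may assume $T\subset H\subset B$; then Proposition~\ref{prop:delta-vs-tau} makes $\grd\ristretto_\Psi:\Psi\to\scrD^*$ bijective and Theorem~\ref{teo:avdeev} iii) makes $\pi:\Psi\to\grD$ injective, so I identify $\scrD^*=\Psi$ and regard $\Psi\subset\grD$ through $\pi$, noting that $\Psi_I=I$ and $\Phi_I=\mZ I\cap\Phi$ for $I\subset\Psi$. For $H=TU'$ one has instead $\Psi=\scrD^*=\grD$ (Example~\ref{ex:timashev}), and $\Phi_J=\mZ J\cap\Phi$ is the standard parabolic subsystem for every $J\subset\grD$. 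Writing $W_I=W(\Phi_I)$ and $W^{\mathrm{std}}_{\pi(I)}=W(\mZ\pi(I)\cap\Phi)$, Corollary~\ref{cor:numero-Borbite} gives
\[
|\scrB(G/H)|=\sum_{I\subset\Psi}\frac{|W|}{|W_I|},\qquad |\scrB(G/TU')|=\sum_{J\subset\grD}\frac{|W|}{|W^{\mathrm{std}}_{J}|}.
\]
Matching each $I\subset\Psi$ with $\pi(I)\subset\grD$ and discarding the nonnegative contribution of the subsets $J\not\subset\pi(\Psi)$ on the right, the statement reduces to the single inequality $|W_I|\geq|W^{\mathrm{std}}_{\pi(I)}|$ for every $I\subset\Psi$.

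I would prove this inequality componentwise. Both $\Phi_I=\mZ I\cap\Phi$ and $\Phi'_I:=\mZ\pi(I)\cap\Phi$ are parabolic subsystems of $\Phi$ (Corollary~\ref{cor: sistemi-parabolici}) of the common rank $|I|$ (Remark~\ref{oss:ranks}). Decomposing $\pi(I)$ into its connected components $C_1,\dots,C_r$ in the Dynkin diagram and setting $I_j=\pi^{-1}(C_j)$, the standard subsystem splits orthogonally as $\Phi'_I=\coprod_j\Phi'_{I_j}$ with each $\Phi'_{I_j}$ irreducible of rank $|I_j|$. The crucial point will be to establish the analogous orthogonal splitting $\Phi_I=\coprod_j\Phi_{I_j}$, each $\Phi_{I_j}$ being irreducible of rank $|I_j|$ by Proposition~\ref{prop:intervalli} i). Granting this, $|W_I|=\prod_j|W(\Phi_{I_j})|$ and $|W^{\mathrm{std}}_{\pi(I)}|=\prod_j|W(\Phi'_{I_j})|$, so it suffices to compare the two systems one component at a time.

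For a single connected component $C=\pi(I_j)$, sitting inside the irreducible factor $\Phi_0$ of $\Phi$ that contains it, I would invoke Proposition~\ref{prop:intervalli}. If $\Phi_0$ is simply laced, or if $\Phi_0$ is not simply laced but $C$ mixes long and short roots, then $\Phi_{I_j}\cong\Phi'_{I_j}$ and the two factors have equal Weyl groups. In the remaining case $\Phi_0$ is not simply laced and $C$ consists of roots of a single length, so $\Phi'_{I_j}$, being generated by a connected string of equal-length simple roots, is of type $\sfA_{|I_j|}$, whereas $\Phi_{I_j}$ is merely some irreducible system of rank $|I_j|$. Here one uses the elementary fact that $\sfA_k$ has the smallest Weyl group among all irreducible root systems of rank $k$ (e.g.\ $6<8<12$ in rank two and $24<48$ in rank three, the general case being immediate from the classification, $|W(\sfA_k)|=(k+1)!$), so that $|W(\Phi_{I_j})|\geq|W(\Phi'_{I_j})|$. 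Thus $|W(\Phi_{I_j})|\geq|W(\Phi'_{I_j})|$ in all cases, and multiplying over $j$ yields $|W_I|\geq|W^{\mathrm{std}}_{\pi(I)}|$, as required.

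The hard part will be the orthogonal decomposition $\Phi_I=\coprod_j\Phi_{I_j}$; concretely, one must show that two active roots $\grb,\grb'$ whose distinguished simple roots $\pi(\grb),\pi(\grb')$ lie in distinct connected components of $\pi(I)$ are orthogonal. This is not a formal consequence of $\langle\pi(\grb),\pi(\grb')^\vee\rangle=0$, because the supports $\supp(\grb),\supp(\grb')$ may extend beyond $\pi(I)$ and could a priori meet or be adjacent. Ruling this out requires the combinatorics of the families $F(\grb)$ from Section~\ref{s:active} — in particular Theorem~\ref{teo:avdeev}, Corollary~\ref{cor:support-family}, Corollary~\ref{cor:massimali} and Lemma~\ref{lemma:radici-adiacenti}, which already drive the connected case of Proposition~\ref{prop:intervalli} — to show that the supports of such $\grb,\grb'$ are disjoint and mutually non-adjacent; orthogonality, and hence the desired splitting, then follows.
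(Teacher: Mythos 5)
Your reduction via Corollary~\ref{cor:numero-Borbite} to the inequality $|W_I|\geq|W'_{\pi(I)}|$ (your $W^{\mathrm{std}}_{\pi(I)}$ is the paper's $W'_I$), and your analysis of a single connected component of $\pi(I)$ --- Proposition~\ref{prop:intervalli} plus the fact that type $\sfA$ minimizes the order of the Weyl group among irreducible root systems of fixed rank --- coincide with the paper's proof. The genuine gap is the step you defer as ``the hard part'': the orthogonal splitting $\Phi_I=\coprod_j\Phi_{I_j}$ is not hard, it is false, so no combinatorics of the families $F(\grb)$ can deliver it. Take $G$ of type $\sfA_3$ and $\Psi=\{\gra_1+\gra_2,\,\gra_2,\,\gra_3\}$: this set is linearly independent, its complement in $\Phi^+$ is closed under addition, so $H=T\,U_{\gra_1}U_{\gra_2+\gra_3}U_{\gra_1+\gra_2+\gra_3}$ is a strongly solvable spherical subgroup of maximal rank (sphericity by \cite[Theorem 1]{Avd1}, exactly as invoked in the paper), with $\pi(\gra_1+\gra_2)=\gra_1$, $\pi(\gra_2)=\gra_2$, $\pi(\gra_3)=\gra_3$. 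For $I=\{\gra_1+\gra_2,\gra_3\}$ the set $\pi(I)=\{\gra_1,\gra_3\}$ has two connected components, yet $\gra_1+\gra_2$ and $\gra_3$ are not orthogonal, and not strongly orthogonal: their sum $\gra_1+\gra_2+\gra_3$ is a root, and $\Phi_I=\{\pm(\gra_1+\gra_2),\pm\gra_3,\pm(\gra_1+\gra_2+\gra_3)\}$ is irreducible of type $\sfA_2$. Thus $|W_I|=6$ while $\prod_j|W(\Phi_{I_j})|=4$, so your key equality $|W_I|=\prod_j|W(\Phi_{I_j})|$ fails (the inequality you ultimately need, $|W'_{\pi(I)}|=4\leq 6=|W_I|$, of course still holds). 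The same kind of example refutes the intermediate claim your sketch rests on, namely that supports over distinct components are disjoint and non-adjacent: here $\supp(\gra_1+\gra_2)$ is adjacent to $\supp(\gra_3)$, and for $\Psi=\{\gra_1+\gra_2,\gra_2,\gra_2+\gra_3\}$ with $I=\{\gra_1+\gra_2,\gra_2+\gra_3\}$ the supports even overlap.

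The paper circumvents this entirely: it never decomposes $\Phi_I$. Only the standard side factors, $\Phi'_I=\Phi'_{I_1}\times\cdots\times\Phi'_{I_m}$, because the $\pi(I_j)$ are pairwise orthogonal sets of simple roots; on the other side one only needs the inequality $|W_{I_1}|\cdots|W_{I_m}|\leq|W_I|$, not an equality, and this is what the linear independence of $\Psi$ buys. Indeed the subspaces $V_j=\mQ\Psi_{I_j}$ form a direct sum inside $\mQ\Psi_I$; since every $w\in W_{I_j}$ satisfies $\mathrm{Im}(w-\mathrm{id})\subset V_j$, a telescoping argument and induction on $m$ show that the multiplication map $W_{I_1}\times\cdots\times W_{I_m}\to W_I$ is injective, whence $\prod_j|W_{I_j}|\leq|W_I|$ even though the image need not be a subgroup, let alone all of $W_I$. (This is the content of the paper's remark that linear independence forces $\Phi_{I_h}\cap\Phi_{I_k}=\vuoto$ and $W_{I_h}\cap W_{I_k}=\{e\}$ for $h\neq k$.) Combining this with your correct componentwise bound $|W(\Phi'_{I_j})|\leq|W(\Phi_{I_j})|$ gives $|W'_I|\leq|W_I|$, and the summation over $I\subset\Psi$ concludes as you wrote. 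So your argument becomes correct, and essentially identical to the paper's, once the false splitting claim is replaced by this injectivity/counting step.
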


\begin{proof}
Suppose that $\pi(I) \subset \Psi$ is connected. By Proposition~\ref{prop:intervalli} $\Phi_I$ and $\Phi_I'$ are both irreducible of rank $|I|$. In particular, if $\Phi_I'$ is of type $\sfA$, then it follows the inequality $|W_I'| \leq |W_I|$. Otherwise, if $\Phi_I'$ is not of type $\sfA$, then Proposition~\ref{prop:intervalli} shows that $W'_I = W_I$.

Consider now an arbitrary subset $I \subset \Psi$ and let $I = I_1 \cup \ldots \cup I_m$ correspond to the decomposition of $\pi(I)$ into connected components. Then $\Phi'_I = \Phi'_{I_1} \times \ldots \times \Phi'_{I_m}$, and for all $j=1, \ldots, m$ it holds $|W'_{I_j}| \leq |W_{I_j}|$. Since $\Psi$ is linearly independent, $\Phi_{I_h} \cap \Phi_{I_k} = \vuoto$ for all $h \neq k$, hence $W_{I_h} \cap W_{I_k} = \{e\}$ for all $h \neq k$, and it follows that
$$
|W'_I| = |W'_{I_1} \times \ldots \times W'_{I_m}| \leq |W_{I_1} \times \ldots \times W_{I_m}| \leq |W_I|.$$
Therefore by Corollary \ref{cor:numero-Borbite} we get
\[
	|\scrB(G/H)| = \sum_{I \subset \Psi} |W/W_I| \leq \sum_{I \subset \Psi} |W/W'_I| \leq  \sum_{I' \subset \grD} |W/W_{I'}| = |\scrB(G/H')|.	\qedhere
\]


\end{proof}

\begin{proof}[Proof of Theorem \ref{teo:knop-conj}]
We reduce the proof of the theorem to the case of a strongly solvable spherical subgroup of $G$, which is treated in the previous lemma.

Suppose that $H$ is a solvable spherical subgroup and denote by $H^\circ \subset H$ the identity component. Clearly $G/H^\circ$ has a larger number of $B$-orbits than $G/H$, and since it is solvable and connected $H^\circ$ is contained inside a Borel subgroup of $G$. Therefore we may assume that $H$ is strongly solvable.

Suppose now that $H$ is a strongly solvable spherical subgroup, assume that $H \subset B$ and that $T \cap H$ contains a maximal torus $T_H \subset H$. Recall the restriction $\tau : \Psi \ra \calX(T_H)$, following the discussion at the end of Section \ref{s:toricvar} we identify the maps $\tau : \Psi \ra \ol\Psi$ and $\grd : \Psi \ra \scrD$. If $D \in \scrD$, denote $\Psi_D = \grd^{-1}(D)$. As in \cite[\S 2]{Avd1}, define a partial order on $\scrD$ as follows: $D \leq D'$ if there are $\grb \in \Psi_D$ and $\grb' \in \Psi_{D'}$ with $\grb \leq \grb'$. Equivalently, by \cite[Proposition 1]{Avd1}, we have $D \leq D'$ if and only if there exists $\gra \in \Phi^+$ such that $\Psi_D + \gra \subset \Psi_{D'}$.

Pick a representative $\grb_D \in \Psi_D$ for all $D \in \scrD$. Set $\scrD^1 = \scrD$ and let $\scrD_1$ be the set of the maximal elements in $\scrD$. We define inductively
$$
 	\scrD^{i+1} = \scrD^i \senza \bigcup_{D \in \scrD_i}\grd(F(\grb_D)),	
$$
and we define $\scrD_{i+1}$ as the set of the maximal elements in $\scrD^{i+1}$. Let $p$ be the maximum such that $\scrD_p$ is not empty and set $\scrD_* = \bigcup_{k \leq p} \scrD_k$. Finally, define
$$\Psi' = \bigcup_{D \in \scrD_*} F(\grb_D).$$

We devote the following paragraphs to show that $\grd_{|\Psi'} : \Psi' \ra \scrD$ is bijective. Indeed, it is surjective by construction. Suppose by contradiction that it is not injective and let $\gra, \gra' \in \Psi'$ be such that $\gra \neq \gra'$ and $\grd(\gra) = \grd(\gra')$. Let $D, D' \in \scrD_*$ and let $\grg, \grg' \in \Psi$ be maximal elements such that
$$
	\gra \leq \grb_D \leq \grg \qquad \text{ and } \qquad  	\gra' \leq \grb_{D'} \leq \grg'.
$$

Since the maps $\tau : \Psi \ra \ol\Psi$ and $\grd : \Psi \ra \scrD$ are canonically identified and since $\grd(\gra) = \grd(\gra')$, by \cite[Lemma~5]{Avd1} it follows that $\grb_D \neq \grb_{D'}$, hence $D \neq D'$. Similarly, by \cite[Lemma~8 and Lemma~11]{Avd1} it follows that $\grd(\grg) = \grd(\grg')$ and $\grg-\gra = \grg' - \gra'$.

By Proposition \ref{prop:attivazioni-equivalenti} we have $\grb_D - \gra  \in \weak$ and $\grb_{D'}-\gra' \in \weak$. Setting moreover $\tilde \grb_D = \gra' + \grb_D - \gra$ and $\tilde \grb_{D'} = \gra + \grb_{D'} - \gra'$, by the same proposition we see that $\tilde \grb_D$ and $\tilde \grb_{D'}$ are active roots, and they satisfy the equalities
$$
\grd(\tilde \grb_D) = \grd(\grb_D) = D, \qquad \qquad \grd(\tilde \grb_{D'}) = \grd(\grb_{D'}) = D'. 
$$
Since $\grg-\gra = \grg' - \gra'$, we have the inequalities
$$
	\gra \leq \tilde \grb_{D'} \leq \grg \qquad \text{ and } \qquad  	\gra' \leq \tilde \grb_D \leq \grg'.
$$

Notice that $\grb_D$ and $\tilde \grb_{D'}$ are comparable by Lemma \ref{lemma:radici-adiacenti}: indeed $\gra \in F(\grb_D) \cap F(\tilde \grb_{D'})$, thus $\supp(\grb_D) \cap \supp(\tilde \grb_{D'})$ is not empty, and $\supp(\grb_D) \cup \supp(\tilde \grb_{D'})$ is connected. Therefore we have either that $\tilde \grb_{D'} \in F(\grb_D)$ or that $\tilde \grb_D \in F(\grb_{D'})$. It follows that either $D' \in \grd(F(\grb_D))$ or $D \in \grd(F(\grb_{D'}))$, and by the definition of $\scrD_*$ we get $D = D'$, a contradiction.

Therefore we proved the bijectivity of the map $\grd_{|\Psi'} : \Psi' \ra \scrD$, and it follows that $\tau_{|\Psi'} : \Psi' \ra \ol \Psi$ is bijective as well. By making use of the properties of $\Psi'$, we now construct a strongly solvable spherical subgroup $H'$ containg $T$ such that $|\scrB(G/H)| \leq |\scrB(G/H')|$.

Notice that the elements of $\Psi'$ are linearly independent: indeed $\tau_{|\Psi'} : \Psi' \ra \ol \Psi$ is bijective, and $\ol\Psi$ is linearly independent in $\calX(T_H)$ by \cite[Theorem 1]{Avd1}. Let $\gra, \gra' \in \Phi^+$ and suppose that $\gra + \gra' \in \Psi'$. Since $\goh \subset \gog$ is a subalgebra and since by definition $\gou_{\gra+\gra'} \not \subset \goh$, it follows that either $\gou_\gra \not \subset \goh$ or $\gou_{\gra'} \not \subset \goh$. It follows that either $\gra \in \Psi$ or $\gra' \in \Psi$, hence either $\gra \in \Psi'$ or $\gra' \in \Psi'$. Therefore
$$H' = T \prod_{\gra \in \Phi^+ \senza \Psi'} U_\gra$$
is a strongly solvable subgroup of $G$, and by \cite[Theorem 1]{Avd1} it is spherical because $\Psi'$ is linearly independent. 

We claim that $|\scrB(G/H)| \leq |\scrB(G/H')|$. Denote indeed $\scrD' = \Div_T(B/H')$. Then by Corollary~\ref{cor:delta-surjective} we have a bijection $\Psi' \ra \scrD'$, and by the discussion above the restriction gives a bijection $\Psi' \ra \ol \Psi$. Thus we have a bijection $\scrD' \ra \scrD$, and by Corollary \ref{cor: Worbits} we get that $W$ acts on $\scrB(G/H)$ and on $\scrB(G/H')$ with the same number of orbits. On the other hand, if $I \subset \scrD$ and $\Phi'_I = \mZ \Psi'_I \cap \Phi$ is the root system associated to the corresponding $T$-orbit in $B/H'$, then by construction we have $\Psi'_I \subset \Psi_I$, hence $\Phi'_I \subset \Phi_I$. Denoting by $W'_I$ be the Weyl group of $\Phi'_I$, it follows that $W'_I \subset W_I$, thus by Corollary \ref{cor:numero-Borbite} we get
$$
	|\scrB(G/H)| = \sum_{I \subset \scrD} |W/W_I| \leq \sum_{I \subset \scrD} |W/W'_I| = |\scrB(G/H')|.
$$
On the other hand by construction $H'$ is a strongly solvable spherical subgroup of $G$ of maximal rank, therefore $|\scrB(G/H')| \leq |\scrB(G/TU')|$ by Lemma~\ref{lemma:Knop-maximal-rank}.
\end{proof}


\begin{thebibliography}{99}

%
\bibitem{AZK} I.V.~Arzhantsev, M.G.~Zaidenberg and K.~Kuyumzhiyan,
{\em Flag varieties, toric varieties, and suspensions: three instances of infinite transitivity},
Mat. Sb. \textbf{203}, no. 7, (2012), 3--30; translation in: Sb. Math. \textbf{203} no. 7-8, (2012), 923--949.

\bibitem{Avd1} R.S.~Avdeev,
{\em On solvable spherical subgroups of semisimple algebraic groups},
Trans. Moscow Math. Soc. \textbf{2011}, 1--44.
%
\bibitem{Avd2} R.~Avdeev,
{\em Strongly solvable spherical subgroups and their combinatorial invariants},
Selecta Math. (N.S.) \textbf{21}, no. 3, (2015), 931--993. 

%
\bibitem{Bou} N.~Bourbaki,
\'El\'ements de math\'ematique. Fasc.\ XXXIV.
{\em Groupes et alg\`ebres de Lie}, Chapitres IV, V, VI,
Actualit\'es Scientifiques et Industrielles \textbf{1337}, Hermann, Paris, 1968.

%
\bibitem{BL} P.~Bravi and D.~Luna,
{\em An introduction to wonderful varieties with many examples of type $\sfF_4$}, 
J. Algebra \textbf{329}, no. 1, (2011), 4--51.

\bibitem{BP} P.~Bravi and G.~Pezzini,
{\em Primitive wonderful varieties}, 
Math.\ Z.\ {\bf 282} (2016), no. 3--4, 1067--1096.

\bibitem{Br1} M.~Brion,
{\em Quelques propri\'et\'es des espaces homog\`enes sph\'eriques}
Manuscripta Math. \textbf{55} (1986), no. 2, 191--198. 

\bibitem{Br0} M.~Brion,
{\em Classification des espaces homog\`enes sph\'eriques},
Compositio Math. \textbf{63} (1987), no. 2, 189--208.


\bibitem{Br2} M.~Brion,
{\em Rational smoothness and fixed points of torus actions},
Transform. Groups \textbf{4} (1999), no. 2-3, pp. 127--156.

\bibitem{Br3} M.~Brion,
{\em On orbit closures of spherical subgroups in flag varieties},
Comment. Math. Helv. \textbf{76} (2001), no. 2, 263--299.



\bibitem{Ca2} J.B.~Carrell,
{\em Torus actions and cohomology},
in: Algebraic quotients. Torus actions and cohomology. The adjoint representation and the adjoint action, 83--158, Encyclopaedia Math. Sci. \textbf{131}, Springer, Berlin, 2002.

\bibitem{CLS} D.A.~Cox, J.B.~Little and H.K.~Schenck,
{\em Toric varieties},
Graduate Studies in Mathematics \textbf{124}, American Mathematical Society, Providence, RI, 2011.
 
\bibitem{CF} S.~Cupit-Foutou,
{\em Wonderful Varieties: A geometrical realization}, 
arXiv:0907.2852.

\bibitem{De} M.~Demazure,
{\em Sous-groupes alg\'ebriques de rang maximum du groupe de Cremona},
Ann. Sci. \'Ecole Norm. Sup. (4) \textbf{3} (1970), 507--588. 

\bibitem{FMSS} W.~Fulton, R.~MacPherson, F.~Sottile and B.~Sturmfels,
{\em Intersection theory on spherical varieties},
J. Algebraic Geom. \textbf{4} (1995), no. 1, 181--193.

\bibitem{Ga} J.~Gandini,
{\em Spherical orbit closures in simple projective spaces and their normalizations},
Transform. Groups \textbf{16} (2011), no.1, pp. 109--136.



\bibitem{Ha} T.~Hashimoto,
{\em $B_{n-1}$-orbits on the flag variety $\mathrm{GL_n}/B_n$},
Geom. Dedicata \textbf{105} (2004), 13--27.

\bibitem{Hu} J.~E.~Humphreys,
\newblock{\em Linear algebraic groups},
 Graduate Texts in Mathematics \textbf{21}, Springer-Verlag,
New York-Heidelberg-Berlin, 1975.


\bibitem{Kn1}
F.~Knop,
{\em The Luna-Vust Theory of spherical embeddings},
in:  Proceedings of the Hyderabad Conference on Algebraic Groups  (Hyderabad, 1989),  225--249,  Manoj Prakashan,  Madras,  1991.

\bibitem{Kn} F.~Knop,
{\em On the set of orbits for a Borel subgroup},
Comment. Math. Helv. \textbf{70} (1995), no. 2, 285--309.

\bibitem{Kn3}
{ F.~Knop},
{\em Automorphisms, root systems, and compactifications of homogeneous varieties},
J.\ Amer.\ Math.\ Soc.\ \textbf{9} (1996), 153--174.

\bibitem{Kr} M.~Kr\"amer,
{\em Sph\"arische Untergruppen in kompakten zusammenh\"angenden Liegruppen}, \newblock Compositio Math. \textbf{38} (1979), no. 2, 129--153.

\bibitem{Ll} A.~Llendo,
{\em Affine $T$-varieties of complexity one and locally nilpotent derivations},
Transform. Groups \textbf{15} (2010), no. 2, 389--425.

\bibitem{Lo} I.V.~Losev,
{\em Uniqueness property for spherical homogeneous spaces},
Duke Math. J. \textbf{147} (2009), no. 2, 315--343.

\bibitem{Lu1} D.~Luna,
{\em Sous-groupes sph\'eriques r\'esolubles},
Pr\'epubl. Inst. Fourier \textbf{241} (1993), 20 pp.

\bibitem{Lu2}
D.~Luna,
{\em Vari\'et\'es sph\'eriques de type ${\sf A}$},
Publ.\ Math.\ Inst.\ Hautes \'Etudes Sci.\ \textbf{94} (2001), 161--226.


\bibitem{Mi} I.V.~Mikityuk,
{\em Integrability of invariant Hamiltonian systems with homogeneous configuration spaces},
Mat. Sb. (N.S.) \textbf{129} (1986), no. 4, 514--534. English transl.: Math. USSR--Sb. \textbf{57} (1987), 527--546.

\bibitem{Mo} P.L.~Montagard,
{\em Une nouvelle propriet\'e de stabilit\'e du pl\'ethysme},
Comment. Math. Helv. \textbf{71} (1996), 475--505.

\bibitem{Oda} T.~Oda,
{\em Convex bodies and algebraic geometry. An introduction to the theory of toric varieties},
Ergebnisse der Mathematik und ihrer Grenzgebiete (3) \textbf{15}, Springer-Verlag, Berlin, 1988.



\bibitem{Re} N.~Ressayre,
{\em About Knop's action of the Weyl group on the set of orbits of a spherical subgroup in the flag manifold},
Transform. Groups \textbf{10} (2005), no. 2, 255--265.

\bibitem{RS1} R.W.~Richardson and T.A.~Springer,
{\em The Bruhat order on symmetric varieties},
Geom. Dedicata \textbf{35} (1990), 389--436.

\bibitem{RS2} R.W.~Richardson and T.A.~Springer,
{\em Combinatorics and geometry of K-orbits on the flag manifold},
in: R.~Elman, M.~Schacher, V.~Varadarajan (eds.),
{\em Linear algebraic groups and their representations},
Contemp. Math. \textbf{153}, Providence, Amer. Math. Soc. 1993, 109--142.

\bibitem{Sp1} T.A.~Springer,
{\em Schubert varieties and generalizations},
in: {\em Representation theories and algebraic geometry} (Montreal, PQ, 1997),
NATO Adv.~Sci.~Inst.~Ser.~C~Math.~Phys.~Sci. \textbf{514}, Kluwer~Acad.~Publ., Dordrecht, 1998, 413--440.


\bibitem{Ti1} D.A.~Timashev,
{\em A generalization of the Bruhat decomposition},
Izv. Ross. Akad. Nauk Ser. Mat. \textbf{58} (1994), no. 5, 110--123.
Translation in: Russian Acad. Sci. Izv. Math. \textbf{45} (1995), no. 2, 339--352.


\bibitem{Vi}
E.B.~Vinberg,
{\em Complexity of actions of reductive groups},
Funct. Anal. Appl. \textbf{20} (1986), no.1, 1--11.

\bibitem{Vi1}
E.B.~Vinberg,
{\em On certain commutative subalgebras of a universal enveloping algebra},
Math.~USSR~Izv. \textbf{36} (1991), 1--22.

\bibitem{Wo} J.~Wolf,
{\em Admissible representations and geometry of flag manifolds},
in: M.~Eastwood, J.~Wolf, R.~Zierau (eds.), {\em The Penrose transform and analytic cohomology in representation theory}, Contemp. Math. \textbf{154}, Providence, Amer. Math. Soc. 1993, 21--45.

\end{thebibliography}
\end{document}